\numberwithin{equation}{subsection}
\newtheorem{theorem}{Theorem}[section]
\newtheorem*{itheorem}{Theorem}
\newtheorem{definition}[theorem]{Definition}
\newtheorem{remark}[theorem]{Remark}
\newtheorem{proposition}[theorem]{Proposition}
\newtheorem{corollary}[theorem]{Corollary}
\newtheorem{lemma}[theorem]{Lemma}
\newtheorem{example}[theorem]{Example}
\def\Q{\mathbb{Q}}
\def\R{\mathbb{R}}
\def\Z{\mathbb{Z}}
\def\E{\mathcal{E}}
\def\cO{\mathcal{O}}
\def\Fil{\mathrm{Fil}}
\def\Spa{\mathrm{Spa}}
\def\Spec{\mathrm{Spec}}
\def\deg{\mathrm{deg}}
\def\ra{\rightarrow}
\def\Qp{\mathbb{Q}_p}
\def\Fbar{\bar{F}}
\def\BdR{B_{\mathrm{dR}}}
\def\BpdR{B_{\mathrm{dR}}^+}
\newcommand{\mar}[1]{\marginpar{\tiny #1}}
\renewcommand\appendix{\par \setcounter{section}{0} \setcounter{subsection}{0} \gdef\thesection{ \Alph{section}}}
\begin{document}

\title{On the weak Harder-Narasimhan stratification on $\BpdR$-affine Grassmannian}
\author{Miaofen Chen, Jilong Tong}
\date{}

\address{Shanghai Center for Mathematical Sciences\\
Fudan University\\ 2005 Songhu Road\\Shanghai 20438, China}\email{miaofenchen@fudan.edu.cn}

\address{School of Mathematical Sciences \\ Capital Normal University \\ 105, Xi San Huan Bei Lu \\ Beijing 100048, China}\email{jilong.tong@cnu.edu.cn}

\dedicatory{to the memory of Professor Linsheng Yin}

\begin{abstract}
We consider the Harder-Narasimhan formalism on the category of normed isocrystals and show that the Harder-Narasimhan filtration is compatible with tensor products which generalizes a result of Cornut. As an application of this result, we are able to define a (weak) Harder-Narasimhan stratification on the $\BpdR$-affine Grassmannian for arbitrary $(G, b, \mu)$. 
When $\mu$ is minuscule, it corresponds to the Harder-Narasimhan stratification on the flag varieties defined by Dat-Orlik-Rapoport. And when $b$ is basic, it's studied by Nguyen-Viehmann and Shen.  We study the  basic geometric properties of the Harder-Narasimhan stratification, such as non-emptiness, dimension and its relation with other stratifications.

\end{abstract}

\maketitle

\section*{introduction}
Let $F$ be a $p$-adic local field, with $\breve F$ the $p$-adic completion of the maximal unramified extension. Consider the $p$-adic flag variety $\mathcal{F}(G, \mu)$ attached to the pair $(G, \mu)$, where $G$ is a connected reductive group over $F$ and $\mu$ a geometric minuscule cocharacter of $G$. Let $E$ be the reflex field of the conjugacy class $\{\mu\}$ of $\mu$. Throughout this paper, we view the flag variety $\mathcal{F}(G, \mu)$ as an adic space over $\breve E$, the $p$-adic completion of the maximal unramified extension of $E$. Fix $b\in G(\breve F)$ which satisfies the Kottwitz condition $[b]\in B(G, \mu)$ (cf. Section \ref{section_Kottwitz}).  There are two open subspaces, both called \emph{$p$-adic period domains}, inside $\mathcal{F}(G, \mu)$
\[\mathcal{F}(G,\mu,b)^a\subset \mathcal{F}(G, \mu, b)^{wa}\subset \mathcal{F}(G, \mu),\]
where $\mathcal{F}(G, \mu, b)^{wa}$ is the \emph{weakly admissible locus} and $\mathcal{F}(G, \mu, b)^a$ is the \emph{admissible locus}. The weakly admissible locus $\mathcal{F}(G, \mu, b)^{wa}$ is of algebraic nature, and was constructed by Rapoport and Zink (\cite{RZ}) as the complement of a profinite union of explicit closed subspaces of $\mathcal F(G,\mu)$. Rapoport and Zink also conjectured the existence of the admissible locus $\mathcal{F}(G, \mu, b)^{a}$ which is of geometric nature, with the property that there exists a $p$-adic local system with additional structures over this space which interpolates the crystalline representations attached to all classical points of $\mathcal F(G,\mu)^{a}$. Since then, the admissible locus was first constructed only for several PEL type triples $(G, \mu, b)$ by Hartl (\cite{Har}, \cite{Har1}) and by Faltings (\cite{Fal1}). Thanks to the recent revolutionary progress in $p$-adic Hodge theory, we are now able to define the admissible locus $\mathcal{F}(G, \mu, b)^{a}$ for an arbitrary triple $(G, \mu, b)$ based on the work of Fargues-Fontaine (\cite{FF}), Fargues (\cite{Far20}), Kedlaya-Liu (\cite{KL}) and Scholze-Weinstein (\cite{SW}). The weakly admissible locus $\mathcal{F}(G, \mu, b)^{wa}$ can be considered as an algebraic approximation of the admissible locus $\mathcal{F}(G, \mu, b)^a$ in the sense that they have the same classical points due to a fundamental result in $p$-adic Hodge theory by Colmez-Fontaine (\cite{CF}). The study of the weakly admissible locus could reflect some information about the admissible locus whose geometry is in general still very mysterious (e.g. \cite{CFS}).

The admissible locus $\mathcal{F}(G, \mu, b)^a$ could be also understood as the image of an \'etale morphism of rigid analytic spaces over $\breve E$, i.e., the \emph{$p$-adic period mapping} (\cite{RZ}, \cite{SW}) 
\[\pi: (\mathcal{M}_{G, \mu, b, K})_{K\subset G(\Qp)}\longrightarrow \mathcal{F}(G, \mu),
\]
where $(\mathcal{M}_{G, \mu, b, K})_{K\subseteq G(\Qp)}$ is the local Shimura variety attached to the triple $(G, \mu, b)$. For a general (not necessarily minuscule) geometric cocharacter $\mu$,  the $p$-adic period mapping above could be upgraded to an \'etale morphism of diamonds over $\breve E$ (\cite{SW}): 
\[\pi^{\diamond}: (\mathrm{Sht}_{G, \mu, b, K})_{K\subset G(\Qp)}\longrightarrow \mathrm{Gr}_{G, \leq \mu},\] where $(\mathrm{Sht}_{G, \mu, b, K})_{K\subset G(\Qp)}$ is the moduli space of local Shtukas attached to $(G, \mu, b)$ and 
\[
\mathrm{Gr}_{G, \leq \mu}=\bigcup_{\lambda\leq \mu}\mathrm{Gr}_{G, \lambda}
\]
is the $\BpdR$-affine Grassmannian attached to $(G, \mu)$ (cf. Section \ref{section_BdR Grassmannian}). There exists a natural Bialynicki-Birula map
\[\mathrm{BB}: \mathrm{Gr}_{G, \mu}\longrightarrow \mathcal{F}(G, \mu)^{\diamond},
\]
where $\mathcal{F}(G, \mu)^{\diamond}$ denotes the diamond associated with  $\mathcal{F}(G, \mu)$. When $\mu$ is minuscule, the Bialynicki-Birula map is an isomorphism (\cite{CS}) and the upgraded $p$-adic period mapping $\pi^{\diamond}$ is the morphism of diamonds associated with the usual $p$-adic period mapping $\pi$ above. For non-minuscule $\mu$, the Bialynicki-Birula map is no longer an isomorphism. Moreover, when $\mu$ is non-minuscule, the points in the $\BpdR$-affine Grassmannian $\mathrm{Gr}_{G, \mu}$ but not the points in $\mathcal{F}(G,\mu)$ are naturally related to the modifications of $G$-bundles on the Fargues-Fontaine curve (cf. \cite{SW}). This suggests that when we work with general $\mu$, the flag variety should be replaced by the $\BpdR$-affine Grassmannian $\mathrm{Gr}_{G, \leq \mu}$ (or $\mathrm{Gr}_{G, \mu}$). As inside the flag varieties, for $[b]\in B(G, \mu)$, we still have subspaces
\[\mathrm{Gr}_{G, \leq \mu, b}^a\subset \mathrm{Gr}_{G, \leq \mu, b}^{wa} \subset\mathrm{Gr}_{G, \leq\mu}.
\]
Here the admissible locus $\mathrm{Gr}_{G, \leq \mu, b}^a:=\mathrm{Im}(\pi^{\diamond})$ could be also defined directly using modification of $G$-bundles on the Fargues-Fontaine curve for arbitrary triple $(G, \mu, b)$ with the Kottwitz condition. On the other hand,  the weakly admissible locus $\mathrm{Gr}_{G, \leq \mu, b}^{wa}$ is only defined for the triple $(G, \mu, b)$ with the condition that $\mu$ is minuscule (\cite{DOR}, \cite{CFS}), or $b=1$ (\cite{Vi}, \cite{Sh}), or $G=\mathrm{GL}_n$ (\cite{NV}). When  $\mu$ is minuscule,  the admissible locus $\mathrm{Gr}_{G, \leq\mu, b}^a=\mathrm{Gr}_{G, \mu, b}^a$ (resp. the weakly admissible locus $\mathrm{Gr}_{G, \leq\mu, b}^{wa}=\mathrm{Gr}_{G, \mu, b}^{wa}$) is the diamond associated to $\mathcal{F}(G, \mu, b)^a$ (resp. $\mathcal{F}(G, \mu, b)^{wa}$) through the identification $\mathrm{BB}: \mathrm{Gr}_{G, \mu}\stackrel{\sim}{\rightarrow} \mathcal{F}(G, \mu)^{\diamond}$.  


To better understand the admissible locus (resp. weakly admissible locus) on the $\BpdR$-affine Grassmannian $\mathrm{Gr}_{G,\mu}$, it's useful to have a natural stratification on  $\mathrm{Gr}_{G, \mu}$ having the admissible locus (resp. the weakly admissible locus) as its unique open stratum. The stratification on $\mathrm{Gr}_{G, \mu}$ with the required property for the admissible locus is called the \emph{Newton stratification}. Like the construction of the admissible locus, it's defined by the isomorphism classes of modifications of $G$-bundles on the Fargues-Fontaine curve for any triple $(G, \mu, b)$. Basic properties of the Newton stratification are studied by many people, such as Rapoport (\cite{Ra}), Caraiani-Scholze (\cite{CS}),  Chen-Fargues-Shen (\cite{CFS}), Hansen (\cite{Han}) and Viehmann (\cite{Vi}). 

On the weakly admissible side, the corresponding stratification on the $\BpdR$-affine Grassmannian $\mathrm{Gr}_{G, \mu}$ is called the \emph{weak Harder-Narasimhan stratification}. Like the situation of the weakly admissible locus, in the literature, it's not yet defined for all the triples $(G, \mu, b)$. When $\mu$ is minuscule, this is done by  Dat-Orlik-Rapoport (\cite{DOR}). Indeed, they construct and study the Harder-Narasimhan stratification on the flag variety $\mathcal{F}(G, \mu)$  for arbitrary $(G, \mu, b)$. When $\mu$ is minuscule, recall that the Bialynicki-Birula map $\mathrm{BB}$ is an isomorphism, their construction gives the Harder-Narasimhan stratification on $\mathrm{Gr}_{G, \mu}$. When $b$ is basic while $\mu$ is general, the corresponding Harder-Narasimhan stratification on $\mathrm{Gr}_{G,\mu}$ is constructed and studied by Nguyen-Viehmann (\cite{NV}) and Shen (\cite{Sh}) based on a crucial result of Cornut-Peche Irissarry (\cite{CPI}) about the compatibility of certain notion of semistability with tensor products that we will explain later. Moreover, Nguyen-Viehmann also completely settles the case when $G=\mathrm{GL}_n$. 

The reason we name it weak Harder-Narasimhan stratification here and also in the title is twofold. One is that it generalizes the weak admissibility condition. And the other is that historically the Newton stratification is also called Harder-Narasimhan stratification (e.g. \cite[5.3]{CFS}). In the following, for simplicity, we will omit the word ``weak" and call it simply the Harder-Narasimhan stratification if there is no confusion.

The purpose of this paper is to propose a construction of a Harder-Narasimhan stratification on $\mathrm{Gr}_{G, \mu}$ for arbitrary $(G, \mu, b)$ that coincides with the previous constructions. A key ingredient is to generalize the result of Cornut-Peche Irissarry in a more general setting. To explain the main idea of our construction, let us first recall how to define the Harder-Narasimhan stratification on the flag variety $\mathcal{F}(G, \mu)$. We know that the weakly admissible locus $\mathcal{F}(G, \mu, b)^{wa}$ parametrizes weakly admissible filtered isocrystals with $G$-structures, with the isocrystal determined by $b$ and the filtration of type $\mu$. Let $C$ be a field extension of $\breve F$. Consider the category 
\[
\mathbf{FilIsoc}_{\breve F|F}^C
\]
of filtered isocrystals over $C|\breve F$ whose objects consist of an isocrystal $D$ over $\breve F|F$ together with a separated exhaustive decreasing filtration by $C$-subspaces of $D\otimes_{\breve F}C$. We can define a Harder-Narasimhan formalism on this category, so that the semi-stable objects of degree $0$ are precisely the weakly admissible filtered isocrystals introduced by Fontaine. By classifying the filtered isocrystals according to the type of their Harder-Narasimhan filtration, we obtain the Harder-Narasimhan stratification on $\mathcal{F}(\mathrm{GL}_n, \mu)$ for any triple of the form $(\mathrm{GL}_n, \mu, b)$. Moreover, Faltings (\cite{Fa}) and Totaro (\cite{To1}) show that the Harder-Narasimhan filtration is compatible with tensor products. Based on this result, using Tannakian formalism, Dat-Orlik-Rapoport are able to define the Harder-Narasimhan stratification on the flag variety $\mathcal{F}(G, \mu)$ for arbitrary $G$. 

From now on, let $C$ be a complete algebraically closed non-archimedean field over $\breve F$. Let $\BpdR$ and $\BdR$ be Fontaine's de Rham period rings corresponding to $C$.
Recall that for an arbitrary cocharacter $\mu$ (possibly non-minuscule), we replace the flag variety $\mathcal{F}(G, \mu)$ (which parametrizes filtrations of type $\mu$) by the $\BpdR$-affine Grassmannian $\mathrm{Gr}_{G, \mu}$ (which parametrizes lattices having relative position $\mu$ with the standard lattice). Accordingly, we replace the category $\mathbf{FilIsoc}_{\breve F|F}^C$ by the category 
\[
\mathbf{BunIsoc}_{\breve F|F}^{\BdR}
\]
whose objects consist of isocrystals $D$ over $\breve F|F$ equipped with a $\BpdR$-lattice inside $D\otimes_{\breve F}\BdR$. There exists a natural Harder-Narasimhan formalism on this category, which leads to the construction of the Harder-Narasimhan stratification on $\mathrm{Gr}_{G, \mu}$ for $G=\mathrm{GL}_n$ by classifying the objects in $\mathbf{BunIso}_{\breve F|F}^{\BdR}$ according to the type of their Harder-Narasimhan filtration. In order to work with arbitrary $G$, we then need a result parallel to the result of Faltings (\cite{Fa}) and Totaro (\cite{To1}) mentioned above. More precisely, let 
\[
\mathcal{F}_{\rm HN}: \mathbf{BunIsoc}_{\breve F|F}^{\BdR}\longrightarrow \mathbf{F}(\mathbf{Isoc}_{\breve F|F}) 
\]
be the functor sending any object to its Harder-Narasimhan filtration, where $\mathbf{F}(\mathbf{Isoc}_{\breve F|F})$ denotes the category of $\R$-descending filtrations of isocrystals over $\breve F|F$ by subisocrystals. 

\begin{itheorem}[Theorem \ref{thm_compatible tensor}]The functor $\mathcal{F}_{\rm HN}: \mathbf{BunIsoc}_{\breve F|F}^{\BdR}\rightarrow \mathbf{F}(\mathbf{Isoc}_{\breve F|F})$ is compatible with tensor products. 
\end{itheorem}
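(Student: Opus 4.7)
The plan is as follows. The key strategic observation is that an object $(D,\Lambda)\in\mathbf{BunIsoc}_{\breve F|F}^{\BdR}$ is essentially a normed isocrystal: the $t$-adic filtration on $\Lambda\subset D\otimes_{\breve F}\BdR$ relative to the standard lattice $\Lambda_0=D\otimes_{\breve F}\BpdR$ produces a norm $\alpha_\Lambda$ on $D\otimes_{\breve F}C$, and the assignment $(D,\Lambda)\mapsto(D,\alpha_\Lambda)$ is faithful at the level of Harder-Narasimhan data: it sends sub-objects to sub-objects, is compatible with tensor products, and the degree of $(D,\Lambda)$ --- namely the difference between the invariant-factor degree of $\Lambda$ relative to $\Lambda_0$ and the Newton degree of $D$ --- equals the degree of the attached normed isocrystal. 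So the theorem reduces to the tensor-product compatibility of the Harder-Narasimhan formalism on the category of normed isocrystals, which is the technical heart of the argument.

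For the normed-isocrystal statement, I would follow the pattern established by Faltings--Totaro (\cite{Fa}, \cite{To1}) and Cornut--Peche Irissarry (\cite{CPI}). By the standard Ramanan--Ramanathan reduction, tensor-product compatibility of the Harder-Narasimhan filtration is equivalent to showing that if $X_1, X_2$ are semistable normed isocrystals then so is $X_1\otimes X_2$, with slope equal to the sum of the two slopes. Twisting by a rank-one object further reduces to the case of two semistable objects of slope zero, where the goal is to show that $X_1\otimes X_2$ admits no sub-object of positive degree. Semistability is characterised by an inequality between a ``norm polygon'' attached to $\alpha$ and the Newton polygon of $D$, and the plan is to prove a convexity statement to the effect that this inequality is preserved under tensor product, exploiting the Cartan-type decomposition of $\GL_n(\BdR)$ together with the additivity of slopes and norm-degrees under tensoring.

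The main obstacle beyond the setting of Cornut--Peche Irissarry is the simultaneous presence of a non-trivial Frobenius: a destabilising sub-object must be a sub-\emph{isocrystal} rather than merely a sub-$C$-vector space, so the purely linear-algebraic convexity argument does not apply verbatim. The plan is to reduce to the isoclinic case by induction on the Newton polygon: on an isocrystal whose slopes all agree, after base change to an algebraic closure of the residue field the Frobenius acts, up to an automorphism, as a scalar multiple of a power of Frobenius, so sub-isocrystals are controlled by Frobenius-stable $C$-subspaces and the Cornut--Peche Irissarry polygon argument can be applied after averaging over Frobenius orbits. The general case then follows by a devissage along the Dieudonn\'e--Manin slope decomposition, using the additivity of both slopes and degrees in short exact sequences; with the normed-isocrystal statement in hand, the theorem follows by transport of structure through the functor of the first paragraph.
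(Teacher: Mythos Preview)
Your first paragraph is on the right track and matches the paper's reduction: one passes from $(D,\Lambda)$ to a normed isocrystal via the gauge norm and checks that rank, degree, strict subobjects and tensor products are preserved. One caution: the norm must live on $D\otimes_{\breve F}\BdR$, not on $D\otimes_{\breve F}C$. The residue filtration on $D\otimes C$ does \emph{not} compute degrees of sub-isocrystals correctly (the paper makes this point explicitly in Remark~\ref{rem:lattice to filtration}), so if your $\alpha_\Lambda$ is meant to live over $C$ the reduction already fails.

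The real gap is the d\'evissage in your third paragraph. If $(D,\alpha)$ is a semistable normed isocrystal, its Newton-graded pieces $(D_\rho,\alpha|_{D_\rho})$ have no reason to be semistable: the norm need not be compatible with the slope decomposition, so a Newton piece can easily contain a sub-isocrystal of large degree. Thus knowing the isoclinic case does not let you reassemble the general case by ``additivity in short exact sequences''. Moreover, a destabilising sub-isocrystal $D'\subset D_1\otimes D_2$ is not forced to respect the Newton decomposition of the tensor product, so there is no exact sequence to feed into the induction. Your reduction of the isoclinic case to Cornut--Peche Irissarry by ``averaging over Frobenius orbits'' is also not clear: for slope $r/s$ with $s>1$, sub-isocrystals correspond to submodules over a noncommutative division algebra rather than $F$-subspaces, and no rank-one twist kills a non-integral slope.

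The paper takes a different route that avoids any slope induction. Following Cornut, the HN filtration is characterised as the unique minimiser of $f\mapsto |f|^2-2\langle\alpha,f\rangle$ on the CAT(0) space $\mathbf F(D)$ of filtrations by sub-isocrystals. One works with $E=D\oplus D'\oplus(D\otimes D')$, embeds $\mathbf F(D)\times\mathbf F(D')$ as a closed convex subset of $\mathbf F(E)$, and uses the convex projection $p$. The degree pairing splits as $\langle\beta,f\rangle=\langle\beta,f\rangle'-\langle\mathcal F_{\mathrm N},f\rangle$, where $\langle\beta,-\rangle'$ is Cornut's vector-space pairing and $\mathcal F_{\mathrm N}$ is the Newton filtration. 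Cornut's result already gives $\langle\beta,f\rangle'\le\langle\beta,p(f)\rangle'$. For the Newton term, the key observation is that both $\mathcal F_{\mathrm N}$ and its opposite $\mathcal F_{\mathrm N}^*$ lie in the convex subset, so the convex-projection inequality (Lemma~\ref{lem:property-for-convex-projection}) applied to each forces the \emph{equality} $\langle\mathcal F_{\mathrm N},f\rangle=\langle\mathcal F_{\mathrm N},p(f)\rangle$. This single trick handles the Frobenius contribution in one stroke and is what replaces the d\'evissage you propose.
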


When we restrict the functor $\mathcal F_{\rm HN}$ to the subcategory  consisting of objects whose underlying isocrystals are isoclinic of slope $0$, this result is proved by Cornut and Peche Irissarry (\cite{CPI}, compare also \cite{Cor}). To show the desired compatibility with tensor products without any restriction on the slopes, we follow the general approach developed by Cornut in \cite{Cor}. Indeed, generalizing the notion of normed vector spaces considered by Cornut in loc. cit., we introduce the notion of normed isocrystals, so that the isocrystals with lattices are precisely those normed isocrystals equipped with the gauge norm induced form a lattice (cf. Section \ref{Sec_variant}). We show that one can also develop a Harder-Narasimhan formalism for normed isocrystals, and the resulting Harder-Narasimhan filtration is compatible with tensor products (cf. Theorem \ref{thm_comp tensor}).

Based on this result, we are able to define the Harder-Narasimhan stratification on $\mathrm{Gr}_{G,\mu}$ for arbitrary triple $(G, \mu, b)$. Then we study the basic properties of the Harder-Narasimhan stratification and its relations to other stratifications. We show that the Harder-Narasimhan stratification, the Newton stratification on the $\BpdR$-affine Grassmannian, and the Harder-Narasimhan stratification on the flag variety (via Bialynicki-Birula map) coincide on classical points (Theorem \ref{thm_classical}) which generalizes the result of Viehmann (\cite{Vi}) for $b=1$. We give a combinatorial criterion for the non-emptiness of any Harder-Narasimhan stratum (Theorem \ref{thm_nonempty}) which is compatible with the result of Nguyen-Viehmann (\cite{NV}) for $b=1$ and is parallel to the result of Orlik (\cite{Orl}) for the Harder-Narasimhan stratum on flag varieties for $G=\mathrm{GL}_n$. When $\mu$ is minuscule, we also give a dimension formula for a Harder-Narasimhan stratum (Theorem \ref{thm_dimension}) which generalizes the result of Fargues (\cite{Far}) for $G=\mathrm{GL}_n$ and $b=1$.

We briefly describe the structure of this article. In Section 1, we review the preliminaries such as $G$-bundles on the Fargues-Fontaine curve and the $\BpdR$-affine Grassmannian. In Section 2, we show the fact that the Harder-Narasimhan filtration is compatible with tensor products in the category of normed isocrystals, which generalizes a result of Cornut for normed vector spaces. As an application, in Section 3, we define the Harder-Narasimhan stratification on the $\BpdR$-affine Grassmannian for arbitrary triple $(G, \mu, b)$. In Section 4, we compare the Harder-Narasimhan stratification with the Newton stratification and the Harder-Narasimhan stratification on the flag variety. In Section 5, we discuss some basic properties of a Harder-Narasimhan stratum such as non-emptiness and dimension formula.   

\textbf{Acknowledgments.} We would like to thank Christophe Cornut,  Laurent Fargues, David Hansen, Kieu Nieu Nguyen, Sian Nie, Xu Shen, Eva Viehmann for helpful discussions. We also thank Kieu Nieu Nguyen and Eva Viehmann for their comments on the previous version of this work. The first author is partially supported by NSFC grant No.12222104, No.12071135, and the second author is partially supported by NSFC grant No. 12231009.

\section{preliminaries on $G$-bundles on the Fargues-Fontaine curve}

We denote by $\mathbf{Perf}$ the category of perfectoid spaces over $\mathbb F_q$ which is the residue field of the $p$-adic local field $F$. Let $G$ be a connected reductive group over $F$. When $G$ is assumed to be quasi-split, we fix  $A\subset T\subset B\subset G$ defined over $F$, where $A$ is a maximal split torus, $T=Z_G(A)$ is the centralizer of $A$ in $G$, and $B$ is a Borel subgroup. Let $W=W_G=N_G(T)/T$ be the absolute Weyl group of $T$ in $G$ and let $w_0\in W$ be the longest element.

\subsection{$G$-bundles on the Fargues-Fontaine curve}
\subsubsection{Fargues-Fontaine curve}\label{sec_FF curve} We briefly recall the construction of the (relative) Fargues-Fontaine curve (cf. \cite[\S~1]{Far1}, or \cite[\S~II.1]{FS}).
Let $\pi_F$ be a uniformizer of $F$ and let $\mathbb{F}_q$ be the residue field of $F$. For  $S=\mathrm{Spa}(R, R^+)\in \mathbf{Perf}$ a perfectoid affinoid space over $\mathbb F_q$, fix a pseudo-uniformizer $\omega\in R^+\subset R$. Let 
\[\mathcal{Y}_S:=\mathrm{Spa}(\mathbb{A}, \mathbb{A})\backslash V(\pi_F[\omega]),
\]
where 
\[
\mathbb{A}=W_{\mathcal{O}_F}(R^+)=\left\{\sum_{n\geq 0}[x_n]\pi_F^n\mid x_n\in R^+\right\}
\]
is the ramified Witt ring with coefficients in $R^+$ equipped with the $(\pi_F,[\omega])$-adic topology. It's known that $\mathcal Y_S$ is an analytic adic space over $\mathcal O_F$ (\cite[Proposition II.1.1]{FS}). The $q$-th Frobenius automorphism on $R^+$ induces an automorphism $\phi_S$ on $\mathcal Y_S$ over $\mathcal O_F$, and this action is free and totally discontinuous (\cite[Proposition II.1.16]{FS}). The \emph{relative adic Fargues-Fontaine curve}  $\mathcal{X}_S$ is by definition the following quotient 
\[\mathcal{X}_S:=\mathcal{Y}_S/\phi_S^{\mathbb{Z}}.
\]
We also have the \emph{relative schematic Fargues-Fontaine curve} $X_S$ defined as 
\[
X_S:=\mathrm{Proj}\left( \bigoplus_{d=0}^{\infty} H^0(\mathcal{Y}_S, \mathcal{O}_{\mathcal{Y}_S})^{\phi_S=\pi_F^d}\right).
\]
When $S=\mathrm{Spa}(K,K^+)$ with $K$ a perfectiod field, $X_S$ (sometimes also denoted by $X$ or $X_K$ since it does not depend on the choice of $K^+$) is a noetherian scheme of dimension $1$.

\begin{remark} Let $S\in \mathbf{Perf}$ which is not necessarily affinoid. \begin{enumerate}
\item By gluing, the local construction above defines the relative Fargues-Fontaine curves $\mathcal X_S$ and $X_S$ (\cite[Proposition II.1.3 and II.2.7]{FS}). 
\item 
There is a morphism of locally ringed spaces:
\[
(\mathcal{X}_S,\mathcal{O}_{\mathcal X_S})\longrightarrow X_S.
\]
The GAGA functor associated to the morphism of ringed spaces 
above gives an equivalence of categories (\cite{KL}): 
\[\mathrm{Bun}_{X_S}\longrightarrow \mathrm{Bun}_{(\mathcal{X}_S,\mathcal O_{\mathcal X_S})},\]
where $\mathrm{Bun}_{X_S}$ (resp. $\mathrm{Bun}_{(\mathcal{X}_S,\mathcal O_{\mathcal X_S})}$) denotes the category of vector bundles on $X_S$ (resp. on the locally ringed space  $(\mathcal{X}_S,\mathcal O_{\mathcal X_S})$). See \cite[Proposition II.2.7]{FS} for more details. 
\end{enumerate}
\end{remark}

Let  $S=\mathrm{Spa}(R, R^+)\in \mathbf{Perf}/\mathrm{Spd}(F)$ be an object of $\mathbf{Perf}$ lying over the diamond $\mathrm{Spd}(F)$, which corresponds to an untilt $S^\sharp=\mathrm{Spa}(R^{\sharp}, R^{\sharp +})$ over $F$ of $S$. The untilt $S^{\sharp}$ gives rise to a natural closed immersion of adic spaces 
\[
S^{\sharp} \hookrightarrow \mathcal{Y}_S
\]
that presents $S^{\sharp}$ as a closed Cartier divisor in $\mathcal Y_S$ (\cite[Proposition II.3.1]{FS}), with image $V(\xi)\subset \mathcal Y_S$ for some element $\xi\in \mathbb{A}$. Furthermore, locally on $S$ the closed Cartier divisor above can be also  defined by an element in $H^0(\mathcal{X}_S,\cO_{\mathcal{X}_S}(1))=H^0(\mathcal Y_S,\mathcal O_{\mathcal Y_S})^{\phi_S=\pi_F}$ (\cite[Proposition II.2.3]{FS}). Consequently, it  defines a Carter divisor of the relative schematic Fargues-Fontaine curve 
\[
\Spec(R^{\sharp})\hookrightarrow X_{S}.
\]
The Fontaine's de Rham period ring $\BpdR(R^{\sharp})$ is defined to be the completion of $\mathcal{O}_{X_S}$ along this Cartier divisor(cf. \cite[1.33]{Far1}) which is also the $\xi$-adic completion of $\mathbb{A}[\frac{1}{\pi_F}]$. We write    $\BdR(R^{\sharp})= \BpdR(R^{\sharp})[\frac{1}{\xi}]$.

\subsubsection{The Kottwitz set}\label{section_Kottwitz}

Let $B(G)$ be the set of $\sigma$-conjugacy classes of $G(\breve F)$, where $\sigma$ is the Frobenius with respective to $\breve F|F$. For $b\in G(\breve F)$, let $[b]\in B(G)$ be the $\sigma$-conjugacy class of $b$. The Kottwitz set is characterized by two invariants. One is called the Newton map (cf. \cite[\S~4]{Kot1}). For any $b\in G(\breve F)$, we can define a slope morphism:
\[\nu_b: \mathbb{D}_{\breve F}\longrightarrow G_{\breve F}\] whose conjugacy class is defined over $F$, where $\mathbb{D}$ is the pro-torus with characters $X^*(\mathbb{D})=\mathbb{Q}$. This defines the \emph{Newton map}:\begin{eqnarray*}\nu: B(G)&\longrightarrow &\mathcal{N}(G),\\   {[b]}&\longmapsto &[\nu_b]\end{eqnarray*}
where \[\mathcal{N}(G):=[\mathrm{Hom}(\mathbb{D}_{\bar F}, G_{\bar F})/G(\bar F)-\text{conjugacy}]^{\Gamma}.
\]
with $\Gamma=\mathrm{Gal}(\bar F|F)$. An element $[b]$ in $B(G)$ is called \emph{basic} if $\nu_b$ is central. On $\mathcal{N}(G)$, there exists a partial order, called \emph{dominance order}, denoted by $\leq$. When $G$ is quasi-split, there is a natural identification
\[\mathcal{N}(G)\simeq X_*(A)^+_{\Q}.\]

The other invariant is the \emph{Kottwitz map} (cf. \cite[4.9, 7.5]{Kot}):  
\begin{eqnarray*}\kappa=\kappa_G: B(G)&\longrightarrow &\pi_1(G)_{\Gamma},\\   {[b]}&\longmapsto &\kappa([b])\end{eqnarray*}
where $\pi_1(G)$ is the algebraic fundamental group of $G$ and $\pi_1(G)_{\Gamma}$ is its Galois coinvariants.

By \cite[4.13]{Kot}, the induced map:
\begin{eqnarray}\label{eqn_RR} (\nu, \kappa): B(G)\longrightarrow \mathcal{N}(G)\times \pi_1(G)_{\Gamma}\end{eqnarray} is injective.

Let $[b]\in B(G)$ and $\mu\in X_*(T)^+$, we define the \emph{Kottwitz set}  (\cite{Kot})
\[B(G, \mu):=\{[b]\in B(G)\mid [\nu_b]\leq \mu^{\diamond}, \kappa_G(b)=\mu^{\sharp}\},\]
where 
 $\mu^{\diamond}\in \mathcal{N}(G)$ is the Galois average of $\mu$, $\mu^{\sharp}\in \pi_1(G)_{\Gamma}$ is the image of $\mu$ via the natural quotient map $X_{*}(T)\rightarrow \pi_1(G)_{\Gamma}$, and the order $\leq$ on $\mathcal{N}(G)$ is the usual order: $\nu_1\leq \nu_2$ if and only if $\nu_1$ lies in the convex hull of the Weyl orbit of $\nu_2$.

We also need the following \emph{generalized Kottwitz set} (cf. \cite{CFS}).
For $\epsilon\in \pi_1 (G)_\Gamma$ and $\delta\in \mathcal{N}(G)$ we set
\[
B(G,\epsilon,\delta) = \{ [b]\in B(G)\ |\ \kappa_G (b)=\epsilon \text{ and } [\nu_b]\leq \delta\}.
\]

We will need the following well-known fact, which can be deduced easily from a general result of Kottwitz (\cite[3.6]{Kot}).
\begin{lemma}\label{lemma_B(P)} Let $P$ be a parabolic subgroup of $G$ over $F$ with $M$ the Levi component. Then $B(M)\simeq B(P)$.
\end{lemma}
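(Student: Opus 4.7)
The plan is to exploit the Levi decomposition $P = M \ltimes U$, where $U := R_u(P)$ is the unipotent radical of $P$. The $F$-morphisms $i\colon M \hookrightarrow P$ and $\pi\colon P \twoheadrightarrow P/U = M$ satisfy $\pi \circ i = \mathrm{id}_M$, and therefore induce a retraction on Kottwitz sets
\[
B(M) \xrightarrow{i_*} B(P) \xrightarrow{\pi_*} B(M)
\]
whose composite is the identity. This already yields the injectivity of $i_*$, so the whole lemma reduces to proving that $i_*$ is surjective, i.e.\ that every $\sigma$-conjugacy class in $P(\breve F)$ meets the Levi subgroup $M(\breve F)$.

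For this surjectivity I would take an arbitrary $p = mu \in P(\breve F)$, with $m \in M(\breve F)$ and $u \in U(\breve F)$, and look for $v \in U(\breve F)$ such that $v^{-1} p\, \sigma(v) \in M(\breve F)$. Using the fact that $M$ normalizes $U$ and that $U$ is normal in $P$, a direct computation gives
\[
v^{-1} p\, \sigma(v) \;=\; m \cdot \bigl[(m^{-1} v^{-1} m) \cdot u \cdot \sigma(v)\bigr],
\]
with the bracketed factor lying in $U(\breve F)$. Hence $v^{-1} p\, \sigma(v) \in M(\breve F)$ if and only if $v$ solves the Lang--Steinberg-type equation $v \cdot \tau(v)^{-1} = \mathrm{Int}(m)(u)$, where $\tau := \mathrm{Int}(m) \circ \sigma$ is a $\sigma$-semilinear endomorphism of the unipotent group $U(\breve F)$.

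The final step is to produce such a $v$. I would filter $U$ by its descending central series $U = U_0 \supset U_1 \supset \cdots \supset U_n = 1$, whose successive quotients $U_i/U_{i+1}$ are vector groups on which $\tau$ acts by a $\sigma$-semilinear automorphism. On each graded piece the equation becomes an affine $\breve F$-linear equation of the shape $x - \tau(x) = c$, whose solvability is the additive analogue of Hilbert~90 for $\breve F|F$: the surjectivity of $1 - \sigma$ on $\breve F$, which one checks by combining the Artin--Schreier-theoretic surjectivity of $1 - \sigma_q$ on the residue field $\overline{\mathbb F_q}$ with a successive-approximation argument using the completeness of $\breve F$. Lifting the solutions step by step up the filtration then produces the desired $v \in U(\breve F)$. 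The only genuinely substantive ingredient is this Lang--Steinberg step for the unipotent radical, which is precisely what is packaged in the result of Kottwitz invoked in the lemma; all of the rest is formal.
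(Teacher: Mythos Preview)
Your approach is correct and is exactly what the paper's citation of Kottwitz unpacks: the retraction $B(M)\to B(P)\to B(M)$ gives injectivity of $i_*$, and surjectivity is a Lang--Steinberg statement for the unipotent radical solved by induction along a central filtration. One point to tighten: on each graded piece the twisted Frobenius $\tau=\mathrm{Int}(m)\circ\sigma$ acts as $x\mapsto A\sigma(x)$ for some $A\in\mathrm{GL}_n(\breve F)$, so the equation to solve is $x-A\sigma(x)=c$, not merely $x-\sigma(x)=c$; the surjectivity of $1-\phi$ on an arbitrary isocrystal $(\,\breve F^n,A\sigma\,)$ is what is needed here (it follows, for instance, from Dieudonn\'e--Manin semisimplicity, or from the convergence/successive-approximation argument you sketch once one separates the slope-zero and nonzero pieces), and this is the actual content of the Kottwitz input you invoke.
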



\subsubsection{$G$-bundles}\label{subsubsection:G-bundles}

Recall that a \emph{$G$-bundle} on an $F$-scheme $Z$ is a (left) $G$-torsor on $Z$ locally trivial for the \'etale topology. It could also be viewed as an exact tensor functor 
\[
\mathrm{Rep} G \longrightarrow \mathrm{Bun}_{Z},
\]
where $\mathrm{Rep } G$ is the category of rational algebraic representations of $G$ and $\mathrm{Bun}_Z$ denotes the category of vector bundles (of finite rank) over $Z$.

Let $S$ be a  perfectoid space over the residue field $\bar{\mathbb F}_q$ of $\breve F$. For each $b\in G(\breve F)$, we can associate a $G$-bundle 
\[
\mathcal{E}_b
\]
on the relative Fargues-Fontaine curve $X_S$ as follows. As the construction is  functorial, it suffices to consider the case where $S=\mathrm{Spa}(R,R^+)$ is an affinoid perfectoid space over $\bar{\mathbb F}_q$. In particular $X_S=\mathrm{Proj}(\oplus_d B^{\phi_S=\pi_F^d})$, with $B=H^0(\mathcal Y_S,\mathcal O_{\mathcal Y_S})$. 
As an exact tensor functor the $G$-bundle $\mathcal E_b$ is given by 
\[
\mathcal E_b:\mathrm{Rep}G\longrightarrow \mathrm{Bun}_{X_S}, \quad (V,\rho)\mapsto \mathcal E_b(V,\rho),
\]
with $\mathcal E_b(V,\rho)$ the vector bundle on $X_S$ corresponding to the following graded module 
\[
\bigoplus_{d\in \mathbb Z} \left(V\otimes_F B\right)^{\rho(b)\phi_S=\pi_F^d}
\]
over the graded ring $\oplus_d B^ {\phi_S=\pi_F^d}$.
Here we use the same notation $\rho(b)$ to denote the $B$-linear automorphism on $V\otimes_FB=(V\otimes_F\breve F)\otimes_{\breve F}B$ induced by $\rho(b)\in \mathrm{GL}(V\otimes_F \breve F)$. 
The isomorphism class of $\mathcal E_b$ only depends on the $\sigma$-conjugacy class of $b$.

If moreover $S=\mathrm{Spa}(C,C^+)$ with $C$ an algebraically closed perfectoid field containing $\bar{\mathbb F}_q$ and write $X=X_S$, this construction gives a bijection of pointed sets by a fundamental theorem of Fargues (\cite{Far20}):
\begin{eqnarray*}B(G)&\stackrel{\sim}{\longrightarrow}& H^1_{\acute{e}t}(X, G).\\
{[b]}&\longmapsto& \mathcal{E}_b\end{eqnarray*}

\subsubsection{Harder-Narasimhan reduction in the quasi-split case}\label{sec: HN reduction}
In this subsection, suppose $G$ is quasi-split.  

A $G$-bundle $\E$ on $X$ is called \emph{semi-stable} if for any standard parabolic subgroup $P$ of $G$, any reduction $\E_P$ of $\E$ to $P$ and any $\chi\in X^*(P/Z_G)^+$, we have $\mathrm{deg}\chi_*(\E_P)\leq 0$. 

For any $G$-bundle $\E$ on $X$, by \cite[Theorem 1.7]{CFS}, there exist a unique standard parabolic subgroup $P$ of $G$ and a unique reduction $\E_P$ of $\E$ to $P$, such that
the associated $M$-bundle $\E_P\times^P M$ is semi-stable and $P$ is maximal among all the standard parabolic with the previous condition. The reduction $\E_P$ is called the \emph{Harder-Narasimhan reduction} or \emph{canonical reduction} of $\E$. This defines the Harder-Narasimhan polygon of $\E$:
\[\nu_{\E}\in X_*(A)^+_{\Q},\] such that for any $\chi\in X^*(P)$, we have $\langle \chi, \nu_{\E}\rangle= \mathrm{deg}\chi_*(\E_P)$.
By \cite[Proposition 6.6]{Far20}, we have $\nu_{\E_{b}}=-w_0\nu_b$. 

The Harder-Narasimhan reduction has the following characterization.

\begin{proposition}[{\cite[Theorem 4.5.1]{Sch}}]\label{prop_canonical reduction characterization} For $\E$ a $G$-bundle on $X$ equipped with a reduction to the standard parabolic subgroup $Q$ of $G$, consider the vector
\[\begin{split}v: X^*(Q)&\longrightarrow\Z \\ \chi&\longmapsto \mathrm{deg}\chi_*\E_Q  \end{split}\] seen as an element of $X_*(A)_{\Q}$, then \begin{enumerate}
\item One has $v\leq \nu_\E$,
\item if this inequality is an equality, then $Q\subset P$ and the canonical reduction $\E_P\simeq \E_Q\times^Q P$.
\end{enumerate}
\end{proposition}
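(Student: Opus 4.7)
My plan is to establish (1) by a Tannakian reduction to the Harder--Narasimhan theory of vector bundles on the Fargues--Fontaine curve, and to derive (2) from the equality case combined with the uniqueness of the canonical reduction.

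For (1), the dominance inequality $v \leq \nu_{\E}$ in $X_*(A)_\Q$ is equivalent, since $\nu_{\E}$ is dominant, to $\max_{w \in W}\langle \chi, wv\rangle \leq \langle \chi, \nu_{\E}\rangle$ for every dominant character $\chi \in X^*(T)^+$ (after passing to a splitting field of $G$, by Galois-equivariance). Fix such a $\chi$ and let $V_\chi$ be the irreducible $G$-representation with highest weight $\chi$; form the vector bundle $\V_\chi := \E \times^G V_\chi$ on $X$. The canonical reduction $\E_P$ together with the cocharacter $\nu_{\E} \in X_*(Z(M))_\Q$ determines a $P$-stable weight filtration of $V_\chi$ (using that $P$ is the parabolic attached to $\nu_{\E}$), which descends to a filtration of $\V_\chi$ by sub-bundles. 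Decomposing $V_\chi|_M$ into $M$-irreducible summands and invoking the semi-stability of $\E_P \times^P M$, each graded piece is semi-stable, so this filtration is (a refinement of) the Harder--Narasimhan filtration of $\V_\chi$; its top slope equals $\langle \chi, \nu_{\E}\rangle$, the maximal HN slope. The reduction $\E_Q$, together with a lift of $v$ to a rational cocharacter of $Z(M_Q)$, analogously produces a $Q$-stable weight filtration of $V_\chi$, inducing a filtration of $\V_\chi$ by sub-bundles; its top slope is $\max_{\mu}\langle \mu, v\rangle$ with $\mu$ ranging over $T$-weights of $V_\chi$, and since all such $\mu$ lie in the convex hull of the Weyl orbit $W\chi$, this maximum equals $\max_{w}\langle \chi, wv\rangle$. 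As the top slope of any filtration by sub-bundles of a vector bundle on $X$ is bounded above by the bundle's maximal HN slope, we conclude $\max_{w}\langle \chi, wv\rangle \leq \langle \chi, \nu_{\E}\rangle$, yielding (1).

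For (2), assume $v = \nu_{\E}$. Then for every dominant $\chi$ the bound from (1) is an equality, so the top step of the filtration of $\V_\chi$ induced by $\E_Q$ is a semi-stable sub-bundle of slope equal to the maximal HN slope; it must therefore lie inside the maximal semi-stable sub-bundle of $\V_\chi$, which is the top step of the HN filtration induced by $\E_P$. Iterating down through the filtrations (equivalently, applying the argument for a faithful representation), the filtration on $\V_\chi$ coming from $\E_Q$ refines into the one coming from $\E_P$, compatibly for every $\chi$. By Tannakian reconstruction of parabolic reductions from compatible systems of filtrations on associated vector bundles, this forces $Q \subset P$ together with $\E_P \simeq \E_Q \times^Q P$.

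The key technical obstacle is the claim used in the argument for (1) that the weight filtration on $\V_\chi$ arising from the canonical reduction $\E_P$ is indeed (a refinement of) the Harder--Narasimhan filtration of $\V_\chi$, so that the pairing $\langle \chi, \nu_{\E}\rangle$ really realizes the maximal HN slope. This requires transferring the semi-stability of the $M$-bundle $\E_P \times^P M$ to each graded piece of $V_\chi|_M$ via the functorial HN formalism for $\mathrm{GL}_n$-bundles on $X$, and resting on the foundational Fargues--Fontaine theory and the Ramanathan-style semi-stability results recalled in \cite{CFS}. The Tannakian reconstruction step in (2) likewise requires that the isomorphism class of a parabolic reduction of a $G$-bundle is determined by the collection of its induced filtrations on associated vector bundles for a faithful family of representations.
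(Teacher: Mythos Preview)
The paper does not give its own proof of this proposition; it cites \cite[Theorem~4.5.1]{Sch}. Your overall strategy---comparing slopes of sub-bundles built from highest weight representations against the Harder--Narasimhan filtration of the associated vector bundle---is exactly Schieder's, and the paper runs the same argument in the parallel normed-isocrystal setting (proof of Proposition~\ref{prop_HN vector maximum of slope vector}).

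However, your execution of (1) has a real gap. You assert that $v$ induces a $Q$-stable weight filtration on $V_\chi$ with top slope $\max_w\langle\chi,wv\rangle$. But $v\in X_*(Z(M_Q))_{\Q}$ is \emph{not} assumed dominant, and the decreasing filtration of $V_\chi$ by $v$-eigenvalues is stabilized by the parabolic $P_v$ defined by $v$, which shares the Levi $M_Q$ with $Q$ but can have the opposite unipotent radical. For instance, take $G=\mathrm{GL}_2$, $Q=B$ upper triangular, $\E$ trivial, and a $B$-reduction with slope vector $v=(-1,1)$: the top $v$-weight line in the standard representation is the $e_2$-line, which is not $B$-stable, and the inequality $\max_w\langle\chi,wv\rangle\leq\langle\chi,\nu_{\E}\rangle$ you aim for actually fails ($1\not\leq 0$ for $\chi=(1,0)$). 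Relatedly, your claimed ``equivalence'' between $v\leq\nu_{\E}$ and the max-condition is false; only the implication from right to left holds. The correct move, as in Schieder and in the paper's Proposition~\ref{prop_HN vector maximum of slope vector}, is more modest: exhibit the single $Q$-stable subspace $V'_\chi$ spanned by the weight spaces $V_\chi[\mu]$ with $\mu$ in the $M_Q$-root-lattice coset of the highest weight $\chi$; its associated sub-bundle has slope exactly $\langle\chi,v\rangle$, giving $\langle\chi,v\rangle\leq\langle\chi,\nu_{\E}\rangle$ for all dominant $\chi$. Combined with the (omitted but easy) fact that $\nu_{\E}-v\in\Q\Phi^\vee$, since both vectors pair identically with every $\chi\in X^*(G)$, this yields $v\leq\nu_{\E}$. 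Your outline for (2) is sound once (1) is repaired in this way.
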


\subsection{$\BpdR$-affine Grassmannian} \label{section_BdR Grassmannian} We collect some basic properties of the $\BpdR$-affine Grassmannian that we will need in the sequel. The main references for this section are \cite{SW} and \cite{FS}.  We consider the $\BpdR$-affine Grassmannian
\[
\mathrm{Gr}_G=\mathrm{Gr}_{G}^{\BpdR}
\]
as a functor on the category $\mathbf{Perf}/\mathrm{Spd}(F)$ of perfectoid spaces over $\mathbb F_q$ lying above $\mathrm{Spd}(F)$. More precisely, $\mathrm{Gr}_G$ is the \'etale sheafification of the functor taking $S=\Spa(R,R^+)\in \mathbf{Perf}/\mathrm{Spd}(F)$, which corresponds to an untilt $(R^{\sharp},R^{\sharp +})$ of $(R,R^+)$ over $F$,  to the coset space
\[
G(\BdR(R^{\sharp}))/G(\BpdR(R^{\sharp})).
\]
According to \cite[Proposition 19.1.2]{SW}, 
\[
\mathrm{Gr}_G(S)\simeq \left\{(\mathcal E,\iota)\Bigg| { \mathcal E \textrm{ is a }G\textrm{-bundle on }\Spec(\BpdR(R^{\sharp})), \textrm{ and} \atop \iota\textrm{ is a trivialization of }\mathcal E|_{\Spec(\BdR(R^{\sharp}))}} \right\}/\simeq,
\]
and $\mathrm{Gr}_G$ is a small v-sheaf.

\subsubsection{Schubert varieties}\label{subsec:Schubert} Let $\bar F$ be an algebraic closure of $F$. We choose a maximal torus $T\subset G_{\bar F}$ and a Borel subgroup containing $T$. Write $\mathrm{Gr}_{G,\bar F}$ the restriction of $\mathrm{Gr}_G$ to $\mathbf{Perf}/\mathrm{Spd}(\bar F)$. Let $S=\Spa(C,C^+)\ra \mathrm{Spd}(\bar F)$ be a geometric point of $\mathrm{Spd}(\bar F)$, given by an untilt $\Spa(C^{\sharp},C^{\sharp+})$ of $S$ over $\bar F$.

\begin{remark}The period ring $\BpdR(C^{\sharp})$ is a complete discrete valuation ring with residue field $C^{\sharp}$. Moreover as $\bar F\subset C^{\sharp}$, the $F$-algebra $\BpdR(C^{\sharp})$ is also naturally an $\bar F$-algebra. Non-canonically we have $\BpdR(C^{\sharp})\simeq C^{\sharp}[\![\xi]\!]$ with $\xi$ a uniformizer of $\BpdR(C^{\sharp})$.
\end{remark}

By the Cartan decomposition,
\[
G(\BdR(C^{\sharp}))=\coprod_{\mu\in X_*(T)^+}G(\BpdR(C^{\sharp}))\mu(\xi)^{-1}G(\BpdR(C^{\sharp})).
\]
In particular,
\[
\begin{array}{rcl}
\mathrm{Gr}_G(S)&=&G(\BdR(C^{\sharp}))/G(\BpdR(C^{\sharp}))\\ &=&\coprod_{\mu\in X_*(T)^+} G(\BpdR(C^{\sharp}))\mu(\xi)^{-1}G(\BpdR(C^{\sharp}))/G(\BpdR(C^{\sharp})).
\end{array}
\]
Here we have the first equality since $C$ is algebraically closed and thus every $G$-bundle over $\mathrm{Spec}(\BpdR(C^{\sharp}))$ is trivial. 
By abuse of notation, for any geometric conjugacy class $\{\mu\}\in X_*(G)/G(\bar F)$, we denote by
\[
\mathrm{Gr}_{G,\mu}\subset \mathrm{Gr}_{G,\leq \mu}\subset \mathrm{Gr}_{G,\bar F}
\]
the subfunctors of $\mathrm{Gr}_{G,\bar F}$ defined by the condition that a map
\[
S'\longrightarrow \mathrm{Gr}_{G,\bar F}
\]
with $S'\in \mathbf{Perf}/\mathrm{Spd}(\bar F)$ factors through $\mathrm{Gr}_{G,\mu}$, or respectively $\mathrm{Gr}_{G,\leq \mu}$, if and only if for all geometric point $S=\Spa(C,{C}^{+})\ra S'$, the corresponding $S$-valued point of $\mathrm{Gr}_{G,\bar F}$ lies in
\[
G(\BpdR(C^{\sharp}))\mu(\xi)^{-1}G(\BpdR(C^{\sharp}))/G(\BpdR(C^{\sharp})),
\]
or respectively lies in
\[
\coprod_{\mu'\leq \mu }G(\BpdR(C^{\sharp}))\mu'(\xi)^{-1}G(\BpdR(C^{\sharp}))/G(\BpdR(C^{\sharp})).
\]
According to \cite[Proposition 19.2.3]{SW}, $\mathrm{Gr}_{G,\leq \mu}\subset \mathrm{Gr}_{G,\bar F}$ is a closed subfunctor proper over $\mathrm{Spd}(\bar F)$, and $\mathrm{Gr}_{G,\mu}\subset \mathrm{Gr}_{G,\leq \mu}$ is an open subfunctor. Furthermore, the small v-sheaf $\mathrm{Gr}_{G,\leq \mu}$ is a spatial diamond (\cite[Theorem 19.2.4]{SW}).

\subsubsection{Generalized semi-infinite orbits}\label{sec_semi-infinite orbits}
We keep the notations in \S~\ref{subsec:Schubert}. Let $P\subset G_{\bar F}$ be a standard parabolic subgroup. Let $M$ be the Levi quotient of $P$, $M^{ab}$ the maximal torus quotient of $M$ and $N\subset P$ the unipotent radical. We have a natural identification of v-sheaves on $\mathbf{Perf}/\mathrm{Spd}(\bar F)$
\[
\mathrm{Gr}_{M^{ab}}\stackrel{\sim}{\longrightarrow} X_*(M^{ab}), \]
which sends $v(\xi)^{-1}M^{ab}(\BpdR)\in \mathrm{Gr}_{M^{ab}}(C)$ to $v$ for every cocharacter $v\in X_*(M^{ab})$.

\begin{remark}To be consistent with the definition of Schubert varieties, we insert a minus sign here compared with \cite{FS}.
\end{remark}

On the other hand, consider the $\BpdR$-affine Grassmannian $\mathrm{Gr}_P$ associated with the parabolic subgroup $P$.
 For $v\in X_*(M^{ab})$, write
 \[
\mathrm{Gr}_{P}^v\subset \mathrm{Gr}_{P,\bar F}
\]
the open and closed subfunctor of $\mathrm{Gr}_{P,\bar F}$ (over $\mathbf{Perf}/\mathrm{Spd}(\bar F)$) obtained as the preimage of $v\in X_*(M^{ab})$  under the following composed map
\[
\mathrm{Gr}_{P,\bar F}\longrightarrow \mathrm{Gr}_{M,\bar F}\longrightarrow \mathrm{Gr}_{M^{ab},\bar F}\stackrel{\sim}{\longrightarrow} X_*(M^{ab}).
\]

\begin{proposition}[{\cite[Proposition VI 3.1]{FS}} ]\label{prop_GrPv} The map
\[
\mathrm{Gr}_{P}=\coprod_{v\in X_*(M^{ab})}\mathrm{Gr}_{P}^v\longrightarrow \mathrm{Gr}_{G,\bar F}
\]
is bijective on geometric points, and it is a locally closed immersion on each $\mathrm{Gr}_{P}^{v}$. Moreover, the image of the union
\[
\bigcup_{v\leq v'}\mathrm{Gr}_P^{v'}
\]
is closed in $\mathrm{Gr}_{G,\bar F}$. Here $X_*(M^{ab})\subset X_*(T)_{\mathbb Q}$ is equipped with the dominance order, where $v\leq v' $ if $v'-v\in X_*(T)_{\mathbb Q}$ is a sum of positive coroots with $\mathbb Z_{\geq 0}$-coefficients.
\end{proposition}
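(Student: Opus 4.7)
The plan is to reduce the three assertions to an Iwasawa decomposition at each geometric point and then to import the classical picture of semi-infinite orbits into the $\BpdR$-setting via a $\mathbb G_m$-action on Schubert varieties.

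First, I would establish bijectivity on geometric points. Fix a geometric point $S=\mathrm{Spa}(C,C^+)\to \mathrm{Spd}(\bar F)$ with untilt $C^{\sharp}/\bar F$, and set $B^+:=\BpdR(C^{\sharp})$ and $B:=\BdR(C^{\sharp})$, a complete discrete valuation ring and its fraction field. Since $G/P$ is projective over $F$, the valuative criterion of properness applied to $B^+$ gives $(G/P)(B^+)=(G/P)(B)$, so every $g\in G(B)$ can be written as $g=pk$ with $p\in P(B)$ and $k\in G(B^+)$. As $P\hookrightarrow G$ is a closed immersion, $P(B)\cap G(B^+)=P(B^+)$. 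Combining yields the Iwasawa bijection
\[
\mathrm{Gr}_P(S)=P(B)/P(B^+)\stackrel{\sim}{\longrightarrow}G(B)/G(B^+)=\mathrm{Gr}_G(S).
\]
The natural projection $\mathrm{Gr}_P\to \mathrm{Gr}_M\to \mathrm{Gr}_{M^{ab}}\simeq X_*(M^{ab})$ then refines this bijection into the partition $\mathrm{Gr}_P=\coprod_v \mathrm{Gr}_P^v$ into open-and-closed subfunctors.

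For the locally closed immersion and closure statements, I would reduce to the proper spatial diamonds $\mathrm{Gr}_{G,\leq \mu}$ exhausting $\mathrm{Gr}_{G,\bar F}$ as $\mu$ varies (cf.~\cite[Theorem 19.2.4]{SW}). Pick a regular cocharacter $\lambda\in X_*(Z_M)$ dominant with respect to $P$; via $\lambda$ the group $\mathbb G_m$ acts on $\mathrm{Gr}_{G,\leq \mu}$. Its fixed-point v-subsheaf is $\mathrm{Gr}_M\cap \mathrm{Gr}_{G,\leq \mu}$, with connected components indexed by $v\in X_*(M^{ab})$ through the projection to $\mathrm{Gr}_{M^{ab}}$, and the attractor locus for the $v$-component is exactly $\mathrm{Gr}_P^v\cap \mathrm{Gr}_{G,\leq \mu}$. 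A Bialynicki-Birula-type statement for proper spatial diamonds then shows that each attractor is a locally closed v-subsheaf whose closure is the union of attractors for fixed-point components lying in the closure of the $v$-component. By the classical analysis of the unipotent flow---moving by $N\subset P$ changes the $M^{ab}$-invariant by a nonnegative sum of positive coroots---these are exactly the $\mathrm{Gr}_P^{v'}\cap \mathrm{Gr}_{G,\leq \mu}$ with $v\leq v'$, and gluing over $\mu$ yields the global closedness of $\bigcup_{v\leq v'}\mathrm{Gr}_P^{v'}$.

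The main obstacle is the Bialynicki-Birula step in the v-sheaf world: the classical contracting-action arguments on schemes must be replaced by their analogues for spatial diamonds, with fixed-point, attractor and repeller v-subsheaves defined and shown to behave well. A more concrete alternative is to parametrize $\mathrm{Gr}_P^v$ via explicit representatives $n\cdot v(\xi)^{-1}$ with $n\in N(B)$ and verify the locally closed immersion and closure directly using semicontinuity of lattices over $\BpdR$-algebras; this is transparent for minuscule $\mu$ but requires extra care in general.
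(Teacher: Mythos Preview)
The paper does not supply its own proof of this proposition: it is quoted verbatim from \cite[Proposition VI.3.1]{FS} and used as a black box. So there is nothing in the paper to compare your proposal against.

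That said, your sketch is essentially the argument given in the cited source. The bijectivity on geometric points is exactly the Iwasawa decomposition over the complete discretely valued field $\BdR(C^{\sharp})$, obtained from properness of $G/P$ as you describe. For the locally closed immersion and the closure relation, Fargues--Scholze do precisely what you outline: they choose a strictly dominant $\lambda\in X_*(Z_M)$, consider the resulting $\mathbb G_m$-action on $\mathrm{Gr}_{G,\leq\mu}$, and invoke the attractor/repeller formalism (hyperbolic localization) for v-sheaves developed earlier in \cite{FS}. Your identification of the main obstacle---that one needs a Bialynicki--Birula/Braden package in the diamond world rather than for schemes---is exactly right, and this is supplied in \cite[\S IV.6]{FS}. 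Your ``concrete alternative'' via explicit $N(\BdR)$-representatives is workable for minuscule $\mu$ (where it reduces to Bruhat cells in the flag variety) but, as you note, is awkward in general; the $\mathbb G_m$-action route is the cleaner one and is what the reference actually does.
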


We also have a finer version of $\mathrm{Gr}_P^{v}$ obtained as follows. Let $\lambda\in X_*(T)$ be an $M$-dominant cocharacter. We set
\[
\mathrm{Gr}_{P,\lambda}\subset \mathrm{Gr}_{P,\bar F}
\]
the locally closed subfunctor given by the preimage of $\mathrm{Gr}_{M,\lambda}\subset \mathrm{Gr}_M$ through the natural projection
\[
\mathrm{pr}_M:\mathrm{Gr}_P\longrightarrow \mathrm{Gr}_M.
\]
Clearly,
\[
\mathrm{Gr}_{P,\lambda}\subset \mathrm{Gr}_P^{\bar \lambda},
\]
where $\bar \lambda$ denotes the image of $\lambda\in X_*(M)$ in $X_{*}(M^{ab})$. So $\mathrm{Gr}_{P,\lambda}\subset \mathrm{Gr}_{G,\bar F}$ is locally closed. Moreover, for a geometric point $\Spa(C,C^+)\rightarrow \mathrm{Spd}(\bar F)$ of $\mathrm{Spd}(\bar F)$ corresponding to an untilt $(C^{\sharp},C^{\sharp+})$ over $\bar F$, we have
\[
\mathrm{Gr}_{P,\lambda}(C,C^+)=N(\BdR(C^{\sharp}))M(\BpdR(C^{\sharp}))\lambda(t)^{-1}G(\BpdR(C^{\sharp}))/G(\BpdR(C^{\sharp})).
\]
\begin{remark}When the parabolic subgroup $P$ is minimal (i.e. a Borel subgroup), then $\mathrm{Gr}_{P, \lambda}=\mathrm{Gr}_{P}^{\lambda}$ is studied by Viehmann (cf. \cite[\S 2.2]{Vi}, see also \cite[Example VI.3.4]{FS}), and is denoted by $S_{\lambda}$ in loc. cit. It is the analogue of the semi-infinite orbit for $\lambda\in X_*(T)$ of the usual affine Grassmannian.  
\end{remark}

Motivated by \cite{GHKR}, for a given $G$-dominant cocharacter $\mu$ we write $S_{M}(\mu;C,C^+)$ for the set of $M$-dominant cocharacters $\lambda$ for which the intersection
\[
N(\BdR(C^{\sharp}))\lambda^{-1}(t)\cap G(\BpdR(C^{\sharp}))\mu^{-1}(t)  G(\BpdR(C^{\sharp}))\neq\emptyset.
\]
We shall see in the next lemma that the set $S_M(\mu;C,C^+)$ does not depend on the choice of the geometric point $\Spa(C,C^+)\ra \mathrm{Spd}(\bar F)$ and hence there will be no confusion to write $S_M(\mu):=S_M(\mu; C, C^+)$.

\begin{lemma}\label{lemma_reduction type} We keep the notations above. 
\begin{enumerate}
\item The set $S_M(\mu)=S_M(\mu;C,C^+)$ does not depend on the choice of the geometric point of $\mathrm{Spd}(\bar F)$.

\item Let $\lambda\in X_*(T)$ be $M$-dominant and $\mu\in X_*(T)$ be $G$-dominant. Then the following assertions are equivalent:
\begin{enumerate}
\item $
\mathrm{Gr}_{G,\mu}\cap \mathrm{Gr}_{P,\lambda}\neq \emptyset$,
\item $
\mathrm{Gr}_{G,\mu}(C,C^+)\cap \mathrm{Gr}_{P,\lambda}(C,C^+)\neq \emptyset$ for some geometric point $\mathrm{Spa}(C,C^+)\ra \mathrm{Spd}(\bar F)$,
\item $
\mathrm{Gr}_{G,\mu}(C,C^+)\cap \mathrm{Gr}_{P,\lambda}(C,C^+)\neq \emptyset$ for every geometric point $\mathrm{Spa}(C,C^+)\ra \mathrm{Spd}(\bar F)$,
\item $\lambda\in S_M(\mu)$.
\end{enumerate}
\end{enumerate}
\end{lemma}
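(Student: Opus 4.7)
The plan is to reduce both parts to standard facts about the classical affine Grassmannian. Given a geometric point $\Spa(C,C^+)\to\mathrm{Spd}(\bar F)$ with untilt $C^{\sharp}\supset\bar F$, a choice of uniformizer $\xi$ of $\BpdR(C^{\sharp})$ induces an isomorphism $\BpdR(C^{\sharp})\simeq C^{\sharp}[\![\xi]\!]$ and hence, at the level of sets, an identification
\[
\mathrm{Gr}_G(C,C^+) \;\simeq\; G(C^{\sharp}(\!(\xi)\!))/G(C^{\sharp}[\![\xi]\!]),
\]
that is, the $C^{\sharp}$-points of the classical affine Grassmannian $\mathrm{Gr}_G^{\mathrm{cl}}$ of the $\bar F$-group $G_{\bar F}$, which is an ind-scheme of ind-finite type over $\bar F$. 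Under this identification $\mathrm{Gr}_{G,\mu}(C,C^+)$ corresponds to the classical Schubert cell $\mathrm{Gr}_{G,\mu}^{\mathrm{cl}}(C^{\sharp})$, and the subset
\[
N(\BdR(C^{\sharp}))\lambda^{-1}(t)G(\BpdR(C^{\sharp}))/G(\BpdR(C^{\sharp}))
\]
corresponds to the classical semi-infinite orbit $S_\lambda^{\mathrm{cl}}(C^{\sharp}) = N(C^{\sharp}(\!(\xi)\!))\lambda^{-1}(\xi)G(C^{\sharp}[\![\xi]\!])/G(C^{\sharp}[\![\xi]\!])$. Both are cut out by conditions involving only the $\bar F$-defined objects $G$, $N$, $\lambda$, $\mu$.

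For part~(1), by construction $\lambda\in S_M(\mu;C,C^+)$ precisely when $\mathrm{Gr}_{G,\mu}^{\mathrm{cl}}(C^{\sharp})\cap S_\lambda^{\mathrm{cl}}(C^{\sharp})\neq\emptyset$. This intersection is the set of $C^{\sharp}$-points of a locally closed subscheme of the finite-type Schubert variety $\mathrm{Gr}_{G,\le\mu}^{\mathrm{cl}}$ defined over $\bar F$ (a generalized Mirkovi\'c--Vilonen cell). Since a finite-type scheme over $\bar F$ is empty if and only if it has no $\bar F$-points, equivalently no $k$-points for some (any) algebraically closed $k\supset\bar F$, the condition $\lambda\in S_M(\mu;C,C^+)$ depends only on the combinatorial data $(G,P,\mu,\lambda)$, which proves (1) and justifies the notation $S_M(\mu)$.

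For part~(2), I first establish (b)$\Leftrightarrow$(d). A point of $\mathrm{Gr}_{P,\lambda}(C,C^+)$ has the form $nm\cdot\lambda^{-1}(t)\bmod G(\BpdR(C^{\sharp}))$ with $n\in N(\BdR(C^{\sharp}))$ and $m\in M(\BpdR(C^{\sharp}))$. Since $M(\BpdR(C^{\sharp}))\subset G(\BpdR(C^{\sharp}))$ preserves $\mathrm{Gr}_{G,\mu}(C,C^+)$ under left multiplication, and $M$ normalizes $N$ so that $m^{-1}nm\in N(\BdR(C^{\sharp}))$, the intersection $\mathrm{Gr}_{G,\mu}(C,C^+)\cap\mathrm{Gr}_{P,\lambda}(C,C^+)$ is nonempty if and only if $N(\BdR(C^{\sharp}))\lambda^{-1}(t)\cap G(\BpdR(C^{\sharp}))\mu^{-1}(t)G(\BpdR(C^{\sharp}))\neq\emptyset$, i.e.\ if and only if $\lambda\in S_M(\mu)$. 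The equivalence (b)$\Leftrightarrow$(c) is then immediate from (1). Finally (a)$\Leftrightarrow$(b) holds because $\mathrm{Gr}_{G,\mu}\cap\mathrm{Gr}_{P,\lambda}$ is a subfunctor of the spatial diamond $\mathrm{Gr}_{G,\le\mu}$, and such a subfunctor is the empty v-sheaf if and only if it admits no geometric point.

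The main obstacle is the claim that $\mathrm{Gr}_{G,\mu}^{\mathrm{cl}}\cap S_\lambda^{\mathrm{cl}}$ is a locally closed subscheme of finite type over $\bar F$, so that emptiness is genuinely algebraic and hence field-extension-invariant. This is classical Mirkovi\'c--Vilonen theory when $P=B$; the general parabolic case should be reduced to the Borel case by factoring through the projection $\mathrm{Gr}_P\to\mathrm{Gr}_M$ and invoking the corresponding statement inside the Levi $M$.
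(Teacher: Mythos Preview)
Your proposal is correct and follows essentially the same route as the paper: both reduce to the classical affine Grassmannian via a choice of isomorphism $\BpdR(C^{\sharp})\simeq C^{\sharp}[\![\xi]\!]$, and both conclude by observing that $\mathrm{Gr}_{G,\mu}^{\mathrm{cl}}\cap S_\lambda^{\mathrm{cl}}$ is a scheme of finite type over $\bar F$, so that nonemptiness of its $C^{\sharp}$-points is independent of the algebraically closed field $C^{\sharp}\supset\bar F$. The paper in fact only writes out part~(1), asserting that (2) follows; your explicit treatment of (2), in particular the absorption of the $M(\BpdR)$-factor via $m^{-1}nm\in N(\BdR)$, is a welcome clarification.

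Your flagged ``main obstacle'' is not really one: the paper simply asserts that the intersection is a finite-type $\bar F$-scheme, and you can too. The point is that the classical $N$-orbit $S_\lambda^{\mathrm{cl}}$ (for the unipotent radical of any parabolic, not just the Borel) is an ind-locally-closed ind-subscheme of $\mathrm{Gr}_G^{\mathrm{cl}}$, so its intersection with the finite-type Schubert cell $\mathrm{Gr}_{G,\mu}^{\mathrm{cl}}$ is automatically locally closed in a finite-type $\bar F$-scheme, hence itself of finite type. No reduction to the Borel case is needed; the locally closed property for general parabolics is the classical analogue of Proposition~\ref{prop_GrPv}.
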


\begin{proof} It is enough to prove (1).
Recall that non-canonically $\BpdR(C^{\sharp})\simeq C^{\sharp}[\![\xi]\!]$. Write $\mathscr{G}_G$ for the classical affine Grassmannian, that is, the \'etale sheafification of the functor on the category $\mathbf{Aff}_{\bar F}$ of affine $\bar F$-schemes sending $\mathrm{Spec}(R)\in \mathbf{Aff}_{\bar F}$ to the coset
\[
G(R(\!(\xi)\!))/G(R[\![\xi]\!]).
\]
Let $\mathscr{G}_{G,\mu}\subset \mathscr G_G$ be the corresponding Schubert variety attached to $\mu$ (with our convention on the minus sign), and $\mathscr{S}_{\lambda}\subset \mathscr G_G$ be the orbit of the $\bar F$-point 
\[\lambda(\xi)^{-1}G(\bar F[\![\xi]\!])\in \mathscr G_G(\bar F) 
\]
under the action of the ind-group scheme 
\[
\mathscr{N}:\mathbf{Aff}_{\bar F}\longrightarrow \mathbf{Grps}, \quad \mathrm{Spec}(R)\longmapsto N(R(\!(\xi)\!))
\]
by left-multiplication on $\mathscr G_G$. Using the (non-canonical) identification $\BpdR\simeq C^{\sharp}[\![\xi]\!]$, we have $\lambda\in S_{M}(\mu;C,C^+)$ if and only if 
\[
N(C^{\sharp}[\![\xi]\!])\lambda^{-1}(\xi)\cap G(C^{\sharp}[\![\xi]\!])\mu^{-1}(\xi)  G(C^{\sharp}[\![\xi]\!])\neq\emptyset,\]
or equivalently $(\mathscr G_{G,\mu}\cap \mathscr S_{\lambda})(C^{\sharp})\neq \emptyset$. But  $\mathscr G_{G,\mu}\cap \mathscr S_{\lambda}$ is an $\bar F$-scheme of finite type, so it has a point with values in the algebraically closed field $C^{\sharp}$ if and only if $\mathscr{G}_{G,\mu}\cap \mathscr S_{\lambda}\neq \emptyset$. This shows that the set $S_{M}(\mu;C,C^+)$ is actually independent on the choice of the geometric point $\mathrm{Spa}(C,C^+)\ra \mathrm{Spd}(\bar F)$.  
\end{proof}


For the description of $S_M(\mu)$, we have an analogous result as \cite[Lemma 5.4.1]{GHKR}.

\begin{lemma} Keep the notations above. 
\begin{enumerate}
\item There are inclusions of finite sets 
\[\Sigma(\mu)_{M-max}\subseteq S_{M}(\mu)\subseteq \Sigma (\mu)_{M-dom},
\]
where $\Sigma(\mu)_{M-dom}$ denotes the set of $M$-dominant cocharacters $\lambda\in X_*(T)$  such that $(\lambda)_{dom}\leq \mu$, and  $\Sigma(\mu)_{M-max}$ denotes the set of elements in  $\Sigma(\mu)_{M-dom}$ that are maximal with respect to the partial order $\leq_M$. In particular, when $\mu$ is minuscule or $M=T$ is the maximal torus, the inclusions become equalities.

\item  For any $\lambda\in S_M(\mu)$, $\kappa(\lambda)=\kappa(\mu)$ in $\pi_1(G)_{\Gamma}$.
\end{enumerate}
\end{lemma}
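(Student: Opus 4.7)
The plan is to transport the question to the classical affine Grassmannian over $\bar F$ and then apply the loop-group analogue, cf.\ \cite[Lemma~5.4.1]{GHKR}. By the proof of the previous lemma, after choosing a non-canonical isomorphism $\BpdR(C^{\sharp})\simeq C^{\sharp}[\![\xi]\!]$, the condition $\lambda\in S_M(\mu)$ is equivalent to $\mathscr{G}_{G,\mu}\cap \mathscr{S}_\lambda\neq \emptyset$ in the classical affine Grassmannian $\mathscr{G}_G$, where $\mathscr{S}_\lambda$ is the $\mathscr{N}$-orbit of $\lambda(\xi)^{-1}G(\bar F[\![\xi]\!])$. All subsequent arguments take place in $\mathscr{G}_G$.

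For the upper bound $S_M(\mu)\subseteq \Sigma(\mu)_{M\text{-dom}}$, pick $g\in \mathscr{S}_\lambda(\bar F)\cap \mathscr{G}_{G,\mu}(\bar F)$: projecting to $\mathscr{G}_M$ via the retraction determined by $P$ and comparing with the Cartan decomposition of $G$ shows that $\lambda$ is a weight of the highest weight representation $V_\mu$, whence $(\lambda)_{\text{dom}}\leq \mu$. For the lower bound $\Sigma(\mu)_{M\text{-max}}\subseteq S_M(\mu)$, suppose $\lambda$ is $\leq_M$-maximal in $\Sigma(\mu)_{M\text{-dom}}$. Using Proposition~\ref{prop_GrPv} together with the closure relations between the generalized semi-infinite orbits $\mathrm{Gr}_P^{v'}$, one shows that if $\mathscr{G}_{G,\mu}\cap \mathscr{S}_\lambda$ were empty, the stratification of $\mathrm{Gr}_{G,\leq \mu}$ by the $\mathrm{Gr}_{P,\lambda'}$'s would force some $\lambda''\in \Sigma(\mu)_{M\text{-dom}}$ strictly larger than $\lambda$ with respect to $\leq_M$ to lie in $S_M(\mu)$, contradicting maximality of $\lambda$.

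The equalities in the minuscule and $M=T$ cases follow from simple combinatorics: when $\mu$ is minuscule, every $\lambda\in\Sigma(\mu)_{M\text{-dom}}$ satisfies $(\lambda)_{\text{dom}}=\mu$, and distinct $M$-dominant elements of $W\mu$ are pairwise $\leq_M$-incomparable; when $M=T$, the order $\leq_M$ degenerates to equality. For part (2), both $\mathscr{G}_{G,\mu}$ and $\mathscr{S}_\lambda$ lie in a single connected component of $\mathscr{G}_G$; since $\pi_0(\mathscr{G}_G)\simeq \pi_1(G)_\Gamma$ via $\kappa$, with $\mathscr{G}_{G,\mu}$ in the component labelled by $\kappa(\mu)$ and $\mathscr{S}_\lambda$ in $\kappa(\lambda)$, the non-emptiness of the intersection forces $\kappa(\mu)=\kappa(\lambda)$.

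The main obstacle is the lower bound in (1): producing an actual point of $\mathscr{G}_{G,\mu}\cap \mathscr{S}_\lambda$ when $\lambda$ is $\leq_M$-maximal. This relies on the precise combinatorics of closures of generalized semi-infinite orbits inside Schubert varieties, where the MV-theoretic input of \cite{GHKR} is crucial; by contrast, the upper bound, the special cases, and part (2) are essentially formal once the reduction to the classical affine Grassmannian has been carried out.
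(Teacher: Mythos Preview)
Your overall strategy---transport to the classical affine Grassmannian via the isomorphism $\BpdR(C^{\sharp})\simeq C^{\sharp}[\![\xi]\!]$ and invoke \cite[Lemma~5.4.1]{GHKR}---is exactly what the paper does. The paper's organization is slightly different and, I think, more honest about where the content lies: it first treats the case $M=T$ directly, where the statement becomes the equality $S_T(\mu)=\Sigma(\mu)$, and proves this using the Mirkovi\'c--Vilonen description of semi-infinite orbits (\cite[Theorem~3.2]{MV}, \cite[Lemma~5.3.7, Theorem~5.3.9]{Zhu}); only then does it reduce the general Levi $M$ to this base case by the argument of \cite{GHKR}. In particular, the $M=T$ equality is the \emph{input}, not a corollary of the general inclusions as your write-up suggests.

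Your sketch of the lower bound $\Sigma(\mu)_{M\text{-}max}\subseteq S_M(\mu)$ via closure relations and contradiction is not complete as written. From Proposition~\ref{prop_GrPv} one gets that if $[\lambda^{-1}]\in\overline{\mathrm{Gr}_{G,\mu}\cap\mathrm{Gr}_{P,\lambda'}}$ for some $\lambda'\in S_M(\mu)$ then $\bar\lambda\leq\bar\lambda'$ in $X_*(M^{ab})$, but this does \emph{not} immediately give $\lambda\leq_M\lambda'$ (the orders are on different lattices and use different cones), so the contradiction with $\leq_M$-maximality of $\lambda$ does not follow. The actual mechanism, as in \cite{GHKR}, passes through the Borel case and the MV description---which you do acknowledge in your final paragraph. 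Since both you and the paper ultimately defer the substantive step to \cite{GHKR}, the difference is organizational rather than mathematical; but be aware that the closure-relation heuristic alone does not close the argument.
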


\begin{proof} The assertion (2) is obvious.

For (1), let us start with the case where $M=T$ is the maximal torus (and thus $N=U$ is the unipotent radical of the Borel subgroup $B\subset G$), $\Sigma(\mu)_{M-max}=S_M(\mu)=\Sigma(\mu)$. In this case, our lemma says that for $\lambda\in X_*(T)$
\[
U(\BdR(C^{\sharp}))\lambda^{-1}(\xi)\cap G(\BpdR(C^{\sharp}))\mu^{-1}(\xi)G(\BpdR(C^{\sharp}))\neq \emptyset 
\]
if and only if $(\lambda)_{dom}\leq \mu$. Using a non-canonical identification $\BpdR(C^{\sharp})\simeq C^{\sharp}[\![\xi]\!]$, the desired assertion follows from 
the geometry of semi-infinite orbits of the usual affine Grassmannian for the group $G$ (cf. \cite[3.2 Theorem]{MV}, or  \cite[Lemma 5.3.7 and Theorem 5.3.9]{Zhu}). The proof for the general standard Levi $M\subset G$ is then the same as that of \cite[Lemma 5.4.1]{GHKR}.
\end{proof}

\begin{remark}\begin{enumerate}\item In \cite[Proposition 9.4]{Ngu}, Nguyen also studies $S_M(\mu)$ when $G=\mathrm{GL}_n$ using categorical local Langlands correspondences. 

\item \label{remark_GrPv finite}Fix $\mu$, there are only finitely many $v\in X_*(M^{ab})$ such that $\mathrm{Gr}_P^v\cap \mathrm{Gr}_{G, \mu}\neq \emptyset$. Indeed, \[\coprod_{\lambda\in X_*(T)\atop \lambda=v\text{ in }\pi_1(M^{ab})}\mathrm{Gr}_{P, \lambda}\longrightarrow \mathrm{Gr}_P^v\] induces a bijection on geometric points. By Lemma \ref{lemma_reduction type},  $\mathrm{Gr}_P^v\cap \mathrm{Gr}_{G, \mu}\neq \emptyset$ if and only if $v$ is contained in the image of $S_M(\mu)$ via $\theta: X_*(T)\rightarrow X_*(M^{ab})$, if and only if $v\in \theta(W\mu)$ by the previous lemma.
\end{enumerate}
\end{remark}

\subsubsection{$\BpdR$-affine Grassmannian and modifications of $G$-bundles} Let $S$ be a perfectoid space over $\mathbb F_q$ lying above $\mathrm{Spd}(\breve F)$, given by an untilt $S^{\sharp}$ of $S$ over $\breve F$. Let $b\in G(\breve F)$. Consider the associated $G$-bundle $\mathcal E_b$ on the relative Fargues-Fontaine curve $X_S$ (whose construction is recalled in \S~\ref{subsubsection:G-bundles}). An element $x\in \mathrm{Gr}_G(S)$ gives rise to a modification
\[
\mathcal E_{b,x}
\]
of $\mathcal E_b$ \`a la Beauville-Laszlo whose construction is recalled briefly as follows. Assume without loss of generality that $S=\Spa(R,R^+)$ is a perfectoid affinoid space, $S^{\sharp}=\Spa(R^{\sharp}, R^{\sharp +})$. 
Let $x\in \mathrm{Gr}_{G}(S)$ correspond to the $G$-bundle $\mathcal F$ on $\BpdR(R^{\sharp})$ together with a trivialization of $G$-bundles
\[
\iota: G|_{\Spec(\BdR(R^{\sharp}))}\stackrel{\sim}{\longrightarrow}\mathcal F|_{\Spec(\BdR(R^{\sharp}))}
.\]
Recall that the completion of $X_{S}$ along  the  Cartier divisor $\Spec(R^{\sharp})\hookrightarrow X_{S}$ is given by the de Rham period ring $\BpdR(R^{\sharp})$ (cf. \cite[1.33]{Far1}). Moreover, the $G$-bundle $\mathcal E_{b}$ is canonically trivialized on $X_{S}\setminus \Spec(R^{\sharp})$. So by a theorem of Beauville-Laszlo (cf. \cite{BL}), we can glue $\mathcal E_{b}$ and the $G$-bundle $\mathcal F$ along the trivialization $\iota$. In this way we obtain the  $G$-bundle $\mathcal E_{b,x}$ on $X_{S}$.

\begin{proposition}
Assume $S=\Spa(C,C^+)$. Let $x\in \mathrm{Gr}_{G,\mu}(S)$, and Write $\mathcal E_{b,x}=\mathcal{E}_{b'}$, with $[b']\in B(G)$. Then $\kappa(b')=\kappa(b)-\mu^{\#}\in \pi_{1}(G)_{\Gamma}$.
\end{proposition}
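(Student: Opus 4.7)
The plan is a two-step reduction: first to the abelianization $G^{ab} = G/G^{\mathrm{der}}$, then to the case of $\mathbb{G}_m$ where a direct computation on the Fargues–Fontaine curve settles the matter.

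For the first step, I would exploit the functoriality of everything in sight under the projection $q: G \twoheadrightarrow G^{ab} =: T$. Since $\pi_1(G) = X_*(T_G)/\mathbb{Z}\Phi^\vee$ kills the coroots, the map $q$ induces a canonical isomorphism $\pi_1(G)_\Gamma \simeq X_*(T)_\Gamma = \pi_1(T)_\Gamma$, under which $\kappa_G(b) \mapsto \kappa_T(q(b))$ and $\mu^{\#} \mapsto q(\mu)^{\#} \in X_*(T)_\Gamma$. The modification functor $\mathcal E_b \leadsto \mathcal E_{b,x}$ is built via Beauville–Laszlo gluing, hence is functorial in $G$: push-out through $q$ gives an identification
\[
\mathcal E_{b,x}\times^{G}T \;\simeq\; \mathcal E_{q(b), q_*(x)},
\]
where $q_*(x) \in \mathrm{Gr}_{T, q(\mu)}(S)$ is the image of $x$. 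Hence if $\mathcal E_{b,x}\simeq \mathcal E_{b'}$, then $\mathcal E_{q(b')}\simeq \mathcal E_{q(b), q_*(x)}$, and it suffices to prove the proposition for the torus $T$ and the cocharacter $q(\mu)$.

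For the second step, I would use that on tori the Kottwitz map $\kappa_T: B(T) \stackrel{\sim}{\to} X_*(T)_\Gamma$ is a bijection by Kottwitz, so the isomorphism class of a $T$-bundle on $X_S$ is determined by its Kottwitz invariant. To identify that invariant with $\kappa_T(b) - \mu$ it suffices, by functoriality in $T$, to treat the case $T = \mathrm{Res}_{F'/F}\mathbb{G}_m$ for finite unramified $F'/F$, which in turn reduces by restriction of scalars to $T = \mathbb{G}_m$ over $F'$. In the $\mathbb{G}_m$-case one computes explicitly: $\mathcal E_b \simeq \mathcal O(-v_F(b))$ on $X_S$, and the modification at the Cartier divisor $\mathrm{Spec}(C^\sharp) \hookrightarrow X_S$ by the lattice $\mu(\xi)^{-1} B_{\mathrm{dR}}^+ \supset B_{\mathrm{dR}}^+$ (with $\mu \in \mathbb Z$) enlarges the global sections by a length-$\mu$ module, giving a line bundle of degree $\mu - v_F(b)$. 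Since the degree is $-\kappa_{\mathbb{G}_m}$, this reads $\kappa(b') = v_F(b) - \mu = \kappa(b) - \mu^{\#}$, as required.

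The main obstacle I expect is the bookkeeping of signs in the two normalisations in play: the Cartan decomposition in \S\ref{subsec:Schubert} uses $\mu(\xi)^{-1}$ (not $\mu(\xi)$), and the dictionary between $b \in G(\breve F)$ and $\mathcal E_b$ via the functor $V \mapsto (V\otimes_F B)^{\rho(b)\phi = \pi_F^\bullet}$ likewise introduces an inversion relative to the naive degree. I would carefully pin down these conventions by checking the case $G = \mathbb{G}_m$, $b=1$, $\mu = 1 \in X_*(\mathbb{G}_m)$, where $\mathcal E_{b,x}$ must be $\mathcal O(1) = \mathcal E_{\pi_F^{-1}}$ and the formula reads $-1 = 0 - 1$. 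Once the sign is fixed at this one baseline, the general torus case follows by linearity in $X_*(T)$ and $\Gamma$-equivariance, and Step 1 propagates it to arbitrary $G$.
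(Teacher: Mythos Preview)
Your overall strategy---reduce to a torus and then to $\mathbb G_m$, where one computes by hand---is exactly what the paper does (invoking \cite[Lemma 3.5.5]{CS}), and your $\mathbb G_m$ computation and sign-check are fine.

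There is, however, a genuine gap in Step 1. You assert that the projection $q:G\twoheadrightarrow G^{ab}$ induces an isomorphism $\pi_1(G)_\Gamma \simeq \pi_1(G^{ab})_\Gamma$. This is false unless $G^{\mathrm{der}}$ is simply connected: in general the kernel of $X_*(T_G)\to X_*(G^{ab})$ is $X_*(T_G\cap G^{\mathrm{der}})$, which strictly contains the coroot lattice $\mathbb Z\Phi^\vee$ whenever $\pi_1(G^{\mathrm{der}})\neq 0$. For instance $G=\mathrm{PGL}_n$ has $G^{ab}=1$ but $\pi_1(G)=\mathbb Z/n$. So pushing the identity $\kappa(b')=\kappa(b)-\mu^\sharp$ forward to $G^{ab}$ loses information and does not suffice to prove it in $\pi_1(G)_\Gamma$.

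Two standard repairs are available. One is to pass to a $z$-extension $\tilde G\twoheadrightarrow G$ with $\tilde G^{\mathrm{der}}$ simply connected, lift $b$, $\mu$, and $x$ to $\tilde G$ (possible since $B(\tilde G)\to B(G)$ and $\mathrm{Gr}_{\tilde G}\to\mathrm{Gr}_G$ are surjective on geometric points), and then your reduction to $\tilde G^{ab}$ is legitimate. The other, which is closer to what \cite{CS} actually does, is to go in the opposite functorial direction: use that $\kappa_b:|\mathrm{Gr}_{G,\mu}|\to\pi_1(G)_\Gamma$ is locally constant (hence constant on the connected $\mathrm{Gr}_{G,\mu}$), so it suffices to evaluate at the single point $x_0=\mu(\xi)^{-1}$; after arranging that $b$ lies in a maximal $F$-torus $T_G\subset G$ (always possible up to $\sigma$-conjugacy), both $b$ and $x_0$ come from $T_G$, and functoriality along $T_G\hookrightarrow G$---where $\pi_1(T_G)_\Gamma\to\pi_1(G)_\Gamma$ is \emph{surjective}, which is the direction you need---reduces everything to the torus case. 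Either route then feeds into your Step~2 without change.
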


\begin{proof}The $b=1$ case is proved in \cite[Lemma 3.5.5]{CS}. For general $b$, the proof in loc. cit. also shows that we may reduce to the case $G=\mathbb{G}_m$ which can be done by explicit computation.
\end{proof}

Recall also the following useful fact.

\begin{proposition}[{\cite[Lemma 2.6]{CFS}, \cite[Lemma 3.10]{Vi}}] Assume $G$ quasi-split. Let $M\subset G$ be a standard Levi subgroup.
Suppose that $(b_M,g)$ is a reduction of $b$ to $M$: so $b_M\in M(\breve F)$ and $g\in G(\breve F)$ such that $b=gb_M\sigma(g)^{-1}$. For a geometric point $x\in \mathrm{Gr}_G(C,C^+)$, also viewed as an element of $\mathrm{Gr}_P(C,C^+)$ via the bijection
\[
\mathrm{Gr}_P(C,C^+)\stackrel{\sim}{\longrightarrow}\mathrm{Gr}_G(C,C^+)
\]
given by the Iwasawa decomposition, we have a natural isomorphism
\[
(\E_{b, g\cdot x})_P\times^P M\simeq (\E_{b_M,x})_P\times^PM \simeq \E_{b_M, \mathrm{pr}_M(x)},
\] where $\mathrm{pr}_M: \mathrm{Gr}_P\rightarrow \mathrm{Gr}_M$ is the natural projection. 
\end{proposition}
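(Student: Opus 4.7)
The strategy is to verify the two displayed isomorphisms separately by tracking the Beauville--Laszlo construction of modifications recalled just above. The rightmost isomorphism $(\E_{b_M, x})_P \times^P M \simeq \E_{b_M, \mathrm{pr}_M(x)}$ is pure functoriality: by construction, the $P$-bundle $(\E_{b_M, x})_P$ is obtained by gluing the push-out $P$-bundle $\E_{b_M}\times^M P$ (on $X_S$ away from the Cartier divisor corresponding to the untilt) with the trivial $P$-bundle on $\Spec(\BpdR(C^\sharp))$, using the trivialization over $\Spec(\BdR(C^\sharp))$ prescribed by the $\mathrm{Gr}_P$-representative of $x$. Push-out along the projection $P \twoheadrightarrow M$ commutes with Beauville--Laszlo gluing, and by definition of $\mathrm{pr}_M$ the resulting $M$-modification datum is $\mathrm{pr}_M(x) \in \mathrm{Gr}_M$, which yields the desired isomorphism.

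For the first isomorphism, the starting point is that the identity $b = g b_M \sigma(g)^{-1}$ produces a canonical isomorphism of $G$-bundles $\Phi_g: \E_{b_M}\times^M G \iso \E_b$ on $X_S$. Concretely, for each representation $(V,\rho) \in \Rep G$, left multiplication by $\rho(g)$ intertwines the Frobenius structures $\rho(b_M)\ph_S$ and $\rho(b)\ph_S$ used to define $\E_{b_M}(V)$ and $\E_b(V)$ in \S\ref{subsubsection:G-bundles}, and thus descends to $\Phi_g$. Under $\Phi_g$ the Beauville--Laszlo modification of $\E_{b_M}\times^M G$ by the datum $x \in \mathrm{Gr}_G(C,C^+)$ is carried, via the inclusion $G(\breve F) \hookrightarrow G(\BdR(C^\sharp))$ and the definition of the $G(\breve F)$-action on $\mathrm{Gr}_G$ by left multiplication, to the modification of $\E_b$ by $g\cdot x$. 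Hence $\Phi_g$ extends to an isomorphism of modified $G$-bundles $(\E_{b_M, x})_P \times^P G \iso \E_{b, g\cdot x}$.

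It remains to show that this extended isomorphism identifies the tautological $P$-reduction of the LHS (coming from $b_M \in M \subset P$ and $x \in \mathrm{Gr}_P$) with the reduction $(\E_{b, g\cdot x})_P$ on the RHS. Via the Iwasawa decomposition $G(\BdR(C^\sharp)) = P(\BdR(C^\sharp))\cdot G(\BpdR(C^\sharp))$, this reduces to the following calculation: if $x$ is represented by $p \in P(\BdR(C^\sharp))$ and one writes $gp = p'k$ with $p' \in P(\BdR(C^\sharp))$ and $k \in G(\BpdR(C^\sharp))$, then $g\cdot x$ is represented in $\mathrm{Gr}_P$ by $p'$, and the $P$-reduction of $\E_{b, g\cdot x}$ transported through $\Phi_g$ is precisely the one determined by $p'$. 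Once this is verified, applying the push-out $\times^P M$ to the resulting $P$-isomorphism $(\E_{b_M, x})_P \simeq (\E_{b, g\cdot x})_P$ gives the first claimed isomorphism. The main obstacle will be this last compatibility with $P$-reductions, where the interaction between $g \in G(\breve F)$ (which need not lie in $P(\breve F)$) and the Iwasawa decomposition of $G(\BdR(C^\sharp))$ must be tracked carefully; the rest of the argument reduces to functoriality of Beauville--Laszlo gluing.
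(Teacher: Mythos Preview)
Your argument for the second isomorphism (push-out along $P\twoheadrightarrow M$ commutes with Beauville--Laszlo gluing) and your construction of the $G$-bundle isomorphism $\Phi_g:\E_{b_M,x}\iso\E_{b,g\cdot x}$ are exactly what the paper does. The difference lies in how you treat the first isomorphism. In the paper, the $P$-reduction $(\E_{b,g\cdot x})_P$ is \emph{defined} to be the image under $\Phi_g$ of the tautological $P$-reduction $(\E_{b_M,x})_P=\E_{b_M,x}^P$; so the first isomorphism is essentially a definition, not a compatibility to be checked. There is no independent intrinsic $P$-reduction of $\E_{b,g\cdot x}$ to match against: since $b\notin P(\breve F)$ in general, one cannot produce a $P$-structure on $\E_{b,g\cdot x}$ without invoking the reduction datum $(b_M,g)$, and invoking it amounts precisely to transporting via $\Phi_g$.

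Consequently, the ``main obstacle'' you isolate---tracking the Iwasawa decomposition $gp=p'k$ and verifying that the transported reduction agrees with the one coming from $p'$---is not present in the paper's setup. Your proposed verification is not wrong, but it is solving a problem that has been defined away. The paper's proof is therefore just two sentences: $g$ gives the $G$-isomorphism and hence transports the $P$-reduction; functoriality of push-out gives the second isomorphism.
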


\begin{proof}Indeed, \cite[Lemma 2.6]{CFS}  and \cite[Lemma 3.10]{Vi}  deals with the case $b$ basic. Their proof could be adapted for general $b$ as follows. 
Since $b=gb_M\sigma(g)^{-1}$, the element $g$ gives an isomorphism of $G$-bundles 
\[
\mathcal E_{b_M,x}\stackrel{\sim}{\longrightarrow}\mathcal E_{b,g\cdot x}. 
\]
On the other hand, as $b_M\in M(\breve F)$ the $G$-bundle $\mathcal E_{b_M,x}$ has a natural reduction to the $P$-bundle $(\mathcal E_{b_M,x})_P=\mathcal E_{b_M,x}^P$ where we view $x\in \mathrm{Gr}_{G}(C,C^+)$ as an element of $\mathrm{Gr}_{P}(C,C^+)$ using the Iwasawa decomposition, yielding a reduction $(\mathcal E_{b,g\cdot x})_P$ to $P$ of $\mathcal E_{b,g\cdot x}$. By functoriality $\mathcal E_{b_M,x}^P\times^PM\simeq \mathcal E_{b_M,\mathrm{pr}_M(x)}$, so we get the required isomorphisms  
\[
(\E_{b, g\cdot x})_P\times^P M\simeq (\E_{b_M,x})_P\times^PM \simeq \E_{b_M, \mathrm{pr}_M(x)}.
\]
\end{proof}

\section{Harder-Narasimhan filtration of normed isocrystals}

Let $L$ be a field extension of $\breve F$. Recall that a filtered isocrystal over $L/\breve F$ consists of an isocrystal $D$ over $\breve F|F$ together with a separated exhaustive decreasing filtration by $L$-subspaces of $D\otimes_{\breve F}L$. When $L/\breve F$ is of finite degree, Fontaine introduced the notion of weak admissibility for filtered isocrystals, and together with Colmez, they showed in \cite{CF} that the latter can be used to characterize those filtered isocrystals coming from crystalline representations. Recently, building on their discovery of the fundamental curve (that is, the Fargues-Fontaine curve), Fargues and Fontaine gave an elegant geometric proof of this classical result in \cite[\S~10.5.3]{FF}: a key observation is that, as $L/\breve F$ is finite, a filtration on $D\otimes_{\breve F}L$ corresponds naturally to a $\mathrm{Gal}(\bar L/L)$-invariant $\BpdR$-lattice inside $D\otimes_{\breve F}\BdR$, hence one can apply the powerful theory of modifications of vector bundles on Fargues-Fontaine curves. Here $\BdR$ and $\BpdR$ are the de Rham period rings corresponding to $C$, the $p$-adic completion of $\bar L$. However, such a correspondence between general filtrations and lattices does not exist in general. Hence, this will cause some trouble once we are working with filtered isocrystals in the geometric setting.

To overcome the difficulty mentioned above, a natural idea is that, instead of considering filtrations on $D\otimes_{\breve F}C$, we should work directly with the $\BpdR$-lattices inside $D\otimes_{\breve F}\BdR$. When $D$ is semi-stable of slope $0$, this point of view has been already adopted in the recent work of Cornut-Peche Irissarry (\cite{CPI}), Viehmann (\cite{Vi}), Nguyen-Viehmann (\cite{NV}) and Shen (\cite{Sh}). In this work, we develop a theory of isocrystals equipped with a $\BpdR$-lattice without restriction on the slope of the isocrystals. In particular, there exists a natural Harder-Narasimhan formalism for such objects. As the main result of this section, we show that the corresponding Harder-Narasimhan filtration is compatible with tensor products (Theorem  \ref{thm_compatible tensor}). Later on we will apply this crucial result to construct the Harder-Narasimhan stratification on the $\BpdR$-affine Grassmannian for a general reductive group.

\subsection{The category $\mathbf{NIsoc}_{\breve F|F}^{K}$} \label{sec:NIsoc}

Let $F$ be a $p$-adic local field, $\breve{F}$ the completion of a maximal unramified extension of $F$, with $\sigma$ the Frobenius $F$-automorphism of $\breve{F}$. Let $K$ be a henselian discrete valuation field, such that its ring of integers $\cO_K$ contains $\breve F$. Let $\pi\in \cO_K$ be a uniformizer. Recall that a norm $\alpha$ on a finite dimensional $K$-vector space $\mathcal V$ is called \emph{splittable} if there exists a $K$-basis $\underline{e}=(e_1,\ldots,e_r)$ of $\mathcal V$ such that
\[
\alpha(v)=\max\{|\lambda_i|\alpha(e_i)\}, \quad \textrm{for all }v=\sum\lambda_ie_i\in \mathcal V.
\]
In this case we say that $\underline{e}$ is an \emph{orthogonal basis} of $(\mathcal V,\alpha)$.

\begin{remark}
In \cite[\S~5.2]{Cor}, Cornut considers normed vector spaces over a  general henselian non-archimedean field. Here for simplicity, we only consider those henselian non-archimedean fields which are discretely valued. In this setting, if $K$ is moreover complete, by \cite[Proposition 1.5 (i)]{BT}, every norm on $\mathcal V$ is splittable.
\end{remark}

\begin{definition}
\begin{enumerate}
\item A \emph{$K$-normed isocrystal} over $\breve F|F$ is a triple $
(D,\phi,\alpha)$
consisting of an isocrystal $D=(D,\phi)$ over $\breve F|F$ equipped with a splittable $K$-norm $\alpha$ on the $K$-vector space $D\otimes_{\breve F}K$.
\item A morphism
\begin{equation*}\label{eq:morphism-in-NIso}
f:(D_1,\phi_1,\alpha_1)\longrightarrow (D_2,\phi_2,\alpha_2)
\end{equation*}
of $K$-normed isocrystals is a morphism $f:D_1\ra D_2$ of $\breve F|F$-isocrystals such that
\[
\alpha_2((f\otimes 1)(v))\leq \alpha_1(v), \quad \textrm{for all }v\in D_{1,K}:=D_1\otimes_{\breve F}K.
\]
\end{enumerate}
\end{definition}

\begin{remark}
If there is no possible confusion, a normed isocrystal $(D,\phi,\alpha)$ is often simply denoted by $(D,\alpha)$,  that is, we omit the Frobenius $\phi$ from the notation.
\end{remark}
We denote by
\[
\mathbf{NIsoc}_{\breve F|F}^{K}
\]
the category of $K$-normed isocrystals over $\breve F|F$. Like the category $\mathbf{Norm}_K$ of normed $K$-vector spaces considered in \cite[5.2]{Cor}, $\mathbf{NIsoc}_{\breve F|F}^{K}$ is a quasi-abelian $\otimes$-category. For example, the tensor product of two objects in $\mathbf{NIsoc}_{\breve F|F}^K$ is given by the formula
\[
(D_1,\alpha_1)\otimes(D_2,\alpha_2):=(D_1\otimes_{\breve F}D_2,\alpha_1\otimes\alpha_2),
\]
where $D_1\otimes_{\breve F}D_2$ is the tensor product of isocrystals, and for $v\in (D_1\otimes D_2)_K\simeq D_{1,K}\otimes_K D_{2,K}$,
\[
(\alpha_1\otimes\alpha_2)(v):=\min\left\{\max_i\left\{\alpha_1(v_{1,i})\alpha_2(v_{2,i})\right\} \ \bigg| \  v=\sum_i v_{1,i}\otimes v_{2,i}\in D_{1,K}\otimes D_{2,K}\right\}.
\]
This formula indeed defines a splittable $K$-norm on $(D_1\otimes D_2)_K$.

\subsection{Harder-Narasimhan filtration for normed isocrystals} There exists a theory of Harder-Narasimhan filtration for normed isocrystals. To see this, recall the following fundamental result.
\begin{theorem} Let $\mathcal V$ be a finite dimensional $K$-vector space. Let $\alpha$ and $\beta$ be two splittable norms on $\mathcal V$.
\begin{enumerate}
\item There exists a $K$-basis $\underline e=(e_1,\ldots, e_r)$ of $\mathcal V$ which is orthogonal for both of the norms $\alpha$ and $\beta$.
\item Write $\alpha(e_i)=|\pi|^{\lambda_i}$ and $\beta(e_i)=|\pi|^{\mu_i}$ for $1\leq i\leq r$. The following quantity
\[
\nu(\alpha,\beta):=\sum_i \left(\mu_i-\lambda_i\right)
\]
does not depend on the choice of the basis $\underline e$.
\item For $\gamma$ a third splittable norm on $\mathcal V$, we have
\[
\nu(\alpha,\gamma)=\nu(\alpha,\beta)+\nu(\beta,\gamma).
\]
\end{enumerate}
\end{theorem}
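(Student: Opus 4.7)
The plan is to reduce parts (2) and (3) to (1) via a determinant argument on the one-dimensional $K$-line $\det(\mathcal V) := \wedge^r \mathcal V$, so essentially all the work lies in part (1). Accordingly I would tackle (1) first and then derive (2) and (3) quickly.

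For (1), the existence of a common orthogonal basis is fundamentally the statement that two splittable norms on $\mathcal V$ lie in a common apartment of the extended Bruhat--Tits building of $\mathrm{GL}(\mathcal V)$; one could just cite the Goldman--Iwahori/Bruhat--Tits theory (\cite{BT}) or Cornut \cite{Cor}. Alternatively, I would argue by induction on $r = \dim_K \mathcal V$, the case $r = 1$ being trivial. In the inductive step, the key point is that the function $v \mapsto \alpha(v)/\beta(v)$ on $\mathcal V \setminus \{0\}$ takes only finitely many values: indeed, if $\underline f$ and $\underline g$ are orthogonal bases for $\alpha$ and $\beta$ respectively, both $\alpha(v)$ and $\beta(v)$ are expressed as maxima of finitely many semi-norms in the respective coordinates of $v$, so the ratio is piecewise constant on projective space. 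Choose $e_1 \in \mathcal V \setminus \{0\}$ attaining the maximum of this ratio; the extremality of $e_1$ is then used to produce a hyperplane $\mathcal W \subset \mathcal V$ with $\mathcal V = Ke_1 \oplus \mathcal W$ orthogonal to $Ke_1$ for both $\alpha$ and $\beta$. Since the restrictions of $\alpha$ and $\beta$ to $\mathcal W$ remain splittable, the inductive hypothesis applied to $(\mathcal W, \alpha|_{\mathcal W}, \beta|_{\mathcal W})$ produces a common orthogonal basis $(e_2, \ldots, e_r)$, which I concatenate with $e_1$.

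For (2) and (3), with (1) established, I would attach to every splittable norm $\alpha$ on $\mathcal V$ a canonical \emph{determinant norm} $\det(\alpha)$ on $\det(\mathcal V)$, characterized by
\[
\det(\alpha)(e_1 \wedge \cdots \wedge e_r) = \prod_{i=1}^r \alpha(e_i)
\]
for any $\alpha$-orthogonal basis $\underline e$. This is intrinsic, as $\det(\alpha)$ is the norm on $\wedge^r \mathcal V$ induced from the tensor product norm on $\mathcal V^{\otimes r}$ via the natural quotient, which makes no reference to a basis. Fixing any nonzero $\omega \in \det(\mathcal V)$, a common $(\alpha,\beta)$-orthogonal basis $\underline e$ as in (1) yields
\[
|\pi|^{\nu(\alpha, \beta)} = \prod_{i=1}^r \frac{\beta(e_i)}{\alpha(e_i)} = \frac{\det(\beta)(e_1 \wedge \cdots \wedge e_r)}{\det(\alpha)(e_1 \wedge \cdots \wedge e_r)} = \frac{\det(\beta)(\omega)}{\det(\alpha)(\omega)},
\]
and the right-hand side depends only on $\alpha$ and $\beta$, proving (2). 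Part (3) is then immediate by factoring
\[
\frac{\det(\gamma)(\omega)}{\det(\alpha)(\omega)} = \frac{\det(\gamma)(\omega)}{\det(\beta)(\omega)} \cdot \frac{\det(\beta)(\omega)}{\det(\alpha)(\omega)}
\]
and taking $\log_{|\pi|}$. The main obstacle is clearly (1); in particular, the construction of a common orthogonal complement to $Ke_1$ in the direct inductive argument requires some care, which is precisely where the Bruhat--Tits building formalism would give the cleanest shortcut.
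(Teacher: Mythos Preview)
Your proposal is correct and takes essentially the same approach as the paper. For (1) the paper simply cites \cite[Proposition 1.26]{BT}, which is one of the options you mention; for (2) and (3) the paper defines, via \cite[Proposition 1.9]{BT}, an intrinsic invariant $\mathrm{vol}(\alpha)=\mathrm{vol}(\underline e)-\sum_i\lambda_i$ (volume of the lattice spanned by an $\alpha$-orthogonal basis $\underline e$ relative to a fixed reference lattice) and observes $\nu(\alpha,\beta)=\mathrm{vol}(\alpha)-\mathrm{vol}(\beta)$, which is exactly your determinant-norm argument in additive notation with $\omega$ taken to generate the top exterior power of the reference lattice.
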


\begin{proof}
In our setting, $K$ is supposed to be discretely valued, so the assertion (1) follows from \cite[Proposition 1.26]{BT}. Moreover, for $\alpha$ a splittable norm on $\mathcal V$ and for $\underline{e}=(e_1,\ldots, e_r)$ an orthogonal basis of $(\mathcal V,\alpha)$, by \cite[Proposition 1.9]{BT}, the quantity
\[
 \mathrm{vol}(\alpha):=\mathrm{vol}(\underline{e})-\sum_i\lambda_i
\]
does not depend on the choice of the basis $\underline e$. Here $\mathrm{vol}(\underline e)$ is the volume of the lattice generated by the basis $\underline e$ of $\mathcal V$ relative to some fixed lattice of $\mathcal V$. In particular, we have
\[
 \nu(\alpha,\beta)=\mathrm{vol}(\alpha)-\mathrm{vol}(\beta)
 \]
 from which we get immediately (2) and (3).
\end{proof}

\begin{remark} As indicated in \cite[\S~5.2.1]{Cor}, the previous theorem also holds for a general henselian non-archimedean base field. For example, (1) is confirmed in \cite[Appendice, page 300]{BT} , while (2) and (3) are proved in \cite{Cor20}.
\end{remark}

\begin{definition}\label{def:degree-of-normed-isocrystals}
Let $(D,\alpha)$ be a normed isocrystal.
\begin{enumerate}
\item The \emph{rank} and the \emph{degree} of $(D,\alpha)$ are defined respectively by 
\begin{eqnarray*}
\mathrm{rank}(D,\alpha)& =&  \dim_{\breve F}D \\ \deg(D,\alpha)& =& \nu(\mathbf{o},\alpha)-\dim(D),
\end{eqnarray*}
with $\mathbf{o}$ the splittable norm on $D_K$ given by 
\[
\mathbf{o}(v)=\inf\{|\lambda| : \lambda\in K, v\in \lambda (D\otimes_{\breve F}\cO_K)\subset D_K\},
\]
and $\dim(D)$ the dimension of the isocrystal $D=(D,\phi)$, that is, the integer $v_{\breve F}(\det \phi)$ with $v_{\breve F}$ the normalized additive valuation on $\breve F$. 
    \item When $D\neq 0$,  the \emph{slope} $\rho(D,\alpha)$ of $(D,\alpha)$ is defined by the following ratio
    \[
    \rho(D,\alpha):=\frac{\deg(D,\alpha)}{\mathrm{rank}(D,\alpha)}\in \mathbb R.
    \]
      \item Assume $D\neq 0$ and write $\rho=\rho(D,\alpha)$. We say that the normed isocrystal $(D,\alpha)$ is \emph{semi-stable of slope $\rho$} if for any non-zero subisocrystal $D'\subset D$, the normed isocrystal $(D',\alpha|_{D'})$ is of slope $\leq \rho$.
\end{enumerate}
\end{definition}

\begin{remark}\label{remark_deg_deg'}
Let $(D,\alpha)$ be a normed isocrystal over $\breve F|F$ with underlying $\breve F$-vector space $V$. Then $\nu(\mathbf{o},\alpha)$ is the degree (defined in \cite[\S~5.2.4]{Cor}) of the normed vector space $(V,\alpha)$ obtained from $(D,\alpha)$ by forgetting the Frobenius. 
\end{remark}

\begin{remark}
As in \cite[5.2.4]{Cor}, the functions $\mathrm{rank}$ and $\deg$ are additive on short exact sequences and respectively constant and non-decreasing on mono-epis, and for
\[
f:(D_1,\alpha_1)\longrightarrow (D_2,\alpha_2)
\]
a mono-epis, $f$ is an isomorphism of normed isocrystals if and only if \[
\deg(D_1,\alpha_1)=\deg(D_2,\alpha_2).
\]
\end{remark}

Consider the following faithful exact $F$-linear functor
\[
\omega: \mathbf{NIsoc}_{\breve F|F}^{K}\longrightarrow \mathbf{Isoc}_{\breve F|F}, \quad (D,\alpha)\mapsto D.
\]
Here $\mathbf{Isoc}_{\breve F|F}$ is the category of isocrystals over $\breve F|F$. Like the case considered by Cornut in \cite{Cor}, the functions $\mathrm{rank}$ and $\deg$ above give rise to a Harder-Narasimhan formalism on $\mathbf{NIsoc}_{\breve F|F}^{K}$. More precisely, for $(D,\alpha)$ a normed isocrystal over $\breve F|F$, there exists an exhaustive separated decreasing $\mathbb R$-filtration
\[
\mathcal F_{\rm HN}(D,\alpha)=(\Fil^sD)_{s\in \mathbb R}
\]
of $D$ by sub-isocrystals such that for each $s\in \mathbb{R}$, 
\[
\Fil^{s}D=\Fil^{s,-}D:=\bigcap_{s'<s}\Fil^{s'}D, 
\]
and that the isocrystal
\[
\mathrm{gr}^s_{\mathcal F_{\rm HN}(D,\alpha)}D:=\Fil^sD/\Fil^{s+}D
\]
equipped with the norm induced from $\alpha$, is either trivial or semi-stable of slope $s$. We call $\mathcal F_{\rm HN}(D,\alpha)$ the \emph{Harder-Narasimhan filtration} of the normed isocrystal $(D,\alpha)$. Since the degree function on $\mathbf{NIsoc}_{\breve F|F}^K$ is additive on short exact sequences, we have 
\[
\deg(\mathcal F_{\rm HN}(D,\alpha)):=\sum_{s\in \mathbb R} s\dim_{\breve F}\mathrm{gr}^s_{\mathcal F_{\rm HN}(D,\alpha)}D=\deg(D,\alpha).
\] 
The proof of the following properties of Harder-Narasimhan filtration is standard. We refer to Example \ref{ex:direct-sum-and-tensor-of-fil} below for the definition of the direct sum of two filtrations. 

\begin{proposition}\label{prop:HN-filtration-and-direct-sim} Let $(D,\alpha)$ and $(D',\alpha')$ be two normed isocrystals. Then 
\[
\mathcal F_{\rm HN}(D\oplus D',\alpha\oplus \alpha')=\mathcal F_{\rm HN}(D,\alpha)\oplus  \mathcal{F}_{\rm HN}(D',\alpha').
\]
In other words, the Harder-Narasimhan filtration for normed isocrystals is compatible with direct sums.    
\end{proposition}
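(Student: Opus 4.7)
The plan is to appeal to the usual uniqueness characterization of the Harder--Narasimhan filtration: a separated exhaustive decreasing left-continuous $\R$-filtration $(\Fil^s D)_{s\in \R}$ of $D$ by sub-isocrystals is the HN filtration of $(D,\alpha)$ if and only if every graded piece $\mathrm{gr}^s D$, equipped with the induced subquotient norm, is either zero or semistable of slope $s$. Granting this, I would set $\Fil^s(D\oplus D') := \Fil^s D\oplus \Fil^s D'$ using the HN filtrations of the two summands, and then verify the characterizing property on graded pieces. Observe first that the direct sum norm $\alpha\oplus\alpha'$ is splittable (concatenate orthogonal bases of the two factors), that $\mathbf o_{D\oplus D'}=\mathbf o_D\oplus \mathbf o_{D'}$, and hence that rank, dimension and $\nu(\mathbf o,-)$, and therefore $\deg$, are additive under direct sums of normed isocrystals.

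The real content is the following Key Lemma: \emph{if $(D,\alpha)$ and $(D',\alpha')$ are semistable of the same slope $\rho$, then $(D\oplus D',\alpha\oplus\alpha')$ is semistable of slope $\rho$}. Let $E\subset D\oplus D'$ be a non-zero subisocrystal, $E_1:=E\cap D$, and $E_2\subset D'$ the image of $E$ under the second projection; this yields a short exact sequence $0\to E_1\to E\to E_2\to 0$ of isocrystals. The restriction of $\alpha\oplus\alpha'$ to $E_1\subset D$ equals $\alpha|_{E_1}$, while the quotient norm $\beta$ induced on $E_2$ from $(\alpha\oplus\alpha')|_E$ satisfies $\beta\geq \alpha'|_{E_2}$, since any lift of $v_2\in E_2$ to $E$ has the form $(v_1,v_2)$ with direct sum norm $\max\{\alpha(v_1),\alpha'(v_2)\}\geq \alpha'(v_2)$. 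As $\deg=\nu(\mathbf o,\cdot)-\dim$ is monotone decreasing in the norm (a pointwise bound $\gamma\leq \gamma'$ on a common orthogonal basis gives $\nu(\mathbf o,\gamma)\geq \nu(\mathbf o,\gamma')$), additivity of degree on short exact sequences of normed isocrystals gives
\[
\deg(E)=\deg(E_1,\alpha|_{E_1})+\deg(E_2,\beta)\leq \deg(E_1,\alpha|_{E_1})+\deg(E_2,\alpha'|_{E_2}).
\]
Semistability of the two factors now bounds each summand by $\rho\cdot \mathrm{rank}(E_i)$, and $\mathrm{rank}(E_1)+\mathrm{rank}(E_2)=\mathrm{rank}(E)$ forces $\deg(E)\leq \rho\cdot\mathrm{rank}(E)$, proving the Key Lemma.

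To conclude, I would check that for the filtration $\Fil^s D\oplus \Fil^s D'$ the induced sub- and quotient- norms are respectively the direct sums of the sub- and quotient- norms on each factor, which reduces to a straightforward orthogonal-basis calculation using splittability. The graded piece $\mathrm{gr}^s D\oplus \mathrm{gr}^s D'$ then carries the direct sum of the two induced norms, and each factor is zero or semistable of slope $s$, so the Key Lemma shows that the direct sum itself is zero or semistable of slope $s$. Uniqueness of the HN filtration finishes the proof. The main obstacle is the Key Lemma: subisocrystals of $D\oplus D'$ need not split as direct sums, so one must work with a genuine extension and carefully track how the direct sum norm interacts with the restriction and the quotient norms.
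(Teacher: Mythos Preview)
Your argument is correct and is precisely the standard route the paper alludes to: the paper does not spell out a proof of this proposition, remarking only that it is standard. Your Key Lemma (direct sums of semistable objects of equal slope are semistable) together with the uniqueness characterization of the HN filtration via semistability of graded pieces is exactly the expected argument, and your handling of the non-split subisocrystal $E\subset D\oplus D'$ via the extension $0\to E\cap D\to E\to \mathrm{pr}_{D'}(E)\to 0$, the inequality $\beta\geq \alpha'|_{E_2}$ for the quotient norm, and the monotonicity of $\nu(\mathbf o,-)$ is the right way to close the gap.
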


\subsection{Compatibility of the Harder-Narasimhan filtration with tensor products} Let
\[
\mathbf{F}(\mathbf{Isoc}_{\breve F|F})
\]
be the category of pairs $(D,\mathcal F)$ with $D$ an isocrystal over $\breve F|F$ and $\mathcal F=(\Fil^iD)_{i\in \mathbb R}$ a finite exhaustive separated decreasing $\mathbb R$-filtration of $D$ by sub-isocrystals. The formalism of  Harder-Narasimhan filtration for normed isocrystals provides a functor
\begin{equation}\label{eq:F_HN}
\mathcal F_{\rm HN}:\mathbf{NIsoc}_{\breve F|F}^{K}\longrightarrow \mathbf{F}(\mathbf{Isoc}_{\breve F|F}),\quad (D,\alpha)\longmapsto \mathcal F_{\rm HN}(D,\alpha). \end{equation}
Note that the category $\mathbf F(\mathbf{Isoc}_{\breve F|F})$ is naturally a quasi-abelian $F$-linear rigid $\otimes$-category, so one may wonder if the functor $\mathcal F_{\rm HN}$ is compatible with tensor products. The main result of this section is the following theorem.
\begin{theorem}\label{thm_comp tensor}
The functor $\mathcal F_{\rm HN}$ in \eqref{eq:F_HN} is compatible with tensor products. In other words, for $(D,\alpha)$ and $(D',\alpha' )$ two normed isocrystals, we have 
\[
\mathcal F_{\rm HN}(D\otimes D,\alpha\otimes \alpha' )=\mathcal{F}_{\rm HN}(D,\alpha)\otimes \mathcal{F}_{\rm HN}(D' ,\alpha' ).
\]\end{theorem}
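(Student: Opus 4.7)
The plan is to first reduce, by a formal argument valid in any quasi-abelian $\otimes$-category with a good slope formalism, the compatibility of $\mathcal{F}_{\rm HN}$ with tensor products to the statement that tensor products preserve semistability: if $(D_1,\alpha_1)$ and $(D_2,\alpha_2)$ are semistable of slopes $\rho_1,\rho_2$, then $(D_1\otimes D_2,\alpha_1\otimes\alpha_2)$ is semistable of slope $\rho_1+\rho_2$. Given this preservation, I would argue inductively on the lengths of the HN filtrations of the two factors, using the bi-exactness of $\otimes$ together with Proposition \ref{prop:HN-filtration-and-direct-sim} applied to the graded pieces, to conclude that $\mathcal{F}_{\rm HN}(D_1,\alpha_1)\otimes \mathcal{F}_{\rm HN}(D_2,\alpha_2)$ is an exhaustive separated decreasing $\mathbb{R}$-filtration of $D_1\otimes D_2$ whose graded pieces are semistable of the expected slopes, hence equal to the HN filtration of $(D_1\otimes D_2,\alpha_1\otimes\alpha_2)$ by uniqueness.

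For the semistability preservation, I would follow the general strategy of Cornut in \cite{Cor}, making essential use of the splitting $\deg(D,\alpha)=\nu(\mathbf{o},\alpha)-\dim(D)$ of the degree into a normed-vector-space part $\nu(\mathbf{o},\alpha)$ (to which Cornut's tensor-compatibility theorem for normed $K$-vector spaces applies) and a Frobenius part $-\dim(D)$ (which is additive on short exact sequences of isocrystals and behaves in the expected way under $\otimes$). Given a sub-isocrystal $D'\subset D_1\otimes D_2$ with induced norm $\alpha$, the slope decomposition $D'=\bigoplus_\lambda D'_\lambda$ in $\mathbf{Isoc}_{\breve F|F}$ renders the Frobenius contribution explicit, since $\dim(D'_\lambda)=\lambda\cdot \dim_{\breve F}D'_\lambda$; the goal is then to bound $\nu(\mathbf{o},\alpha|_{D'_\lambda})$ by invoking Cornut's theorem on the normed $K$-vector spaces underlying $(D_1,\alpha_1)$ and $(D_2,\alpha_2)$.

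The main obstacle is that semistability in $\mathbf{NIsoc}_{\breve F|F}^{K}$ is tested only against $\phi$-stable sub-$\breve F$-spaces, a strictly weaker condition than semistability as a normed $K$-vector space in Cornut's sense, so one cannot naively transfer Cornut's vector-space semistability to $(D_i\otimes_{\breve F}K,\alpha_i)$. The resolution I would attempt is, after a suitable faithfully flat extension of scalars from $\breve F$ so that each isoclinic component becomes isotypic, to describe $\phi$-stable subspaces of $D_1\otimes D_2$ and of its isoclinic pieces in linear-algebraic terms via a Dieudonn\'e--Manin-type decomposition; one can then apply Cornut's theorem to the relevant subspaces and recombine with the Frobenius correction $-\dim(D')=-\sum_\lambda \lambda\cdot \dim_{\breve F}D'_\lambda$ to deduce $\rho(D',\alpha)\leq \rho_1+\rho_2$. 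Verifying that the splittable norm structure and the quantity $\nu(\mathbf{o},\cdot)$ behave compatibly under this scalar extension, and that the slope-wise bound combines correctly with the Frobenius correction, is where the bulk of the technical work will lie.
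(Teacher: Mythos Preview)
Your reduction to the statement ``tensor product of semistable normed isocrystals is semistable'' is correct and is indeed equivalent to the theorem (the paper notes this in the remark following the statement). You also correctly identify the central difficulty: semistability in $\mathbf{NIsoc}_{\breve F|F}^{K}$ is tested only against $\phi$-stable $\breve F$-subspaces, so the underlying normed $K$-vector spaces $(D_i\otimes_{\breve F}K,\alpha_i)$ need not be semistable in Cornut's sense, and one cannot invoke \cite[Theorem 5.8]{Cor} directly.

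However, your proposed resolution via scalar extension and Dieudonn\'e--Manin does not close this gap. Consider already the case where both $D_i$ are isoclinic of slope $0$, so $D_i=V_i\otimes_F\breve F$ with $\phi=1\otimes\sigma$; then $\phi$-stable subspaces of $D_1\otimes D_2$ are exactly $W\otimes_F\breve F$ for $F$-subspaces $W\subset V_1\otimes V_2$, and the Frobenius correction vanishes. Cornut's tensor theorem for normed $K$-vector spaces gives information about the HN filtration computed against \emph{all} $K$-subspaces, but knowing only that each $(D_i\otimes K,\alpha_i)$ is semistable against $F$-rational subspaces does not let you extract a bound on $\nu(\mathbf{o},\alpha|_{D'})$ for an arbitrary $F$-rational $D'\subset D_1\otimes D_2$ from that theorem. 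No further scalar extension helps: the mismatch between the class of test subspaces is intrinsic, and the slope-wise decomposition of $D'$ only reorganizes the problem without providing the missing inequality. This case is precisely \cite{CPI}, which already requires the full convex-geometric machinery rather than a direct degree bound.

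The paper avoids this obstruction by \emph{not} attempting to bound degrees of subobjects directly. Instead it works with the variational characterization of the HN filtration as the unique minimizer of $f\mapsto |f|^2-2\langle\alpha,f\rangle$ on $\mathbf{F}(D)$ (Cornut's Proposition~2.12), and studies the auxiliary object $E=D\oplus D'\oplus(D\otimes D')$. One embeds $\mathbf{F}(D)\times\mathbf{F}(D')$ as a closed convex subset of $\mathbf{F}(E)$ and uses the convex projection $p$. The decisive new trick handling the Frobenius correction is this: writing $\langle\beta,f\rangle=\langle\beta,f\rangle'-\langle\mathcal{F}_{\rm N},f\rangle$ where $\langle\beta,\cdot\rangle'$ is Cornut's vector-space degree and $\mathcal{F}_{\rm N}$ is the Newton filtration of $E$, one observes that both $\mathcal{F}_{\rm N}$ and its opposite $\mathcal{F}_{\rm N}^*$ lie in $\mathbf{F}(D)\times\mathbf{F}(D')$; applying the convex-projection inequality of Lemma~\ref{lem:property-for-convex-projection} to each yields the \emph{equality} $\langle\mathcal{F}_{\rm N},f\rangle=\langle\mathcal{F}_{\rm N},p(f)\rangle$. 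Combined with Cornut's inequality $\langle\beta,f\rangle'\leq\langle\beta,p(f)\rangle'$ for the vector-space part, this gives $\langle\beta,f\rangle\leq\langle\beta,p(f)\rangle$, which together with $|p(f)|\leq|f|$ reduces the minimization over $\mathbf{F}(E)$ to minimization over $\mathbf{F}(D)\times\mathbf{F}(D')$, where a direct computation finishes the argument. This Newton-filtration trick is the missing idea in your proposal.
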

We refer to Example \ref{ex:direct-sum-and-tensor-of-fil} for  the definition of the tensor product of two filtrations.

\begin{remark}
It's easy to verify that the assertion of the previous theorem is equivalent to the following: the tensor product of two semi-stable normed isocrystals is still semi-stable.  
\end{remark}

The analogue of the theorem above for normed vector spaces is due to Cornut (cf. \cite[Theorem 5.8]{Cor}). Our proof of Theorem \ref{thm_comp tensor} given below is motivated by his proof of loc. cit.

\subsubsection{The vectorial Tits building for $\mathbf{GL}_n$} Let $L$ be a field, and $V$ a finite-dimensional $L$-vector spaces. 
Let $\mathbf F(V)$ be the set of separated exhaustive decreasing $\R$-filtrations $f=(V_f^a)_{a\in \mathbb R}$ of $V$ by sub-spaces so that 
\[
V_f^{a}=V_f^{a,-}:=\bigcap_{a'<a}V_{f}^{a'},\quad \textrm{for all } a\in \mathbb R.
\] 
Let $\underline{e}=(e_1,\ldots, e_n)$ be a $K$-basis of $V$. We say that an $\mathbb R$-filtration $f\in \mathbf{F}(V)$ is splitted by $\underline e$ if each subspace $V_f^a$ is spanned by a subset of $\{e_1,\ldots, e_n\}$. We have a natural injective map 
\begin{equation}\label{eq:appartement}
\mathbb R^n\longrightarrow \mathbf F(V), 
\end{equation}
sending $\alpha=(a_1,\ldots,a_n)\in \mathbb R^n$ to the $\mathbb R$-filtration $f_{\alpha}$ of $V$ given by setting 
\[
V_{f_{\alpha}}^a:=\mathrm{Span}\{e_i| a_i\geq a\}\subset V.
\]
The image of \eqref{eq:appartement} is an \emph{apartment} of $\mathbf{F}(V)$, denoted by $\mathbf{F}(\underline{e})$. Hence, $\mathbf{F}(V)$ is the union of all its apartments. It is known that any two $\mathbb R$-filtrations of $V$ can be splitted by a same $L$-basis, thus are contained in a common apartment.

\begin{example} Let $f,g\in \mathbf{F}(V)$, and $\lambda\in \mathbb R_{>0}$. We can define two new elements $f+g$ and $\lambda \cdot f$ of $\mathbf F(V)$ by 
\[
V_{f+g}^a:=\sum_{b+c=a}V_f^b\bigcap V_g^c \quad \textrm{and}\quad V_{\lambda\cdot f}^a:=V_{f}^{\lambda^{-1}a}.
\]
Note that the formula defining $f+g$ makes sense since the sets 
\[
\{V_f^b \ |\  b\in \mathbb R\}\quad \textrm{and} \quad \{V_g^c\ |\ c\in \mathbb R\}
\]
consist of only finitely many subspaces of $V$. Moreover, if $f,g$ are contained in the apartment $\mathbf{F}(\underline{e})$ for a certain $K$-basis, and write $f=f_{\alpha}$ and $g=f_{\beta}$ for some $\alpha,\beta\in \mathbb R^n$, then $f+g=f_{\alpha+\beta}$ and $\lambda \cdot f=f_{\lambda \alpha}$. 
\end{example}


\begin{example}\label{ex:direct-sum-and-tensor-of-fil} Let $V,V'$ be two $L$-vector spaces. Let $f,f'$ be two $\mathbb R$-filtrations of $V$ and $V'$ respectively. 

\begin{enumerate}
\item The direct sum of $f$ and $g$, denoted by $f\oplus g$, is the $\mathbb R$-filtration of $V\oplus V'$ given by setting 
    \[
    V_{f\oplus f'}^a:=V_f^a\oplus V_{f'}^{'a}\subset V\oplus V', \quad \forall a\in \mathbb R. 
    \]
    Clearly, 
    \[
    \mathrm{gr}_{f\oplus f'}^a(V\oplus V')=\mathrm{gr}_{f}^aV\oplus \mathrm{gr}_{f'}^aV'.
    \]  
\item The tensor product of $f$ and $g$, denoted by $f\otimes g$, is the $\mathbb R$-filtration on $V\otimes V'$ given by setting (one checks easily that the formula below indeed makes sense)
\[
V_{f\otimes f'}^a:=\sum_{s+t=a}V_f^s\otimes V_{f'}^{'t}\subset V\otimes V', \quad \forall a\in \mathbb R.
\]
As for the graded pieces of $f\otimes f'$, we have the following well-known description 
    \[
    \mathrm{gr}_{f\otimes f'}^a= \bigoplus_{s+t=a}\mathrm{gr}_f^sV\otimes \mathrm{gr}_{f'}^tV'.
    \]
\end{enumerate}
    
\end{example}

The set $\mathbf{F}(V)$ is naturally a metric space (\cite[\S~2.3]{Cor}): let $f,g\in \mathbf F(V)$, and write
\begin{eqnarray*}
\langle f,g\rangle&: = & \sum_{s,t\in \mathbb R}st\dim \mathrm{gr}_{f}^s\left(\mathrm{gr}^t_{g}V\right), \\ \|f\|& :=& \sqrt{\langle f,f\rangle}, \quad \textrm{and}
\\ 
d(f,g)& :=& \sqrt{\|f\|^2+\|g\|^2-2\langle f,g\rangle }.
\end{eqnarray*}
\begin{proposition}\label{prop:F(V)-complete}
  The function $d(-.-)$ defines a metric on $\mathbf F(V)$, and the metric space  $(\mathbf{F}(V),d)$ is complete.   
\end{proposition}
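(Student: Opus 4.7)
The plan is to first reduce the pairing $\langle-,-\rangle$ and the distance $d$ for a pair of filtrations to an explicit Euclidean computation on a common apartment, and then handle the metric axioms and completeness separately. I expect the triangle inequality to be the main obstacle, since three filtrations of $V$ need not share a common apartment.

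\emph{Apartment computation.} For $f, g \in \mathbf{F}(\underline e)$ with types $\alpha = (a_1,\ldots,a_n)$ and $\beta = (b_1,\ldots,b_n)$, the fact that both filtrations are splitted by $\underline e$ gives that $\mathrm{gr}_f^s(\mathrm{gr}_g^t V)$ is spanned by the images of $\{e_i : a_i = s,\ b_i = t\}$. Hence
\[
\langle f,g\rangle = \sum_i a_ib_i = \alpha\cdot\beta, \qquad d(f,g)^2 = |\alpha-\beta|^2.
\]
Since any two $\mathbb R$-filtrations of $V$ are simultaneously splittable and thus lie in a common apartment, this immediately gives the symmetry of $\langle-,-\rangle$ and of $d$, the non-negativity of $d$, and the implication $d(f,g)=0 \Rightarrow f=g$.

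\emph{Triangle inequality.} Here I plan to follow the standard approach for vectorial Tits buildings: for each apartment $\mathcal A \subset \mathbf{F}(V)$, construct a retraction $r_{\mathcal A}: \mathbf F(V) \to \mathcal A$ which is the identity on $\mathcal A$ and $1$-Lipschitz for $d$. Given $f,g,h$, choose $\mathcal A$ containing $g$ and $h$; the Euclidean triangle inequality inside $\mathcal A$ together with the Lipschitz property of $r_{\mathcal A}$ yields
\[
d(f,h) \leq d(r_{\mathcal A}(f), h) \leq d(r_{\mathcal A}(f), g) + d(g,h) \leq d(f,g) + d(g,h).
\]
Equivalently, this is the assertion that $(\mathbf F(V), d)$ is CAT(0); the whole package (apartment formula, retractions, completeness) is treated in \cite[\S 2.3]{Cor}, which I would cite.

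\emph{Completeness.} Given a Cauchy sequence $(f_n)$, the norms $\|f_n\|$ are bounded, so the jumps of the $f_n$ lie in a fixed bounded subset of $\mathbb R$. The combinatorial ``shape'' of $f_n$, namely the decreasing list of dimensions $d_i^{(n)}=\dim \mathrm{gr}_{f_n}^{s_i^{(n)}} V$ indexed by the jumps $s_1^{(n)}>s_2^{(n)}>\cdots$, can take only finitely many values, so after passing to a subsequence I may assume that it equals a fixed list $(d_1,\dots,d_r)$. Then each $f_n$ is the data of a flag $V_1^{(n)}\supsetneq\cdots\supsetneq V_r^{(n)}\supsetneq 0$ with $\dim V_i^{(n)}/V_{i+1}^{(n)}=d_i$, together with the jump vector $(s_1^{(n)}>\cdots>s_r^{(n)}) \in \mathbb R^r$. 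Working in a common apartment of $f_n$ and $f_m$ and applying the rearrangement inequality to the sorted types gives
\[
\sum_{i} d_i \bigl(s_i^{(n)}-s_i^{(m)}\bigr)^2 \leq d(f_n,f_m)^2,
\]
so each sequence $(s_i^{(n)})_n$ is Cauchy in $\mathbb R$ and converges to some $s_i^{(\infty)}$; after regrouping indices whose limits agree I may assume $s_1^{(\infty)}>\cdots>s_r^{(\infty)}$. For the flag part, once $n, m$ are so large that $d(f_n,f_m)$ and the deviations $|s_i^{(n)}-s_i^{(\infty)}|$ are all much smaller than $c := \min_{i\neq j}|s_i^{(\infty)}-s_j^{(\infty)}|$, the formula
\[
d(f_n,f_m)^2 = \sum_{i,j}\bigl(s_i^{(n)}-s_j^{(m)}\bigr)^2\dim \mathrm{gr}_{f_n}^{s_i^{(n)}}\mathrm{gr}_{f_m}^{s_j^{(m)}}V
\]
forces the off-diagonal terms ($i\neq j$) to vanish. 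In a common splitting basis this says exactly that each basis vector has the same $(f_n)$- and $(f_m)$-jump index, whence $V_i^{(n)} = V_i^{(m)}$ for all $i$. Thus the flag stabilises to some $(V_i)_i$, and the filtration $f_\infty$ defined by this flag and the jumps $(s_i^{(\infty)})$ is the required limit, by direct computation of $d(f_n,f_\infty)\to 0$ via the same formula.
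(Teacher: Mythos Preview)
Your triangle-inequality sketch has the inequality running the wrong way. With $\mathcal A$ chosen to contain $g$ and $h$ and $r_{\mathcal A}$ assumed $1$-Lipschitz, you obtain $d(r_{\mathcal A}(f), h) = d(r_{\mathcal A}(f), r_{\mathcal A}(h)) \leq d(f, h)$, not $d(f, h) \leq d(r_{\mathcal A}(f), h)$; so the displayed chain does not prove anything. The standard building argument chooses an apartment through the two \emph{endpoints} $f$ and $h$ and retracts the middle point $g$: then $d(f,h) \leq d(f, r_{\mathcal A}(g)) + d(r_{\mathcal A}(g), h)$ is the Euclidean triangle inequality inside $\mathcal A$, and each summand is bounded by $d(f,g)$ (resp.\ $d(g,h)$) via the Lipschitz property applied to the pair $(f,g)$ (resp.\ $(g,h)$). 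Since you already plan to cite \cite[\S 2.3]{Cor} for the full package this is not fatal, but the sketch as written is incorrect.

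By contrast, the paper's proof is a one-line reduction rather than a direct argument: the set $\mathscr X$ of $L$-subspaces of $V$, ordered by inclusion, is a bounded modular lattice of finite length with $\dim_L$ as rank function, and $\mathbf F(V)$ is exactly Cornut's $\mathbf F(\mathscr X)$; both the metric axioms and completeness are then instances of \cite[Propositions 2.6, 2.7]{Cor}. Your hands-on completeness argument (convergence of the sorted jump vector via the rearrangement inequality, then eventual stabilisation of the flag from the vanishing of the off-diagonal graded pieces) is correct and more self-contained, but the paper prefers to outsource the whole statement to the modular-lattice framework, which it reuses immediately afterwards for the sub-lattice of subisocrystals when treating $\mathbf F(D)\subset\mathbf F(V)$.
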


\begin{proof}
This can be viewed as the special case of some general results of Cornut. Indeed, as $V$ is a finite-dimensional $K$-vector space, the set $\mathscr X$ of all $L$-subspaces, equipped with the partial order $\leq $ given by the inclusion relation of subspaces, is a bounded modular lattice of finite length $\dim_KV$, with 
\[
W_1\vee W_2:=W_1+W_2 \quad \textrm{and}\quad W_1\wedge W_2:=W_1\cap W_2. 
\]
Moreover, the set $\mathbf{F}(V)$ is naturally identified with the $\mathbf{F}(\mathscr X)$ of all $\mathbb R$-filtration on $\mathscr X$ considered in \cite[\S~2.2]{Cor}, and the map 
\[
\mathscr X\longrightarrow \mathbb R, \quad W\longmapsto \dim_{L}W
\]
gives a rank function (in the sense of \cite[\S~2.1.3]{Cor}) on $\mathscr X$. So our proposition follows from the corresponding general statements for bounded modular lattices of finite length established in \cite[Proposition 2.6,2.7]{Cor}.  
\end{proof}

\begin{remark} Let $\underline{e}$ be an $L$-basis of $V$. Restricting to the apartment $\mathbf{F}(\underline e)$, identified with $\mathbb R^n$ via \eqref{eq:appartement}, the pairing above is just the standard scale product $\langle -,-\rangle$ in $\mathbb R^n$, i.e., 
\[
\langle f_{\alpha},f_{\beta}\rangle =\langle \alpha,\beta\rangle, \quad \forall \ \alpha,\beta\in \mathbb R^n. 
\]
In particular, for $f,g,h\in \mathbf{F}(\underline e)$, we have 
\[
\langle f,g+h\rangle=\langle f,g\rangle +\langle f,h\rangle. 
\]
In general the pairing $\langle -,-\rangle $ is only concave (\cite[Lemma 2.5]{Cor}): we have 
\begin{equation}\label{eq:concave}
\langle f,g+h\rangle \geq \langle f,g\rangle +\langle f,h\rangle, \quad \forall f,g,h\in \mathbf F(V). 
\end{equation}
\end{remark}

Let $C\subset \mathbf{F}(V)$ be a closed subset. Assume moreover that it is \emph{convex}, i.e., for any $f,g\in C$
\[
\lambda \cdot f+(1-\lambda )\cdot g\in C, \quad \forall \lambda\in [0,1]. 
\] 
Then, for every $f\in \mathbf{F}(V)$, there exists a unique $p_C(f)\in C$, called the \emph{convex projection} of $f$ to $C$, such that 
\[
d(f,p_C(f))=\min\{d(f,g)| g\in C\}. 
\]
\begin{lemma}\label{lem:property-for-convex-projection} Let $f\in \mathbf{F}(V)$, with convex projection $p_C(f)\in C$. We have \[
|p_C(f)|\leq |f|.
\]
Moreover, if $g\in \mathbf F(V)$ so that $p_C(f)+t\cdot g\in C$ for  sufficiently small positive real $t$, we have 
\[
\langle p_C(f),g\rangle\geq \langle f,g\rangle.
\]
\end{lemma}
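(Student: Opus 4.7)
\textbf{Proof plan for Lemma \ref{lem:property-for-convex-projection}.}

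The strategy for both assertions is the classical variational argument for convex projections, adapted to the fact that $(\mathbf{F}(V), d)$ is not a Hilbert space: although the pairing $\langle -,-\rangle$ is bilinear on each individual apartment $\mathbf{F}(\underline{e}) \cong \mathbb R^n$ (where it coincides with the standard Euclidean inner product), globally only the one-sided concavity estimate \eqref{eq:concave} is available. One also uses that any two filtrations of $V$ lie in a common apartment, and that the pairing is $\mathbb{R}_{>0}$-homogeneous in each argument.

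For the second assertion, set $h_t := p_C(f) + t\cdot g$ for $t \geq 0$ small, so that $h_t \in C$ by hypothesis; minimality of $p_C(f)$ gives $\phi(t) := d(f, h_t)^2 \geq \phi(0)$. Pick a common apartment containing $p_C(f)$ and $g$; there the pairing is bilinear, giving the exact equality
\[
\|h_t\|^2 = \|p_C(f)\|^2 + 2t\,\langle p_C(f), g\rangle + t^2\,\|g\|^2.
\]
Meanwhile, the concavity \eqref{eq:concave} combined with $\mathbb{R}_{>0}$-homogeneity yields
\[
\langle f, h_t\rangle \;\geq\; \langle f, p_C(f)\rangle + t\,\langle f, g\rangle.
\]
Substituting into $\phi(t) = \|f\|^2 + \|h_t\|^2 - 2\,\langle f, h_t\rangle$ produces
\[
\phi(t) \;\leq\; \phi(0) + 2t\bigl(\langle p_C(f), g\rangle - \langle f, g\rangle\bigr) + t^2\,\|g\|^2.
\]
Combined with $\phi(t) \geq \phi(0)$, dividing by $t > 0$ and letting $t \to 0^+$ forces $\langle p_C(f), g\rangle \geq \langle f, g\rangle$, as required.

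For the first assertion, run the same variational argument along the path $h_t := (1-t)\cdot p_C(f)$ for $t \in [0,1]$; provided the trivial filtration $\mathbf{0}$ lies in $C$ (which holds in the setting where the lemma will be invoked), the identity $h_t = (1-t)\cdot p_C(f) + t\cdot \mathbf 0$ exhibits $h_t$ as a convex combination of points of $C$, so $h_t \in C$. The $\mathbb{R}_{>0}$-homogeneity of the pairing gives
\[
\phi(t) = \|f\|^2 + (1-t)^2\,\|p_C(f)\|^2 - 2(1-t)\,\langle f, p_C(f)\rangle,
\]
and $\phi'(0) \geq 0$ forces $\langle f, p_C(f)\rangle \geq \|p_C(f)\|^2$. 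The Cauchy--Schwarz inequality $\langle f, p_C(f)\rangle \leq \|f\|\cdot \|p_C(f)\|$, which is valid in any common apartment of $f$ and $p_C(f)$, then concludes.

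\textbf{Main obstacle.} The chief subtlety is that $\mathbf{F}(V)$ is not a Hilbert space: the pairing is only concave, not bilinear, and the tidy vector-space formulas hold only apartment-by-apartment. The variational arguments must therefore be set up using pairs of filtrations that can be placed in a common apartment, and one has to check that the concavity inequality \eqref{eq:concave} goes in the direction compatible with the minimality estimate (so as to yield a one-sided bound on $\phi(t) - \phi(0)$). Once that alignment is arranged, both parts reduce to routine differential calculus at $t = 0$.
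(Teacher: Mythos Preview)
Your argument for the second inequality is essentially identical to the paper's: both expand $d(f,p_C(f)+t g)^2$ using bilinearity of the pairing on an apartment containing $p_C(f)$ and $g$, bound the cross-term $\langle f, p_C(f)+tg\rangle$ from below via the concavity inequality \eqref{eq:concave}, and read off the sign of the linear coefficient as $t\to 0^+$.

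For the first inequality the approaches diverge slightly. The paper simply invokes the general fact that the metric projection onto a closed convex subset of a CAT(0) space is non-expanding (citing \cite[II.2.4]{BH}), whereas you give a self-contained variational argument along the ray $t\mapsto (1-t)\cdot p_C(f)$, obtaining $\langle f,p_C(f)\rangle\geq \|p_C(f)\|^2$ and then concluding by Cauchy--Schwarz in a common apartment. Your route is more elementary and stays entirely within the building formalism, at the cost of needing the trivial filtration $\mathbf 0$ to lie in $C$; note however that the paper's citation has the same hidden hypothesis, since non-expansiveness yields $|p_C(f)|=d(p_C(f),\mathbf 0)\leq d(f,\mathbf 0)=|f|$ only when $\mathbf 0\in C$ (so that $p_C(\mathbf 0)=\mathbf 0$). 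In the sole application of the lemma (the proof of Theorem \ref{thm_comp tensor}) the convex set is $\mathbf F(D)\times\mathbf F(D')\subset\mathbf F(E)$, which certainly contains $\mathbf 0$, so both arguments are valid there.
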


\begin{proof}
The inequality $|p_C(f)|\leq |f|$ follows from the fact that the convex projection $p_C$ is non-expanding (\cite[II.2.4]{BH}). The second asssertion is also well-known and let us recall the proof for the sake of completeness. Note that, for $t\in \mathbb R_{>0}$ with $p_C(f)+tg\in C$,  
\begin{eqnarray*}
d(f,p_C(f))^2 & \leq & d(f,p_C(f)+tg)^2 \\ & =& |f|^2+|p_C(f)+tg|^2-2\langle f,p_C(f)+tg\rangle \\ & =& |f|^2+|p_C(f)|^2+|g|^2t^2+2\langle p_C(f),g\rangle t -2\langle f,p_C(f)+tg\rangle \\ & \leq & |f|^2+|p_C(f)|^2+|g|^2t^2+2\langle p_C(f),g\rangle t -2\langle f,p_C(f)\rangle -2\langle f,tg\rangle \\& = & |f|^2+|p_C(f)|^2+|g|^2t^2+2\langle p_C(f),g\rangle t -2\langle f,p_C(f)\rangle -2t\langle f,g\rangle \\ & =& |g|^2t^2+2\left(\langle p_C(f),g\rangle-\langle f,g\rangle\right) t + |f|^2+|p_C(f)|^2-2\langle f,p_C(f)\rangle  \\ & =& |g|^2t^2+2\left(\langle p_C(f),g\rangle-\langle f,g\rangle\right) t + d(f,p_C(f))^2.
\end{eqnarray*}
Here the second inequality follows from the fact that the pairing $\langle -,-\rangle $ is concave (cf. \eqref{eq:concave}). Since these inequalities hold for any sufficiently small positive real number $t$, it follows that the 
polynomial  
\[
|g|^2t^2+2\left(\langle p_C(f),g\rangle-\langle f,g\rangle\right) t 
\]
attains its minimum at some $t_0\leq 0$. As a result, we find $\langle p_C(f),g\rangle \geq \langle f,g\rangle$, as required. 
\end{proof}

\subsubsection{Vectorial Tits building and Harder-Narasimhan filtration} We use the notations fixed at the beginning of \S~\ref{sec:NIsoc}. 
Let $(D,\alpha)$ be a normed isocrystal over $K/F$, with $V$ the underlying $\breve F$-vector space. Let $\mathbf{F}(D)\subset \mathbf{F}(V)$ be the subset of $\mathbb R$-filtrations of $D$ by subisocrystals. As in the proof of Proposition \ref{prop:F(V)-complete}, the set of all subisocrystals of $D$, equipped with the partial order $\leq$ given by the inclusion relation, is naturally a bounded modular lattice of finite length $\leq \dim_{\breve F}D$. So by \cite[Proposition 2.7]{Cor}, the subset $\mathbf{F}(D)$, equipped with the distance function induced from that on $\mathbf{F}(V)$ is complete, thus is closed in $\mathbf{F}(D)$. The norm $\alpha$ on $D\otimes_{\breve F}K$ defines a map 
\[
\langle \alpha,-\rangle : \mathbf{F}(D)\longrightarrow \mathbb R,\quad f\longmapsto \langle \alpha,f\rangle =\sum_{s\in \mathbb R}s\cdot \mathrm{deg}(\mathrm{gr}_f^sD,\alpha).
\]
Using the vectorial Tits building, the Harder-Narasimhan filtration for the normed isocrystal $(D,\alpha)$ has the following characterization:

\begin{proposition}[{\cite[Proposition 2.12]{Cor}}] The Harder-Narasimhan filtration for $(D,\alpha)$ is the unique filtration $\mathcal F\in \mathbf{F}(D)$ such that 
\[
|\mathcal F|^2-2\langle \alpha,\mathcal F\rangle\leq |f|^2-2\langle \alpha,f\rangle 
\]
for any $f\in\mathbf{F}(D)$. 
    
\end{proposition}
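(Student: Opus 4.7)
The plan is to recognize the statement as a specialization of Cornut's general minimization characterization of Harder-Narasimhan filtrations on bounded modular lattices, and then to translate the present setting into his framework. The key observation is that normed isocrystals fit into Cornut's abstract formalism in exactly the same way that normed vector spaces do, once one restricts attention to subisocrystals (rather than to arbitrary subspaces) and uses the rank and degree functions on $\mathbf{NIsoc}_{\breve F|F}^K$ from Definition \ref{def:degree-of-normed-isocrystals}.

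First, I would set up the relevant modular lattice. As in the proof of Proposition \ref{prop:F(V)-complete}, the poset $\mathscr X_D$ of subisocrystals of $D$ ordered by inclusion is a bounded modular lattice of finite length $\leq \dim_{\breve F}D$, with join $D_1\vee D_2=D_1+D_2$ and meet $D_1\wedge D_2=D_1\cap D_2$. The function $\dim_{\breve F}$ restricted to $\mathscr X_D$ is a rank function in Cornut's sense, and the assignment $D'\mapsto \deg(D',\alpha|_{D'})$ is an additive degree function (additivity follows from that of $\nu(\mathbf o,\alpha|)$ and of the dimension of isocrystals on short exact sequences). This turns $\mathscr X_D$ into a slope-type lattice, producing an intrinsic pairing $\langle -,-\rangle$ on $\mathbf F(D)\subset \mathbf F(V)$ and an intrinsic linear functional $\langle \alpha,-\rangle$ that coincide with those appearing in the statement.

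Next, I would re-express the functional to be minimized. For any $f\in \mathbf F(D)$, a direct unwinding of the definitions yields
\[
|f|^2-2\langle \alpha,f\rangle \;=\;\sum_{s\in \mathbb R}\bigl(s^2\dim_{\breve F}\mathrm{gr}_f^sD - 2s\deg(\mathrm{gr}_f^sD,\alpha|)\bigr),
\]
which is precisely the quadratic functional analysed in \cite[\S~2]{Cor} for the slope data $(\dim_{\breve F},\deg)$ on $\mathscr X_D$. The general result \cite[Proposition 2.12]{Cor} then asserts that the associated abstract Harder-Narasimhan filtration is the unique minimizer of this functional on $\mathbf F(\mathscr X_D)=\mathbf F(D)$. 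The existence of a minimum rests on the completeness of $\mathbf F(D)$ (Proposition \ref{prop:F(V)-complete}), while uniqueness comes from a convexity/projection argument of exactly the type illustrated in Lemma \ref{lem:property-for-convex-projection}.

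The one point that needs a separate verification — and which I expect to be the only potential source of difficulty, though a very mild one — is the compatibility between our concrete definition of $\mathcal F_{\rm HN}(D,\alpha)$ (via the slope decomposition appearing after Definition \ref{def:degree-of-normed-isocrystals}) and the abstract Harder-Narasimhan filtration produced by Cornut's formalism applied to the slope data $(\dim_{\breve F},\deg)$ on $\mathscr X_D$. This reduces to checking that our notion of semistability of slope $\rho$ in Definition \ref{def:degree-of-normed-isocrystals}(3) coincides with the intrinsic semistability attached to these slope data on $\mathscr X_D$, which is immediate from the definitions. Once this dictionary is in place, the proposition follows directly from Cornut's general statement.
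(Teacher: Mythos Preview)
Your proposal is correct and matches the paper's approach: the paper does not give an independent proof but simply cites \cite[Proposition 2.12]{Cor}, having already noted (just before the statement) that the subisocrystals of $D$ form a bounded modular lattice to which Cornut's formalism applies. Your write-up spells out the dictionary (rank, degree, semistability) needed to invoke that citation, which is exactly the intended justification.
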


On can slightly strengthen the above inequality:

\begin{corollary}\label{cor:inequality-for-HN} Let $(D,\alpha)$ be a normed isocrystal, with $\mathcal F=\mathcal F_{\rm HN}(D,\alpha)$ its Harder-Narasimhan filtration. Then for any $f\in \mathbf{F}(D)$, 
\[
|\mathcal F|^2-2\langle \alpha,\mathcal F\rangle\leq |f|^2-2\langle \alpha,f\rangle-\frac{(\deg(f)-\deg(D,\alpha))^2}{\dim_{\breve F}D}. 
\]   \end{corollary}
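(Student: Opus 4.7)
The plan is to apply the preceding proposition not to the given filtration $f$, but to a one-parameter family of ``shifts'' $f_c$ of $f$, and then to optimize the resulting inequality over the real parameter $c$. Concretely, for $c \in \mathbb R$ and $f \in \mathbf{F}(D)$, I would set $V_{f_c}^{a} := V_f^{a-c}$, which still defines a separated exhaustive decreasing $\mathbb R$-filtration of $D$ by subisocrystals and thus an element of $\mathbf{F}(D)$. The graded pieces are simply relabeled: $\mathrm{gr}_{f_c}^{s+c}D = \mathrm{gr}_f^s D$.

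Next I would compute how the three quantities appearing in the proposition transform under this shift. Writing $n := \dim_{\breve F}D$, one gets directly from the definitions that
\[
|f_c|^2 \;=\; \sum_{s}(s+c)^2 \dim_{\breve F}\mathrm{gr}_f^s D \;=\; |f|^2 + 2c\deg(f) + c^2 n.
\]
On the other hand, since the degree is additive on short exact sequences of normed isocrystals, one has $\sum_s \deg(\mathrm{gr}_f^s D,\alpha) = \deg(D,\alpha)$, and therefore
\[
\langle \alpha, f_c\rangle \;=\; \sum_{s}(s+c)\deg(\mathrm{gr}_f^s D,\alpha) \;=\; \langle \alpha,f\rangle + c\deg(D,\alpha).
\]
Plugging these into the inequality of the proposition applied to $f_c$ gives, for every $c\in \mathbb R$,
\[
|\mathcal F|^2 - 2\langle \alpha,\mathcal F\rangle \;\leq\; |f|^2 - 2\langle \alpha,f\rangle + 2c\bigl(\deg(f) - \deg(D,\alpha)\bigr) + c^2 n.
\]

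Finally, the right-hand side is a quadratic in $c$ with positive leading coefficient $n$, so it attains its minimum at $c_0 = -(\deg(f) - \deg(D,\alpha))/n$, where the added correction equals $-(\deg(f) - \deg(D,\alpha))^2/n$. Substituting $c = c_0$ yields exactly the claimed sharpening. There is no genuine obstacle here: the only content beyond the proposition is the observation that $\mathbf{F}(D)$ is stable under real shifts and the one-variable optimization; the key bookkeeping step is verifying that the degree and squared-norm of $f_c$ depend on $c$ in the clean affine/quadratic way recorded above, which in turn rests on the additivity of $\deg(-,\alpha)$ on short exact sequences.
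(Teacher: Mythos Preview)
Your proposal is correct and is essentially identical to the paper's proof: the paper also introduces the shifted filtration $f_r$ with $D_{f_r}^a:=D_f^{a-r}$, expands $|f_r|^2-2\langle\alpha,f_r\rangle$ as a quadratic in $r$, and then plugs in the minimizing value $r=-(\deg(f)-\deg(D,\alpha))/\dim_{\breve F}D$.
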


\begin{proof}
For $r\in \mathbb R$, let $f_r$ be the $\mathbb R$-filtration on $D$ with 
\[
D_{f_r}^a:=D_f^{a-r}, \quad \textrm{for all }a\in \mathbb R.  
\]
Then 
\begin{eqnarray*}
|\mathcal F|^2-2\langle \alpha,\mathcal F\rangle & \leq &  
|f_r|^2-2\langle \alpha, f_r\rangle \\ & =& \sum_{a\in \mathbb R}(a+r)^2\dim_{\breve{F}}\mathrm{gr}_f^aD-2\sum_{a\in \mathbb R}(a+r)\deg(\mathrm{gr}_f^a,\alpha). \\ &  =& r^2\cdot \dim_{\breve F}D+2r\cdot (\deg(f)-\mathrm{deg}(D,\alpha))+|f|^2-2\langle \alpha, f\rangle.   \end{eqnarray*}
Since this holds for any $r$, taking $r=-\frac{\deg(f)-\deg(D,\alpha)}{\dim_{\breve F}D}$, we obtain the desired inequality. 
\end{proof}

\subsubsection{Compatibility of Harder-Narasimhan filtration for normed isocrystals with tensor products}

Let $(D,\alpha)$ and $(D',\alpha')$ be two normed isocrystals on $K/F$, with underlying $\breve F$-vector spaces $V$ and $V'$ respectively. Consider the isocrystal
\[
E=D\oplus D'\oplus (D\otimes D')
\]
equipped with the norm $\beta$ on $E_K$ induced from $\alpha$ and $\alpha'$. We have the following map 
\[
\mathbf F(D)\times \mathbf{F}(D')\longrightarrow \mathbf{F}(E), \quad (f,f' )\mapsto f\oplus f' \oplus (f\otimes f'). 
\]

\begin{lemma} The above map is injective, and identifies $\mathbf{F}(D)\times \mathbf{F}(D')$ as a closed convex subset of $\mathbf{F}(E)$. 
\end{lemma}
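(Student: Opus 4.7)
The plan is to verify injectivity, convexity, and closedness in turn. Abbreviate the map in question as $\Phi(f,f'):=f\oplus f'\oplus (f\otimes f')$.

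Injectivity is immediate: $D,D'\subset E$ are subisocrystals via the natural inclusions, so for every $(f,f')\in\mathbf F(D)\times\mathbf F(D')$ and every $a\in\mathbb R$ we have
\[
\Phi(f,f')^a\cap D=f^a\qquad\text{and}\qquad \Phi(f,f')^a\cap D'=f'^a,
\]
so the restriction maps $R_D,R_{D'}\colon\mathbf F(E)\to\mathbf F(D),\mathbf F(D')$ sending $h\mapsto (h^a\cap D)_a$, resp.\ $(h^a\cap D')_a$, provide a left inverse to $\Phi$.

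The main step is convexity. The identities
\[
\lambda\cdot (f\oplus f')=(\lambda f)\oplus(\lambda f')\quad\text{and}\quad (f_1\oplus f_1')+(f_2\oplus f_2')=(f_1+f_2)\oplus(f_1'+f_2')
\]
follow at once from the definitions of $+$ and $\lambda\cdot$ recalled earlier in the text, so convexity of the image of $\Phi$ reduces to the tensor-product identity
\[
\lambda(f_1\otimes f_1')+(1-\lambda)(f_2\otimes f_2')=\bigl(\lambda f_1+(1-\lambda) f_2\bigr)\otimes \bigl(\lambda f_1'+(1-\lambda) f_2'\bigr),\qquad \lambda\in[0,1].
\]
To prove this I would pick $\breve F$-bases $\underline e$ of $D$ and $\underline e'$ of $D'$ that split $\{f_1,f_2\}$ and $\{f_1',f_2'\}$ respectively and are compatible with the isocrystal structure (so that every subspace spanned by a subcollection of basis vectors is a subisocrystal); such bases are furnished by Cornut's apartment formalism \cite{Cor} applied to the bounded modular lattices of subisocrystals, exactly as in the proof of Proposition~\ref{prop:F(V)-complete}. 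In the apartment $\mathbf F(\underline e\otimes\underline e')$ the tensor product of filtrations is described in coordinates by the $\mathbb R$-linear map $(\alpha,\alpha')\mapsto (\alpha_i+\alpha'_j)_{i,j}$, so the identity reduces to the trivial fact that linear maps preserve convex combinations. Combined with the $\oplus$ compatibilities this yields $\lambda\Phi(f_1,f_1')+(1-\lambda)\Phi(f_2,f_2')=\Phi\bigl(\lambda f_1+(1-\lambda) f_2,\,\lambda f_1'+(1-\lambda) f_2'\bigr)$, whence convexity.

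Finally, closedness follows from continuity. Working in any common apartment compatible with the decomposition $E=D\oplus D'\oplus (D\otimes D')$ shows that $R_D,R_{D'}$ are $1$-Lipschitz (they are orthogonal projections in apartment coordinates) and $\Phi$ is Lipschitz on bounded subsets. Given $\Phi(f_n,f_n')\to h$ in $\mathbf F(E)$, applying $R_D,R_{D'}$ yields $f_n\to R_D(h)=:f$ and $f_n'\to R_{D'}(h)=:f'$, after which continuity of $\Phi$ gives $h=\Phi(f,f')$. The main technical obstacle is the simultaneous-splitting step in the convexity argument: the bases $\underline e,\underline e'$ must respect both the given filtrations and the subisocrystal structure, so that the apartment computations actually take place in $\mathbf F(D)\times \mathbf F(D')$ rather than merely in the ambient buildings of the underlying $\breve F$-vector spaces. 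This is precisely what Cornut's lattice-theoretic apartment formalism provides.
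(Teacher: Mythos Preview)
Your convexity argument is exactly the paper's: pick common splitting bases $\underline e,\underline e'$ and verify $(f+g)\otimes(f'+g')=f\otimes f'+g\otimes g'$ in apartment coordinates. Your ``main technical obstacle'' is not one: the bases $\underline e,\underline e'$ need only split the filtrations as $\breve F$-vector-space filtrations, since the operations $+$ and $\lambda\cdot(-)$ are given by the formulas $V_{f+g}^a=\sum_{b+c=a}V_f^b\cap V_g^c$ and $V_{\lambda f}^a=V_f^{\lambda^{-1}a}$, which automatically return subisocrystals when applied to subisocrystals. So the identity is checked in the ambient $\mathbf F(V)\times\mathbf F(V')$ and the result lands in $\mathbf F(D)\times\mathbf F(D')$ for free.

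For closedness the paper argues more directly: for points $\Phi(f,f'),\Phi(g,g')$ in the image one computes (in the common apartment $\underline e\otimes\underline e'$) that
\[
d(f,g)^2+d(f',g')^2\ \le\ d\bigl(\Phi(f,f'),\Phi(g,g')\bigr)^2\ \le\ (2m+1)\,d(f,g)^2+(2n+1)\,d(f',g')^2,
\]
so the induced metric on the image is equivalent to the product metric, and completeness of $\mathbf F(D),\mathbf F(D')$ gives completeness, hence closedness, of the image. Your route via restriction maps $R_D,R_{D'}$ and continuity of $\Phi$ is essentially the same computation repackaged, but your phrasing has a small gap: you write ``$f_n\to R_D(h)$'', which presumes $R_D$ is continuous at the limit $h\in\mathbf F(E)$, yet your apartment argument only justifies $1$-Lipschitzness of $R_D$ on the image of $\Phi$ (two arbitrary elements of $\mathbf F(E)$ need not sit in a common apartment compatible with the decomposition). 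The fix is immediate: from $1$-Lipschitzness on the image you get that $(f_n)$ is Cauchy, hence $f_n\to f$ for some $f$ by completeness, and then continuity of $\Phi$ forces $h=\Phi(f,f')$.
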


\begin{proof} The injectivity of the above map  is clear. So we can view $\mathbf{F}(D)\times \mathbf{F}(D')$ as a subset of $\mathbf{F}(E)$. To check that $\mathbf F(D)\times \mathbf F(D')$ is convex in $\mathbf F(E)$, we claim that, for $(f,f')$ and $(g,g')$ in $\mathbf F(D)\times \mathbf F(D')$, \[
(f+g)\otimes (f'+g')=f\otimes f'+g\otimes g'\in \mathbf F(D\otimes D').
\]
For this, let $\underline{e}=(e_1,\ldots,e_n)$ (resp.  $\underline e'=(e_{1}',\ldots, e_m')$) be an $\breve F$-basis that split $f$ and $g$ (resp. $f'$ and $g'$) at the same time, with the corresponding $n$-tuples (resp. $m$-tuples) $\alpha=(a_1,\ldots, a_n)$ and $\beta=(b_1,\ldots,b_n)$ (resp. $\alpha'=(a_1',\ldots,a_m')$ and $\beta'=(b_1',\ldots,b_m')$). Then the $\breve F$-basis $(e_i\otimes e_{j}')_{i,j}$ splits both the filtrations $(f+g)\otimes(f'+g')$ and $f\otimes f'+g\otimes g'$, and the corresponding $mn$-tuples are the same, given by 
\[
\left((a_i+b_i)+(a_j'+b_j' )\right)_{i,j}=\left( (a_i+a_j')+(b_i+b_{j} ')\right)_{i,j}, 
\]
giving our claim. 

It remains to check that $\mathbf{F}(D)\times \mathbf{F}(D')\subset \mathbf{F}(E)$ is closed. Let $(f,f')$ and $(g,g')$ are two elements in $\mathbf{F}(D)\times \mathbf F(D')$. Viewed as two elements in $\mathbf F(E)$, their distance is 
\[
\sqrt{d(f,g)^2+d(f',g')^2+d(f\otimes f',g\otimes g')^2}.
\]
On the other hand, consider the $\breve F$-bases $\underline e$ and $\underline e '$, the tuples $\alpha, \alpha',\beta$ and $\beta'$ as above, then we have  
\begin{eqnarray*}
d(f\otimes f',g\otimes g')^2 & = & \sum_{i=1}^n\sum_{j=1}^m (a_i+a_j'-b_i-b_j')^2 \\ & \leq & \sum_{i=1}^n\sum_{j=1}^m 2\left((a_i-b_i)^2+(a_j'-b_j')^2\right) \\ & =& 2m\cdot d(f,g)^2+2n\cdot d(f',g'), 
\end{eqnarray*}
with $m=\dim_{\breve F}D'$ and $n=\dim_{\breve F}D$. Consequently, 
\begin{eqnarray*}
\sqrt{d(f,g)^2+d(f',g')^2} & \leq&  \sqrt{d(f,g)^2+d(f',g')^2+d(f\otimes f',g\otimes g')^2} \\ & \leq & \sqrt{(2m+1)d(f,g)^2+(2n+1)d(f',g')^2}.
\end{eqnarray*}
In particular, the distance function on $\mathbf{F}(D)\times \mathbf F(D')$ induced from the distance function on $\mathbf{F}(E)$ is equivalent to the product of the distance functions on $\mathbf{F}(D)$ and on $\mathbf{F}(D')$. Since $(\mathbf{F}(D),d)$ and $(\mathbf{F}(D'),d)$ are complete, it follows that as a subset of $\mathbf {F}(E)$, $\mathbf{F}(D)\times \mathbf{F}(D')$ is also complete, thus is closed, as required. 
\end{proof}

As a result, we can consider the convex projection of $\mathbf{F}(E)$ to the closed convex subset $\mathbf{F}(D)\times \mathbf {F}(D')$, denoted by 
\[
p: \mathbf{F}(E)\longrightarrow \mathbf{F}(D)\times \mathbf{F}(D'). 
\]
in the sequel. Recall that the normed isocrystals have underlying $\breve{F}$-vector spaces $V$ and $V'$ respectively. Let 
\[
W=V\oplus V'\oplus (V\otimes V')
\]
be the underlying $\breve{F}$-vector space of the isocrystal $E$. A similar argument shows that the natural map 
\begin{equation}\label{eq:V-tensor-Vprime-to-W}
\mathbf{F}(V)\times \mathbf{F}(V')\longrightarrow F(W), \quad (f,f' )\longmapsto f\oplus f'\oplus (f\otimes f') 
\end{equation}
identifies $\mathbf{F}(V)\times \mathbf{F}(V' )$ as a closed convex subset of $\mathbf{F}(W)$. Let us denote the corresponding convex projection by $q$. 

\begin{lemma}\label{lem:two-convex-projections} The square below is commutative:
\[
\xymatrix{\mathbf{F}(E)\ar[r]^<<<<<p\ar@{^(->}[d] & \mathbf{F}(D)\times \mathbf{F}(D') \ar@{^(->}[d] \\ \mathbf{F}(W)\ar[r]^<<<<<{q} & \mathbf{F}(V)\times \mathbf{F}(V')}
\]
\end{lemma}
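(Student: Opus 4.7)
The plan is to show that for every $f \in \mathbf{F}(E)$, the projection $q(f)$ already lies inside the smaller subset $\mathbf{F}(D)\times \mathbf{F}(D')$; by the uniqueness of the convex projection and the fact that the distance on $\mathbf{F}(E)$ is the restriction of the distance on $\mathbf{F}(W)$, this will force $q(f) = p(f)$, giving commutativity of the square.

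The crucial tool is the Frobenius. The $\sigma$-semilinear bijection $\phi$ of $E$ stabilizes each of the summands $V$, $V'$, $V\otimes V'$ of $W$, and applying $\phi$ to any $\breve{F}$-subspace $U\subseteq W$ produces another $\breve{F}$-subspace of the same $\breve{F}$-dimension. Hence the induced involution $\phi^{\ast}$ on $\mathbf{F}(W)$, sending $(W_f^a)_{a}$ to $(\phi(W_f^a))_a$, preserves the dimensions of all graded pieces $\mathrm{gr}^s_f\mathrm{gr}^t_g W$ and is therefore an isometry for the metric $d$. Moreover $\phi^{\ast}$ sends $\mathbf{F}(V)\times\mathbf{F}(V')$ to itself (since it preserves the direct-sum decomposition of $W$), and a filtration $f$ lies in $\mathbf{F}(E)\subseteq \mathbf{F}(W)$ precisely when $\phi^{\ast}f=f$ (i.e., when each step is $\phi$-stable, hence a subisocrystal). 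The analogous statements hold for $\mathbf{F}(D)\times\mathbf{F}(D')\subseteq \mathbf{F}(E)$.

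The first main step is to observe that the convex projection $q$ is equivariant for $\phi^{\ast}$: because $\phi^{\ast}$ is an isometry of $\mathbf{F}(W)$ stabilizing $\mathbf{F}(V)\times\mathbf{F}(V')$, the uniqueness of the minimizer of $d(f,-)$ on the latter closed convex set yields $\phi^{\ast}q(f)=q(\phi^{\ast}f)$. Applying this to $f\in \mathbf{F}(E)$, where $\phi^{\ast}f=f$, we find $\phi^{\ast}q(f)=q(f)$. Writing $q(f)=g\oplus g'\oplus(g\otimes g')$ with $g\in \mathbf{F}(V)$, $g'\in \mathbf{F}(V')$, the $\phi^{\ast}$-fixity says that each step of $g$, of $g'$, and of $g\otimes g'$ is stable under $\phi$; in particular $g\in\mathbf{F}(D)$ and $g'\in\mathbf{F}(D')$, so $q(f)\in \mathbf{F}(D)\times \mathbf{F}(D')$.

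The final step is to conclude: since $\mathbf{F}(D)\times \mathbf{F}(D')\subseteq \mathbf{F}(V)\times\mathbf{F}(V')$, and the metric on $\mathbf{F}(E)$ coincides with the one induced from $\mathbf{F}(W)$, the element $q(f)$, which minimizes $d(f,-)$ over the larger closed convex set $\mathbf{F}(V)\times\mathbf{F}(V')$ and happens to lie in $\mathbf{F}(D)\times\mathbf{F}(D')$, also minimizes $d(f,-)$ over the smaller closed convex set. Uniqueness of the convex projection then gives $q(f)=p(f)$, as desired. The only delicate point is the verification that $\phi^{\ast}$ acts as an isometry, which boils down to the fact that a $\sigma$-semilinear automorphism of an $\breve{F}$-vector space preserves the dimension of subspaces and hence of all graded pieces appearing in the pairing $\langle-,-\rangle$.
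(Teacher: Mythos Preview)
Your proof is correct and follows essentially the same approach as the paper: show that $q(f)$ is Frobenius-fixed by exploiting that $\phi^{\ast}$ is an isometry of $\mathbf{F}(W)$ stabilizing the closed convex subset $\mathbf{F}(V)\times\mathbf{F}(V')$, hence commutes with the convex projection $q$, and then conclude by uniqueness of the convex projection. One small terminological slip: $\phi^{\ast}$ is a bijection (and an isometry), but in general not an involution; fortunately your argument never uses that it squares to the identity.
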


\begin{proof} By the uniqueness of convex projection, it suffices to check that, for $f\in \mathbf{F}(E)$ viewed as an element in $\mathbf{F}(W)$, its convex projection $q(f)\in \mathbf{F}(V)\times \mathbf F(W)$ is an $\mathbb R$-filtration by subisocrystals. To show this, by abuse of notation, we denote by $\phi$ the Frobenius on the related isocrystals in the proof. So for $\mathcal D\in \{D, D',E\} $, with underlying $\breve F$-vector space $\mathcal V$, we have an isomorphism of $\breve F$-spaces
\[
\phi\otimes 1: \mathcal V\otimes_{\breve F,\sigma}\breve F\longrightarrow \mathcal V.
\]
In particular, for $g\in \mathbf{F}(\mathcal V)$, we obtain a filtration $
\phi(g)$ of $\mathcal V$ by setting 
\[
\mathcal V_{\phi(g)}^a=(\phi\otimes 1)(\mathcal V_{g}^a)\subset \mathcal V. 
\]
As the Frobenius map $\phi$ is bijective, we obtain bijection
\[
\phi: \mathbf{F}(\mathcal V)\rightarrow \mathbf{F}(\mathcal V)
\]
and $\mathbf{F}(\mathcal D)\subset \mathbf{F}(\mathcal V)$ is precisely the subset of elements fixed by $\phi$. For $g'$ a second filtration of $\mathbf{F}(\mathcal V)$, we have 
\[
\langle g,g'\rangle=\langle \phi(g),\phi(g')\rangle
\]
and thus $d(\phi(g),\phi(g' ))=d(g,g' )$. Moreover, the map \eqref{eq:V-tensor-Vprime-to-W} is equivariant with respect to the bijection $\phi$ on both sides. 
It follows that, for $f\in \mathbf{F}(E)\subset \mathbf F(W)$, $q(f)$ and $\phi(q(f))$ are two elements in $\mathbf F(V)\times \mathbf{F}(V' )$ minimizing the function
\[
\mathbf{F}(V)\times \mathbf{F}(V')\ni g\longmapsto d( f,g).
\]
As a result,  $\phi(q(f))=q(f)\in \mathbf{F}(V)\times \mathbf{F}(V')$ and thus $q(f)\in \mathbf{F}(D)\times \mathbf{F}(D' )$, as required. 
\end{proof}

We are now ready to prove the main result of this section. 

\begin{proof}[Proof of Theorem \ref{thm_comp tensor}]
Let 
\[
\mathcal F=(\mathcal F_{\rm HN}(D,\alpha),\mathcal F_{\rm HN}(D' ,\alpha' ))\in \mathbf{F}(D)\times \mathbf{F}(D')
\]
and view it as the  element 
\[\mathcal F_{\rm HN}(D,\alpha)\oplus \mathcal F_{\rm HN}(D',\alpha')\oplus \left(\mathcal F_{\rm HN}(D,\alpha)\otimes \mathcal F_{\rm HN}(D',\alpha')\right)\]
in $\mathbf {F}(E)$. We want to show that this coincides with the Harder-Narasimhan filtration $\mathcal F_{\rm HN}(E,\beta)$ for $(E, \beta)$. For this, it suffices to check that $\mathcal F$ satisfies the inequality characterizing the Harder-Narasimhan filtration $\mathcal F_{\rm HN}(E,\beta)$:
\begin{equation}\label{eq:inequality-for-HN}
|\mathcal F|^2-2\langle \beta,\mathcal F\rangle \leq |f|^2-2\langle \beta,f\rangle, \quad \forall f\in \mathbf{F}(E). 
\end{equation}

For the moment, let us check the inequality \eqref{eq:inequality-for-HN} for $f=(g,g' )\in \mathbf{F}(D)\times \mathbf F(D')\subset \mathbf F(E)$. Firstly, 
\begin{eqnarray*}
|f|^2& =& \sum_{a\in \mathbb R} a^2\dim \mathrm{gr}_f^ a(D\oplus D' \oplus D\otimes D' )    \\ & =& \sum_{a\in \mathbb R} a^2\left(\dim \mathrm{gr}_g^ a(D)\oplus \mathrm{gr}_{g'}^a(D') \oplus \mathrm{gr}_{g\otimes g' }^ a(D\otimes D') )\right)\\ & =& |g|^2+|h|^2+\sum_{a}a^2\dim\left(\oplus_{r+s=a}\mathrm{gr}_g^r D\otimes \mathrm{gr}_{g'}^s D'  \right) \\ & =& |g|^2+|g' |^2+\sum_{r,s\in \mathbb R}(r+s)^2\dim(\mathrm{gr}_g^r D)\cdot \dim(\mathrm{gr}_{g'}^s D')  \\ & =& (\dim D'+1)|g|^2+(\dim D+1)|g'|^2+2\deg(g)\deg(g').  
\end{eqnarray*}
Secondly, 
\begin{eqnarray*}
\langle \alpha\otimes \alpha',g\otimes g' \rangle  & =& \sum_{a\in \mathbb R}a \cdot \deg(\mathrm{gr}_{g\otimes g' }^a,\alpha\otimes \alpha' ) \\ & =& \sum_{r,s\in \mathbb R}(r+s)\deg(\mathrm{gr}_g^rD\otimes \mathrm{gr}_{g' }^s D',\alpha\otimes \alpha' ) \\  & =& \sum_{r,s\in \mathbb R}(r+s)\left(\deg(\mathrm{gr}_g^rD,\alpha)\cdot \dim (\mathrm{gr}_{g' }^sD') + \deg(\mathrm{gr}_{g' }^sD',\alpha')\cdot \dim(\mathrm{gr}_{g }^r D) \right) \\ & =& \langle \alpha,g\rangle \dim D'+\langle \alpha',g' \rangle \dim D+\deg(D,\alpha)\deg(g')+\deg(D',\alpha' )\deg(g), 
\end{eqnarray*}
yielding 
\begin{eqnarray*}
\langle \beta, f\rangle &=& \langle \alpha,g\rangle+\langle \alpha',g' \rangle+\langle \alpha\otimes \alpha',g\otimes g' \rangle \\ & =& \langle \alpha,g\rangle (\dim D'+1)+\langle \alpha',g' \rangle (\dim D+1)+\deg(D,\alpha)\deg(g' )+\deg(D',\alpha' )\deg(g). 
\end{eqnarray*}
So 
\begin{eqnarray*}
|f|^2-2\langle \beta,f\rangle & =& (\dim D'+1)(|g|^2-2\langle \alpha,g\rangle) +(\dim D+1)(|g' |^2-2\langle \alpha' ,g'\rangle) \\ & & +2\deg(g)\deg(g')-2\deg(D,\alpha)\deg(g')-2\deg(D',\alpha' )\deg(g).    
\end{eqnarray*}
Applying this equality to $f=\mathcal F$, and noticing that 
\[
\deg(\mathcal F_{\rm HN}(D,\alpha))=\deg(D,\alpha) \quad \textrm{and}\quad \deg(\mathcal F_{\rm HN}(D' ,\alpha' ))=\deg(D',\alpha'),
\]
we find 
\begin{eqnarray*}
|\mathcal F|^2-2\langle \beta,\mathcal F\rangle & =& (\dim D'+1)(|\mathcal F_{\rm HN}(D,\alpha)|^2-2\langle \alpha,\mathcal F_{\rm HN}(D,\alpha)\rangle)\\ & &  +(\dim D+1)(|\mathcal F_{\rm HN}(D',\alpha' )|^2-2\langle \alpha' ,\mathcal F_{\rm HN}(D',\alpha' )\rangle) \\ & & -2\deg(D,\alpha)\deg(D',\alpha' ).    
\end{eqnarray*}
Using Corollary \ref{cor:inequality-for-HN}, we deduce 
\[
\begin{array}{rl}
& |\mathcal F|^2-2\langle \beta,\mathcal F\rangle -(|f|^2-2\langle \beta,f\rangle ) \\   \leq & -\left( \frac{\dim D '+1}{\dim D}(\deg(g)-\deg(D,\alpha))^2+\frac{\dim D +1}{\dim D' }(\deg(g')-\deg(D',\alpha'))^2\right) \\   & -2(\deg(g)-\deg(D,\alpha))(\deg(g')-\deg(D',\alpha' )) \\   \leq & -\left( \frac{\dim D '}{\dim D}(\deg(g)-\deg(D,\alpha))^2+\frac{\dim D }{\dim D' }(\deg(g')-\deg(D',\alpha'))^2\right) \\   & -2(\deg(g)-\deg(D,\alpha))(\deg(g')-\deg(D',\alpha' )) \\ \leq & 0
\end{array}
\]
Here for the last inequality, we use the classical inequality 
\[
a^2+b^2\geq 2ab, \quad \forall a,b\in \mathbb R. 
\]
As a result, we see that the inequality \eqref{eq:inequality-for-HN} holds once $f\in \mathbf{F}(D)\times \mathbf{F}(D')\subset \mathbf{F}(E)$. 

For general $f\in \mathbf{F}(E)$, consider its convex projection $p(f)$ to $\mathbf{F}(D)\times \mathbf{F}(D')$. According to Lemma \ref{lem:property-for-convex-projection}, we have 
\[
|p(f)|^2\leq |f|^2.
\]
On the other hand, viewing $f$ as an $\mathbb R$-filtration of the underlying $\breve{F}$-vector space $W$ of $E$, we have 
\[
\deg(\mathrm{gr}_f^ aE,\beta)=\deg(\mathrm{gr}_f^aW, \beta)-\dim(\mathrm{gr}_f^aE),
\]
where the first term on the right hand side is the degree of the normed vector space $\mathrm{gr}_f^a W$, equipped with the norm induced from $\beta$, and the second term $\dim(\mathrm{gr}_f^aE)$ denotes the dimension of the isocrystsal $\mathrm{gr}_f^aW$. Hence 
\begin{eqnarray*}
\langle \beta,f\rangle& = & \sum_{a\in \mathbb R}a\cdot\deg(\mathrm{gr}_f^{a}E,\beta) \\& = & \sum_{a\in \mathbb R}a\cdot\deg(\mathrm{gr}_f^{a}W,\beta)-\sum_a a\cdot \dim(\mathrm{gr}_f^ aE).
\end{eqnarray*}
Now the first sum
\[
\langle\beta,f\rangle ' :=\sum_a a\cdot \deg(\mathrm{gr}_f^aW,\beta)
\]
has been already considered by Cornut in \cite[\S~5.2]{Cor}. As $p(f)$ is also the convex projection of $f\in \mathbf{F}(E)\subset \mathbf F(W)$ to $\mathbf{F}(V)\times \mathbf{F}(V')\subset \mathbf F(W)$ according to Lemma \ref{lem:two-convex-projections}, by \cite[Proposition 5.6 and \S~5.2.13]{Cor}, we have 
\begin{equation}\label{eq:inequality-of-Cornut}
\langle \beta,f\rangle ' \leq \langle \beta,p(f)\rangle '. 
\end{equation}
Furthermore, if we write $\mathcal F_{\rm N}\in \mathbf{F}(E)$ the Newton filtration on the isocrystal $E$ and $\mathcal F_{\rm N}^*$ its opposite (defined from the slope decomposition), it follows that
\[
\sum_a a\cdot \dim(\mathrm{gr}_f^ aE)=\langle\mathcal F_{\rm N},f\rangle=-\langle \mathcal F_{\rm N}^*, f\rangle. 
\]
Clearly, $\mathcal F_{\rm N},\mathcal F_{\rm N}^*\in \mathbf{F}(D)\times \mathbf{F}(D' )\subset \mathbf F(E)$, and both $ t\cdot \mathcal F_N+p(f)$ and $t\cdot \mathcal F_{\rm N}^*+p(f)$ are contained in $\mathbf{F}(D)\times \mathbf{F}(D')$ for any $t\in \mathbb R_{>0}$ sufficiently closed to $0$. By Lemma \ref{lem:property-for-convex-projection} we have 
\[
\langle \mathcal F_{\rm N},f\rangle \leq \langle \mathcal F_{\rm N},p(f)\rangle \quad \textrm{and}\quad \langle \mathcal F_{\rm N}^*,f\rangle \leq \langle \mathcal F_{\rm N}^*,p(f)\rangle,
\]
and thus an equality 
\[
\langle \mathcal F_{\rm N},f\rangle =\langle \mathcal F_{\rm N},p(f)\rangle.
\]
Combining with the inequality  \eqref{eq:inequality-of-Cornut}, we find
\[
\langle \beta,f\rangle =\langle \beta,f\rangle'-\langle \mathcal F_{\rm N},f\rangle \leq \langle \beta, p(f)\rangle'-\langle \mathcal F_{\rm N},p(f)\rangle=\langle \beta, p(f)\rangle,
\]
and thus 
\begin{eqnarray*}
|\mathcal F|^2-2\langle \beta,\mathcal F\rangle &\leq & |p(f)|^2-2\langle \beta,p(f)\rangle \\ & \leq & |f|^2-2\langle \beta,f\rangle
\end{eqnarray*}
for any $f\in \mathbf{F}(E)$, where the first inequality follows from the previous step. Therefore we find 
\[
\mathcal F_{\rm HN}(D,\alpha)\oplus \mathcal F_{\rm HN}(D',\alpha')\oplus \left(\mathcal F_{\rm HN}(D,\alpha)\otimes \mathcal F_{\rm HN}(D',\alpha')\right)=\mathcal F_{\rm HN}(E,\beta),
\]
while 
\[
\mathcal{F}_{\rm HN}(E,\beta)=\mathcal F_{\rm HN}(D,\alpha)\oplus \mathcal F_{\rm HN}(D',\alpha')\oplus \mathcal F_{\rm HN}(D\otimes D',\alpha\otimes \alpha')
\]
by Proposition \ref{prop:HN-filtration-and-direct-sim}. 
In other words, 
\[
\mathcal F_{\rm HN}(D,\alpha)\otimes \mathcal F_{\rm HN}(D' ,\alpha' )=\mathcal F_{\rm HN}(D\otimes D',\alpha\otimes \alpha' ),
\]
and the functor \eqref{eq:F_HN} is compatible with tensor products. 
\end{proof}

\subsection{A variant}\label{Sec_variant} Let 
\[
\mathbf{BunIsoc}_{\breve F|F}^{K}
\]
be the quasi-abelian $F$-linear $\otimes$-category whose objects are pairs $(D,\Xi)$,
where $D$ is an isocrystal over $\breve F|F$ and $\Xi\subset D\otimes_{\breve F}K$ is a lattice. A morphism \[
f:(D_1,\Xi_1)\longrightarrow (D_2,\Xi_2)
\]
in $\mathbf{BunIsoc}_{\breve F|F}^{K}$ consists of a morphism $f:D_1\ra D_2$ of isocrystals with 
\[
(f\otimes 1)(\Xi_1)\subseteq \Xi_2.
\]
As in \cite[5.2.3]{Cor}, there is a natural $F$-linear exact $\otimes$-functor
\begin{equation}\label{eq:gauge}
\mathbf{BunIsoc}_{\breve F|F}^K\longrightarrow \mathbf{NIsoc}_{\breve F|F}^{K}, \quad (D,\Xi)\mapsto (D,\alpha_{\Xi}).
\end{equation}
Here $\alpha_{\Xi}$ denotes the \emph{gauge norm} of the lattice $\Xi\subset D\otimes_{\breve F}K$: for $v\in D\otimes_{\breve F}K$
\[
\alpha_{\Xi}(v):=\inf\{|\lambda|: v\in \lambda \Xi\}.
\]
For example, for the lattice $\Xi_0:=D\otimes_{\breve F}\cO_K$, the gauge norm $\alpha_{\Xi_0}$ is the norm $\mathbf{o}$ used in the definition of the degree of a normed isocrystal (Definition \ref{def:degree-of-normed-isocrystals}).
\begin{remark}\label{rem:HN-for-lattice} The functor \eqref{eq:gauge} above identifies  $\mathbf{BunIsoc}_{\breve F|F}^K$ with a full subcategory of $\mathbf{NIsoc}_{\breve F|F}^K$, made of those $(D,\alpha)$ such that $\alpha(D\otimes_{\breve F}K)\subset |K|$, which is stable under strict subobjects and quotients.
\end{remark}

For $(D,\Xi)$ an object in $\mathbf{BunIsoc}_{\breve F|F}^{K}$, define
\[
\mathrm{rank}(D,\Xi):=\dim_{\breve F} D, \quad \textrm{and}\quad \deg(D,\Xi):=\deg(D,\alpha_{\Xi}).
\]

\begin{remark}\label{rem:lattice to filtration} Let $(D,\Xi)$ be an object in $\mathbf{BunIsoc}_{\breve F|F}^{K}$. Let $\mathcal F_{\Xi}$ be the \emph{residue filtration} on $D\otimes_{\breve F}C$ defined by the lattice $\Xi$, with $C$ the residue field of $K$: for $i\in \mathbb Z$,
\[
\mathcal F_{\Xi}^{i}(D\otimes C)=\frac{(\pi^{i}\Xi)\cap \Xi_0+\pi\Xi_0}{\pi\Xi_0}\hookrightarrow D\otimes_{\breve F}C.
\]
Here $\Xi_0=D\otimes_{\breve F}\cO_K$. In particular, we obtain a filtered isocrystal $(D,\mathcal F_{\Xi})$, and this construction yields a functor 
\[
\mathbf{BunIsoc}_{\breve F|F}^K\longrightarrow \mathbf{FilIsoc}_{\breve F|F}^C, \quad (D,\Xi)\longmapsto (D,\mathcal F_{\Xi}).
\]
By the adapted basis theorem, there exists a basis $\underline{e}=(e_1,\ldots, e_r)$ of the free $\cO_K$-module $\Xi_0=D\otimes_{\breve F}\cO_K$, and elements $\lambda_1,\ldots,\lambda_r\in \mathbb Z$ such that $(\pi^{\lambda_1} e_1,\ldots, \pi^{\lambda_r}e_r)$ forms an $\cO_K$-basis of $\Xi$. Then $\underline{e}$ is an orthogonal basis for $\alpha_{\Xi}$ and for $\mathbf{o}=\alpha_{\Xi_0}$ at the same time. Moreover,  
\[
\alpha_{\Xi}(e_i)=|\pi|^{-\lambda_i},\quad \textrm{and}\quad  
\alpha_{\Xi}(e_i)=1, \quad i\in \mathbb Z.
\]
So
\[
\deg(D,\Xi)=-\sum_{i}\lambda_i-\dim(D)=\deg(\mathcal F_{\Xi})-\dim(D)=\deg(D,\mathcal F_{\Xi}).
\]
In other words, $\deg(D,\Xi)$ is the same as the degree of the filtered isocrystal $(D,\mathcal F_{\Xi})$. However, we caution the readers that, for $D'\subset D$ a subisocrystal, set $\mathcal F_{\Xi}|_{D_C'}$ the filtration on $D_C'=D' \otimes_{\breve F}C$ induced from $\mathcal F_{\Xi}$, then we have in general 
\[
\deg(D',\Xi\cap (D'\otimes_{\breve F}K))\neq \deg(D',\mathcal F_{\Xi}|_{D_C'}).
\]
See \cite[Example 2.1]{Sh} for an explicit example of Viehmann. As a result, the Harder-Narasimhan filtration on $D$ defined by the lattice $\Xi$ as we consider here differs from the one defined by the residue filtration $\mathcal F_{\Xi}$. We will come back to this point later in \S~\ref{sec:compactibility-with-flag}.
\end{remark}

Theorem \ref{thm_comp tensor} combined with Remark \ref{rem:HN-for-lattice} give the following result.

\begin{theorem}\label{thm_compatible tensor}
The functions $\mathrm{rank}$ and $\deg$ above on $\mathbf{BunIsoc}_{\breve{F}/F}^{K}$ induce a Harder-Narasimhan formalism on $\mathbf{BunIsoc}_{\breve F|F}^K$, whose Harder-Narasimhan filtration
\begin{eqnarray}\label{eqn_F HN}
\mathcal F_{\rm HN}: \mathbf{BunIsoc}_{\breve F|F}^{K}\longrightarrow \mathbf F(\mathrm{Isoc}_{\breve F|F})\end{eqnarray}
is induced from the Harder-Narasimhan filtration $\mathcal F_{\rm HN}$ on $\mathbf{NIsoc}_{\breve F|F}^K$ in \eqref{eq:F_HN}. Moreover, $\mathcal F_{\rm HN}$  is compatible with tensor products, or equivalently, the tensor product of two semi-stable objects in $\mathbf{BunIsoc}_{\breve F|F}^K$ is still semi-stable.   
\end{theorem}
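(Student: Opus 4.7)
The plan is to derive Theorem \ref{thm_compatible tensor} as a direct corollary of Theorem \ref{thm_comp tensor}, by exploiting the faithful $\otimes$-functor \eqref{eq:gauge} and the observation already recorded in Remark \ref{rem:HN-for-lattice} that it identifies $\mathbf{BunIsoc}_{\breve F|F}^{K}$ with a full subcategory of $\mathbf{NIsoc}_{\breve F|F}^{K}$ stable under strict subobjects and quotients. Indeed, if $(D,\Xi)\in \mathbf{BunIsoc}_{\breve F|F}^K$ and $D'\subset D$ is a subisocrystal, then $\Xi' := \Xi \cap (D'\otimes_{\breve F}K)$ is a lattice in $D'\otimes_{\breve F}K$ (since $\cO_K$ is a discrete valuation ring), and a direct unravelling of the definitions shows that the restriction $\alpha_{\Xi}|_{D'\otimes_{\breve F}K}$ is exactly the gauge norm $\alpha_{\Xi'}$; the analogous statement for quotients holds as well.

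Combined with the obvious fact that the rank and degree on $\mathbf{BunIsoc}_{\breve F|F}^K$ coincide with the restrictions of those on $\mathbf{NIsoc}_{\breve F|F}^K$, the first step then shows that the Harder-Narasimhan formalism on $\mathbf{NIsoc}_{\breve F|F}^K$ restricts to a Harder-Narasimhan formalism on $\mathbf{BunIsoc}_{\breve F|F}^K$, and that the Harder-Narasimhan filtration of any $(D,\Xi)$ computed in $\mathbf{BunIsoc}_{\breve F|F}^K$ agrees with $\mathcal F_{\rm HN}(D,\alpha_{\Xi})$ as computed in $\mathbf{NIsoc}_{\breve F|F}^K$. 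This proves the first half of the theorem.

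The remaining step is to confirm that the embedding \eqref{eq:gauge} is a tensor functor, i.e.\ that the gauge norm of the tensor-product lattice $\Xi_1 \otimes_{\cO_K}\Xi_2 \subset (D_1\otimes_{\breve F}D_2)\otimes_{\breve F}K$ coincides with the tensor norm $\alpha_{\Xi_1}\otimes \alpha_{\Xi_2}$. Using the adapted-basis theorem invoked in Remark \ref{rem:lattice to filtration}, I would pick $\cO_K$-bases of $\Xi_i$; these give $K$-bases of $D_i\otimes_{\breve F}K$ which are orthogonal for $\alpha_{\Xi_i}$ (each basis vector has gauge-norm $1$), and their tensor product is simultaneously an $\cO_K$-basis of $\Xi_1\otimes_{\cO_K}\Xi_2$ and an orthogonal basis for $\alpha_{\Xi_1}\otimes \alpha_{\Xi_2}$, matching up the two norms on every basis vector. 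Once this is in place, Theorem \ref{thm_comp tensor} applied to $(D_i,\alpha_{\Xi_i})$ yields the required identity
\[
\mathcal F_{\rm HN}\bigl((D_1,\Xi_1)\otimes (D_2,\Xi_2)\bigr)=\mathcal F_{\rm HN}(D_1,\Xi_1)\otimes \mathcal F_{\rm HN}(D_2,\Xi_2),
\]
and the equivalent reformulation in terms of tensor products of semi-stable objects follows by the standard formal argument from the Harder-Narasimhan formalism.

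No real obstacle is expected: the heavy lifting has already been done in Theorem \ref{thm_comp tensor}, and the only substantive verification that remains is the elementary identification of the gauge norm of a tensor-product lattice with the tensor of the gauge norms, which reduces to bookkeeping with adapted bases over the henselian discrete valuation ring $\cO_K$.
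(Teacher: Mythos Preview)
Your proposal is correct and matches the paper's approach exactly: the paper simply states that the theorem follows from Theorem \ref{thm_comp tensor} combined with Remark \ref{rem:HN-for-lattice}, and you have spelled out the details of that deduction (including the verification that the gauge-norm functor \eqref{eq:gauge} is a $\otimes$-functor, which the paper asserts without proof when introducing \eqref{eq:gauge}).
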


\begin{remark}
The similar result of Theorem \ref{thm_compatible tensor} for filtered isocrystals was proved by Faltings in \cite{Fa} and by Totaro in \cite{To1}. Both proofs work by reducing the problem of $\sigma$-linear algebra to a different problem of pure linear algebra. Motivated by the idea of Ramanan and Ramanathan from geometric invariant theory, Totaro gave in \cite{To2} an different elementary proof which avoids the reduction from filtered isocrystals to filtered
vector spaces. The method of Cornut in \cite{Cor} can be viewed as an axiomatized version
of Totaro's overall strategy in \cite{To2} in which the GIT tools are replaced by tools from convex
metric geometry. 
\end{remark}

In the next section, we will apply Theorem \ref{thm_compatible tensor} in the case where $K=\BdR$ to define Harder-Narasimhan stratification for a general reductive group. Here $\BdR$ is the de Rham period ring associated with an algebraically closed perfectoid field of characteristic $0$.


\section{Harder-Narasimhan stratification on $\BpdR$-affine Grassmannian}
The goal of this section is to define the Harder-Narasimhan stratification on the $\BpdR$-affine Grassmannian of a reductive group $G$. For $G=\mathrm{GL}_n$, it would be enough to use the Harder-Narasimhan formalism, i.e., the existence of Harder-Narasimhan filtration for isocrystals with lattices. In order to pass to more general reductive groups, we use Tannakian duality, and hence the compatibility of Harder-Narasimhan filtration with tensor products (Theorem \ref{thm_compatible tensor}) becomes crucial. Note that the special case of the Harder-Narasimhan stratification for $b=1$ with general reductive groups has already been worked out by Nguyen-Viehmann (\cite{NV}) and by Shen (\cite{Sh}).

\subsection{HN-vectors}
Let $G$ be a reductive group over $F$. Let $S=\Spa(C,C^+)\ra \mathrm{Spd}(\breve F)$ be a geometric point of $\mathrm{Spd}(\breve F)$, given by an untilt $S^{\sharp}=\Spa(C^{\sharp},C^{\sharp+})$ over $\breve F$. Write 
\[
\BdR=\BdR(C^ {\sharp})\quad \textrm{and}\quad  \BpdR=\BpdR(C^{\sharp})
\]
the corresponding de Rham period rings (cf. Section \ref{sec_FF curve}). Let $b\in G(\breve F)$. Consider the following faithful exact $\otimes$-functor
\begin{equation}\label{eq:G-bundle-Eb}
\omega_{G,b}: \mathbf{Rep}(G)\longrightarrow \mathbf{Isoc}_{\breve F|F}, \quad (V,\rho)\longmapsto (V\otimes_F \breve F,\rho(b)\circ (1\otimes \sigma)).
\end{equation}
Let $x\in \mathrm{Gr}_G(S)=G(\BdR)/G(\BpdR)$. For an $F$-representation $\rho=(V,\rho)$ of $G$, it induces an element
\[
\rho(x)\in \mathrm{Gr}_{\mathrm{GL}_V}(S)=\mathrm{GL}(V\otimes_F \BdR)/\mathrm{GL}(V\otimes_{F}B^+_{\rm dR}),
\]
or equivalently, a $\BpdR$-lattice $\Xi_{x,\rho}$ in  $V\otimes_{F}\BdR$: 
\[
\Xi_{x,\rho}=\rho(x)(V\otimes_F \BpdR)\subseteq V\otimes_F \BdR.
\]
The pair $(b, x)$ then gives rise to a $\otimes$-functor
\begin{equation}\label{eq:omega-G-b-x}
\begin{split} \omega_{G,b,x}:\mathbf{Rep}(G)&\longrightarrow\mathbf{BunIsoc}_{\breve F|F}^{\BdR}\\
 (V, \rho)&\longmapsto (\omega_{G, b}(V, \rho), \Xi_{x, \rho}).\end{split}
\end{equation}
Composing it with the functor $\mathcal{F}_{\rm HN}$ in (\ref{eqn_F HN}), we obtain
\[
\mathcal F_{\rm HN}(b,x):\mathbf{Rep}(G)\longrightarrow \mathbf{BunIsoc}_{\breve F|F}^{\BdR}\stackrel{\mathcal{F}_{\rm HN}}{\longrightarrow}\mathbf F(\mathbf{Isoc}_{\breve F|F}),
\]
which is exact and compatible with tensor products by Corollary \ref{thm_compatible tensor}. Since $G$ is reductive, there exists some rational cocharacter
\begin{equation}\label{eq:cocharacter-v}
v: \mathbb D_{\breve F}\longrightarrow G_{\breve F}
\end{equation}
that splits $\mathcal F_{\rm HN}(b,x)$, i.e., the composed functor
\[
\mathbf{Rep}(G)\stackrel{\mathcal F_{\rm HN}(b,x)}{\longrightarrow}\mathbf{F}(\mathbf{Isoc}_{\breve F|F})\longrightarrow \mathbf{F}( \mathbf{Vect}_{\breve F})
\]
of $\mathcal F_{\rm HN}(b, x)$ with the forgetful functor $\mathbf{F}(\mathbf{Isoc}_{\breve F|F})\rightarrow \mathbf{F}( \mathbf{Vect}_{\breve F})$ admits a factorization
\[
\xymatrix{\mathbf{Rep}(G)\ar[r] \ar[d]&  \mathbf{F}(\mathbf{Vect}_{\breve F}) \\ \mathbf{Rep}(\mathbb D_{\breve F})\ar@{=}[r] & \mathbb Q\!-\!\mathbf{Vect}_{\breve F}\ar[u] }.
\]
Here the left-vertical functor is induced by the rational cocharacter $v$, while the right-vertical functor takes a $\mathbb Q$-graded $\breve F$-vector space $V=\oplus_{\alpha\in \mathbb Q}V_{\alpha}$ to the filtered $\breve F$-space $(V,\Fil^{\bullet}V)$ with
\[
\Fil^{\alpha}V=\bigoplus_{\beta\geq \alpha}V_{\beta}, \quad \forall ~\alpha\in \mathbb R.
\]
\begin{remark}\label{rem:parabolic-attached-to-HN}
Let $P\subset G_{\breve F}$ be the parabolic subgroup of elements of $G_{\breve F}$ that preserves the Harder-Narasimhan filtration. In particular
\[
P(\breve F)=\{g\in G(\breve F)~|~  \rho(g) \textrm{ preserves }\mathcal F_{\rm HN}(b,x)(V,\rho), \forall (V,\rho)\in \mathbf{Rep}(G)\}.
\]
Then $P$ is also the parabolic subgroup attached to the rational cocharacter $v$.
\end{remark}

Consider as in \eqref{eq:cocharacter-v} a rational cocharacter $v$ that splits the Harder-Narasimhan filtration $\mathcal F_{\rm HN}(b,x)$. The class of $v$ in $X_{*}(G)_{\mathbb Q}/G(\bar F)$ does not depend on the particular choice of the splitting $v$. Moreover, since the Tannakian category $\mathbf{Isoc}_{\breve F|F}$ has a fiber functor over $F^{\rm un}$, the Harder-Narasimhan filtration has also a splitting over $F^{\rm un}$, the maximal unramified extension (inside $\bar F$) of $F$. Let us denote resulting class by
\[
v_{b,x}\in X_*(G)_{\mathbb Q}/G(\bar F).
\]

\begin{lemma}\label{lem:vbx-is-Galois-invariant} We have $
v_{b,x}\in \mathcal N(G)=\left(X_*(G)_{\mathbb Q}/G(\bar F)\right)^{\Gamma}$.
\end{lemma}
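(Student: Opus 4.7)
The plan is to show that the class $v_{b,x}\in X_*(G)_{\mathbb Q}/G(\bar F)$ is fixed by $\Gamma = \mathrm{Gal}(\bar F/F)$. Since the Galois action on $X_*(G)_{\mathbb Q}/G(\bar F)$ factors through $\mathrm{Gal}(K/F)$ for a finite extension $K/F$ splitting $G$, and $\mathrm{Gal}(K/F)$ is generated by (the image of) a Frobenius lift $\sigma$ together with the inertia subgroup, it suffices to separately verify $\sigma$-invariance and inertia-invariance. I will choose the splitting cocharacter $v\colon \mathbb{D}_{\breve F}\to G_{\breve F}$ to be defined over $\breve F$, which is possible since the composite of $\mathcal F_{\mathrm{HN}}(b,x)$ with the forgetful functor to $\mathbf F(\mathbf{Vect}_{\breve F})$ is a filtration of a fiber functor over $\breve F$. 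As $v$ is then fixed pointwise by $\mathrm{Gal}(\bar F/F^{\mathrm{un}})$, inertia-invariance of $[v]\in X_*(G)_{\mathbb Q}/G(\bar F)$ is automatic, and the whole content of the lemma is $\sigma$-invariance.

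For this I will prove the stronger statement that $\sigma(v)$ and $v$ are conjugate already in $G(\breve F)$, where $\sigma(v)$ is the $\sigma$-transform of $v$. The crucial input is that the HN filtration $\mathcal F_{\mathrm{HN}}(b,x)(V,\rho)$ is a filtration by \emph{sub-isocrystals}, so that for every $(V,\rho)\in \mathbf{Rep}(G)$ the Frobenius $\phi_\rho = \rho(b)\circ(1\otimes \sigma)$ on $V_{\breve F}:=V\otimes_F \breve F$ stabilizes each piece $\mathrm{Fil}^\alpha V_{\breve F}$. This translates into the identity
\[
(1\otimes\sigma)\bigl(\mathrm{Fil}^\alpha V_{\breve F}\bigr) \;=\; \rho(b)^{-1}\bigl(\mathrm{Fil}^\alpha V_{\breve F}\bigr),\quad \alpha\in\mathbb R.
\]
On the other hand, the weight-$n$ space of $\rho\circ\sigma(v)$ is precisely the image under $(1\otimes\sigma)$ of the weight-$n$ space of $\rho\circ v$, by a routine linear-algebra check using that the standard weight spaces of the pro-torus $\mathbb D$ are defined over $F$. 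Combining these observations, if the weight decomposition of $\rho\circ v$ refines $\mathrm{Fil}^\bullet V_{\breve F}$ for every $(V,\rho)$, then the weight decomposition of $\rho\circ\sigma(v)$ refines $\rho(b)^{-1}(\mathrm{Fil}^\bullet V_{\breve F})$, and hence the conjugate $b\cdot \sigma(v)\cdot b^{-1}$ splits the original HN filtration.

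Now $v$ and $b\cdot \sigma(v)\cdot b^{-1}$ are two splittings of the same filtration on the composite fiber functor $\mathbf{Rep}(G)\xrightarrow{\omega_{G,b}}\mathbf{Isoc}_{\breve F|F}\to \mathbf{Vect}_{\breve F}$. By the standard Tannakian theory of filtrations on fiber functors (Saavedra--Deligne), any two splittings of a given filtration are conjugate by a unique element of the unipotent radical $U(\breve F)$ of the parabolic $P\subset G_{\breve F}$ attached to the filtration (cf.~Remark~\ref{rem:parabolic-attached-to-HN}). Thus $b\sigma(v)b^{-1} = uvu^{-1}$ for some $u\in U(\breve F)\subset G(\breve F)$, giving $\sigma(v) = (b^{-1}u)\cdot v\cdot (b^{-1}u)^{-1}$ in $G(\breve F)$, as required.

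The main technical point I expect to be the verification of the identity on subspaces and the identification of the weight spaces of $\sigma(v)$ with the $(1\otimes\sigma)$-images of the weight spaces of $v$; once these are in hand, the uniqueness-up-to-unipotent of splittings is formal Tannakian machinery, and the conclusion follows immediately.
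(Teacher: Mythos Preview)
Your proposal is correct and follows essentially the same approach as the paper: both arguments take a splitting $v$ defined over $F^{\mathrm{un}}$ (or $\breve F$), reduce to checking $\sigma$-invariance, use that the HN filtration is by subisocrystals to obtain the identity $(1\otimes\sigma)(\mathrm{Fil}^\alpha V_{\breve F})=\rho(b)^{-1}(\mathrm{Fil}^\alpha V_{\breve F})$, and conclude that $\mathrm{Int}(b)\circ\sigma(v)$ and $v$ split the same filtration, hence are $G(\breve F)$-conjugate. Your invocation of the Saavedra--Deligne fact that two splittings differ by an element of the unipotent radical is slightly more precise than the paper's phrasing, but the content is identical; one small point is that you should say $v$ is defined over $F^{\mathrm{un}}$ (as the paper notes is possible, since $\mathbf{Isoc}_{\breve F|F}$ has a fiber functor over $F^{\mathrm{un}}$) rather than merely over $\breve F$, so that inertia-invariance of $[v]\in X_*(G)_{\mathbb Q}/G(\bar F)$ is literally automatic.
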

In the following, we call the element $v_{b,x}\in \mathcal N(G)$ the \emph{HN-vector} of the pair $(b,x)$. When $G$ is quasi-split, we also consider $v_{b,x}$ as an element in $X_*(A)_{\Q}^+$ via the identification $\mathcal{N}(G)\simeq X_*(A)^+_\Q$.

\begin{proof}[Proof of Lemma \ref{lem:vbx-is-Galois-invariant}]
Suppose that the rational cocharacter $v$ in \eqref{eq:cocharacter-v} is defined over $F^{\rm un}$. So it suffices to check that the class $[v]\in X_{*}(G)_{\mathbb Q}/G(\bar F)$ is invariant under the action of the Frobenius $\sigma\in \mathrm{Gal}(F^{\rm un}/F)$, i.e., $[v^{\sigma}]=[v]$ with $v^{\sigma}$ be base change of $v:\mathbb D_{F^{\rm un}}\ra G_{F^{\rm un}}$ induced by the Frobenius $\sigma$.
This is a consequence of the fact that the Harder-Narasimhan filtration is a filtration by \emph{subisocrystals}. More precisely, recall that $v$ is a splitting of the composed functor
\[
\mathcal F_{\rm HN}(b,x)':\mathbf{Rep}(G)\stackrel{\mathcal F_{\rm HN}(b,x)}{\longrightarrow}\mathbf{F}(\mathbf{Isoc}_{\breve F|F})\stackrel{\textrm{forgetful}}{\longrightarrow}\mathbf{F}({\mathbf{Vect}_{\breve F}}).
\]
By functoriality, its twist $v^{\sigma}$ by Frobenius splits
\[
\mathcal F_{\rm HN}(b,x)'':\mathbf{Rep}(G)\stackrel{\mathcal F_{\rm HN}(b,x)}{\longrightarrow}\mathbf{F}(\mathbf{Vect}_{\breve F})\stackrel{-\otimes_{\breve F,\sigma}\breve F}{\longrightarrow}\mathbf{F}({\mathbf{Vect}_{\breve F}}).
\]
On the other hand, for $(V,\rho)$ a representation of $G$ and for $W\subset (V_{\breve F},\rho(b)\sigma)$ a subisocrystal, we have $(\rho(b)\sigma)(W)=W$. In particular, the isomorphism
\[
\rho(b)^{-1}:V_{\breve F}\longrightarrow V_{\breve F}
\]
maps $W$ onto $\sigma(W)$. Moreover, under the natural identification $V_{\breve F}\otimes_{\breve F,\sigma}\breve F\simeq V\otimes_{F}\breve F$, $W\otimes_{\breve F,\sigma}\breve F$ corresponds to $\sigma(W)\subset V_{\breve F}$. As a result, the bijection $\rho(b)^{-1}$ above sends the Harder-Narasimhan filtration of $(\omega_{G,b}(V,\rho),\Xi_{x,\rho})$ to its twist by Frobenius. In this way, we obtain a natural transformation of functors
\[
b^{-1}:\mathcal F_{\rm HN}(b,x)'\stackrel{\sim}{\longrightarrow}\mathcal F_{\rm HN}(b,x)''.
\]
So $\mathrm{Int}(b^{-1})\circ v$ also splits $\mathcal F_{\rm HN}(b,x)''$. Hence, the cocharacter $v^{\sigma}$ is $G(\breve F)$-conjugate to $\mathrm{Int}(b^{-1})\circ v$, and the class $v_{b,x}$ of $v$ in $X_*(G)_{\mathbb Q}/G(\bar F)$ is Galois invariant.
\end{proof}

\subsection{Compatibility with inner-twists}
\label{subsection_comp inner twists}

Let $H$ be an inner-twist of $G$, given by a basic element $b_0\in G(\breve F)$. In particular, $H=J_{b_0}$ and for any $\breve F$-algebra $R$, we have
\[
H(R)=\{g\in G(\breve F\otimes_F R)| gb_0\sigma(g)^{-1}=b_0\}\stackrel{\sim}{\longrightarrow} G(R).
\]
Here the morphism is induced by the natural map $\breve F\otimes_F R\ra R$. Therefore, we have an identification of algebraic groups over $\breve F$:
\[
\iota: H_{\breve F}\stackrel{\sim}{\longrightarrow} G_{\breve F}.
\]
Using the element $b_0$ we can deduce a well-defined bijection
\[
\tilde{\iota}: B(H)\longrightarrow B(G), \quad [h]\longmapsto [\iota(h)b_0].
\]
Indeed identify $H_{\breve F}$ with $G_{\breve F}$ and thus $H(\breve F)$ with $G(\breve F)$ through the isomorphism $\iota$, the Frobenius automorphism $\sigma_H$ on $H(\breve F)$ is given by the composed map $\mathrm{Int}(b_0)\circ \sigma_G$ on $H(\breve F)=G(\breve F)$. Therefore for any $h,h_1\in H(\breve F)=G(\breve F)$ 
\[
h_1 (hb_0)\sigma_G(h_1)^{-1}=h_1 hb_0\sigma_G(h_1)^{-1}b_0^{-1}b_0=(h_1h\sigma_H(h_1)^{-1})b_0
\]
and the map $h\mapsto hb_0$ sends a $\sigma$-conjugacy class in $H(\breve F)$ to a $\sigma$-conjugacy class in $G(\breve F)$. Thus the map $\tilde {\iota}$ above is well-defined and it is clear that $\tilde{\iota}$ is bijective. 
On the other hand, the isomorphism $\iota$ induces also an isomorphism between the $\BpdR$-affine Grassmannians, still denoted by $\iota$ in the sequel
\[
\mathrm{Gr}_{H,\breve F}\longrightarrow \mathrm{Gr}_{G,\breve F}.
\]

\begin{proposition}\label{prop_comp inner twists}
Keep the notation above. Let $h\in H(\breve F)$ and $x\in \mathrm{Gr}_{H,\breve F}$ be a geometric point. Then $v_{\iota(h)b_0,\iota(x)}=v_{h,x}-\nu_{b_0}\in \mathcal N(G)=\mathcal N(H)$.
\end{proposition}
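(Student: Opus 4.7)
The strategy is to produce a rational splitting cocharacter for $\mathcal F_{\rm HN}(\iota(h)b_0,\iota(x))$ from a splitting of $\mathcal F_{\rm HN}(h,x)$, differing by the central cocharacter $\nu_{b_0}$. Because $b_0$ is basic, $\nu_{b_0}$ is central in $G$, so it lies unambiguously in $\mathcal N(G)=\mathcal N(H)$ and commutes with every other cocharacter of $G$; the claim becomes: if $\tilde v:\mathbb D\to G_{\Fun}$ is a rational cocharacter splitting $\mathcal F_{\rm HN}(h,x)$ (viewed on $\mathbf{Rep}(G)$ after base change via $\iota$), then $\tilde v-\nu_{b_0}$ splits $\mathcal F_{\rm HN}(\iota(h)b_0,\iota(x))$. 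Passing to $G(\bar F)$-conjugacy classes will then give the desired equality in $\mathcal N(G)$. After an unramified base change, which affects neither side, I may also assume $\nu_{b_0}$ is defined over $F$.

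Fix $(V,\rho)\in\mathbf{Rep}(G)$. The centrality of $\nu_{b_0}$ yields a $G$-stable decomposition $V=\bigoplus_\chi V_\chi$ into weight spaces of $\rho\circ\nu_{b_0}$. Because every element of $G(\breve F)$, $G(\BpdR)$, and $G(\BdR)$ commutes with the central $\nu_{b_0}$, this decomposition is preserved by the Frobenius $\rho(\iota(h))\rho(b_0)\otimes\sigma$, by the ``$H$-side'' Frobenius $\rho(\iota(h))\otimes\sigma$, and by the lattice $\Xi_{\iota(x),\rho}$. Proposition~\ref{prop:HN-filtration-and-direct-sim} then reduces the comparison of the two HN filtrations to a comparison on each individual $V_\chi$.

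Setting $s_\chi:=\langle\chi,\nu_{b_0}\rangle$, the isocrystal $(V_{\chi,\breve F},\rho(b_0)|_\chi\otimes\sigma)$ is isoclinic of slope $s_\chi$, whence a direct determinant computation yields the degree identity
\[
\deg\bigl(V_{\chi,\breve F},\rho(\iota(h))\rho(b_0)\otimes\sigma,\Xi_{\iota(x),\rho}|_\chi\bigr)=\deg\bigl(V_{\chi,\breve F},\rho(\iota(h))\otimes\sigma,\Xi_{\iota(x),\rho}|_\chi\bigr)-s_\chi\dim_{\breve F}V_{\chi,\breve F},
\]
and an analogous shift holds for any sub-object obtained by restriction to an invariant $\breve F$-subspace. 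A key auxiliary fact, coming directly from the defining identity $\iota(h)b_0=b_0\sigma(\iota(h))$ for $h\in H(\breve F)$, is that $\rho(\iota(h))|_\chi$ commutes with the isoclinic Frobenius $\rho(b_0)|_\chi\otimes\sigma$; this permits a coherent comparison of sub-isocrystals on the two sides and shows that the HN filtration on the $G$-side of $V_{\chi,\breve F}$ is obtained from that on the $H$-side by a uniform slope shift of $-s_\chi$.

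Assembling these piece-wise comparisons across all $\chi$, one concludes that $\tilde v-\nu_{b_0}$ is a splitting cocharacter for $\mathcal F_{\rm HN}(\iota(h)b_0,\iota(x))$; passing to $G(\bar F)$-conjugacy classes gives the desired equality. The main obstacle is exactly the comparison of HN filtrations on each $\chi$-summand, since a priori the set of $\phi_G$-stable $\breve F$-subspaces of $V_{\chi,\breve F}$ differs from the set of $\phi_H$-stable ones. It is the commutation relation coming from $h\in J_{b_0}(\breve F)$ together with the isoclinicity of $\rho(b_0)|_\chi\otimes\sigma$ that makes the slope shift $-s_\chi$ uniform across all sub-isocrystals and preserves semistability across the two sides.
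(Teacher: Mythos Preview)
There is a genuine gap. The ``defining identity'' $\iota(h)b_0=b_0\,\sigma(\iota(h))$ does \emph{not} hold for arbitrary $h\in H(\breve F)$: it is precisely the condition singling out $h\in H(F)=J_{b_0}(F)$. Under the identification $H(\breve F)\simeq G(\breve F)$ the Frobenius on the $H$-side is $\mathrm{Int}(b_0)\circ\sigma_G$, not $\sigma_G$, so the commutation of $\rho(\iota(h))$ with $\rho(b_0)\otimes\sigma$ that you invoke simply fails. And even in cases where it happens to hold (e.g.\ $h=1$), the collections of $\phi_G$- and $\phi_H$-stable subspaces of $V_{\chi,\breve F}$ can be wildly different: for $G=\mathrm{GL}_2$ and $\nu_{b_0}=(\tfrac12,\tfrac12)$, the isoclinic $(V_{\breve F},\rho(b_0)\sigma)$ is a simple isocrystal while $(V_{\breve F},\sigma)$ has a line of sub-isocrystals. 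So ``uniform slope shift across all sub-isocrystals'' is not what is actually happening.

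Underlying this is a category mismatch. The normed isocrystal you call the ``$H$-side'', namely $(V_{\breve F},\rho(\iota(h))\otimes\sigma,\Xi_{\iota(x),\rho})$ for $(V,\rho)\in\mathbf{Rep}(G)$, is \emph{not} a value of the functor $\omega_{H,h,x}$: that functor lives on $\mathbf{Rep}(H)$, and $(V,\rho)$ is not an $H$-representation over $F$ (only over $\breve F$ via $\iota$). Hence nothing ties the splitting $\tilde v$ of $\mathcal F_{\rm HN}(h,x)$---which is characterised through honest $H$-representations---to the HN filtration of your ad hoc object. The paper resolves exactly this by manufacturing a genuine $F$-linear $H$-representation from $(V,\rho)$: with $\breve V=(V_{\breve F},\rho(b_0)\sigma)$ one takes $U=\mathrm{End}_{\sigma,\breve F}(\breve V)$, on which $H$ acts by left multiplication, and observes that $U\otimes_F\breve F\simeq\mathrm{End}_F(V)\otimes_F\breve F$. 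A surjection of normed isocrystals $\breve U\otimes\breve V\twoheadrightarrow\breve V'$ (with $\breve V'$ carrying the Frobenius $\rho(\iota(h)b_0)\sigma$), together with the tensor-compatibility Theorem~\ref{thm_compatible tensor}, then transports the splitting from the $H$-side to the $G$-side. Your argument is missing an analogue of this bridge between $\mathbf{Rep}(H)$ and $\mathbf{Rep}(G)$.
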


\begin{proof} The proof is inspired by \cite[3.4]{Kot}, where can be found a similar result for the Newton vectors. Take a faithful representation $(V,\rho)$ of $G$. Together with the Frobenius induced by $b_0$, the base change $V\otimes_F \breve F$ becomes an isocrystal over $F$, denoted by $\breve{V}$. One can also use the Frobenius structure induced by $\iota(h)b_0$ on $V\otimes_F \breve F$, and we denote the resulting isocrystal by $\breve{V}'$. Let $U=\mathrm{End}_{\sigma,\breve F}(\breve V)$, and more generally, for any $F$-algebra $R$,
\[
U\otimes_FR=\mathrm{End}_{\sigma,\breve F\otimes R}(\breve{V}\otimes_F R).
\]
Then $H(R)\subset U(R)$, so $U$ gives a faithful representation of $H$ by left-multiplication. In this way, $\breve U:=U\otimes_F \breve F$ becomes an isocrystal whose Frobenius structure is given by $h$. If moreover $R$ is an $\breve F$-algebra, we have naturally an identification induced by the natural morphism $\breve F\otimes_F R\ra R$ of $F$-algebras (recall that $b_0\in G(\breve F)$ is basic)
\[
U\otimes_F R\stackrel{\sim}{\longrightarrow}\mathrm{End}_{R}(V\otimes_F R).
\]
Under this identification, the action of $H_{\breve F}$ on $\breve U:=U\otimes_F\breve F$ is the same as the action of $G_{\breve F}$ on $\mathrm{End}_{F}(V)\otimes_F \breve F$ by left multiplication, and hence the isocrystal $\breve U$ can be identified with the following isocrystal
\[
\mathrm{End}_F(V)\otimes_F \breve F\simeq \mathrm{Hom}_{\breve F}(V\otimes_F \breve F,V\otimes_F \breve F),
\]
where the Frobenius is induced by the left multiplication given by $\rho(\iota(h))\in \mathrm{GL}(V\otimes_F\breve F)$. Consequently, we get a morphism of isocrystals
\begin{equation}\label{eq:comparison-inner-twist}
\breve{U}\otimes \breve V\longrightarrow \breve V'.
\end{equation}
Furthermore, once we put the norm structure on $\breve U\otimes_{\breve F}\BdR$ given by $x\in \mathrm{Gr}_{H,\breve F}$, or equivalently, the norm structure on
\[
\mathrm{End}_{F}(V)\otimes_{F}\BdR =\mathrm{Hom}_{\BdR}(V\otimes_F \BdR, V\otimes_F \BdR)
\]
by the guage norm induced by the lattice
\[
\mathrm{Hom}_{\BpdR}(V\otimes_F \BpdR,\Xi_{\iota(x),\rho})\subset \mathrm{End}_{F}(V)\otimes_{F}\BdR,
\]
and the norm on $\breve{V}\otimes_{\breve F}\BdR$ (resp. on $\breve{V}'\otimes_{\breve F}\BdR=V\otimes_F \BdR$) by the trivial $\BpdR$-lattice $\breve{V}\otimes \BpdR$ (resp. $\Xi_{\iota(x),\rho}$), the map \eqref{eq:comparison-inner-twist} above is even a surjective map between the normed isocrystals. Now take $v\in X_*(H)_{\mathbb Q}$ (resp. $v_0\in X_*(G)_{\mathbb Q}$) a rational cocharacter that splits the Harder-Narasimhan filtration $\mathcal F_{\rm HN}(h,x)$
(resp. corresponding to the slope decomposition of the isocrystal $\breve V$). In particular, $-v_0\in X_*(G)_{\mathbb Q}$ gives a splitting of the Harder-Narasimhan filtration $\mathcal{F}_{\rm HN}(b_0,x_0)$, where $x_0\in \mathrm{Gr}_{G,\breve F}$ is given by the identity element of $G$. As the Harder-Narasimhan filtration is compatible with tensor products (Theorem \ref{thm_compatible tensor}), using the surjective map \eqref{eq:comparison-inner-twist}, we deduce that the rational cocharacter
\[
\rho\circ \iota\circ v-\rho\circ v_0
\]
splits the Harder-Narasimhan filtratiton of the normed isocrystal $\breve V'$. As $V$ is a faithful representation of $G$, it follows that the rational cocharacter $\iota\circ v-v_0$ splits $\mathcal F_{\rm HN}(\iota(h)b_0,\iota(x))$. Therefore, we obtain the required equality $v_{h,x}-\nu_{b_0}=v_{\iota(h)b_0,\iota(x)}\in \mathcal N(H)=\mathcal N(G)$.
\end{proof}

\subsection{HN-vectors in the quasi-split case}\label{subsection_HN-vector} We keep the notations in the previous subsection. In particular, $S=\Spa(C,C^+)\ra \mathrm{Spd}(\breve F)$ is a geometric point of $\mathrm{Spd}(\breve F)$, given by an untilt $S^{\sharp}=\Spa(C^{\sharp},C^{\sharp+})$ over $\breve F$, with $
\BdR$ and $  \BpdR$ the corresponding de Rham period rings. Suppose that $G$ is quasi-split.  Abusing the notation, we identify $v_{b,x}\in \mathcal N(G)$ with its dominant representative in $X_*(A)_{\mathbb Q}^+$ in the following via the natural identification $X_*(A)_{\mathbb Q}^{+}\simeq \mathcal N(G)$. Let $P\subset G_{\breve F}$ be the parabolic subgroup corresponding to the Harder-Narasimhan filtration of $\omega_{G,b,x}$ (Remark \ref{rem:parabolic-attached-to-HN}). Since the Harder-Narasimhan filtration on a normed isocrystal is a filtration by subisocrystals, the parabolic subgroup $P$ descends to a subgroup of $J_b$ defined over $F$. 

\begin{lemma}\label{lem:std-parabolic-Q}  There exist a standard parabolic $Q\subset G$ over $F$ with standard Levi $M$, and a reduction $(b_Q,g)$ of $b$ to $Q$ satisfying the following properties.  
\begin{itemize}
\item $Q$ is the parabolic subgroup of $G$ defined by $v_{b,x}\in X_*(A)_{\mathbb Q}^ +$, and $gQ_{\breve F}g^{-1}=P$. In particular $v_{b,x}$ is dominant $Q$-regular.
\item Write $b_M\in M(\breve F)$ the image of $b_Q\in Q(\breve F)$, and recall the natural projection $\mathrm{pr}_M:\mathrm{Gr}_Q\ra \mathrm{Gr}_M$. Then the HN-vector of the pair $(b_M,\mathrm{pr}_M(g^{-1}x ))$ is $v_{b,x}$, thus the functor $\omega_{M,b_M,\mathrm{pr}_M(g^{-1}x)}$ (defined in \ref{eq:omega-G-b-x}) is semi-stable in the sense that the HN-vector of $(b_M,\mathrm{pr}_M(g^{-1}x))$ is central in $M$. 
\end{itemize}
\end{lemma}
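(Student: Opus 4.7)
I plan to first construct $Q,M,g,b_Q$ and then to identify the HN-vector by a graded-piece computation on a faithful $G$-representation. Since $v_{b,x}\in X_*(A)^+_{\mathbb Q}$ is defined over $F$ (as $A$ is $F$-split), it determines a standard parabolic $Q\subset G$ over $F$, namely the one whose Lie algebra is $\bigoplus_{\langle\alpha,v_{b,x}\rangle\geq 0}\mathfrak{g}_\alpha$; the standard Levi $M$ is the centralizer of $v_{b,x}$ in $G$. By construction $v_{b,x}$ is central in $M$ and strictly positive on the roots of the unipotent radical $N$, hence dominant $Q$-regular. Any splitting cocharacter of the HN filtration of $\omega_{G,b,x}$ has $P$ as its associated parabolic and represents $v_{b,x}\in\mathcal{N}(G)$, so $P$ and $Q_{\breve F}$ are parabolics of the same type; using $H^1(\breve F,G)=0$, there exists $g\in G(\breve F)$ with $gQ_{\breve F}g^{-1}=P$. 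The stability of $P$ under $\mathrm{Int}(b)\circ\sigma$ (the content of $P$ descending to $J_b$) combined with $\sigma(Q)=Q$ yields $g^{-1}b\sigma(g)\in N_G(Q)(\breve F)=Q(\breve F)$; setting $b_Q:=g^{-1}b\sigma(g)$ then gives the required reduction. Writing $y:=g^{-1}x\in\mathrm{Gr}_Q(S)$ (via the Iwasawa decomposition) and $b_M$ for the image of $b_Q$ in $M$, all the data in the statement are in place.

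For the HN-vector identity, I fix a faithful $F$-representation $V$ of $G$ and decompose $V=\bigoplus_\alpha V_\alpha$ into $v_{b,x}$-weight subspaces; each $V_\alpha$ is an $M$-subrepresentation defined over $F$, and the filtration $\Fil^\alpha V:=\bigoplus_{\beta\geq\alpha} V_\beta$ is $Q$-stable. The induced filtration $\Fil^\bullet V\otimes_F\breve F$ is then a filtration of the normed isocrystal $\omega_{G,b_Q,y}(V)$ by sub-normed-isocrystals. Choosing Levi decompositions $b_Q=b_M\cdot n$ (with $n\in N(\breve F)$) and $q_0=m_0\cdot u_0$ (with $m_0\in M(\BdR)$ and $u_0\in N(\BdR)$) of a representative $q_0\in Q(\BdR)$ of $y$, a direct calculation shows that the unipotent factors $n$ and $u_0$ act as the identity on each associated graded piece (since $n-1$ and $u_0-1$ strictly increase the $v_{b,x}$-weight), yielding a canonical isomorphism of normed isocrystals $\mathrm{gr}^\alpha\simeq \omega_{M,b_M,\mathrm{pr}_M(y)}(V_\alpha)$.

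On the other hand, multiplication by $\rho(g)$ identifies $\omega_{G,b_Q,y}(V)$ with $\omega_{G,b,x}(V)$ in $\mathbf{BunIsoc}_{\breve F|F}^{\BdR}$, so the HN polygon of $\omega_{G,b_Q,y}(V)$ has slope $\alpha$ of multiplicity $\dim V_\alpha$. Since the polygon of the filtration $\Fil^\bullet V\otimes\breve F$ is bounded above by the HN polygon at every intermediate dimension and agrees with it at the top dimension (by additivity of the degree), a standard polygon-comparison forces $\Fil^\bullet V\otimes\breve F$ to be the HN filtration of $\omega_{G,b_Q,y}(V)$; in particular, each $\mathrm{gr}^\alpha\simeq\omega_{M,b_M,\mathrm{pr}_M(y)}(V_\alpha)$ is semi-stable of slope exactly $\alpha$. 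Since $\omega_{M,b_M,\mathrm{pr}_M(y)}$ is an exact $\otimes$-functor, $\omega_{M,b_M,\mathrm{pr}_M(y)}(V|_M)=\bigoplus_\alpha\omega_{M,b_M,\mathrm{pr}_M(y)}(V_\alpha)$ is then a direct sum of semi-stable objects of pairwise distinct slopes, whose HN filtration is split by $v_{b,x}$. As $V|_M$ is a faithful $M$-representation, this identifies the HN-vector of $(b_M,\mathrm{pr}_M(y))$ with $v_{b,x}$, which is central in $M$ by the construction of $Q$. The main technical obstacle will be the graded-piece identification modulo unipotent factors and the polygon-comparison argument establishing semi-stability of $\omega_{M,b_M,\mathrm{pr}_M(y)}(V_\alpha)$.
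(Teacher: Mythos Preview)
Your construction of $Q$, $g$, and $b_Q$ is correct and essentially identical to the paper's, and your explicit identification $\mathrm{gr}^\alpha\simeq\omega_{M,b_M,\mathrm{pr}_M(y)}(V_\alpha)$ via the Levi decomposition is a valid computation that the paper only sketches. The one genuine gap is the ``standard polygon comparison'' you invoke to conclude that $\Fil^\bullet V\otimes\breve F$ coincides with the Harder--Narasimhan filtration of $\omega_{G,b_Q,y}(V)$. Knowing that each point $(\mathrm{rk}\,\Fil^{\alpha_i},\deg\Fil^{\alpha_i})$ lies on or below the HN polygon, that the endpoints agree, and even that the ranks of the steps match those of the HN filtration, does \emph{not} force equality of filtrations: any other filtration by sub-normed-isocrystals with the same step ranks but strictly smaller intermediate degrees would satisfy the same hypotheses. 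Nothing in your argument so far rules this out.

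The fix is shorter than your attempted detour. You already observed that $\rho(g)$ identifies $\omega_{G,b_Q,y}(V)$ with $\omega_{G,b,x}(V)$ as normed isocrystals; hence the HN filtration of the former is $\rho(g)^{-1}$ applied to the HN filtration of the latter, i.e.\ it is split by $\mathrm{Int}(g^{-1})\circ v$. This cocharacter is $G(\breve F)$-conjugate to $v_{b,x}$ (both represent $[v_{b,x}]\in\mathcal N(G)$) and has associated parabolic $g^{-1}Pg=Q_{\breve F}$, the same as $v_{b,x}$. Two conjugate cocharacters with the same associated parabolic are conjugate by an element of that parabolic, hence define the \emph{same} weight filtration on $V$. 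Thus $\Fil^\bullet V\otimes\breve F$ \emph{is} the HN filtration, and your graded-piece identification then gives that each $\omega_{M,b_M,\mathrm{pr}_M(y)}(V_\alpha)$ is semi-stable of slope $\alpha$, exactly as you wanted. The paper streamlines this by choosing $g$ at the outset so that $\mathrm{Int}(g)\circ v_{b,x}=v$, whence $v_{b,x}$ splits $\mathcal F_{\rm HN}(b_Q,g^{-1}x)$ directly; your weaker choice of $g$ works too, at the cost of the extra line above.
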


\begin{proof} Let $v\in X_*(G)_{\mathbb Q}$ be a rational cocharacter defined over $\breve F$ that splits the Harder-Narasimhan filtration of $\omega_{G,b,x}$: so $[v]=[v_{b,x}]\in \mathcal N(G)$. Since $G$ is quasi-split, $v$ is $G(\breve F)$-conjugate to $v_{b,x}\in X_*(A)_{\mathbb Q}^+$. Let $g\in G(\breve F)$ with $\mathrm{Int}(g)\circ v_{b,x}=v$. Consequently,
\[
gQ_{\breve F}g^{-1}=P\subset G_{\breve F}
\]
with $Q\subset G$ the standard parabolic subgroup defined by  $v_{b,x}\in X_*(A)_{\mathbb Q}^+$. In particular, $v_{b,x}$ is dominant $Q$-regular.

On the other hand, since the Harder-Narasimhan filtration of $\omega_{G,b,x}$ is a filtration by sub-isocrystals, $b\sigma(P)b^{-1}=P$. It follows that
\[
b\sigma(g)Q_{\breve F}\sigma(g)^{-1}b^{-1}=b\sigma(g)\sigma(Q_{\breve F})\sigma(g)^{-1}b^{-1}=gQ_{\breve F}g^{-1},
\]
and thus
\[
g^{-1}b\sigma(h)Q_{\breve F}\sigma(g)^{-1}b^{-1}g=Q_{\breve F}.
\]
As a result, $b_{Q}:=g^{-1}b\sigma(g)\in G(\breve F)$ normalizes $Q_{\breve F}$ and hence is contained in $Q(\breve F)$. So $b=gb_{Q}\sigma(g)^{-1}$ has a reduction $(b_{Q},g)$ to $Q$. Furthermore, $g\in G(\breve F)$ gives an isomorphism from $\omega_{G,b_Q,g^{-1}x}$ to $\omega_{G,b,x}$, thus an isomorphism of functors
\[
\mathcal F_{\rm HN}(b_Q,g^{-1}x)\stackrel{\sim}{\longrightarrow}\mathcal F_{\rm HN}(b,x).
\]
In particular, the Harder-Narasimhan filtration of $\omega_{G,b_Q,g^{-1}x}$ has a splitting given by $v_{b,x}=\mathrm{Int}(g^{-1})\circ v$. By considering a faithful representation of $G$ (and thus a faithful representation of $Q$ through the embedding $Q\hookrightarrow G$), we check that $v_{b,x}\in X_*(A)_{\mathbb Q}$ splits the Harder-Narasimhan filtration of $\omega_{Q,b_Q,g^{-1}x}$,  hence also splits the Harder-Narasimhan filtration of the functor  $\omega_{M,b_M,\mathrm{pr}_M(g^{-1}x)}$. Since $v_{b,x}$ is central in $M$, it coincides with the HN-vector of the pair $(b_M,\mathrm{pr}_M(g^{-1}x))$ in $X_*(A)_{\mathbb Q}^{M,+}$, and the functor $\omega_{M,b_M,\mathrm{pr}_M(g^{-1}x)}$ is semi-stable.   
\end{proof}

To go further, it is convenient to use the notion of (modifications of) $G$-bundles on the Fargues-Fontaine curve $X=X_{(C,C^ +)}$. We first introduce the notion of slope vector of a $G$-bundle for any $G$. Later on, we will use the slope vector of a $M$-bundle where $M$ is a Levi subgroup of $G$. The faithful $\otimes$-functor $\omega_{G,b}$ of \eqref{eq:G-bundle-Eb} corresponds naturally to the $G$-bundle $\mathcal E_b$ on $X$ recalled in \S~\ref{subsubsection:G-bundles}. The map
\[\begin{split}X^*(G)&\longrightarrow \mathbb{Z}\\  \chi&\longmapsto \mathrm{deg}(\chi_*\mathcal{E}_{b})
\end{split}\]
is Galois invariant and can thus be viewed as an element $v(\E_b)\in X_*(Z_G)_{\Q}^{\Gamma}$, where $Z_G$ denotes the center of $G$ and $\chi_*(\mathcal E_b)$ is the pushout of the $G$-bundle $\mathcal E_b$ by the character $\chi$, thus is a line bundle on $X_{\bar F}$, the base change to $\bar F$ of the Fargues-Fontaine curve $X/F$.\footnote{Over $X_{\bar F}$ every vector bundle is a direct sum of line bundles of rational slopes (\cite[Th\'eor\`eme 8.5.1]{FF}).}  In other words, 
\[
\langle v(\E_b), \chi\rangle= \mathrm{deg}(\chi_*\mathcal{E}_{b}), \quad \forall \chi\in X^*(G).\]
In fact, $v(\E_b)=-\mathrm{Av}_G (\nu_{b})$, where $\mathrm{Av}_G$ denote the average over the action of the Weyl group $W_G$ of $(G, T)$. The element $v(\E_b)\in X_*(Z_G)_{\mathbb Q}^{\Gamma}$ is called the \emph{slope vector} of $\E_b$. 

Let $(b_{Q'},g')$ be a reduction of $b$ to a standard parabolic subgroup $Q'\subseteq G$, which, together with the $G$-bundle $\mathcal E_{b,x}$ on $X$, gives rise to a reduction $(\mathcal E_{b,x})_{b_{Q'}}$ to $Q$ and thus an element, (by abuse of notations) still called the \emph{slope vector} for the pair $(b_{Q'}, g')$,
\[
v_{(b_{Q'},g')}\in X_*(Z_{M'})_{\mathbb Q}^{\Gamma}\subset X_*(T)_{\mathbb Q}^{\Gamma}= X_*(A)_{\mathbb Q}
\]
as the slope vector of the $M'$-bundle $(\mathcal E_{b,x})_{b_{Q'}}\times^{Q'}M'$, where $Z_{M'}$ is the center of the standard Levi $M'$ corresponding to $Q'$, and
\[
(\mathcal E_{b,x})_{b_{Q'}}\simeq \mathcal E_{b_{Q'},g^{'-1}x}^{Q'}
\]
the reduction to the parabolic subgroup $Q'$ of $\mathcal E_{b,x}$ corresponding to $(b_{Q'},g')$.

The following result is an analogue of \cite[Theorem 4.1]{Sch}.

\begin{proposition}\label{prop_HN vector maximum of slope vector} Let $Q_1,Q_2$ be two standard parabolic subgroups of $G$, with $M_1$ and $M_2$ their corresponding standard Levi subgroups. For $i=1$ or $2$, let $(b_{Q_i},g_i)$ be a reduction of $b$ to $Q_i$, and write $b_{M_i}$ (resp. $x_{M_i}$) the image of $b_{Q_i}\in Q_i(\breve F)$ (resp. of $g_i^{-1}x\in \mathrm{Gr}_G(C,C^+)=\mathrm{Gr}_{Q_i}(C,C^+)$) in $M_i(\breve F)$ (resp. $\mathrm{Gr}_{M_i}(C,C^+)$) via the natural projection $Q_i\rightarrow M_i$. Assume that 
\begin{enumerate}
\item[(i)] the $M_1$-bundle $\mathcal {E}_{b_{M_1},x_{M_1}}^{M_1}$ is HN-semi-stable, in the sense that the HN-vector of the pair $(b_{M_1},x_{M_1})$ is central in $M_1$; and that 
\item[(ii)] the slope vector $v_{(b_{Q_1},g_1)}$ is dominant $Q_1$-regular. 
\end{enumerate}
Then the following assertions hold:  
\begin{enumerate}
\item $v_{(b_{Q_2},g_2)}\leq  v_{(b_{Q_1},g_1)}$; 
\item we have $Q_1=Q$, the standard parabolic subgroup in Lemma \ref{lem:std-parabolic-Q}, and $v_{(b_{Q_1},g_1)}=v_{b,x}$; 
\item if the inequality in (1) is an equality, then $Q_2\subset Q_1$, and $(b_{Q_2},g_2)$, viewed as a reduction of $b$ to $Q_1$, is equivalent to $(b_{Q_1},g_1)$.
\end{enumerate}
\end{proposition}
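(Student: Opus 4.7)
The proof will parallel the classical characterization of the Harder--Narasimhan reduction of a $G$-bundle on a curve (Proposition \ref{prop_canonical reduction characterization} and \cite[Theorem 4.5.1]{Sch}), the key new ingredient being the Harder--Narasimhan formalism on the category $\mathbf{BunIsoc}_{\breve F|F}^{\BdR}$ and its compatibility with tensor products established in Theorem \ref{thm_compatible tensor}. The starting point is that each reduction $(b_{Q_i}, g_i)$ produces a Tannakian functor
\[
\omega_{Q_i, b_{Q_i}, g_i^{-1} x}: \mathbf{Rep}(Q_i) \longrightarrow \mathbf{BunIsoc}_{\breve F|F}^{\BdR}
\]
lifting $\omega_{G, b, x}$, and for any character $\chi \in X^*(Q_i) = X^*(M_i)$ one has
\[
\langle v_{(b_{Q_i}, g_i)}, \chi \rangle = \deg \bigl( \omega_{Q_i, b_{Q_i}, g_i^{-1} x}(\chi) \bigr).
\]
More generally, applying the functor to a $G$-representation $(V, \rho)$ and decomposing $V|_{Q_i}$ along the characters of $Z_{M_i}$ yields a filtration of the normed isocrystal $\omega_{G, b, x}(V, \rho)$ by subisocrystals, whose graded pieces have slopes $\langle v_{(b_{Q_i}, g_i)}, \chi \rangle$ for the weights $\chi$ occurring in $V$.

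I would first establish (2) by showing that, under assumptions (i) and (ii), the filtration of $\omega_{G, b, x}(V, \rho)$ coming from the $Q_1$-reduction \emph{is} the Harder--Narasimhan filtration. Indeed, assumption (i) implies that each $Z_{M_1}$-weight component is semistable of slope equal to the pairing with $v_{(b_{Q_1}, g_1)}$ (using that in the semistable case the HN-vector of the $M_1$-functor coincides with the slope vector $v_{(b_{Q_1}, g_1)}$), while assumption (ii) --- the $Q_1$-regularity of $v_{(b_{Q_1}, g_1)}$ --- forces these slopes to be strictly decreasing along the filtration. By the uniqueness of the Harder--Narasimhan filtration (characterized by having semistable graded pieces with strictly decreasing slopes), we conclude $v_{b,x} = v_{(b_{Q_1}, g_1)}$, and hence $Q_1 = Q$ by comparing with Lemma \ref{lem:std-parabolic-Q}.

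For (1), given any $G$-representation $(V, \rho)$, the $Q_2$-reduction produces a filtration of $\omega_{G, b, x}(V, \rho)$ by subisocrystals whose slope polygon lies on or below the HN-polygon, as a formal consequence of the variational characterization of the Harder--Narasimhan filtration (Corollary \ref{cor:inequality-for-HN}). By the previous paragraph the HN-polygon is determined by $v_{(b_{Q_1}, g_1)} = v_{b, x}$. Running through dominant weights $\chi \in X^*(T)^+$ --- for instance by considering the irreducible representation of highest weight $\chi$ together with its exterior powers, so as to extract sums of top slopes --- these polygon inequalities translate into
\[
\langle v_{(b_{Q_2}, g_2)}, \chi \rangle \leq \langle v_{(b_{Q_1}, g_1)}, \chi \rangle \quad \text{for all } \chi \in X^*(T)^+,
\]
which is equivalent to $v_{(b_{Q_2}, g_2)} \leq v_{(b_{Q_1}, g_1)}$ in the dominance order on $X_*(A)_{\mathbb Q}$.

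Finally, for (3), equality in (1) implies, for every $(V, \rho)$, that the slope polygon of the $Q_2$-filtration coincides with the HN-polygon; standard HN-theory then forces the $Q_2$-filtration to be a refinement of the HN-filtration that introduces no new slopes. Translated back to parabolic subgroups of $G_{\breve F}$, this yields $g_2 (Q_2)_{\breve F} g_2^{-1} \subseteq P = g_1 (Q_1)_{\breve F} g_1^{-1}$, whence $Q_2 \subset Q_1$ as standard parabolics and $g_2 \in g_1 Q_1(\breve F)$, giving the equivalence of reductions. I expect the main obstacle to lie in the passage from representation-wise polygon inequalities to the coordinate-free cocharacter dominance inequality in the step above; it is precisely here that Theorem \ref{thm_compatible tensor} plays an essential role, guaranteeing that the Harder--Narasimhan bookkeeping across all $G$-representations is coherent and encoded by a single cocharacter class in $X_*(A)_{\mathbb Q}$.
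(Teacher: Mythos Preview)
Your plan follows the same strategy as the paper's proof: establish (2) first by showing that the $Q_1$-filtration of each $\omega_{G,b,x}(V,\rho)$ \emph{is} the Harder--Narasimhan filtration (assumption (i) makes the graded pieces semistable, assumption (ii) makes their slopes strictly decreasing), and then deduce (1) and (3) from general properties of the HN formalism.

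Two places in your sketch of (1) are imprecise compared to the paper. First, Corollary \ref{cor:inequality-for-HN} is not the tool actually used; the paper argues more directly. For an irreducible $V$ of highest weight $\lambda$, it exhibits the $Q_2$-stable subspace
\[
V_2' \;=\; \bigoplus_{\mu \in \lambda + \mathbb{Z}\Phi_{M_2,0}} V[\mu],
\]
checks that the resulting sub-normed-isocrystal of $\omega_{G,b,x}(V,\rho)$ has slope exactly $\langle v_{(b_{Q_2},g_2)},\lambda\rangle$, and compares this with the top slope $\langle v_{(b_{Q_1},g_1)},\lambda\rangle$ of the HN filtration. No exterior powers or polygon bookkeeping are needed---a single subobject suffices. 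Second, the paper separates out the verification that $v_{(b_{Q_1},g_1)}-v_{(b_{Q_2},g_2)}\in\mathbb{Q}\Phi_0^\vee$ (by pairing against $X^*(G)$) before turning the inequalities $\langle v_1-v_2,\lambda\rangle\ge 0$ into the dominance relation; your sketch absorbs this into the final equivalence without comment.

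For (3) the paper's argument is exactly what you describe, formalized via an auxiliary lemma: a filtration by subisocrystals having the same numerical data (ranks and degrees of graded pieces) as the HN filtration must coincide with it. This is what pins down $g_2^{-1}g_1\in Q_1(\breve F)$.
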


\begin{proof}
To simplify notations, write $v_i=v_{(b_{Q_i},g_i)}$. Write also $v_1'\in X_*(A)_{\mathbb Q}$ the HN-vector of the pair $(b_{M_1},x_{M_1})$. We first check that, as $v_1'$ is central in $M_1$ by the  assumption (i) above, $v_1'$ is equal to the slope vector $v_1$ of the $M_1$-bundle $\mathcal E_{b_{M_1},x_{M_1}}$, or equivalently, of the $Q_1$-bundle $(\mathcal E_{b,x})_{b_{Q_1}}=\mathcal{E}_{b_{Q_1},g^{-1}x}^{Q_1}$. More precisely,  for  every  $\lambda\in X^*(Q_1)^{\Gamma}=X^*(M_1)^{\Gamma}$, we want to show
\begin{equation}\label{eq:characterization-for-vbx}
\langle v_{1}',\lambda \rangle =\deg (\lambda _*(\mathcal E_{b,x})_{b_{Q_1}}).
\end{equation}
By the definition of the HN-vector, $\lambda\circ v_{1}'$ splits the Harder-Narasimhan filtration of the one-dimensional isocrystal $(\breve F,\lambda(b_{Q}))$ equipped with the norm (or equivalently, the $\BpdR$-lattice) defined by $\lambda(g^{-1}x)\in \mathrm{Gr}_{\mathbb G_m}(C,C^+)$. Therefore, $\lambda\circ v_{1}'=\langle v_{1}',\lambda\rangle\in \mathbb Q$ is the degree of this one-dimensional normed isocrystal, i.e.,
\[
\deg(\lambda _*(\mathcal E_{b,x})_{b_{Q_1}})=\deg(\mathcal E^{\mathbb G_m}_{\lambda(b_{Q_1}),\lambda(g_1^{-1}x)})=\langle v_{1}',\lambda \rangle,
\]
as desired by \eqref{eq:characterization-for-vbx}. So $v_{1}=v_{1}'\in X_*(A)_{\mathbb Q}$ since $v_1'$ is already central in $M_1$. 

To check the inequality $v_{2}\leq v_{1}$, observe firstly that the difference
\[
v_{1}-v_2\in \mathbb Q\Phi_0^{\vee},
\]where $\Phi_0^\vee$ is the set of coroots for $(G, A)$.
Indeed, for any $\lambda\in X^*(G)\subset X^*(T)_{\mathbb Q}$, we have
\[
\langle v_{1},\lambda\rangle=\langle v_2,\lambda\rangle=\deg \lambda_{*}\mathcal E_{b,x}.
\]
Hence $v_{1}-v_2\in X_*(T)_{\mathbb Q}$ lies in the orthogonal complementary of $X^*(Z_{G})_{\mathbb Q}\subset X^*(T)$ under the natural pairing
\[
X_*(T)_{\mathbb Q}\times X^*(T)_{\mathbb Q}\longrightarrow \mathbb Q.
\]
Consequently $v_{1}-v_2\in \mathbb Q\Phi^{\vee}$, the $\mathbb Q$-subspace of $X_*(T)_{\mathbb Q}$ generated by the set $\Phi^{\vee}$ of coroots for $(G, T)$. As $v_{1}-v_2$ is Galois-invariant, so $v_{1}-v_2\in \mathbb Q\Phi_0^{\vee}$, as wanted. 

To complete the proof of (1), we need to check that, for any dominant $\lambda\in X^*(A)_{\mathbb Q}^{+}$,
\[
\langle v_{1},\lambda\rangle \geq \langle v_2,\lambda\rangle.
\]
Let $\rho:G\ra \mathbf{GL}_V$ be a finite-dimensional representation of highest weight $\lambda$, and let
\[
V=\bigoplus_{\mu\in X^*(A)}V[\mu]
\]
be its weight decomposition. For any $q\in \mathbb Q$, consider the linear subspaces
\[
V_{1,q}=\bigoplus_{\langle v_1,\mu\rangle \geq q}V[\mu]\subset V. 
\]
Then $V_{1,q'}\subset V_{1,q}$ once $q\leq q'$, and we obtain a $\mathbb Q$-filtration $V_{1,\bullet}=(V_{1,q})_{q\in \mathbb Q}$ of $V$. 
Since $v_1$ is $Q_1$-dominant, for any character $\mu$ occurring in the weight decomposition above we have $\mu\leq \lambda$ and thus 
\[
\langle v_1,\mu\rangle \leq \langle v_1,\lambda \rangle.
\]
Consequently, the maximal rational number $q_1$ occurring among the jumps of the $\mathbb Q$-filtration $V_{1,\bullet}$ is equal to $\langle v_1,\lambda\rangle$. Moreover, as in \cite[Lemma 5.1]{Sch}, the filtration $V_{1,\bullet}$ is a filtration of $V$ by $Q_1$-subrepresentations, and the $Q_1$-action on each quotient $\mathrm{gr}_qV_{1,\bullet}$ descends to an action of the Levi $M_1$. 

We claim that the normed isocrystal 
\begin{equation}\label{eq:ss-normed-isocrystal}
\left(\mathrm{gr}_{q}V_{1,\bullet} \otimes_F \breve{F} ,b_{M_1}\circ (1\otimes\sigma),\alpha_{x_{M_1}}\right)
\end{equation}
is semi-stable of slope $q$. Indeed, as a representation of $A$, we have weight decomposition 
\[
\mathrm{gr}_q V_{1,\bullet}\simeq \bigoplus_{\langle v_1,\mu\rangle =q} V[\mu].  
\]
Since the $M_1$-bundle $\mathcal E_{b_{M_1},x_{M_1}}^{M_1}$ is HN-semi-stable, $v_1=v_1'$ is the HN-vector of $(b_{M_1},x_{M_1})$ and thus splits the Harder-Narasimhan filtration of the normed isocrystal \eqref{eq:ss-normed-isocrystal} above. As $\langle v_1,\mu\rangle =q$ for all character $\mu$ occurring in the weight decomposition of $\mathrm{gr}_qV_{1,\bullet}$, it follows that the normed isocrystal \eqref{eq:ss-normed-isocrystal} has only one jump $q$ in its Harder-Narasimhan filtration, hence is semi-stable of slope $q$ as claimed. As a result, the $\mathbf{Q}$-filtration $V_{1,\bullet}\otimes \breve F$ is precisely the Harder-Narasimhan filtration of the normed isocrystal $(V\otimes_F \breve F,b_Q\circ (1\otimes \sigma),\alpha_{g^{-1}x})$. In particular, the maximal destabilized sub-normed isocrystal of 
\[
(V\otimes_F \breve F,b_Q\circ (1\otimes \sigma),\alpha_{g^{-1}x})\stackrel{\sim}{\longrightarrow}(V\otimes_F \breve F, b\circ(1\otimes \sigma),\alpha_x)
\]
is of slope $q_1=\langle v_1,\lambda\rangle$. 
On the other hand, for the standard parabolic $Q_2$, we can define a subspace
\[
V_{2}'=\bigoplus_{\mu\in \lambda+\mathbb Z\Phi_{M_2,0}}V[\mu]
\]
of $V$. As  in \cite[Lemma 3.5]{Sch} we know that $V_2'$ is a $Q_2$-subrepresentation of $V$, so it gives a sub-normed isocrystal $(V_2'\otimes_F\breve F, b_{Q_2}\circ (1\otimes \sigma),\alpha_{g^{-1}x})$ of slope $\langle v_2,\lambda\rangle $ of  
\[
(V\otimes_F\breve F,b_{Q_2}\circ(1\otimes \sigma),\alpha_{g^{-1}x})\stackrel{\sim}{\longrightarrow}(V\otimes_F \breve  F,b\circ(1\otimes \sigma).\alpha_x).
\]
So $\langle v_{1},\lambda\rangle\geq \langle v_2,\lambda\rangle$. As this inequality holds for every dominant weight, we deduce 
\[
v_2=v_{(b_{Q_2},g_2)}\leq  v_{1}=v_{(b_{Q_1},g_1)},
\]
as asserted by (1).

(2) We use the notation of Lemma \ref{lem:std-parabolic-Q}. As $v_{b,x}\in X_*(A)_{\mathbb Q}^+$ is the HN-vector of the pair $(b_M,\mathrm{pr}_M(g^{-1}x))$ and is central in $M$, it coincides with the slope vector of $\mathcal E_{b_M, \mathrm{pr}_{M}(g^{-1}x)}^{M}$. Thus $v_{(b_Q,g)}=v_{b,x}$. Moreover, $v_{b,x}\in X_*(A)_{\mathbb Q}^+$ is dominant $Q$-regular. Consequently, by (1) we must have 
\[
v_{(b_{Q_1},g_1)}=v_{(b_Q,g)}=v_{b,x}
\]     
and thus also $Q_1=Q$.

(3) Finally suppose $v_1=v_2$. Since $v_1$ is dominant $Q_1$-regular, necessarily $Q_2\subset Q_1=Q$. It remains to check that the pair $(b_{Q_2},g_2)$, viewed as a reduction of $b$ to $Q_1=Q$, is equivalent to $(b_{Q_1,}g_1)$. Replacing $Q_2$ by $Q_1=Q$ and $b_{Q_2}$ by its image in $Q$ if needed, we assume moreover that $Q_1=Q_2=Q$. Write $b_i=b_{Q_i}$ for $i=1,2$. So we have two reduction $(b_i,g_i)$ of $b$ to $Q$ and hence $b_2=(g_2^{-1}g_1)b_1\sigma(g_2^{-1}g_1)^{-1}$. To conclude the result we need to check that 
\[
g_2^{-1}g_1\in Q(\breve F).
\]
For each $i=1,2$, the element $g_i\in G(\breve F)$ induces an isomorphism of functors 
\begin{equation}\label{eq:two-omega}
\omega_{G,b_i,g_i^{-1}x}\stackrel{\sim}{\longrightarrow }\omega_{G,b,x}.
\end{equation}
We claim that the parabolic subgroup $Q_{\breve F}\subset G_{\breve F}$ preserves the Harder-Narasimhan filtration on $\omega_{G,b_i,g_i^{-1}x}$. To see this, it is enough to check the corresponding assertion after evaluating $\omega_{G,b_i,g_i^{-1}x}$ at an algebraic representation $(V,\rho)$ of $G$. Clearly, we can assume moreover that $(V,\rho)$ is a highest weight representation with highest weight $\lambda\in X_*(A)_{\mathbb Q}^ +$. Using the fact that $v_1$ is dominant $Q$-regular, the construction in (1) provides a filtration $V_{\bullet}=(V_q)_{q\in \mathbb Q}$ of $V$, which is stable under the action of $Q$. Moreover, as observed above, $V_{\bullet,\breve F}=(V_q\otimes \breve F)_{q\in \mathbb Q}$ gives the Harder-Narasimhan filtration of 
\[
\omega_{G,b_1,g_1^{-1}x}(V,\rho)=(V\otimes \breve F,\rho(b_1)(1\otimes \sigma),\Xi_{\rho,g_1^{-1}x}). 
\]
Hence this verifies our claim when $i=1$. For $i=2$, the same filtration $V_{\bullet,\breve F}$ of $V\otimes \breve F$ yields a filtration by subisocrystals of $\omega_{G,b_2,g_2^{-1}x}(V,\rho)$ whose slope vector is the same as that of its Harder-Narasimhan filtration. Therefore, according to the lemma below, $V_{\bullet,\breve F}$ coincides necessarily with the Harder-Narasimhan filtration of $\omega_{G,b_2,g_2^{-1}x}(V,\rho)$, showing our claim when $i=2$. In particular, it follows that $Q_{\breve F}\subset g_i^{-1}Pg_i$ and thus $Q_{\breve F}= g_i^{-1}Pg_i$ since $Q_{\breve F}$ is a $G(\breve F)$-conjugate of $P$. As a result, we get 
\[
g_1Q_{\breve F}g_1^{-1}=P=g_2Q_{\breve F}g_2^{-1}
\]
and hence $g_2^{-1}g_1\in Q(\breve F)$ as desired. 
\end{proof}

\begin{lemma} Let $(D,\alpha)$ be a normed isocrystal. Let $
D_{\bullet}=(D_q)_{q\in \mathbb R}$ and $D_{\bullet}'=(D_q')_{q\in \mathbb R}$ be two $\mathbb R$-filtrations of $D$ by subisocrystals. Assume the following two conditions:
\begin{itemize}
\item $D_{\bullet}$ is the Harder-Narasimhan filtration of $(D,\alpha)$; 
\item $D_{\bullet}'$ has the same numerical data as $D_{\bullet}$, i.e., 
\[
\deg(\mathrm{gr}_{D_{\bullet}}^qD,\alpha)=\deg(\mathrm{gr}_{D_{\bullet}'}^qD,\alpha), \quad \textrm{and} \quad \dim (\mathrm{gr}_{D_{\bullet}}^qD)=\dim (\mathrm{gr}_{D_{\bullet}'}^qD).
\]
\end{itemize}
Then $D_{\bullet}=D_{\bullet}'$. 
\end{lemma}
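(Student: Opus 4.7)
The plan is to argue by induction on $\dim_{\breve F} D$, first pinning down the top-slope piece of $D_\bullet'$ via the maximality of the maximal destabilising subisocrystal, and then quotienting it out and invoking the inductive hypothesis.

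For the first step, let $s_1\in\R$ be the largest slope appearing in $D_\bullet$, so that $D_{s_1}=\mathrm{Fil}^{s_1}D$ is the maximal destabilising subisocrystal of $(D,\alpha)$, semi-stable of slope $s_1$. By the numerical hypothesis, $s_1$ is also the largest jump of $D_\bullet'$, and $D_{s_1}'$ has the same rank and degree as $D_{s_1}$, hence slope exactly $s_1$. Since $D_{s_1}$ is the maximal destabilising of $(D,\alpha)$, every non-zero subisocrystal of $(D,\alpha)$ has slope $\leq s_1$, so $D_{s_1}'$ is in fact semi-stable of slope $s_1$. The standard fact that the sum of two semi-stable subobjects of the same slope is again semi-stable of that slope (a property of the quasi-abelian Harder-Narasimhan formalism recalled in Section 2.2) shows that $D_{s_1}$ is the \emph{unique} maximal semi-stable subisocrystal of slope $s_1$; hence $D_{s_1}'\subseteq D_{s_1}$, and the equality of dimensions forces $D_{s_1}'=D_{s_1}$.

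For the induction step, I would equip the quotient $\bar D:=D/D_{s_1}$ with the quotient norm $\bar\alpha$. By the usual formalism, $D_\bullet/D_{s_1}$ is the Harder-Narasimhan filtration of $(\bar D,\bar\alpha)$, and $D_\bullet'/D_{s_1}$ is a second $\R$-filtration of $\bar D$ by subisocrystals, whose graded pieces coincide (as normed isocrystals) with those of $D_\bullet$ and $D_\bullet'$ for slopes $s<s_1$. In particular it has the same numerical data as the Harder-Narasimhan filtration of $(\bar D,\bar\alpha)$, so the inductive hypothesis applied to $(\bar D,\bar\alpha)$ yields $D_\bullet/D_{s_1}=D_\bullet'/D_{s_1}$, and pulling back along $D\twoheadrightarrow \bar D$ we recover $D_\bullet=D_\bullet'$.

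The main (minor) obstacle is verifying that the norm induced on a graded piece $\mathrm{gr}^s D_\bullet'$ is the same whether one views it as a subquotient of $(D,\alpha)$ directly or as a subquotient of $(\bar D,\bar\alpha)$, so that the numerical data really does transfer to $\bar D$. This is a routine consequence of the compatibility of induced and quotient splittable norms under iterated subquotients in the quasi-abelian category $\mathbf{NIsoc}_{\breve F|F}^{K}$, but it is the one point that requires care beyond the abstract Harder-Narasimhan formalism.
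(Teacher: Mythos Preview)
Your proposal is correct and follows essentially the same argument as the paper: identify the top jump $s_1$, observe that $D_{s_1}'$ has slope $s_1$ and is therefore semi-stable (since nothing can exceed the slope of the maximal destabilising), conclude $D_{s_1}'=D_{s_1}$ by maximality and equality of dimensions, then pass to $D/D_{s_1}$ and induct. The paper's proof is more terse, omitting the discussion of norm compatibility on subquotients that you flag as a minor obstacle; this is indeed routine in the quasi-abelian setting and the paper simply takes it for granted.
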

\begin{proof}
Write $r$ (resp. $r'$) be the maximal real number occurring among the jumps of the $\mathbb R$-filtration $D_{\bullet}$ (resp. of $D_{\bullet}'$). Then $(D_r,\alpha)$ is the maximal destabilized sub-normed isocrystal of $(D,\alpha)$ and it is of slope $r$. Since $D_{\bullet}'$ has the same numerical data as $D_{\bullet}$, it follows that $r=r'$ and that $(D_r',\alpha)$ is a sub-normed isocrystal of $(D,\alpha)$ of slope $r$. Therefore, $(D_r',\alpha)$ must be semi-stable, and hence coincides with $(D_r,\alpha)$ as $\dim_{\breve F} D_r=\dim_{\breve F}D_r'$. Next replacing $D$ by $D/D_r=D/D_{r'}$ we check by induction that $D_{\bullet}=D_{\bullet}'$. 
\end{proof}

Combining Lemma \ref{lem:std-parabolic-Q} and Proposition \ref{prop_HN vector maximum of slope vector} we obtain 

\begin{corollary}[Canonical reduction for the HN-filtration]\label{prop_can reduction} Assume $G$ quasi-split. Let $b\in G$ and $x\in \mathrm{Gr}_G(C,C^+)$ a geometric point. Then 
\[
v_{b,x}=\max\left\{v_{(b_{Q'},g')} \bigg| {Q'\textrm{ is a standard parabolic of } G, \textrm{ and}\atop (b_{Q'},g') \textrm{ is a reduction of }b \textrm{ to }Q' } \right\}.
\]
Moreover, there exist a unique standard parabolic subgroup $Q$ with Levi component $M$ and a unique class of reduction $(b_Q,g)$ of $b$ to $Q$ satisfying the following properties:

\begin{enumerate}
    \item write $b_M\in M(\breve F)$ the image of $b_Q$ and recall the projection $\mathrm{pr}_M:\mathrm{Gr}_P\rightarrow  \mathrm{Gr}_M$, then the $M$-bundle $\mathcal E_{b_M, \mathrm{pr}_M(g^{-1}x)}$ is \emph{HN-semi-stable} in the sense that the HN-vector of $(b_M,x_M)$ is central in $M$; and
    \item the slope vector of the $M$-bundle in (1) is dominant $Q$-regular.
\end{enumerate}
Finally, the slope vector of the $M$-bundle in (1) is $v_{b,x}$.
\end{corollary}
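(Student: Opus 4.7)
The plan is to deduce the corollary almost formally from Lemma \ref{lem:std-parabolic-Q} and Proposition \ref{prop_HN vector maximum of slope vector}; essentially no new input is required, one only needs to re-package the existence, maximality and uniqueness statements.

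First I would establish existence. Applying Lemma \ref{lem:std-parabolic-Q}, we obtain a standard parabolic $Q\subset G$ with standard Levi $M$ and a reduction $(b_Q,g)$ of $b$ to $Q$ such that $v_{b,x}$ is dominant $Q$-regular and is the HN-vector of the pair $(b_M,\mathrm{pr}_M(g^{-1}x))$, which is moreover central in $M$. In particular, the functor $\omega_{M,b_M,\mathrm{pr}_M(g^{-1}x)}$ is semi-stable, so the associated $M$-bundle $\mathcal E_{b_M,\mathrm{pr}_M(g^{-1}x)}$ is HN-semi-stable, which gives property (1). As noticed in the proof of Proposition \ref{prop_HN vector maximum of slope vector}, when the HN-vector is central in $M$ it coincides with the slope vector of the corresponding $M$-bundle; thus the slope vector of $\mathcal E_{b_M,\mathrm{pr}_M(g^{-1}x)}$ equals $v_{b,x}$, which is dominant $Q$-regular, giving (2) together with the final assertion of the corollary.

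Second I would derive the maximality formula for $v_{b,x}$. The pair $(b_Q,g)$ constructed above satisfies hypotheses (i) and (ii) of Proposition \ref{prop_HN vector maximum of slope vector} with $v_{(b_Q,g)}=v_{b,x}$. Applied to any other reduction $(b_{Q'},g')$ of $b$ to a standard parabolic $Q'$, part (1) of that proposition yields
\[
v_{(b_{Q'},g')}\leq v_{(b_Q,g)}=v_{b,x},
\]
while the reduction $(b_Q,g)$ itself realises the equality. This is exactly the claimed maximum formula.

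Finally, for uniqueness, suppose $(Q',(b_{Q'},g'))$ is another pair satisfying (1) and (2). By the same argument used above, condition (1) forces the slope vector of the $M'$-bundle to coincide with the HN-vector of $(b_{M'},\mathrm{pr}_{M'}(g'{}^{-1}x))$, and condition (2) then says that this slope vector $v_{(b_{Q'},g')}$ is dominant $Q'$-regular. Hence the pair $(Q',(b_{Q'},g'))$ also satisfies the hypotheses of Proposition \ref{prop_HN vector maximum of slope vector}, so by part (2) of that proposition we obtain $Q'=Q$ and $v_{(b_{Q'},g')}=v_{b,x}=v_{(b_Q,g)}$. The equality case in part (3) then shows that $(b_{Q'},g')$ and $(b_Q,g)$ define the same class of reduction of $b$ to $Q$, completing the uniqueness. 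I do not anticipate a genuine obstacle here: the substance of the argument is already contained in Lemma \ref{lem:std-parabolic-Q} and Proposition \ref{prop_HN vector maximum of slope vector}, and the only mildly subtle point is the routine verification that the HN-semi-stability hypothesis (1) allows one to identify the HN-vector with the slope vector, which is needed both to feed the data into Proposition \ref{prop_HN vector maximum of slope vector} and to conclude the last sentence of the corollary.
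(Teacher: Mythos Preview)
Your proposal is correct and follows exactly the paper's approach: the paper's entire proof is the single sentence ``Combining Lemma \ref{lem:std-parabolic-Q} and Proposition \ref{prop_HN vector maximum of slope vector} we obtain'' the corollary, and you have simply spelled out how the existence, maximality, and uniqueness are extracted from these two results. The one mildly implicit step you flagged---that HN-semi-stability forces the HN-vector to coincide with the slope vector---is indeed established in the first paragraph of the proof of Proposition \ref{prop_HN vector maximum of slope vector}, so your use of it is legitimate.
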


\begin{corollary}\label{coro_HN vector}  Let $G$ be a reductive group over $F$, which is not necessarily quasi-split. Let $b\in G(\breve F)$ and $x\in \mathrm{Gr}_{G}$ a geometric point, such that $\mathcal E_{b,x}=\mathcal E_{b'}$ with $[b']\in B(G)$. Then there exists a unique element $[b'']\in B(G)$ such that $\nu_{b''}=-w_0v_{b,x}$ and $\kappa(b'')=\kappa(b')=\kappa(b)-\mu^\sharp$. Moreover $\nu_{b''}\leq \nu_{b'}$.

In the following, we shall denote by $\mathrm{HN}(b,x)$ this unique element $[b'']\in B(G)$.
\end{corollary}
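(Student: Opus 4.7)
\medskip

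\noindent\textbf{Proof plan for Corollary \ref{coro_HN vector}.}
The existence and uniqueness of $[b'']$ will follow from Kottwitz's description of $B(G)$: by the injectivity of $(\nu,\kappa)$ in \eqref{eqn_RR} and his description of the image, a pair $(\nu_0,\kappa_0)\in \mathcal{N}(G)\times \pi_{1}(G)_{\Gamma}$ comes from some class in $B(G)$ if and only if the two natural images of $\nu_0$ and $\kappa_0$ in $\pi_{1}(G)_{\Gamma,\mathbb{Q}}$ coincide. Thus once I verify this numerical compatibility for the pair $(-w_0v_{b,x},\kappa(b'))$, the unique $[b'']$ will exist automatically, and the remaining claim is the inequality $\nu_{b''}\leq \nu_{b'}$.

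\medskip

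To carry out both verifications I first reduce to the quasi-split case. Choose a basic $[b_0]\in B(G)$ such that the inner form $H=J_{b_0}$ is quasi-split; write $\iota:H_{\breve F}\iso G_{\breve F}$ for the induced identification, pick $h\in H(\breve F)$ with $b=\iota(h)b_0$, and set $y=\iota^{-1}(x)\in \mathrm{Gr}_{H,\breve F}$. Proposition \ref{prop_comp inner twists} gives $v_{b,x}=v_{h,y}-\nu_{b_0}\in \mathcal{N}(G)=\mathcal{N}(H)$, and translation through Fargues' theorem identifies the modifications $\mathcal{E}_{b,x}^{G}$ and the $H$-bundle $\mathcal{E}_{h,y}^{H}$ compatibly with both invariants $(\nu,\kappa)$ (which only depend on the inner class). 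Hence it is enough to do everything for the quasi-split group $H$ and then transport back through $\iota$.

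\medskip

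Assume now $G$ quasi-split. Apply Corollary \ref{prop_can reduction}: there is a standard parabolic $Q\subset G$ with Levi $M$ and a reduction $(b_Q,g)$ of $b$ to $Q$ whose associated $M$-bundle $\mathcal{E}_{b_M,\mathrm{pr}_M(g^{-1}x)}^{M}$ is HN-semi-stable with slope vector $v_{b,x}\in X_{*}(Z_M)_{\mathbb{Q}}^{\Gamma}$ (central in $M$). For any character $\chi\in X^{*}(G)$, the reduction $(\mathcal{E}_{b,x})_{b_Q}$ of $\mathcal{E}_{b,x}\simeq \mathcal{E}_{b'}$ to $Q$ satisfies
\[
\langle v_{b,x},\chi\rangle=\deg\chi_{*}(\mathcal{E}_{b,x})_{b_Q}=\deg\chi_{*}\mathcal{E}_{b'}=\langle v(\mathcal{E}_{b'}),\chi\rangle=\langle -\nu_{b'},\chi\rangle,
\]
the last step using $v(\mathcal{E}_{b'})=-\mathrm{Av}_G(\nu_{b'})$ together with the fact that any $\chi\in X^{*}(G)$ is Weyl-invariant on $G^{\mathrm{ab}}$. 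Therefore $v_{b,x}$ and $-\nu_{b'}$ have the same image in $X_{*}(G^{\mathrm{ab}})_{\mathbb{Q}}=\pi_{1}(G)_{\Gamma,\mathbb{Q}}$. Since $w_0$ acts trivially on $\pi_{1}(G)$ (the Weyl action on $X_{*}(T)/\Phi^{\vee,\mathbb{Z}}$ is trivial), the image of $-w_0v_{b,x}$ in $\pi_{1}(G)_{\Gamma,\mathbb{Q}}$ equals that of $-v_{b,x}$, hence equals the image of $\nu_{b'}$, which in turn equals the image of $\kappa(b')$. The compatibility is verified and so $[b'']$ exists uniquely.

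\medskip

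For the inequality, recall from \S~\ref{sec: HN reduction} that the Harder-Narasimhan polygon of the $G$-bundle $\mathcal{E}_{b'}$ is $\nu_{\mathcal{E}_{b'}}=-w_0\nu_{b'}$. The slope vector of the reduction $(\mathcal{E}_{b,x})_{b_Q}$ of $\mathcal{E}_{b,x}=\mathcal{E}_{b'}$ to $Q$ equals $v_{b,x}$ by Corollary \ref{prop_can reduction}, so Proposition \ref{prop_canonical reduction characterization} gives
\[
v_{b,x}\leq \nu_{\mathcal{E}_{b'}}=-w_0\nu_{b'}.
\]
Since $-w_0$ permutes the positive coroots, it preserves the dominance order on $X_{*}(A)_{\mathbb{Q}}^{+}$, so applying $-w_0$ yields $\nu_{b''}=-w_0v_{b,x}\leq \nu_{b'}$. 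The hardest ingredient to handle cleanly is the inner-twist reduction: one must check that Proposition \ref{prop_comp inner twists} together with the identification $B(H)\simeq B(G)$ transports the pair $(\nu,\kappa)$ in the required way, and that the dominance order on $\mathcal{N}(G)=\mathcal{N}(H)$ is independent of which inner form is used to define it. Once this bookkeeping is settled, the quasi-split arguments above apply verbatim.
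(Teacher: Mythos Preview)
Your argument for the inequality $\nu_{b''}\leq\nu_{b'}$ via Proposition~\ref{prop_canonical reduction characterization}, and your inner-twist reduction, are correct and essentially match the paper's proof. The gap is in the existence argument.

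The characterization you invoke for the image of $(\nu,\kappa):B(G)\hookrightarrow \mathcal{N}(G)\times\pi_1(G)_\Gamma$ is false: it is \emph{not} true that a pair $(\nu_0,\kappa_0)$ lies in the image whenever their projections to $\pi_1(G)_{\Gamma,\mathbb{Q}}$ agree. Take $G=\mathrm{GL}_2$, $\nu_0=(1/3,-1/3)$, $\kappa_0=0$: both images in $\pi_1(G)_{\Gamma,\mathbb{Q}}=\mathbb{Q}$ vanish, yet $(1/3,-1/3)$ is not the Newton polygon of any rank-two isocrystal (by Dieudonn\'e--Manin the only possibilities are $(a,b)$ with $a,b\in\mathbb Z$ or $(s/2,s/2)$ with $s$ odd). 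The genuine condition is that $\nu_0$ lift to an element of $\pi_1(M_{\nu_0})_\Gamma$ for its centralizing Levi $M_{\nu_0}$, an integrality constraint invisible to your compatibility check. So your verification that $-w_0v_{b,x}$ and $\kappa(b')$ agree in $\pi_1(G)_{\Gamma,\mathbb Q}$ is necessary but not sufficient.

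The paper's proof supplies exactly this missing integrality by an explicit construction. In the quasi-split case it takes the canonical HN-reduction $(b_Q,g)$ of $b$ to the standard parabolic $Q$ with Levi $M$, forms the $M$-bundle $\mathcal{E}_{b_M,\mathrm{pr}_M(g^{-1}x)}$, and reads off its $M$-equivariant first Chern class $c_M^1\in\pi_1(M)_\Gamma$. This class determines a basic $[\tilde{b}_M'']\in B(M)$ whose Newton point is automatically $v_{b,x}$; the point is that $v_{b,x}$ is not an arbitrary rational dominant vector but the slope vector of an honest $M$-bundle, so the lift to $\pi_1(M)_\Gamma$ is built in. Pushing $[\tilde{b}_M'']$ to $B(G)$ and applying the duality bijection $B(G,\epsilon,\delta)\simeq B(G,-\epsilon,-w_0\delta)$ from \cite{CT} then produces $[b'']$ with the required $(\nu,\kappa)$. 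To repair your argument you would have to insert this construction (or an equivalent one) in place of the appeal to a nonexistent abstract characterization.
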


\begin{proof}The uniqueness is clear as any element in $B(G)$ is uniquely determined by Newton invariant and Kottwitz invariant (\ref{eqn_RR}). It suffices to prove the existence of $b''$.

Assume first that $G$ is quasi-split. Let $Q$ be the standard parabolic subgroup of $G$ as given in Lemma \ref{lem:std-parabolic-Q}, with $M$ its corresponding standard Levi which is  the centralizer of $v_{b,x}$. Let $b_Q=(b_{Q},g)$ be a reduction of $b$ to $Q$ such that $v_{(b_Q,g)}=v_{b,x}$. Let $b_{M}\in M(\breve F)$ be the image of $b_{Q}$ via the natural projection. So
\[
(\mathcal E_{b,x})_{b_Q}\times^QM\simeq \mathcal E_{b_M,x'}
\]
with $x'\in \mathrm{Gr}_{M, \lambda}$ a geometric point for some $\lambda\in S_M(\mu)$. Its $M$-equivariant first Chern class $c_M^1(\mathcal E_{b_M,x'})\in \pi_1(M)_{\Gamma}$ gives a basic element $[\tilde{b}_M'']\in B(M)$ with
\[
\kappa_M(\tilde{b}_M'')=c_M^{1}(\mathcal E_{b_M,x'})=\lambda^{\sharp}-\kappa_M(b_M) \text{ in } \pi_1(M)_{\Gamma}.
\] Then $\nu_{\tilde{b}_{M}''}=v_{b,x}$. Let $[\tilde{b}'']\in B(G)$ be its image via the natural map $B(M)\rightarrow B(G)$. 
Recall that by \cite[Corollary 2.9]{CT}, the map 
\[\begin{split} X_*(A)_{\mathbb Q}^+&\simeq X_*(A)_{\mathbb Q}^+\\
v&\mapsto v^*:=-w_0v\end{split}
\]
induces a bijection between generalized Kottwitz sets
\[
B(G, \epsilon, \delta)\simeq B(G, -\epsilon, -w_0\delta).
\]
Let $[b'']\in B(G)$ be the image of $[\tilde{b}'']$ under this bijection. Then it is clear that 
\[\kappa(b'')=-\kappa(\tilde{b}'')=-\mu^\sharp+\kappa(b)=\kappa(b')\] and
\[
\nu_{b''}=-w_0\nu_{\tilde{b}''}=-w_0v_{b,x}.
\]

The inequality $\nu_{b''}\leq \nu_{b'}$, or equivalently, $v_{b,x}\leq \nu_{\mathcal E_{b,x}}$ (where the HN polygone $\nu_{\E_{b, x}}$ of $\E_{b, x}$ is defined in Section \ref{sec: HN reduction}), follows from the previous result, as $v_{\mathcal E_{b,x}}$ can be characterized similarly as the maximum of the slope vectors of all possible reductions of $\mathcal E_{b,x}$ to a standard parabolic subgroups of $G$ (Proposition \ref{prop_canonical reduction characterization}).

Now consider the general case where $G$ is not necessarily quasi-split. Let $H$ be the unique quasi-split inner form over $F$ of $G$, and write $H=J_{b_0}$ for some basic $b_0\in G(\breve F)$. In particular, we have a natural identification $\iota: H_{\breve F}=J_{b_0,\breve F}\stackrel{\sim}{\ra}G_{\breve F}$. Write $b_H=\iota^{-1}(bb_{0}^{-1})\in H(\breve F)$, $x_H=\iota^{-1}(x)\in \mathrm{Gr}_H$ and $\mathcal E_{b_H,x_H}=\mathcal E_{b_H'}$. Then under the natural identification $\mathcal N(H)=\mathcal N(G)$, we have $v_{b,x}=v_{b_H,x_H}-\nu_{b_0}$ by Proposition \ref{prop_comp inner twists}. After applying our corollary to the quasi-split group $H$ for the triple $(b_H, x_H, b'_H)$, there exists an unique $[b''_H]\in B(H)$ such that 
\[\nu_{b''_H}=-w_0v_{b_H, x_H} \text{ and } \kappa(b''_H)=\kappa(b'_H)=\kappa(b_H)-\mu^{\sharp}.\]Let $[b'']\in B(G)$ be the image of $[b''_H]\in B(H)$ via the identification $B(H)\simeq B(G)$. In fact $[b'']=[\iota(b_H'')b_0]\in B(G)$. So
\[
-w_0v_{b,x}=-w_0v_{b_H,x_H}+\nu_{b_0}=\nu_{b_{H}''}+\nu_{b_0}=\nu_{b''}
\]
and
\[
\kappa_G(b'')=\kappa_H(b_{H}'')+\kappa_G(b_0)=\kappa_H(b_H')+\kappa_G(b_0)=\kappa_G(b').
\]
\end{proof}

\subsection{Harder-Narasimhan stratification on $\BpdR$-affine Grassmannian} Let $b\in G(\breve F)$. Then we have the following simple observation. 

\begin{lemma} Let $\mathrm{Spa}(C_2,C_2^+)\ra \mathrm{Spa}(C_1,C_1^+)$ be a morphism in $\mathbf{Perf}/\mathrm{Spd}(F)$ with $C_1,C_2$ two algebraically closed perfectoid fields with $C_1^+\subset C_1$ and $C_2^+\subset C_2$ bounded valuation subrings. For $x_1\in \mathrm{Gr}_G(C_1,C^+_1)$ with image $x_2\in \mathrm{Gr}_G(C_2,C_2^+)$, we have 
\[
\mathrm{HN}(b,x_1)=\mathrm{HN}(b,x_2).
\]  
    
\end{lemma}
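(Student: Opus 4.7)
The plan is to reduce the equality $\mathrm{HN}(b,x_1)=\mathrm{HN}(b,x_2)$ to the equality $v_{b,x_1}=v_{b,x_2}$ of the HN-vectors. Indeed, by Corollary \ref{coro_HN vector}, the element $\mathrm{HN}(b,x)\in B(G)$ is uniquely determined by the two invariants $\nu_{\mathrm{HN}(b,x)}=-w_0 v_{b,x}$ and $\kappa(\mathrm{HN}(b,x))=\kappa(b)-\mu^{\sharp}$. The Kottwitz part depends only on the Schubert cell $\mathrm{Gr}_{G,\mu}\ni x$, which is preserved under the given base change (since the image of a geometric point of $\mathrm{Gr}_{G,\mu}$ again lies in $\mathrm{Gr}_{G,\mu}$).

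To prove $v_{b,x_1}=v_{b,x_2}$, I would use the Tannakian definition: it is enough to show that for every finite-dimensional representation $(V,\rho)$ of $G$, the functors $\omega_{G,b,x_1}$ and $\omega_{G,b,x_2}$ have the same Harder-Narasimhan filtration at $(V,\rho)$. Observe that each of these HN filtrations is by definition a filtration of the isocrystal $V\otimes_F\breve F$ by sub-isocrystals, and this isocrystal is a $\breve F$-linear object independent of the base field $C_i$. Thus both HN filtrations live naturally in the same space $\mathbf{F}(V\otimes_F\breve F)$, and it makes sense to compare them.

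The key step is then the following invariance of the degree function. For any sub-isocrystal $D'\subset V\otimes_F\breve F$, set $\Xi_{x_i,\rho}':=\Xi_{x_i,\rho}\cap\bigl(D'\otimes_{\breve F}\BdR(C_i^{\sharp})\bigr)$, a $\BpdR(C_i^{\sharp})$-lattice in $D'\otimes_{\breve F}\BdR(C_i^{\sharp})$. I claim that $\deg(D',\Xi_{x_1,\rho}')=\deg(D',\Xi_{x_2,\rho}')$. By the adapted basis theorem discussed in Remark \ref{rem:lattice to filtration}, this degree equals $-\sum_j\lambda_j-\dim(D')$, where the integers $\lambda_1,\ldots,\lambda_r$ are the elementary divisors of the lattice $\Xi_{x_i,\rho}'$ relative to the reference lattice $D'\otimes_{\breve F}\BpdR(C_i^{\sharp})$. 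Since $\BpdR(C_1^{\sharp})\to\BpdR(C_2^{\sharp})$ is a faithfully flat extension of complete DVRs with uniformizer $\xi$ going to a uniformizer (and uniformizer to uniformizer up to a unit), and since $\Xi_{x_2,\rho}=\Xi_{x_1,\rho}\otimes_{\BpdR(C_1^{\sharp})}\BpdR(C_2^{\sharp})$, flatness gives $\Xi_{x_2,\rho}'=\Xi_{x_1,\rho}'\otimes_{\BpdR(C_1^{\sharp})}\BpdR(C_2^{\sharp})$, so the two lattices have the same elementary divisors.

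Granting the claim above, the degree function on $\mathbf{F}(V\otimes_F\breve F)$ induced by $\Xi_{x_i,\rho}$ is the same for $i=1,2$, hence the two HN filtrations coincide. Varying $(V,\rho)$ through the category of representations of $G$ and taking a splitting, we conclude that $v_{b,x_1}=v_{b,x_2}$, and the lemma follows. The only mildly delicate point is the verification that $\Xi_{x_i,\rho}\mapsto\Xi_{x_i,\rho}'$ commutes with base change, but this is a formal consequence of flatness.
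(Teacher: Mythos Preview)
Your proof is correct and follows essentially the same approach as the paper: reduce via Tannakian duality to $\mathrm{GL}_V$, then use that $\BpdR(C_1^{\sharp})\to\BpdR(C_2^{\sharp})$ is an unramified (hence flat) extension of DVRs to conclude that the HN-filtration of the normed isocrystal is unchanged under base change. You spell out the reason (preservation of elementary divisors, and intersection with a subisocrystal commuting with flat base change) more explicitly than the paper, which simply asserts that the two HN-filtrations coincide.
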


\begin{proof} By Tannakian duality, we reduce to the case $G=\mathrm{GL}_V$ with $V$ a finite-dimensional $F$-vector space. Write $(C_i^{\sharp},C_i^{\sharp +})$ be the corresponding untilt of $(C_i,C_i^+)$, $B_i^+:=\BpdR(C_i^{\sharp})$ and $B_i=\BdR(C_i^{\sharp})$ for $i=1,2$. Then the natural map $B_1^+\ra B_2^+$ is an unramified extension of DVRs whose residue field extension is just $C_1^{\sharp}\subset C_2^{\sharp}$. Let $\Xi_1\subset V\otimes_{F}B_1$ be a $B_1^+$-lattice and $\Xi_2=\Xi_1\otimes_{B_1^+}B_2^+$. Then the HN-filtration for the normed isocrystal $(V\otimes \breve F, b,\alpha_{\Xi_1})$ coincides with that for $(V\otimes \breve F,b,\alpha_{\Xi_2})$, from where our lemma follows.
\end{proof}

Consequently, we dispose of a map
\begin{equation}\label{eq:HN_b}
\mathrm{HN}_b:|\mathrm{Gr}_G|\longrightarrow B(G), \quad x\longmapsto \mathrm{HN}(b,x).
\end{equation}
Let $[b'']\in B(G)$. We write 
\[
|\mathrm{Gr}_{G}|^{\mathrm{HN}_b\geq [b'']} \quad (\textrm{resp. } |\mathrm{Gr}_{G}|^{\mathrm{HN}_b= [b'']})
\]
the subset of $|\mathrm{Gr}_G|$ consisting of points $x$ such that 
\[
\mathrm{HN}(b,x)\geq [b'']\quad  (\textrm{resp. } \mathrm{HN}(b,x)=[b'']).
\]
Write also 
\[
\mathrm{Gr}_{G,b}^{\mathrm{HN}=[b'']}:=\mathrm{Gr}_{G}\times_{|\mathrm{Gr}_{G}|} |\mathrm{Gr}_G|^{\mathrm{HN}_b\geq [b'']} \quad (\textrm{resp.}\ \mathrm{Gr}_{G,b}^{\mathrm{HN}\geq [b'']}:=\mathrm{Gr}_{G}\times_{|\mathrm{Gr}_{G}|} |\mathrm{Gr}_G|^{\mathrm{HN}_b= [b'']}),
\]
that is, the v-subsheaf of $\mathrm{Gr}_{G}$, such that a morphism $S\ra \mathrm{Gr}_G$ from a perfectoid space $S\in \mathbf{Perf}/\mathrm{Spd}(F)$ factors through $\mathrm{Gr}_{G,b}^{\mathrm{HN}=[b'']}$ (resp. $\mathrm{Gr}_{G,b}^{\mathrm{HN}\geq [b'']}$) if and only if $\mathrm{HN}(b,x)=[b'']$ (resp. (resp. $\mathrm{HN}(b,x)\geq [b'']$) for every geometric point $x$ of $S$. 
On the other hand, by the local constancy of the Kottwitz invariant (\cite[Theorem III.2.7]{FS}),  the following composed map is locally constant
\[
\kappa_b: |\mathrm{Gr}_G|\stackrel{BL_b}{\longrightarrow} |\mathrm{Bun}_G|\stackrel{\kappa}{\longrightarrow}\pi_1(G)_{\Gamma}.
\]
Here the first map is the Beauville-Laszlo morphism defined by using the element $b\in G(\breve F)$. The preimage of an element $\alpha\in \pi_1(G)_{\Gamma}$ is then the underlying topological space of an open and closed v-subsheaf, denoted by
\[
\mathrm{Gr}_G^{\kappa_b=\alpha},
\]
of $\mathrm{Gr}_G$, and we deduce a decomposition in open and closed subsheaves
\[
\mathrm{Gr}_G=\coprod_{\alpha\in \pi_1(G)_{\Gamma}}\mathrm{Gr}_G^{\kappa_b=\alpha}.
\]
For a geometric conjugacy class $\{\mu\}$ of cocharacters of $G$, write also 
\[
\mathrm{Gr}_{G,\leq \mu}^{\kappa_b=\alpha}=\mathrm{Gr}_{G,\bar F}^{\kappa_b=\alpha}\cap \mathrm{Gr}_{G,\leq \mu}.
\]

\begin{proposition}\label{prop:HN-strata-are-locally-closed} Let $[b'']\in B(G)$.
\begin{enumerate}
\item  Suppose moreover $G$ quasi-split. Let $S\ra \mathrm{Spd}(\bar F)$ be a geometric point of $\mathrm{Spd}(\bar F)$. We have
\[
\mathrm{Gr}_{G,b}^{\mathrm{HN}\geq [b'']}(S)=\left(\bigcup_{\substack{\textrm{reduction }(b_{P},g) \textrm{ of }b \textrm{ to}\\ \textrm{a standard parabolic }P}} g\cdot \left(\bigcup_{\substack{u\in X_*(M^{ab}) \textrm{ such that} \\ -\mathrm{Av}_{M}(\nu_{b_M})+u^{\diamond}\geq -w_0\nu_{b''} }} \mathrm{Gr}_P^{u}(S)\right)\right)\bigcap \mathrm{Gr}_{G}^{\kappa_b=\kappa(b'')}(S),
\] 
where \begin{itemize}
\item $\mathrm{Gr}_P^u$ is defined in section \ref{sec_semi-infinite orbits};
\item $[b_M]$ is the image of $[b_P]$ via the natural map $B(P)\rightarrow B(M)$;
\item in the inequality, $u^{\diamond}$ is the Galois average of $u$ and is considered to be an element in $X_*(A)_\Q$ via the natural inclusion $X_*(M^{ab})_{\mathbb Q}^{\Gamma}\subset X_*(A)_{\Q}$.
\end{itemize}
\item For general reductive group $G$ and for a geometric conjugacy class $\{\mu\}$ of cocharacters of $G$, 
the subfunctor 
\[
\mathrm{Gr}_{G,b,\bar F}^{\mathrm{HN}\geq [b'']}\cap \mathrm{Gr}_{G,\leq \mu}\quad \quad (\textrm{resp. }\mathrm{Gr}_{G,b,\bar F}^{\mathrm{HN}= [b'']}\cap \mathrm{Gr}_{G,\leq \mu}) 
\]
is closed (resp. locally closed) in $\mathrm{Gr}_{G,\leq \mu}$, thus a spatial diamond (resp. a locally  spatial diamond). In particular, the restriction to $|\mathrm{Gr}_{G,\leq \mu}|$ of the map  $\mathrm{HN}_b$ in \eqref{eq:HN_b} is upper semi-continuous.
\end{enumerate}
\end{proposition}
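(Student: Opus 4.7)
The plan is to derive (1) from Corollary \ref{prop_can reduction} and the computation of slope vectors of reductions, and then to derive (2) from (1) together with the closedness properties of the generalized semi-infinite orbits established in \S~\ref{sec_semi-infinite orbits}.

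For (1), I would first unfold what $\mathrm{HN}(b,x)\geq [b'']$ means. By Corollary \ref{coro_HN vector}, $\nu_{\mathrm{HN}(b,x)}=-w_0v_{b,x}$ and $\kappa(\mathrm{HN}(b,x))=\kappa_b(x)$, so the condition amounts to the Kottwitz equality $\kappa_b(x)=\kappa(b'')$ together with the dominance inequality $v_{b,x}\geq -w_0\nu_{b''}$ in $X_*(A)_{\mathbb Q}^+$. Corollary \ref{prop_can reduction} characterizes $v_{b,x}$ as the maximum of $v_{(b_P,g)}$ over all reductions of $b$ to standard parabolics, so this inequality is equivalent to the existence of at least one reduction $(b_P,g)$ of $b$ to some standard parabolic $P$ for which $v_{(b_P,g)}\geq -w_0\nu_{b''}$. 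It then remains to compute $v_{(b_P,g)}$ pointwise. Combining the last proposition of Section~1, which identifies $(\mathcal E_{b,x})_{b_P}\times^PM\simeq \mathcal E_{b_M,\mathrm{pr}_M(g^{-1}x)}$, with the degree formula for modifications of $\mathbb G_m$-bundles ($\deg\chi_*\mathcal E_{b_M,y}=-\langle\chi,\nu_{b_M}\rangle+\langle\chi,u\rangle$ when $\mathrm{pr}_{M^{ab}}(y)=u\in X_*(M^{ab})$), one obtains $v_{(b_P,g)}=-\mathrm{Av}_M(\nu_{b_M})+u^{\diamond}$ whenever $g^{-1}x\in \mathrm{Gr}_P^u$. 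Since $g\cdot\mathrm{Gr}_P^u$ is exactly the locus where $g^{-1}x\in\mathrm{Gr}_P^u$, the description in (1) follows.

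For (2) in the quasi-split case, I would use (1) to write the subfunctor as the intersection of the open-and-closed Kottwitz locus $\mathrm{Gr}_G^{\kappa_b=\kappa(b'')}$ with a union of $g\cdot\bigcup_{u\in U(b_P,g)}\mathrm{Gr}_P^u$ where $U(b_P,g)$ is the up-closed (with respect to the dominance order on $X_*(M^{ab})$) set of $u$'s satisfying the slope inequality. By Proposition \ref{prop_GrPv}, the union of $\mathrm{Gr}_P^{u'}$ over any up-closed subset of $X_*(M^{ab})$ is closed in $\mathrm{Gr}_{G,\bar F}$, and translating by $g\in G(\breve F)$ preserves closedness. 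After intersecting with the quasi-compact spatial diamond $\mathrm{Gr}_{G,\leq\mu}$, the index set becomes finite: there are finitely many standard parabolics $P$; for each $P$, only finitely many $u\in X_*(M^{ab})$ satisfy $\mathrm{Gr}_P^u\cap \mathrm{Gr}_{G,\leq \mu}\neq\emptyset$ by the remark following Lemma \ref{lemma_reduction type}; and for each $(P,u)$, only finitely many classes $[b_M]\in B(M)$ lift $[b]$ and satisfy the slope bound together with the Kottwitz compatibility $\kappa_M(b_M)+u^{\sharp}=\kappa_G(b)$. This yields closedness of $\mathrm{Gr}_{G,b,\bar F}^{\mathrm{HN}\geq[b'']}\cap \mathrm{Gr}_{G,\leq\mu}$, and the locally closed statement for $\mathrm{HN}=[b'']$ follows by taking the difference with the closed strata corresponding to strictly larger elements of $B(G)$, of which there are only finitely many inside $\{[b'']'\leq\mu^{\diamond}\}$.

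For (2) in the case of a general (not necessarily quasi-split) reductive group $G$, I would reduce to the quasi-split case using Proposition \ref{prop_comp inner twists}: write $G$ as an inner twist of its quasi-split inner form $H=J_{b_0}$ via the isomorphism $\iota:H_{\breve F}\xrightarrow{\sim}G_{\breve F}$, which induces an isomorphism $\iota:\mathrm{Gr}_{H,\breve F}\xrightarrow{\sim}\mathrm{Gr}_{G,\breve F}$ and a bijection $\tilde\iota:B(H)\xrightarrow{\sim}B(G)$. By Proposition \ref{prop_comp inner twists}, this isomorphism identifies the HN-stratum $\mathrm{Gr}_{H,h}^{\mathrm{HN}\geq\tilde\iota^{-1}[b'']}$ with $\mathrm{Gr}_{G,\iota(h)b_0}^{\mathrm{HN}\geq[b'']}$ (the slope vector is shifted by the basic element $\nu_{b_0}$, which does not affect closedness), and it respects the Schubert stratification up to the standard identification of geometric conjugacy classes of cocharacters. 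The desired closedness for $G$ therefore follows from the already established quasi-split case. The main obstacle in the plan above is the finiteness argument in the quasi-split step: the union over reductions $(b_P,g)$ is a priori infinite, and the closedness argument collapses unless one can extract a finite subfamily after restriction to $\mathrm{Gr}_{G,\leq\mu}$. Turning the boundedness of slope vectors and Kottwitz invariants into an honest finiteness for the relevant classes of reductions is thus the key technical step.
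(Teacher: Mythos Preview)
Your argument for (1) is correct and essentially identical to the paper's: both reduce to Corollary~\ref{coro_HN vector} and Corollary~\ref{prop_can reduction}, then compute the slope vector of a reduction as $-\mathrm{Av}_M(\nu_{b_M})+u^{\diamond}$.

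For (2), however, there is a genuine gap, and it is precisely the one you flag but do not resolve. You claim that after restricting to $\mathrm{Gr}_{G,\leq\mu}$ and fixing $P$, $u$, and the class $[b_M]\in B(M)$, the index set becomes finite. It does not: even with $[b_M]$ fixed, the set of reductions $(b_P,g)$ in that class is parameterized by $g$ ranging over a coset space of the form $J_{b_M}(F)/\bigl(J_{b_M}(F)\cap P(\breve F)\bigr)$, which is infinite. The paper's resolution is not to extract a finite subfamily, but to observe that this coset space is \emph{compact}: $J_{b_M}(F)\cap P(\breve F)$ is the $F$-points of a parabolic subgroup of $J_{b_M}$, and the quotient of a reductive group by a parabolic is projective over the locally compact field $F$. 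A union of $g$-translates of a fixed closed subset, indexed by a compact set, is then closed. Combined with the finiteness of preimages under $B(M)\to B(G)$ (which the paper isolates as a separate lemma) and the finiteness of relevant $u$'s, this gives closedness of the topological subset $|\mathrm{Gr}_{G,b}^{\mathrm{HN}\geq[b'']}\cap\mathrm{Gr}_{G,\leq\mu}|$.

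There is also a second step you omit entirely. Closedness of the underlying topological subset is not by itself enough to conclude that the \emph{subfunctor} is a closed immersion of v-sheaves; one must additionally check that the subset is \emph{generalizing}. The paper does this by reducing to points valued in $(C,C^+)$ and observing that the decomposition $\mathrm{Gr}_G(C,C^+)=\coprod_u\mathrm{Gr}_P^u(C,C^+)$ is independent of the choice of $C^+$. Your reduction to the quasi-split case via inner twisting is fine.
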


\begin{proof}
(1) Take $x\in \mathrm{Gr}_{G,b}^{\mathrm{HN}\geq [b'']}(S)$. Then $v_{b,x}\geq -w_0\nu_{b''}$ and $\kappa_b(x)=\kappa(b'')$ by Corollary \ref{coro_HN vector}. Let $M\subset G$ be the centralizer of $v_{b,x}$, with $P\subset G$ the corresponding standard parabolic subgroup. In particular, $b$ has a reduction $(b_P,g)$ to $P$ such that
\[
\mathcal E_{b,x}\simeq \mathcal E_{b_P,g^{-1}x},
\] with $g^{-1}x\in \mathrm{Gr}_P^u$ for some $u\in \pi_1(M^{ab})$.
Moreover, the HN-vector $v_{b,x}$ is the slope vector of the $M$-bundle $\mathcal E_{b_M,\mathrm{pr}_M(g^{-1}x)}$, which is nothing but $-\mathrm{Av}_M(\nu_{b_M})+u^{\diamond}$. It follows that \[
-\mathrm{Av}_M(\nu_{b_M})+u^{\diamond}\geq -w_0\nu_{b''}.
\]
Consequently, $x\in \mathrm{Gr}_{G,b}^{\mathrm{HN}\geq [b'']}(S)$ is contained in the right-hand side of the equality in (1).

Conversely, suppose that $b$ has a reduction $(b_P,g)$ and $x\in g\mathrm{Gr}_{P}^u(S)$ for some $u\in X_*(M^{ab})$ with $
-\mathrm{Av}_M(\nu_{b_P})+u^{\diamond}\geq -w_0\nu_{b''}$. The slope vector of the $M$-bundle $\mathcal E_{b_M,\mathrm{pr}(g^{-1}x)}$ is precisely 
\[
-\mathrm{Av}_M(\nu_{b_M})+u^{\diamond}.
\]
Therefore 
\[
v_{b,x}\geq -\mathrm{Av}_M(\nu_{b_P})+u^{\diamond}\geq -w_0\nu_{b''}.
\]
If moreover $\kappa_b(x)=\kappa(b'')$, we then obtain $\mathrm{HN}(b,x)\geq [b'']$, as desired.

(2) Clearly we may assume $G$ quasi-split and it is enough to show that $\mathrm{Gr}_{G,b}^{\mathrm{HN}\geq [b'']}\cap \mathrm{Gr}_{G,\leq \mu}$ is closed in $\mathrm{Gr}_{G,\leq \mu}$, i.e., for every totally disconnected perfectoid space $S$ mapping to $\mathrm{Gr}_{G,\leq \mu}$, the pullback 
\[
(\mathrm{Gr}_{G,b}^{\mathrm{HN}\geq [b'']}\cap \mathrm{Gr}_{G,\leq \mu}) \times_{\mathrm{Gr}_{G,\leq \mu}} S
\]
is representable by a closed immersion to $S$ (cf. \cite[Definition 10.7  (ii)]{Sch2}). Since 
\[
\mathrm{Gr}_{G,b}^{\mathrm{HN}\geq [b'']}\cap \mathrm{Gr}_{G,\leq \mu}=\mathrm{Gr}_{G,\leq \mu}\times_{|\mathrm{Gr}_{G,\leq \mu }|}|\mathrm{Gr}_{G,b}^{\mathrm{HN}\geq [b'']}\cap \mathrm{Gr}_{G,\leq \mu}|,
\]
according to \cite[Lemma 7.6 and Definition 5.6]{Sch2}, 
we only need to show that the subset 
\[
|\mathrm{Gr}_{G,b}^{\mathrm{HN}\geq [b'']}\cap \mathrm{Gr}_{G,\leq \mu}|\subset |\mathrm{Gr}_{G,\leq \mu}|
\]
is closed and generalizing. 

For a standard parabolic $P$ and $\lambda:=-w_0\nu_{b''}$, consider the following v-subsheaves 
\[
\mathscr{S}_{b_P,\lambda}:=\left(\bigcup_{\substack{u\in X_*(M^{ab}) \textrm{ such that} \\ -\mathrm{Av}_{M}(\nu_{b_M})+u^{\diamond}\geq  \lambda }}\mathrm{Gr}_P^{u}\right)\cap \mathrm{Gr}_{G,\leq \mu}^{\kappa_b=\kappa(b'')}
\]
and 
\[
\mathscr{S}:=\bigcup_{\substack{\textrm{reduction }(b_P,g) \textrm{ of }b \textrm{ to } \\ \textrm{a standard parabolic }P}} g\cdot \mathscr{S}_{b_P,\lambda}.
\]
Clearly we have inclusion 
\[
\mathscr S\subset \mathrm{Gr}_{G,b}^{\mathrm{HN}\geq [b'']}\cap \mathrm{Gr}_{G,\leq \mu}
\]
of subsheaves of $\mathrm{Gr}_{G,\leq \mu}$ and they have the same geometric points by (1). So  
\[
|\mathscr S|=|\mathrm{Gr}_{G,b}^{\mathrm{HN}\geq [b'']}\cap \mathrm{Gr}_{G,\leq \mu}|\subset |\mathrm{Gr}_{G,\leq \mu}|. 
\]
Moreover, for $g\in P(\breve F)$, $g\mathrm{Gr}_{P}^u=\mathrm{Gr}_{P}^{u}$, so the subfunctor $\mathscr{S}_{b_P,\lambda}$ is preserved by the left-multiplication by an element of $P(\breve F)$. 

We first claim that $|\mathscr{S}_{b_P,\lambda}|$ is closed in $|\mathrm{Gr}_{G,\leq \mu}|$. Indeed, for $u,u'\in X_*(M^{ab})$ with $u'\geq u$, we have also ${u'}^{\diamond}\geq u^{\diamond}$. It follows that  
\[
\mathscr{S}_{b_P,\lambda}=\left(\bigcup_{\substack{u\in X_*(M^{ab}) \textrm{ such that} \\ -\mathrm{Av}_{M}(\nu_{b_M})+u^{\diamond}\geq  \lambda }}\mathrm{Gr}_P^{u}\right)\cap \mathrm{Gr}_{G,\leq \mu}^{\kappa_b=\kappa(b'')}=\bigcup_{\substack{u\in X_*(M^{ab}) \textrm{ such that} \\ -\mathrm{Av}_{M}(\nu_{b_M})+u^{\diamond}\geq  \lambda }} \left(\left(\bigcup_{\substack{u'\in X_*(M^{ab}) \\ \textrm{with }u'\geq u}}\mathrm{Gr}_P^{u'}\right)\cap \mathrm{Gr}_{G,\leq \mu}^{\kappa_b=\kappa(b'')}\right).
\]
On the other hand, the union 
\[
\bigcup_{\substack{u'\in X_*(M^{ab}) \\ \textrm{with }u'\geq u}}\mathrm{Gr}_P^{u'}
\]
is closed in $\mathrm{Gr}_{G}$ by Proposition \ref{prop_GrPv}, and there exist only finitely many $u\in X_*(M^{ab})$ with $\mathrm{Gr}_P^u\cap \mathrm{Gr}_{G,\leq \mu}\neq\emptyset$ by Remark \ref{remark_GrPv finite}. So $\mathscr{S}_{b_P,\lambda}$ is a union of finitely many closed subfunctors of $\mathrm{Gr}_{G,\leq \mu}$. Consequently  
\[
|\mathscr S_{b_P,\lambda}|\subset |\mathrm{Gr}_{G,\leq \mu}|
\]
is closed.

Next we show that $|\mathscr S|\subset |\mathrm{Gr}_{G,\leq \mu}|$ is also closed. By Lemma \ref{lem:BM-and-BG} below, $[b]\in B(G)$ has finite preimage via the natural map $B(M)\ra B(G)$. We denote by $[b_i]\in B(M)$ ($1\leq i\leq n$) all these preimages. Suppose $g_ib_i\sigma(g_i)^{-1}=b$ with $g_i\in G(\breve F)$, and write $J_i:=J_{b_i}$ where $b_i$ is viewed as an element of $G(\breve F)$. Let $(b_P,g)$ be a reduction of $b$ to $P$. By Lemma \ref{lemma_B(P)}, there exist some $p\in P(\breve F)$ and $1\leq i\leq n$ such that $b_P=pb_i\sigma(p)^{-1}$, and thus
\[
b=gb_P\sigma (g)^{-1}=(gp)b_i\sigma(gp)^{-1}=g_ib_i\sigma(g_i)^{-1}.
\]
Therefore $g_i^{-1}gp\in J_i(F)$. Hence all the reductions of $b$ to $P$ are of the form
\[
(pb_i\sigma(p)^{-1},g), \quad \textrm{with}\quad p\in P(\breve F) \textrm{ and }g\in g_iJ_i(F)p^{-1}
\]
for some $1\leq i\leq n$. 
Moreover, as $[b_M]=[b_i]\in B(M)$, it follows that $\mathscr{S}_{b_P,\lambda}=\mathscr{S}_{b_i,\lambda}$, and
\[
\mathscr S=\bigcup_{\textrm{std parabolic }P}\left(\bigcup_{\substack{\textrm{reduction }(b_{P},g) \\ \textrm{of }b \textrm{ to }P}} g\cdot \mathscr{S}_{b_P,\lambda}\right)=\bigcup_{\textrm{std parabolic }P}\left(\bigcup_{i=1}^ng_i\cdot \left(\bigcup_{g\in J_i(F)}g\cdot \mathscr{S}_{b_i,\lambda}\right)\right).
\]
Since for a fixed standard parabolic $P$, the closed subfunctor $\mathscr{S}_{b_i,\lambda}$ is preserved by the action of an element of $P(\breve F)$, the union 
\[
\bigcup_{g\in J_i(F)}g\cdot |\mathscr{S}_{b_i,\lambda}|
\]
is closed in $|\mathrm{Gr}_{G,\leq \mu}|$ if we can show that the quotient $
J_i(F)/J_i(F)\cap P(\breve F)$ is compact. But $J_i(F)\cap P(\breve F)$ is the $F$-points of a parabolic subgroup $P_i\subset J_i$, so the compactness of 
\[
J_i(F)/J_i(F)\cap P(\breve F)=J_i(F)/P_i(F)\simeq (J_i/P_i)(F)
\]
follows from the fact that $J_i/P_i$ is projective over locally compact field $F$ (see \cite[15.1.4 Corollary]{Spr} for the last identification above). As a result, being a finite union of closed subsets 
\[
|\mathscr S|=\bigcup_{\textrm{std parabolic }P}\left(\bigcup_{i=1}^ng_i\cdot \left(\bigcup_{g\in J_i(F)}g\cdot |\mathscr{S}_{b_i,\lambda}|\right)\right)
\]
is closed in $|\mathrm{Gr}_{G,\leq \mu}|$.

Finally we show that the closed subset $|\mathscr S|\subset |\mathrm{Gr}_{G,\leq \mu}|$ is generalizing. To check this, let \[
f:T\longrightarrow \mathrm{Gr}_{G,\leq \mu}
\]
be a surjective map from a perfectoid space (such a map exists since $\mathrm{Gr}_{G,\leq \mu}$ is diamond). Because the induced map $|f|:|T|\ra |\mathrm{Gr}_{G,\leq \mu}|$ is generalizing (\cite[Proposition 11.18]{Sch2}) and surjective. So it is enough to check that the locus of points $x\in T$ whose image by $|f|$ is contained in $|\mathscr S|$ is generalizing. Since $T$ is an analytic adic space (hence every specialization in $T$ is vertical), so we are reduced to showing that for any perfectoid field $C$ with an open and bounded valuation subring $C^+\subset C$, and every morphism $\mathrm{Spa}(C,C^+)\ra \mathrm{Gr}_{G,\leq \mu}$, the induced map 
\[
|\mathrm{Spa}(C,C^+)|\longrightarrow |\mathrm{Gr}_{G,\leq \mu}|
\]
factors through $|\mathscr S|$ if and only if $|\mathscr S|$ contains the image of the closed point of $\mathrm{Spa}(C,C^+)$. But this is clear since $\mathrm{Gr}_{G}(C,C^+)$, $\mathrm{Gr}_P^u(C,C^+)$ and also the decomposition 
\[
\mathrm{Gr}_{G}(C,C^+)=\coprod_{u\in X_*(M^{ab})} \mathrm{Gr}_{P}^u(C,C^+)
\]
do not depend on the choice of $C^+$. 
\end{proof}

\begin{lemma}\label{lem:BM-and-BG} Let $G$ be a quasi-split group, and $M\subset G$ a standard Levi. Then, every element in $B(G)$ has at most finite preimages through the map $B(M)\ra B(G)$.
\end{lemma}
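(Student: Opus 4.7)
The plan is to use the injectivity of the invariants $(\nu,\kappa)\colon B(M)\hookrightarrow \mathcal N(M)\times \pi_1(M)_\Gamma$ from \eqref{eqn_RR}: it suffices to show that only finitely many pairs $(\nu_{b_M},\kappa_M(b_M))$ can appear as $[b_M]\in B(M)$ runs through the preimage of a fixed $[b]\in B(G)$. I would handle the Newton coordinate and the Kottwitz coordinate separately, the compatibility between them being exactly what makes the argument finite.

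\textbf{The Newton coordinate.} Since $G$ is quasi-split and $M$ is a standard Levi, $\mathcal N(M)$ and $\mathcal N(G)$ are identified with $M$-dominant (resp.\ $G$-dominant) elements of $X_{*}(T)_{\mathbb Q}$. Under this identification, the natural map $\mathcal N(M)\to\mathcal N(G)$ sends $\nu$ to the unique $W_G$-conjugate of $\nu$ which is $G$-dominant. Thus if $[b_M]\mapsto [b]$, the vector $\nu_{b_M}$ must be $M$-dominant and lie in the finite $W_G$-orbit of $\nu_b$. This already confines $\nu_{b_M}$ to a finite subset of $\mathcal N(M)$.

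\textbf{The Kottwitz coordinate, with fixed Newton.} Once $\nu_{b_M}$ has been fixed, there are two constraints on $\kappa_M(b_M)\in \pi_1(M)_\Gamma$. First, functoriality of $\kappa$ forces its image in $\pi_1(G)_\Gamma$ to equal the fixed element $\kappa_G(b)$. Second, the standard compatibility
\[
\nu_{b_M} = \kappa_M(b_M)\text{ in }\pi_1(M)_\Gamma\otimes_{\mathbb Z}\mathbb Q
\]
(both sides being computed via the natural maps $X_*(T)\to \pi_1(M)\to \pi_1(M)_\Gamma$ and $\mathcal N(M)\hookrightarrow X_*(T)_{\mathbb Q}$) determines the image of $\kappa_M(b_M)$ in $\pi_1(M)_\Gamma\otimes\mathbb Q$. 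Hence $\kappa_M(b_M)$ must lie in a coset of the torsion subgroup $\pi_1(M)_{\Gamma,\mathrm{tors}}$. Since $\pi_1(M)_\Gamma$ is a finitely generated abelian group, this torsion subgroup is finite, so there are only finitely many possibilities for $\kappa_M(b_M)$.

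\textbf{Combining the two.} Putting the two steps together, the preimage of $[b]$ injects into the finite product of the finite Weyl-orbit set and the finite torsion coset, which is finite. The main (very mild) obstacle is simply to record the compatibility between Newton and Kottwitz rationally, which is classical (see e.g.\ \cite[4.13]{Kot}); once this is in place, the whole argument is elementary finite-group theory applied to $W_G$ and to $\pi_1(M)_\Gamma$.
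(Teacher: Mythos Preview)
Your proof is correct and follows essentially the same approach as the paper: both use the injection $(\nu,\kappa)\colon B(M)\hookrightarrow \mathcal N(M)\times\pi_1(M)_\Gamma$, the finiteness of the fibers of $\mathcal N(M)\to\mathcal N(G)$ via Weyl-orbit considerations, and the fact that $\kappa_M$ is determined by $\nu_M$ up to the finite torsion of $\pi_1(M)_\Gamma$. Your write-up is in fact slightly more explicit than the paper's about the rational compatibility $\nu_{b_M}=\kappa_M(b_M)$ in $\pi_1(M)_\Gamma\otimes\mathbb Q$, and you correctly invoke the torsion of $\pi_1(M)_\Gamma$ (the paper writes $\pi_1(G)_\Gamma$, which appears to be a slip).
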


\begin{proof} Recall that by \cite[4.13]{RR}, the combination of the Newton map and the Kottwitz map give an injective map
\[
(\nu,\kappa): B(G)\longrightarrow \mathcal N(G)\times \pi_1(G)_{\Gamma}.
\]
Moreover, up to torsion, the Kottwitz map $\kappa$ is determined by the Newton map $\nu$. As the subgroup of torsion elements in $\pi_1(G)_{\Gamma}$ is finite, to prove our lemma, it suffices to check that the natural map
\[
\mathcal N(M)\longrightarrow \mathcal N(G)
\]
is finite-to-one. But this is clear since the later can be embedded into
\[
X_*(A)_{\mathbb Q}^{M,+}\longrightarrow X_*(A)_{\mathbb Q}^+, \quad \lambda\longmapsto \lambda_{\rm dom}.
\] which is finite-to-one.
\end{proof}

\begin{definition}Fix a triple $(G, \mu, b)$ and $[b']\in B(G)$.
\begin{enumerate}\item Let
\[\begin{split}\mathrm{Gr}_{G, \mu, b}^{\mathrm{HN}=[b']}:=\mathrm{Gr}_{G, b}^{\mathrm{HN}=[b']}\cap \mathrm{Gr}_{G, \mu},\\
 \mathrm{Gr}_{G, \mu, b}^{\mathrm{HN}\geq[b']}:=\mathrm{Gr}_{G, b}^{\mathrm{HN}\geq [b']}\cap \mathrm{Gr}_{G, \mu}.\end{split}\]The subfunctor $\mathrm{Gr}_{G, \mu, b}^{\mathrm{HN}=[b']}$ (resp. $\mathrm{Gr}_{G, \mu, b}^{\mathrm{HN}\geq [b']}$) is locally closed (resp. closed) in $\mathrm{Gr}_{G, \mu}$. This defines the Harder-Narasimhan stratification on $\mathrm{Gr}_{G, \mu}$. We will also call it HN-stratification for simplicity.

 \item If $b'$ is basic, then the HN-stratum $\mathrm{Gr}_{G, \mu, b}^{\mathrm{HN}=[b']}$ is called \emph{semi-stable} or \emph{weakly admissible}. There is at most one non-empty semi-stable locus and is denoted by by $\mathrm{Gr}_{G, \mu, b}^{wa}$. 
 \end{enumerate}
\end{definition}

\begin{remark} \label{remark_different wa}(1)Harder-Narasimhan stratification on the $\BpdR$-affine Grassmannian has been studied by many people. When $\mu$ is minuscule, this is studied by Dat-Orlik-Rapoport \cite[IX.6]{DOR}. Indeed, they study the Harder-Narasimhan stratification on the flag variety $\mathcal{F}(G, \mu)$ for arbitrary $\mu$. When $\mu$ is minuscule, the flag variety $\mathcal{F}(G, \mu)$ is isomorphic to $\mathrm{Gr}_{G, \mu}$ via the Bialynicki-Birula map. When $G=\mathrm{GL}_n$ or $b=1$ (or more generally $b$ is basic), the Harder-Narasimhan stratification is studied by  Nguyen-Viehmann (\cite{NV}) and Shen (\cite{Sh}) based on a result about the compatibility of Harder-Narasimhan filtration with tensor product by Cornut and Peche Irissarry (\cite{CPI}) which is Theorem \ref{thm_comp tensor} for the case $b=1$.

(2)Here the definition of the weakly admissible locus  $\mathrm{Gr}_{G, \mu, b}^{wa}$ is the same as the definition in \cite{Vi, NV} and is more general than the notion in some literatures such as \cite{RZ, CFS, Sh} where the weakly admissible locus is referred to $\mathrm{Gr}_{G, \mu, b}^{\mathrm{HN}=[b']}$ with $[b']=[1]$. Therefore in these literatures, they usually impose the condition $[b]\in B(G, \mu)$. Under this condition, our definition of weakly admissible locus coincides with theirs. 
\end{remark}

\section{Compatibility of HN-stratification}
In the rest of the article, we change the notations. Let $C$ be a complete algebraically closed non-archimedean field over $\bar F$ (unlike in the previous sections, $C$ usually stands for a complete algebraically closed non-archimedean field of characteristic $p$).  Let $C^+\subset C$ be an open valuation subring. To simplify notation, we write $\mathrm{Gr}_{G}(C, C^+)$ (resp. $\mathrm{Gr}_{G, \mu}(C, C^+)$, resp. $\mathrm{Gr}_{G, \leq \mu}(C, C^+)$) for $\mathrm{Gr}_{G}(C^{\flat}, C^{\flat, +})$ (resp. $\mathrm{Gr}_{G, \mu}(C^{\flat}, C^{\flat, +})$, resp. $\mathrm{Gr}_{G, \leq \mu}(C^{\flat}, C^{\flat, +})$). As $\mathrm{Gr}_{G}(C, C^+)$ only depends on $C$ but not on $C^+$, we also write  $\mathrm{Gr}_{G}(C)$ for $\mathrm{Gr}_{G}(C, C^+)$. The notations $\mathrm{Gr}_{G, \mu}(C)$ and $\mathrm{Gr}_{G, \leq \mu}(C)$ are similar.

In this section, we want to compare the HN-stratification for different groups (inner twists and adjoint) and also compare it with the Newton stratification on $\mathrm{Gr}_{G, \mu}$ and the HN-stratification on flag varieties.

\subsection{Compatibility with inner twists}\label{subsection_comp inner twists HN strat} We take notations as in \S\ref{subsection_comp inner twists}. Let $H=J_{b_0}$ be an inner twist of $G$ with $b_0\in G(\breve F)$ basic. Then the inner twisting $\iota: H_{\breve F}\rightarrow G_{\breve F}$ induces an isomorphism $\mathrm{Gr}_{H, \breve{F}}\stackrel{\sim}{\rightarrow} \mathrm{Gr}_{G, \breve{F}}$ on which the Harder-Narasimhan stratifications on both sides are compatible by Proposition \ref{prop_comp inner twists}, i.e.
\[
\mathrm{Gr}_{H,\mu, b}^{\mathrm{HN}=[b']}\stackrel{\sim}{\longrightarrow} \mathrm{Gr}_{G, \iota(\mu), \iota(b)b_0}^{\mathrm{HN}=[\iota(b')b_0]}.
\]
Therefore, when we study the properties of HN-strata, we may always assume that $G$ is quasi-split.

\subsection{Compatibility with adjoint groups}\label{subsection_adjoint} Consider the natural homomorphism
\[
(-)^{ad}: G\longrightarrow G^{ad}.\]
This induces a morphism
\[
\pi: \mathrm{Gr}_{G}\longrightarrow \mathrm{Gr}_{G^{ad}}
\]
which is compatible with Harder-Narasimhan stratification on both sides in the following sense:

\begin{proposition}
Suppose $\mathrm{Gr}_{G^{ad}, \mu^{ad}, b^{ad}}^{\mathrm{HN}=[\tilde{b'}]}$ is non-empty, then
\[
\pi^{-1}\left(\mathrm{Gr}_{G^{ad}, \mu^{ad}, b^{ad}}^{\mathrm{HN}=[\tilde{b'}]}\right)=\mathrm{Gr}_{G,\mu, b}^{\mathrm{HN}=[b']},\] where $[b']\in B(G)$ is the unique element which is mapped to $[\tilde{b}']$ via $B(G)\rightarrow B(G^{ad})$ and to $\kappa_G(b')=\kappa_G(b)-\mu^{\sharp}$ in $\pi_1(G)_{\Gamma}$.
\end{proposition}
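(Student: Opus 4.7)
The plan is first to invoke the compatibility of the HN-stratification with inner twists from Section 4.1, which reduces the problem to the case where $G$ is quasi-split. The key technical claim to establish is that the HN-vector behaves functorially under the quotient $G\to G^{ad}$: for any geometric point $x\in \mathrm{Gr}_G(C)$, the image of $v_{b,x}\in \mathcal{N}(G)$ under the natural map $\mathcal{N}(G)\to \mathcal{N}(G^{ad})$ equals $v_{b^{ad},\pi(x)}$. The forward direction is automatic, since every representation of $G^{ad}$ pulls back to a representation of $G$, and so any rational cocharacter $v:\mathbb{D}_{\breve F}\to G_{\breve F}$ splitting the functor $\mathcal{F}_{\rm HN}(b,x):\mathbf{Rep}(G)\to \mathbf{F}(\mathbf{Isoc}_{\breve F|F})$ composes with $G\to G^{ad}$ to yield a splitting of $\mathcal{F}_{\rm HN}(b^{ad},\pi(x))$.

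For the reverse direction, any splitting cocharacter $\bar v\in X_*(G^{ad})_{\mathbb Q}$ of $\mathcal{F}_{\rm HN}(b^{ad},\pi(x))$ lifts to a rational cocharacter of $G$, uniquely modulo $X_*(Z(G))_{\mathbb Q}$. The correct lift splitting $\mathcal{F}_{\rm HN}(b,x)$ is then pinned down by testing against one-dimensional representations: for each character $\chi\in X^*(G)$, the splitting forces $\langle v,\chi\rangle=\deg \chi_*(\mathcal{E}_{b,x})$. Using the one-dimensional modification $\chi_*(\mathcal{E}_{b,x})=\mathcal{E}_{\chi(b),\chi(x)}$ on the Fargues-Fontaine curve, this degree depends only on $b$ and on the Kottwitz class $\kappa_b(x)\in\pi_1(G)_{\Gamma}$. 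Consequently, $v_{b,x}\in \mathcal{N}(G)$ is uniquely determined by the pair $\left(v_{b^{ad},\pi(x)},\kappa_b(x)\right)$, which gives the desired functoriality.

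To conclude, I observe that on $\mathrm{Gr}_{G,\mu}$ the locally constant map $\kappa_b$ takes the constant value $\kappa(b)-\mu^{\sharp}$, which by hypothesis equals $\kappa_G(b')$. Hence the condition $\mathrm{HN}(b,x)=[b']$, which amounts to $v_{b,x}=-w_0\nu_{b'}$ together with the automatic equality of Kottwitz invariants, becomes, via the comparison above, equivalent to $v_{b^{ad},\pi(x)}=-w_0\nu_{\tilde b'}$ --- that is, to $\pi(x)\in \mathrm{Gr}_{G^{ad},\mu^{ad},b^{ad}}^{\mathrm{HN}=[\tilde b']}$. The existence and uniqueness of $[b']$ with the prescribed image $[\tilde b']$ and Kottwitz invariant $\kappa(b)-\mu^{\sharp}$ follow from the injectivity of $(\nu,\kappa)$ in \eqref{eqn_RR}, since $\nu_{b'}\in\mathcal{N}(G)$ is determined by its image $\nu_{\tilde b'}\in\mathcal{N}(G^{ad})$ together with its abelian part, the latter being controlled by $\kappa_G(b')$ modulo torsion.

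The main obstacle I anticipate is making precise the pinning-down of the central ambiguity in the lift of the splitting cocharacter from $G^{ad}$ to $G$: one has to verify that the correct central twist is determined uniquely and consistently by the degrees $\deg\chi_*(\mathcal{E}_{b,x})$, and that these are genuinely rigid numerical invariants depending only on $(b,\kappa_b(x))$. Once this central-part comparison is in place, the remainder of the argument is routine bookkeeping with Newton and Kottwitz invariants via \eqref{eqn_RR}.
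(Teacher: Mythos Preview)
Your approach is correct and follows a genuinely different route from the paper's proof. You work directly with the Tannakian definition of the HN-vector: since every representation of $G^{ad}$ pulls back to one of $G$ and the resulting normed isocrystals literally coincide, any $v$ splitting $\mathcal{F}_{\rm HN}(b,x)$ yields $v^{ad}$ splitting $\mathcal{F}_{\rm HN}(b^{ad},\pi(x))$, giving $v_{b,x}^{ad}=v_{b^{ad},\pi(x)}$ immediately; you then resolve the residual central ambiguity by pairing against characters $\chi\in X^*(G)$, using $\langle v_{b,x},\chi\rangle=\deg\chi_*(\mathcal E_{b,x})$ together with the constancy of $\kappa_b$ on $\mathrm{Gr}_{G,\mu}$. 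The paper instead invokes the characterization of $v_{b,x}$ as the maximum of slope vectors over all parabolic reductions (Proposition~\ref{prop_HN vector maximum of slope vector}): it checks that standard parabolics of $G$ and of $G^{ad}$ correspond bijectively, that reductions of $b$ to $Q$ surject onto reductions of $b^{ad}$ to $Q^{ad}$, and that the slope vectors satisfy $v_{(b_Q,g)}^{ad}=v_{(b_Q^{ad},g^{ad})}$, whence the maxima agree. Your argument is more self-contained, bypassing the slope-vector and canonical-reduction machinery; the paper's is shorter once that machinery is already in hand. One small correction: injectivity of $(\nu,\kappa)$ in \eqref{eqn_RR} gives only \emph{uniqueness} of $[b']$; for \emph{existence} the paper cites \cite[Corollary~4.11]{Kot}, and you should do the same (alternatively, use the non-emptiness hypothesis: pick any $x$ in the preimage and set $[b']:=\mathrm{HN}(b,x)$).
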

\begin{proof}Here the existence and uniqueness of $[b']$ is due to Kottwitz \cite[Corollary 4.11]{Kot}.

 By subsection \ref{subsection_comp inner twists HN strat}, we may assume that $G$ is quasi-split. Note that for any $x\in \mathrm{Gr}_G(C)$,
\[\E_{b^{ad}, \pi(x)}=\E_{b, x}\times^G G^{ad}.\] And there is a canonical bijection
\[\begin{split}\{\text{standard parabolic subgroups of } G\}&\longrightarrow \{\text{standard parabolic subgroups of } G^{ad}\}\\
Q&\longmapsto Q^{ad}\end{split}.\]  Moreover, the natural map
\[\begin{split}\{\text{reduction of } b \text{ to } Q\}&\longrightarrow \{\text{reduction of } b^{ad} \text{ to } Q^{ad}\}\\ (b_Q, g)&\longmapsto (b_Q^{ad}, g^{ad})\end{split}\] is surjective with slope vectors \[v_{(b_Q, g)}^{ad}=v_{(b_Q^{ad}, g^{ad})},\] where the slope vectors are defined in subsection \ref{subsection_HN-vector}. Then the result follows from Proposition \ref{prop_HN vector maximum of slope vector}.
\end{proof}

\subsection{Compatibility with Newton stratification} On the $\BpdR$-affine Grassmannian $\mathrm{Gr}_{G,\mu}$, we have another stratification called the Newton stratification. Let us first briefly recall its definition. For each geometric point $x\in \mathrm{Gr}_{G,\mu}(C,C^+)$, the isomorphism class of the modification $\mathcal E_{b,x}$ of the $G$-bundle $\mathcal E_b$ corresponds to a unique element $\mathrm{New}(b,x)\in B(G)$ by a theorem Fargues (\cite{Far20}, see also \S~\ref{subsubsection:G-bundles}). So we obtain a map 
\[
\mathrm{Gr}_{G,\mu}(C,C^+)\longrightarrow B(G).
\]
Letting $(C,C^+)$ vary, we deduce a map
\begin{eqnarray}\label{eqn_Newton vector}
\mathrm{Newt}_b: |\mathrm{Gr}_{G,\mu}|\longrightarrow B(G), \quad x\longmapsto  \mathrm{New}(b,x).
\end{eqnarray}
For $[b']\in B(G)$, the corresponding Newton strata $\mathrm{Gr}_{G,\mu,b}^{\mathrm{New}=[b']}$ is defined to be the preimage of $[b']\in B(G)$ via the map $\mathrm{New}_b$: so for any complete algebraically closed field extension $C$ of $\Fbar$ and for any valuation subring $C^+\subset C$,
\[
\mathrm{Gr}_{G,\mu,b}^{\mathrm{New}=[b']}(C,C^+)=\{x\in\mathrm{Gr}_{G,\mu}(C,C^+)| \mathcal E_{b,x}\simeq \mathcal E_{b'}\}.
\]
In particular, we have a decomposition
\begin{equation}\label{eq:Newton-stratification}
\mathrm{Gr}_{G,\mu}=\coprod_{[b']\in B(G)} \mathrm{Gr}_{G,\mu,b}^{\mathrm{New}=[b']}
\end{equation}
of the $\BpdR$-affine Grassmannian $\mathrm{Gr}_{G,\mu}$. Write also 
\[
\mathrm{Gr}_{G,\mu,b}^{\mathrm{New}\geq [b']}=\bigcup_{B(G)\ni[b'']\geq [b']}\mathrm{Gr}_{G,\mu,b}^{\mathrm{New}=[b'']}.
\]
 The non-empty Newton stratum $\mathrm{Gr}_{G, \mu, b}^{\mathrm{New}=[b']}$ with $[b']\in B(G)$ basic is called the admissible locus and is denoted by $\mathrm{Gr}_{G, \mu, b}^a$.

\begin{remark} The definition of the admissible locus is more general to the one in some literature which is parallel to the situation of the weakly admissible locus (cf. \ref{remark_different wa} (2)). It is also known that each Newton stratum $\mathrm{Gr}_{G,\mu,b}^{\mathrm{New}=[b'']}$ is locally closed and  $\mathrm{Gr}_{G,\mu,b}^{\mathrm{New}\geq  [b'']}$ is closed in $\mathrm{Gr}_{G,\mu}$ (\cite[Theorem 7.4.5]{KL}, \cite[Corollary 3.5.9]{CS}, \cite[\S~3.1]{Vi}). 
\end{remark}

\begin{proposition} For any $x\in \mathrm{Gr}_{G, \mu}(C)$, we have
\[\mathrm{HN}(b, x)\leq \mathrm{New}(b, x),\] which implies \[\mathrm{Gr}_{G, \mu, b}^{\mathrm{HN}=[b']}\subseteq \mathrm{Gr}_{G, \mu, b}^{\mathrm{New}\geq [b']}.\] In particular, when $[b]\in B(G, \mu)$, we have $\mathrm{Gr}_{G, \mu, b}^a\subseteq \mathrm{Gr}_{G, \mu, b}^{wa}$.

\end{proposition}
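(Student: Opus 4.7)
The plan is to read off the inequality $\mathrm{HN}(b,x)\leq \mathrm{New}(b,x)$ directly from Corollary \ref{coro_HN vector}, and then deduce the two inclusions by an elementary manipulation of the partial order on $B(G)$.

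Fix a geometric point $x\in\mathrm{Gr}_{G,\mu}(C)$ and set $[b_2]:=\mathrm{New}(b,x)$, so that by the definition of the Newton map $\mathcal E_{b,x}\simeq \mathcal E_{b_2}$; likewise set $[b_1]:=\mathrm{HN}(b,x)$. Applying Corollary \ref{coro_HN vector} with $b'=b_2$ supplies three pieces of information simultaneously: first, $\nu_{b_1}=-w_0 v_{b,x}$; second, $\kappa(b_1)=\kappa(b_2)=\kappa(b)-\mu^\sharp$ in $\pi_1(G)_\Gamma$; and third, the inequality $\nu_{b_1}\leq \nu_{b_2}$ in $\mathcal N(G)$. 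Since the partial order on $B(G)$ is characterized by equality of Kottwitz invariants together with the dominance order on Newton points (recall that $(\nu,\kappa)$ is injective by \eqref{eqn_RR}), these three facts together give exactly $\mathrm{HN}(b,x)\leq \mathrm{New}(b,x)$ in $B(G)$. The inclusion $\mathrm{Gr}_{G,\mu,b}^{\mathrm{HN}=[b']}\subseteq\mathrm{Gr}_{G,\mu,b}^{\mathrm{New}\geq[b']}$ is then immediate on geometric points: if $[b']=\mathrm{HN}(b,x)$, then $\mathrm{New}(b,x)\geq[b']$.

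For the final assertion $\mathrm{Gr}_{G,\mu,b}^a\subseteq \mathrm{Gr}_{G,\mu,b}^{wa}$ under the hypothesis $[b]\in B(G,\mu)$, I would take $x\in\mathrm{Gr}_{G,\mu,b}^a$, so that $[b_2]=\mathrm{New}(b,x)$ is basic and $\nu_{b_2}$ is central. I would then invoke the classical fact that a basic class $[b_2]$ is the unique minimum, for the dominance order, of the Kottwitz fiber $\kappa^{-1}(\kappa(b_2))\subset B(G)$: centrality of $\nu_{b_2}$ means it coincides with the Galois-averaged image of $\kappa(b_2)$ in $X_*(Z_G)_\mathbb{Q}^\Gamma$, so any $[b'']$ with $\kappa(b'')=\kappa(b_2)$ necessarily satisfies $\nu_{b''}\geq\nu_{b_2}$. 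Combined with the already established inequality $\nu_{b_1}\leq \nu_{b_2}$ and the equality $\kappa(b_1)=\kappa(b_2)$, this forces $\nu_{b_1}=\nu_{b_2}$ and hence $[b_1]=[b_2]$ by injectivity of $(\nu,\kappa)$. In particular $[b_1]=\mathrm{HN}(b,x)$ is basic, so $x$ lies in $\mathrm{Gr}_{G,\mu,b}^{wa}$.

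The only non-formal input is Corollary \ref{coro_HN vector}; everything else is a routine unwinding of the combinatorics of $B(G)$, and I do not anticipate a genuine obstacle in this argument.
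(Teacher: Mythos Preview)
Your proposal is correct and follows exactly the paper's approach: the paper's proof consists of the single sentence that the inequality $\mathrm{HN}(b,x)\leq \mathrm{New}(b,x)$ follows from Corollary \ref{coro_HN vector}, from which the second assertion follows. You have simply unpacked this, and additionally spelled out the ``In particular'' clause (basic classes are minimal in their Kottwitz fiber, forcing $\mathrm{HN}(b,x)=\mathrm{New}(b,x)$ when the latter is basic), which the paper leaves implicit.
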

\begin{proof}
The inequality $\mathrm{HN}(b,x)\leq \mathrm{New}(b,x)$ following from Corollary \ref{coro_HN vector}, from which the second assertion follows. 
\end{proof}

\begin{remark}When $b=1$, this result is known by Nguyen-Viehmann \cite[Lemma 6.1]{NV} and Shen \cite[Proposition 3.4]{Sh}.
\end{remark}

\subsection{Compatibility with HN-stratification on flag varieties} \label{sec:compactibility-with-flag}
Dat-Orlik-Rapoport  defined in \cite[IX.6]{DOR} the Harder-Narasimhan formalism for the flag variety associated to the triple $(G, \mu, b)$.
We recall briefly its construction which is parallel to ours.

Let $K|\breve F$ be a field extension. Denote by 
\[
\mathbf{FilIsoc}_{\breve{F}|F}^K
\]
the category of filtered isocrystals consisting of triples $(N, \varphi, \mathrm{Fil}^{\bullet}N_K)$, where
\begin{itemize}
\item $(N, \varphi)$ is an isocrystal over $\breve{F}|F$,
\item $\mathrm{Fil}^{\bullet}N_K$ is a separated exhaustive decreasing $\Z$-filtration of $K$-subspaces on $N_K:=N\otimes_{\breve{F}}K$. 
\end{itemize}
For an object $(N, \varphi, \mathrm{Fil}^{\bullet}N_K)$ in $\mathbf{FilIsoc}_{K|\breve{F}}$, define the slope
\[
\mathrm{slope}(N, \varphi, \mathrm{Fil}^{\bullet}N_K):=\frac{\deg (N, \varphi, \mathrm{Fil}^{\bullet}N_K)}{\mathrm{rank} (N, \varphi, \mathrm{Fil}^{\bullet}N_K)},\]
where 
\[\begin{split}\mathrm{rank} (N, \varphi, \mathrm{Fil}^{\bullet}N_K)&:=\mathrm{rank}_{\breve{F}} N,\\  
\deg (N, \varphi, \mathrm{Fil}^{\bullet}N_K)&:=\deg(\mathrm{Fil}^{\bullet}N_K)-\deg(N, \varphi),\end{split}
\]
with $\deg(\mathrm{Fil}^{\bullet}N_K):=\sum_{i\in\Z}i \dim_K(\mathrm{Fil}^{i}N_K/ \mathrm{Fil}^{i+1}N_K)$ and $\deg (N, \varphi)=v_F(\det(\varphi))$ is the $p$-adic valuation of the determinant of $\varphi$ where $v_F$ denotes the normalized valuation on $F$. This slope function leads to a Harder-Narasimhan formalism on the category $\mathbf{FilIsoc}_{K|\breve{F}}$. In particular, we have the Harder-Narasimhan filtration for each objects which defines the functor
\[
\tilde{\mathcal{F}}_{\rm HN}: \mathbf{FilIsoc}_{\breve{F}|F}^K\longrightarrow \mathbf{F}(\mathrm{Isoc}_{\breve{F}|F}).
\] 
It is known that $\tilde{\mathcal F}_{\rm HN}$ is compatible with tensor products (\cite{Fa},  \cite{To1}).

Now we apply similar argument as in Section \ref{subsection_HN-vector} to construct HN-vector associated to a pair $(b, x)$ with $x\in \mathcal{F}(G, \mu)(K)$. We have the functor
\[\begin{array}{cccccc} \tilde{\mathcal{F}}_{HN}(b, x): &\mathrm{Rep}_F(G)&\longrightarrow &\mathbf{FilIsoc}_{\breve{F}|F}^K&\stackrel{\tilde{\mathcal{F}}_{HN}}{\longrightarrow} &\mathbf{F}(\mathrm{Isoc}_{\breve{F}|F}).\\
&(V, \rho)&\longmapsto &(V_{\breve{F}}, \rho(b)\sigma, \mathrm{Fil}^{\bullet}_{\rho(x)}V_K)& &\end{array}\]
There exists some rational cocharacter of $G$ which splits $\tilde{\mathcal{F}}_{HN}(b, x)$. Its conjugacy class does not depend on the choice of the splitting and is denoted by $\tilde{v}_{b, x}$. 
Similar arguments as Corollary \ref{coro_HN vector} shows that there exists a unique element $\mathrm{HN}(b, x)\in B(G)$ called the Harder-Narasimhan vector for the pair $(b, x)$, such that 
\[
\nu_{\mathrm{HN}(b, x)}=-\tilde{v}_{b,x} \quad \textrm{and}\quad  \kappa(\mathrm{HN}(b, x))=\kappa(b')=\kappa(b)-\mu^\sharp.
\]
The parametrization by the HN-vector gives the Harder-Narasimhan stratification on the flag variety $\mathcal{F}(G, \mu)$:
\[\mathcal{F}(G, \mu)=\coprod_{[b']\in B(G)}\mathcal{F}(G, \mu, b)^{\mathrm{HN}=[b']},\] 
where for each geometric point $x\in \mathcal{F}(G, \mu)(C,C^+)$, \[x\in \mathcal{F}(G, \mu, b)^{\mathrm{HN}=[b']}(C,C^+) \Longleftrightarrow\mathrm{HN}(b, x)=[b'].\] Define also 
\[ \mathcal{F}(G, \mu, b)^{\mathrm{HN}\geq [b']}:=\coprod_{[b'']\geq [b']} \mathcal{F}(G, \mu, b)^{\mathrm{HN}=[b'']}.\]

\begin{remark}In \cite{DOR}, the index for the each HN-stratum is given by $\tilde{v}_{b, x}$. Here we take the index to be $\mathrm{HN}(b, x)$ in order to be compatible with the index of the HN-strata and Newton strata on the $\BpdR$-Grassmannian.
\end{remark}

\begin{proposition}\label{prop_comparison via BB}Via the Bialynicki-Birula map $\mathrm{BB}: \mathrm{Gr}_{G, \mu}\rightarrow \mathcal{F}(G, \mu)$, we have 
\[\mathrm{HN}(b, x)\leq \mathrm{HN}(b, \mathrm{BB}(x)).\] In particular, \[\mathrm{BB}^{-1}(\mathcal{F}(G, \mu, b)^{\mathrm{HN}=[b']})\subseteq \coprod_{[b'']\leq [b']\in B(G), }\mathrm{Gr}_{G, \mu, b}^{\mathrm{HN}=[b'']},\] and $\mathrm{Gr}_{G, \mu, b}^{wa}\supseteq \mathrm{BB}^{-1}(\mathcal{F}(G, \mu, b)^{ss})$.
\end{proposition}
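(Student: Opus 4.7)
The plan is to prove the main inequality $\mathrm{HN}(b, x) \leq \mathrm{HN}(b, \mathrm{BB}(x))$, from which the two stated inclusions follow formally. By the compatibility of the Harder-Narasimhan stratification with inner twists (\S~\ref{subsection_comp inner twists HN strat}), I may assume $G$ is quasi-split. Since both sides of the inequality share the same Kottwitz invariant $\kappa(b)-\mu^{\sharp}$, and since the involution $\lambda\mapsto -w_0\lambda$ preserves the dominance order on $X_*(A)_{\mathbb{Q}}^+$, it is enough to establish $v_{b,x}\leq \tilde v_{b,\mathrm{BB}(x)}$ in $X_*(A)_{\mathbb{Q}}^+$.

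The key idea is to compare slope vectors associated with a common parabolic reduction. For a reduction $(b_P,g)$ of $b$ to a standard parabolic $P$, there are a priori two associated slope vectors in $X_*(A)_{\mathbb{Q}}$: the lattice slope vector $v^{\mathrm{lat}}_{(b_P,g)}$ defined in \S~\ref{subsection_HN-vector} via degrees of pushforwards $\chi_*(\mathcal{E}_{b,x})_{b_P}$ to line bundles on the Fargues-Fontaine curve, and the filtered slope vector $v^{\mathrm{fil}}_{(b_P,g)}$ defined analogously in the DOR filtered-isocrystal setting using $(b_P,\mathrm{BB}(g^{-1}x))$. For each character $\chi\in X^*(P)^{\Gamma}$, both pushforwards produce the same rank-one datum on the isocrystal $(\breve F,\chi(b_P))$, equipped with the lattice $\chi(g^{-1}x)\in \mathrm{Gr}_{\mathbb{G}_m}$ on one side and the residue filtration $\chi(\mathrm{BB}(g^{-1}x))=\mathrm{BB}(\chi(g^{-1}x))$ on the other, by the naturality of BB with respect to $\chi$. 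By the rank-one computation in Remark \ref{rem:lattice to filtration}, these two notions of degree coincide, so $v^{\mathrm{lat}}_{(b_P,g)}=v^{\mathrm{fil}}_{(b_P,g)}$ for every reduction.

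With this identification in hand, let $(b_Q,g)$ be the canonical reduction for the lattice Harder-Narasimhan formalism supplied by Corollary \ref{prop_can reduction}, so $v_{b,x}=v^{\mathrm{lat}}_{(b_Q,g)}$. The filtered analogue of Proposition \ref{prop_HN vector maximum of slope vector}, which is DOR's canonical-reduction theorem for filtered isocrystals (its proof being formally identical to ours), then gives $v^{\mathrm{fil}}_{(b_Q,g)}\leq \tilde v_{b,\mathrm{BB}(x)}$. Combining with the previous equality yields $v_{b,x}\leq \tilde v_{b,\mathrm{BB}(x)}$, hence the main inequality. The first inclusion is then tautological; for the second, if $\mathrm{HN}(b,\mathrm{BB}(x))$ is basic then the inequality forces $\mathrm{HN}(b,x)$ to equal this basic class, since a basic element is the unique minimum of its generalized Kottwitz set, so $x\in \mathrm{Gr}_{G,\mu,b}^{wa}$. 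The principal technical hurdle is the rank-one degree comparison underlying the identification of slope vectors: once one verifies that BB commutes with pushforward along each character $\chi$ and that the degree of the resulting line bundle on the Fargues-Fontaine curve matches the degree of the resulting rank-one filtered isocrystal uniformly in $\chi$, the rest reduces to two applications of the canonical-reduction formalism, one in each category.
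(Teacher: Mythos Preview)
Your argument has a genuine gap at the step where you claim $v^{\mathrm{lat}}_{(b_P,g)}=v^{\mathrm{fil}}_{(b_P,g)}$ for every reduction $(b_P,g)$. This equality is false in general, and the failure is precisely the phenomenon flagged in Remark~\ref{rem:lattice to filtration} and Remark~\ref{remark_classical}. Concretely, take $G=\mathrm{GL}_n$, let $D_1\subset D$ be a subisocrystal with stabilizer $P$, and let $\chi$ be the determinant on the $D_1$-block. On the lattice side the pairing $\langle v^{\mathrm{lat}}_{(b_P,g)},\chi\rangle$ computes $\deg(D_1,\Xi_1)$ with $\Xi_1=(D_1\otimes\BdR)\cap\Xi$. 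On the filtered side the analogous pairing computes $\deg(D_1,\mathcal F_{\Xi,1})$, where $\mathcal F_{\Xi,1}$ is the filtration on $D_{1,C}$ \emph{induced by intersection} from $\mathcal F_\Xi$. These two degrees differ in general: the functor $\mathrm{BB}$ preserves rank and degree on objects, but it does \emph{not} preserve strict subobjects, so $\mathcal F_{\Xi,1}\neq\mathcal F_{\Xi_1}$. Your appeal to ``naturality of $\mathrm{BB}$ with respect to $\chi$'' tacitly asserts that $\mathrm{BB}$ commutes with the parabolic projection, i.e.\ that Iwasawa decomposition on $\mathrm{Gr}_G$ and Bruhat decomposition on $\mathcal F(G,\mu)$ match up under $\mathrm{BB}$; Remark~\ref{remark_classical} explains that they do not, and Viehmann's example shows the discrepancy already for $\mathrm{GL}_3$.

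What is true, and what the paper actually proves, is the \emph{inequality} $v^{\mathrm{lat}}_{(b_P,g)}\leq v^{\mathrm{fil}}_{(b_P,g)}$. The paper obtains this (after Tannakian reduction to $\mathrm{GL}_n$) by exhibiting a morphism of filtered vector spaces $(D_1\otimes C,\mathcal F_{\Xi_1})\to(D_1\otimes C,\mathcal F_{\Xi,1})$ which is the identity on underlying spaces, whence $\deg\mathcal F_{\Xi_1}\leq\deg\mathcal F_{\Xi,1}$ and therefore $\deg(D_1,\Xi_1)\leq\deg(D_1,\mathcal F_{\Xi,1})$. Since this holds for every subisocrystal $D_1$, the Harder--Narasimhan polygon comparison follows directly. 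Your overall architecture (take the canonical reduction on the lattice side, then compare against the filtered maximum) would go through if you replaced the false equality by this inequality, but establishing the inequality is exactly the substance of the paper's argument; it cannot be extracted from a rank-one computation alone.
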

\begin{proof}By Tannakian formalism, we reduce to the $\mathrm{GL}_n$ case. In this case, the $\mathrm{BB}$ map gives a functor still denoted by $\mathrm{BB}$:
\[\begin{split}\mathrm{BB}: \mathbf{BunIsoc}_{\breve{F}|F}^{\BdR} &\longrightarrow \mathbf{FilIsoc}_{\breve{F}|F}^C\\
(D, \Xi)&\longmapsto (D, \mathcal{F}_{\Xi}),\end{split}\] where $\mathcal{F}_{\Xi}$ is defined in Remark \ref{rem:lattice to filtration}. 

We want to compare the HN vectors on both sides. First note that the $\mathrm{BB}$ map preserves the rank and degree functions, i.e., $\deg(D,\Xi)=\deg(D,\mathcal F_{\Xi})$ and $\mathrm{rank}(D,\Xi)=\mathrm{rank}(D,\mathcal F_{\Xi})$. Let $D_1$ be a sub-isocrystal of $D$. Let $(D_1, \Xi_1)$ (resp. $(D_1, \mathcal{F}_{\Xi, 1})$) be the corresponding strict sub-object of $(D, \Xi)$ (resp. $(D, \mathcal{F}_{\Xi})$) in $\mathrm{BunIsoc}_{\breve{F}|F}^{\BdR}$ (resp. $\mathrm{FilIsoc}_{\breve{F}|F}^C$): so 
\[
\Xi_1=(D_1\otimes_{\breve F}\BdR)\cap \Xi
\]
and $\mathcal{F}_{\Xi, 1}$ is the intersection of $\mathcal{F}_{\Xi}$ with $D_1\otimes_{\breve F}C$. In general, the $\mathrm{BB}$ map does NOT preserve strict sub-objects, i.e., $\mathcal{F}_{\Xi, 1}\neq \mathcal{F}_{\Xi_1}$. More precisely note that for $i\in\Z$, 
\[\begin{split}\mathcal{F}_{\Xi_1}^i&=\frac{\pi^i (D_1\otimes \BdR\cap \Xi)\cap \Xi_{1, 0}+\pi\Xi_{1, 0}}{\pi \Xi_{1, 0}}\subset D_1\otimes_{\breve F}C\\
\mathcal{F}_{\Xi, 1}^i&=\frac{\pi^i\Xi\cap \Xi_0+\pi \Xi_0}{\pi \Xi_0}\cap (D_1\otimes_{\breve F} C),\end{split}
\] where $\Xi_0=D\otimes \BpdR$ and $\Xi_{1, 0}=D_1\otimes \BpdR$. We can check that the identity map on $D_1\otimes_{\breve F}C$ induces a morphism of filtered $C$-vector spaces 
\[
(D_1\otimes C,\mathcal{F}_{\Xi_1})\longrightarrow (D_1\otimes C,\mathcal{F}_{\Xi, 1})
\]
which implies that 
\[
\deg\mathcal{F}_{\Xi_1}\leq \deg \mathcal{F}_{\Xi, 1},
\]
and hence
\[\deg (D_1, \Xi_1)=\deg(D_1,\mathcal{F}_{\Xi_1})\leq \deg(D_1, \mathcal{F}_{\Xi, 1}),
\]
where the first equality holds since the $\mathrm{BB}$ map preserves degree. On the other hand, \[\mathrm{rank} (D_1, \Xi_1)=\mathrm{rank}_{\breve F}D_1=\mathrm{rank}(D_1, \mathcal{F}_{\Xi, 1}),\] it follows that  $\mathrm{HN}(b, x)\leq \mathrm{HN}(b, \mathrm{BB}(x))$.
\end{proof}

\begin{remark}\label{remark_classical} (1) Shen mentioned in \cite[\S~2.4]{Sh} that the BB map 
\[
\mathrm{BB}: \mathbf{BunIsoc}_{\breve{F}|F}^{\BdR} \longrightarrow \mathbf{FilIsoc}_{\breve{F}|F}^C
\]
does not preserve strict sub-objects. Viehmann gave an example in \cite[Example 4.10]{Vi} to illustrate that the semi-stable locus of the two sides does not correspond to each other via the $\mathrm{BB}$ map.

(2) We keep the notations as in the proof of the previous proposition, the lattice $\Xi$ could be considered as an element in $\mathrm{Gr}_{G, \mu}(C)$ for some $\mu$, where $G=\mathrm{GL}(D)$. 
Let $M_1=\mathrm{GL}(D_1)$ and  $P=\mathrm{stab}(D_1)\subset G$ a parabolic subgroup with Levi component $\mathrm{GL}(D_1)\times \mathrm{GL}(D/D_1)$. Then is a natural projection $\mathrm{pr}_{M_1}: P\rightarrow M_1$.

The fact that the $\mathrm{BB}$ map is not compatible with subobjects could be reformulated by the fact that the following diagram is not commutative:  

\begin{center}
	\begin{tikzpicture}
	\matrix(a)[matrix of math nodes, row sep={1cm,between origins}, column sep={1.2cm,between origins},
	text height=1.5ex, text depth=0.25ex]
	{\mathrm{Gr}_{G,\mu}(C) & & & & \mathcal{F}(G, \mu)(C)\\
	 & \Xi & & \mathcal{F}_{\Xi} & \\ 
	 & & & & \\ 
     & \Xi_1 & & \mathcal{F}_{\Xi_1}, \mathcal{F}_{\Xi, 1} \ \ \quad & \\
	 \mathrm{Gr}_{M_1}(C) & & & &\coprod_{\mu_1}\mathcal{F} (M_1, \mu_1)(C)\\};
	\path[->] (a-1-1) edge node[above]{{\scriptsize $\mathrm{BB}$}} (a-1-5); 
	\path[->] (a-1-1) edge node[left]{\scriptsize $\mathrm{pr}_{\mathrm{Gr}}$} (a-5-1); 
	\path[->] (a-5-1) edge node[below]{\scriptsize $\mathrm{BB}$} (a-5-5); 
	\path[->] (a-1-5) edge node[right]{\scriptsize $\mathrm{pr}_{\mathcal{F}}$} (a-5-5); 
	\path[|->] (a-2-2) edge node[above]{} (a-2-4); 
	\path[|->] (a-2-2) edge node[left]{} (a-4-2); 
	\path[|->] (a-4-2) edge node[below]{} (a-4-4); 
	\path[|->] (a-2-4) edge node[right]{} (a-4-4); 
\end{tikzpicture}
\end{center}

where \begin{itemize}
\item $\mathrm{pr}_{\mathrm{Gr}}(gG(\BpdR)):=\mathrm{pr}_{M_1}(p)M_1(\BpdR)$, if $gG(\BpdR)=pG(\BpdR)$ for some $p\in P(\BdR)$ by Iwasawa decomposition. 
\item $\mathrm{pr}_{\mathcal{F}}(gP_{\mu}(C)):=\mathrm{pr}_{M_1}(\tilde{p})\in \mathcal{F}(M_1, \mu_1)(C)$, where $\mu_1$ is the projection of $w\mu$ to the $M_1$-component,  if $g\in \tilde{p} wP_{\mu}(C)$ for some $\tilde{p}\in P(C)$ by Bruhuat decomposition.  
\item $\mathrm{pr}_{\mathrm{Gr}}(\Xi)=\Xi_1$ and $\mathrm{pr}_{\mathcal{F}}(\mathcal{F}_{\Xi})=\mathcal{F}_{\Xi, 1}$.
\end{itemize}
The non-commutativity of the diagram follows from the fact that the Iwasawa decomposition and Bruhat decomposition on $G(\BdR)$ are in general NOT compatible. 

On the other hand, for $\Xi\in \mathrm{Gr}_{G, \mu}$, if
\begin{eqnarray}\label{eqn_commutative}\Xi\in P(\BpdR)w\mu^{-1}(\xi)G(\BpdR)/G(\BpdR)(C)\end{eqnarray}
for some $w\in W_G$, then we can read off simultaneously the Iwasawa decomposition and the Bruhat decomposition for $\Xi$, and it follows by direct verification that the previous diagram is commutative for such that $\Xi$. For example, when $\mu$ is minuscule or $\Xi\in G(C)\mu^{-1}(\xi)G(\BpdR)/G(\BpdR)$ for some section $C\rightarrow \BpdR$, then $\Xi$ satisfies (\ref{eqn_commutative}) (compare also \cite[Theorem 5.1]{NV}).
\end{remark}


\subsection{Classical points}
Recall that $\{\mu\}$ is a geometric conjugacy class of cocharacters of the $F$-reductive group $G$, with $E$ its reflex field inside a fixed algebraic closure $\bar F$ of $F$. Write $\breve E=E\cdot \breve F$, the maximal unramified extension of $E$. Then the $\BpdR$-affine Grassmannian $\mathrm{Gr}_{G,\mu}$ is defined over $E$, giving in particular a functor on $\mathbf{Perf}/\mathrm{Spd}(\breve E)$. By abusing the notation, for a finite field extension $K/\breve E$, we write 
\[
\mathrm{Gr}_{G,\mu}(K):=\mathrm{Hom}(\mathrm{Spd}(K),\mathrm{Gr}_{G,\mu})
\]
and call it the set of $K$-valued classical points of $\mathrm{Gr}_{G,\mu}$. Same notation applies if we replace $\mathrm{Gr}_{G,\mu}$ by $\mathcal F(G,\mu)^{\diamond}$ or more generally by any v-sheaf on $\mathbf{Perf}/\mathrm{Spd}(\breve E)$.

\begin{lemma}\label{lem:description-classical-pts} Let $K/\breve E$ be a finite field extension and denote by $C$ the p-adic completion of an algebraic closure of $K$.
\begin{enumerate}
\item The natural maps 
\[
\mathcal F(G,\mu)^{\diamond}(K)\longrightarrow \mathcal F(G,\mu)^{\diamond}(C)=\mathcal F(G,\mu)(C) \quad \textrm{and}\quad \mathrm{Gr}_{G,\mu}(K)\longrightarrow \mathrm{Gr}_{G,\mu}(C)
\]
are injective, and induce bijections 
\[
\mathcal F(G,\mu)^{\diamond}(K)\stackrel{\sim}{\longrightarrow} \mathcal F(G,\mu)(C)^{\mathrm{Gal}(C/K)} \quad \textrm{and}\quad \mathrm{Gr}_{G,\mu}(K)\stackrel{\sim}{\longrightarrow} \mathrm{Gr}_{G,\mu}(C)^{\mathrm{Gal}(C/K)}. 
\]
\item (\cite[Proposition 10.4.4]{FF}, \cite[Proposition 5.1]{Vi}) The Bialynicki-Birula map induces a bijection $
\mathrm{Gr}_{G, \mu}(K)\stackrel{\sim}{\rightarrow} \mathcal{F}(G, \mu)^{\diamond}(K)$.
\end{enumerate} 
\end{lemma}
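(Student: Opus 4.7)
For part (1), the key tool is v-descent. Both $\mathrm{Gr}_{G,\mu}$ and $\mathcal{F}(G,\mu)^{\diamond}$ are v-sheaves on $\mathbf{Perf}/\mathrm{Spd}(\breve E)$. For a finite extension $K|\breve E$, the natural morphism $\mathrm{Spd}(C)\to \mathrm{Spd}(K)$ is a v-cover whose automorphism group, via tilting, is canonically identified with the continuous Galois group $\mathrm{Gal}(C/K)$. The sheaf property then immediately yields that the maps
\[
\mathrm{Gr}_{G,\mu}(K)\hookrightarrow \mathrm{Gr}_{G,\mu}(C)\quad \text{and}\quad \mathcal{F}(G,\mu)^{\diamond}(K)\hookrightarrow \mathcal{F}(G,\mu)^{\diamond}(C)
\]
are injective with image equal to the Galois-fixed points. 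Moreover, since $\mathcal{F}(G,\mu)$ is a rigid analytic variety over $\breve E$, the general equivalence between analytic adic spaces and their associated diamonds (Scholze--Weinstein) identifies $\mathcal{F}(G,\mu)^{\diamond}(C)$ with $\mathcal{F}(G,\mu)(C)$, giving the stated form.

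For part (2), using (1) and the $\mathrm{Gal}(C/K)$-equivariance of BB, it suffices to produce a two-sided inverse to BB on Galois-fixed geometric points, which I would build explicitly. Given $x\in\mathcal{F}(G,\mu)(K)$, by Tannakian formalism it corresponds to an exact tensor functor $\mathrm{Rep}_F(G)\to\mathbf{FilVect}_K$, $(V,\rho)\mapsto (V_K,\mathrm{Fil}^{\bullet}_x V_K)$, whose graded type is $\mu$. Fix a uniformizer $\xi$ of $\BpdR(K)$ and set, for each representation,
\[
\Xi_{x,\rho}:=\sum_{i\in\mathbb{Z}}\xi^{-i}\,\mathrm{Fil}^i_x V_K\otimes_K \BpdR(K)\ \subset\ V\otimes_F \BdR(K).
\]
Because the exponent on $\xi$ is additive, this assignment is compatible with tensor products, so Tannakianly it produces a $G$-bundle on $\mathrm{Spec}(\BpdR(K))$ trivialized over $\mathrm{Spec}(\BdR(K))$, i.e.\ an element $\tilde x\in\mathrm{Gr}_G(K)$. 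The relative position to the standard lattice, read off from the jumps of $\mathrm{Fil}^{\bullet}_x$, is exactly $\mu$, so in fact $\tilde x\in\mathrm{Gr}_{G,\mu}(K)$. Reducing modulo $\xi$ recovers the original filtration, hence $\mathrm{BB}(\tilde x)=x$; conversely, any $K$-point of $\mathrm{Gr}_{G,\mu}$ with image $x$ under BB has underlying lattice given by this formula, so $x\mapsto \tilde x$ is a two-sided inverse.

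The main obstacle is verifying that the Tannakian assignment $\rho\mapsto \Xi_{x,\rho}$ really lands in $\mathrm{Gr}_{G,\mu}$ and not just in some larger Schubert cell $\mathrm{Gr}_{G,\leq \mu}$, i.e.\ that the relative position is exactly $\mu$. This is transparent for $G=\mathrm{GL}_n$ from the explicit formula, where it is essentially the content of \cite[Proposition 10.4.4]{FF}. For general reductive $G$, one reduces to the $\mathrm{GL}_n$ case via a faithful representation and Tannakian duality, or directly invokes \cite[Proposition 5.1]{Vi}.
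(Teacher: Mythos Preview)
Your approach is essentially correct and parallels the paper's, with two presentational differences worth flagging.

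For part (1), you invoke v-descent uniformly for both $\mathcal F(G,\mu)^{\diamond}$ and $\mathrm{Gr}_{G,\mu}$. The paper instead handles $\mathcal F(G,\mu)^{\diamond}$ directly (using $C^{\mathrm{Gal}(C/K)}=K$ and the fact that a Galois-fixed $C$-point of a rigid variety is already a $K$-point), and obtains the $\mathrm{Gr}_{G,\mu}$ statement only as a \emph{corollary} of part (2). Your route is cleaner, but you should make explicit that $\mathrm{Spd}(C)\to\mathrm{Spd}(K)$ is a pro-\'etale $\underline{\mathrm{Gal}(C/K)}$-torsor, so that the sheaf equalizer is literally the set of Galois-fixed points.

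For part (2), your Tannakian Rees construction $\Xi_{x,\rho}=\sum_i \xi^{-i}\mathrm{Fil}^i_x\otimes\BpdR$ is equivalent to the paper's more group-theoretic move: the paper lifts $x\in\mathcal F(G,\mu)(K)=G(K)/P_\mu(K)$ to an element of $G(K)$, observes via Hensel's lemma that $\BpdR(C)$ is naturally a $K$-algebra (hence $G(K)\subset G(\BpdR(C))$), and sends $x$ to $x\mu(\xi)^{-1}G(\BpdR)$. Unwinding this for a splitting of the filtration gives exactly your Rees lattice. Both approaches reduce the verification of injectivity (equivalently, that the Rees formula recovers any Galois-invariant lattice) to $\mathrm{GL}_n$ via a faithful representation and \cite[Proposition 10.4.4]{FF}.

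One genuine notational issue: you write $\BpdR(K)$ and $\BdR(K)$, but $K$ is a discretely valued (non-perfectoid) field, so these are not defined in the paper's framework. What you need is $\BpdR(C)$ together with its canonical $K$-algebra structure (which the paper justifies via Hensel's lemma); your formula should read $\Xi_{x,\rho}=\sum_i \xi^{-i}(\mathrm{Fil}^i_x V_K)\otimes_K\BpdR(C)\subset V\otimes_F\BdR(C)$. With this correction your argument goes through.
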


\begin{proof} The first part of this lemma is well-known and the second part of the lemma is proved by \cite{FF} and \cite{Vi}. But we still give the proof for the convenience of the readers. 

The morphism $\mathrm{Spd}(C)\ra \mathrm{Spd}(K)$ of pro-\'etale sheaves  on $\mathbf{Perf}/\mathrm{Spd}(\breve E)$ is surjective. It follows that the natural map 
\[
\mathcal{F}(G,\mu)^{\diamond}(K)\longrightarrow \mathcal F(G,\mu)^{\diamond}(C)
\]
is injective, whose image is obviously contained in $\mathcal F(G,\mu)(C)^{\mathrm{Gal}(C/K)}$. On the other hand, as $C^{\mathrm{Gal}(C/K)}=K$, every element $x\in \mathcal F(G,\mu)(C)^{\mathrm{Gal}(C/K)}$ is a $K$-point of $\mathcal F(G,\mu)$. The latter defines a morphism $\mathrm{Spd}(K)\ra \mathcal F(G,\mu)^{\diamond}$ by mapping any affinoid perfectoid space $S=\mathrm{Spa}(R,R^+ )$ over $K$ to the $R$-point of $\mathcal F(G,\mu)$ induced by $x$. Therefore $\mathcal F(G,\mu)^{\diamond}(K)\stackrel{\sim}{\ra}\mathcal F(G,\mu)^{\diamond}(C)^{\mathrm{Gal}(C/K)}$.  

The remaining part of the proof is adapted from the proof of \cite[Proposition 5.1]{Vi}. Recall that in the $\mathrm{GL}_n$-case, $\mathrm{Gr}_{\mathrm{GL}_n}(C)^{\mathrm{Gal}(C/K)}$ classifies the $\BpdR(C)$-lattices in $\BdR(C)^n$ that are invariant under the action of $\mathrm{Gal}(C/K)$. Therefore, by \cite[Proposition 10.4.4]{FF}, the Bialynicki-Birula map induces a bijection 
\[
\mathrm{Gr}_{\mathrm{GL}_n,\mu}(C)^{\mathrm{Gal}(C/K)}\stackrel{\sim}{\longrightarrow } \mathcal F(\mathrm{GL}_n,\mu)(K). 
\]
Now for a general reductive group $G$, we take a closed embedding $G\hookrightarrow \mathrm{GL}_n$ over $F$. The latter induces a commutative diagram as below 
\[
\xymatrix{\mathrm{Gr}_{G,\mu}(K)\ar[r]\ar[d] & \mathrm{Gr}_{G,\mu}(C)^{\mathrm{Gal}(C/K)}\ar[r]^{\rm BB}\ar[d] & \mathcal F(G,\mu)^{\diamond}(C)^{\mathrm{Gal}(C/K)}\ar[d]\ar@{=}[r] & \mathcal F(G,\mu)(K)\ar[d]\\ \mathrm{Gr}_{\mathrm{GL}_n,\mu}(K)\ar[r] & \mathrm{Gr}_{\mathrm{GL}_n,\mu}(C)^{\mathrm{Gal}(C/K)}\ar[r]^{\rm BB} & \mathcal F(\mathrm{GL}_n,\mu)^{\diamond}(C)^{\mathrm{Gal}(C/K)}\ar@{=}[r] & \mathcal F(\mathrm{GL}_n,\mu)(K) }.
\]
Here the vertical maps are all induced from the closed immersion $G\hookrightarrow \mathrm{GL}_n$ thus are injective: for example the  injectivity of the first two maps follows from \cite[Lemma 19.1.5]{SW}. As the Bialynicki-Birula map for $\mathrm{GL}_n$ in the diagram above is bijective, together with the fact that the second vertical map (from left) is injective, we deduce that the Bialynicki-Birula map for general $G$ induces an injective map 
\begin{equation}\label{eq:BB-for-K}
\mathrm{Gr}_{G,\mu}(K)\longrightarrow \mathcal F(G,\mu)^{\diamond}(K)=\mathcal F(G,\mu)(K).
\end{equation}

To complete the proof of our lemma, it remains to show that the map \eqref{eq:BB-for-K} is also surjective (which implies also $\mathrm{Gr}_{G,\mu}(K)\stackrel{\sim}{\ra}\mathrm{Gr}_{G,\mu}(C)^{\mathrm{Gal}(C/K)}$). Take $x\in \mathcal F(G,\mu)(K)$. Since the field $K$ is strictly henselian with algebraically closed residue field, $x$ lifts to an element in $G(K)$ still denoted by $x$. On the other hand, the $\xi$-adic complete ring $\BpdR(C)$ is naturally an $\breve F$-algebra whose residue field $\BpdR(C)/\xi\BpdR(C)\simeq C$ contains the finite separable extension $K/\breve F$, it follows by Hensel's Lemma that $\BpdR(C)$ is also naturally an algebra over $K$. Hence the element $x\in G(K)$ can be viewed as an element of $G(\BpdR(C))$, yielding an element 
\[
x\mu(\xi)^{-1}G(\BpdR(C))\in \mathrm{Gr}_{G,\mu}(C)
\]
whose image via the Bialynicki-Birula map is $x\in \mathcal F(G,\mu)(C)$. Furthermore, $x$ also defines a morphism 
\[
\alpha:\mathrm{Spd}(K)\longrightarrow \mathrm{Gr}_{G,\mu}
\] by sending every affinoid perfecoid $\mathrm{Spa}(R,R^ +)$ over $K$ to $x\mu(\xi)^ {-1}\BpdR(R)\in \mathrm{Gr}_{G,\mu}(R,R^+)$. Here as $R$ is an $K$-algebra, the period ring $\BpdR(R)$ contains naturally $K$ as a subring by the similar argument using Hensel's Lemma mentioned above. Finally we check that the image by \eqref{eq:BB-for-K} of $\alpha$ is exactly $x$, showing the desired surjectivity.  
\end{proof}

\begin{remark}[cf.  {\cite[Remark 5.4]{NV}}]\label{remark_classical definition} We keep the notation of Lemma \ref{lem:description-classical-pts}. By the proof of the lemma above, a $K$-valued classical point, viewed as an element in $\mathrm{Gr}_{G, \mu}(C)$, always has the form $x\mu(\xi)^{-1}G(\BpdR)$ for some $x\in G(K)$, where $K$ is identified with $\BdR^{\mathrm{Gal}(C|K)}$.  
\end{remark}

The main result for classical points is the following theorem. 

\begin{theorem}\label{thm_classical}Let $x$ be a classical point of $\mathrm{Gr}_{G, \mu}$ with value in some finite field extension of $\breve E$, and let $[b']\in B(G)$, then the following statements are equivalent:
\begin{enumerate}
\item $x\in \mathrm{Gr}_{G,\mu, b}^{\mathrm{New}=[b']}$,
\item $x\in \mathrm{Gr}_{G,\mu, b}^{\mathrm{HN}=[b']}$,
\item $\mathrm{BB}(x)\in \mathcal{F}(G,\mu, b)^{\mathrm{HN}=[b']}$.
\end{enumerate}
\end{theorem}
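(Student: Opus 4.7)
By Tannakian formalism, the three invariants $\mathrm{HN}(b,x)$, $\mathrm{HN}(b,\mathrm{BB}(x))$ and $\mathrm{New}(b,x)$ are each determined by the corresponding HN-filtration functor $\mathbf{Rep}(G)\to \mathbf{F}(\mathbf{Isoc}_{\breve F|F})$, so the assertion reduces to a statement about sub-isocrystals after evaluating on a faithful representation; in particular we may argue at the level of $G=\mathrm{GL}_n$ and compare the HN-filtrations attached respectively to an isocrystal-with-lattice, a filtered isocrystal, and a modification of vector bundles on the Fargues--Fontaine curve. By Remark~\ref{remark_classical definition}, a classical point $x\in\mathrm{Gr}_{G,\mu}(K)$ with $K/\breve E$ finite is represented by $g\mu(\xi)^{-1}G(\BpdR)$ with $g\in G(K)$, and Hensel's lemma embeds $K$ into $\BpdR(C)$ as a section of the residue map modulo $\xi$.

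For the equivalence (2) $\Leftrightarrow$ (3), the content is that the counterexample recalled in Remark~\ref{rem:lattice to filtration} does not occur for classical points. Given $V\in \mathbf{Rep}(G)$ with the weight decomposition $V=\bigoplus_a V_a$ relative to $\mu_V=\rho\circ\mu$, the associated lattice takes the form $\Xi_V = \sum_a \xi^{-a}U^a_K\otimes_K\BpdR$, where $U^a_K=\rho(g)\bigl(\bigoplus_{b\geq a}V_b\otimes_F K\bigr)\subset V_K:=V\otimes_F K$ is the $K$-filtration determined by $(g,\mu)$. For any sub-isocrystal $V'\subset V$, choose a $K$-basis of $V_K$ simultaneously adapted to $U^\bullet_K$ and to $V'_K:=V'\otimes_{\breve F} K$, built by first picking a basis of $V'_K$ compatible with the restricted filtration and extending via compatible complements. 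A direct computation in this basis yields
\[
\Xi_V\cap (V'\otimes_F\BdR)\;=\;\sum_a \xi^{-a}(U^a_K\cap V'_K)\otimes_K\BpdR,
\]
and applying the formula of Remark~\ref{rem:lattice to filtration} to both sides gives $\mathcal F_{\Xi_V\cap(V'\otimes\BdR)} = \mathcal F_{\Xi_V}|_{V'\otimes C}$. Consequently the two notions of degree on sub-isocrystals of $V$ agree, the HN-filtrations of $(V,\Xi_V)$ and $(V,\mathcal F_{\Xi_V})$ coincide, and therefore $\mathrm{HN}(b,x) = \mathrm{HN}(b,\mathrm{BB}(x))$.

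For (1) = (3) I would invoke the classical identification between the HN-filtration of a filtered isocrystal over a finite extension $K/\breve F$ and the slope reduction of the associated modification of vector bundles on the Fargues--Fontaine curve, which is the geometric incarnation of the Colmez--Fontaine theorem (\cite[\S~10.5.3]{FF}) and was worked out in the Grassmannian setting for $b=1$ by Viehmann~\cite{Vi}. The argument adapts verbatim to general $b$: the classicality of $x$ sets up a bijection between Frobenius-stable subbundles of $\mathcal E_{b,x}$ and sub-filtered-isocrystals of $(V\otimes_F\breve F, \rho(b)\sigma, \mathcal F_x)$ preserving degree, so the HN-reduction of $\mathcal E_{b,x}$ translates into the HN-filtration of the filtered isocrystal. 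Matching Newton and Kottwitz invariants (both sides have Kottwitz class $\kappa(b)-\mu^\sharp$ by construction) then yields $\mathrm{HN}(b,\mathrm{BB}(x))=\mathrm{New}(b,x)$. Combined with the inequalities $\mathrm{HN}(b,x)\leq\mathrm{New}(b,x)$ and $\mathrm{HN}(b,x)\leq\mathrm{HN}(b,\mathrm{BB}(x))$ established earlier in this section, together with the equality from the previous paragraph, the three quantities coincide. The main obstacle lies precisely in this last comparison step: one must track carefully the sign and Weyl-group conventions (recall $\nu_{\mathcal E_{b'}}=-w_0\nu_{b'}$, while $\nu_{\mathrm{HN}(b,\mathrm{BB}(x))}=-\tilde v_{b,\mathrm{BB}(x)}$) to verify that the bijection between subobjects is an equivalence of HN-formalisms and not merely an equality of slope polygons; once these conventions are properly aligned, the rest of the argument is essentially formal.
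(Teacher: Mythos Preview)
Your treatment of (2)$\Leftrightarrow$(3) is correct and matches the paper's Proposition~\ref{prop_classical_HN=flag}: both argue that for a lattice of the special form $\sum_a\xi^{-a}U^a_K\otimes_K\BpdR$ the restriction to a sub-isocrystal commutes with passing to the residue filtration, so the two degree functions on sub-isocrystals agree.

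For (1)$\Leftrightarrow$(2) you take a different route from the paper, and the sketch contains a genuine imprecision. The phrase ``bijection between Frobenius-stable subbundles of $\mathcal E_{b,x}$ and sub-filtered-isocrystals'' is not well-posed: $\mathcal E_{b,x}$ carries no Frobenius after modification, and subbundles of $\mathcal E_{b,x}$ on $X$ are \emph{not} in bijection with sub-isocrystals of $(V_{\breve F},\rho(b)\sigma)$---there are typically many more of the former. What you actually need (and what Colmez--Fontaine/\cite[\S 10.5]{FF} supplies) is one-directional: every sub-isocrystal with induced lattice yields a saturated subbundle of the same degree, and each \emph{semi-stable} graded piece of the filtered-isocrystal HN-filtration gives a \emph{semi-stable} vector bundle of the same slope. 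Uniqueness of the bundle HN-filtration then forces the two filtrations to agree. This is a substantive step, not merely a sign-convention issue as your final paragraph suggests.

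Once this is corrected, your Tannakian reduction to $\mathrm{GL}_n$ is legitimate and yields a slightly more elementary proof than the paper's: both $v_{b,x}$ and $\nu_{\mathcal E_{b,x}}$ are Tannakian invariants, so equality after every faithful representation gives equality in $\mathcal N(G)$. The paper instead works group-theoretically: it takes the canonical HN-reduction of the normed isocrystal (Corollary~\ref{prop_can reduction}) to a parabolic $P$ with Levi $M$, observes that $\mathrm{pr}_M(x)$ is again classical and HN-semi-stable, applies Proposition~\ref{Prop_a=wa} to conclude $(\mathcal E_{b,x})\times^P M\simeq\mathcal E_{b'_M}$, and then invokes \cite[Theorem~2.7, Corollary~2.9]{Ch} to recover $\mathcal E_{b,x}\simeq\mathcal E_{b'}$ from its Levi reduction. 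Your approach avoids this last external input at the cost of doing the full filtration comparison inside $\mathrm{GL}_n$ rather than only the semi-stable case.
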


\begin{remark}When $b$ is basic, this theorem is proved in \cite[Theorem 5.2]{Vi} for $[b']=1$ and in \cite[Proposition 5.5]{NV} for general $b'$.
\end{remark}

The rest of this section is devoted to the proof of Theorem \ref{thm_classical}. We first prove some weaker results. 

\begin{proposition}\label{prop_classical_HN=flag}Let $K$ be a finite extension of $\breve E$ and $[b']\in B(G)$, then
\[
\mathrm{BB}\left(\mathrm{Gr}_{G,\mu, b}^{\mathrm{HN}=[b']}(K)\right)=\mathcal{F}(G,\mu, b)^{\mathrm{HN}=[b']}(K).\]
\end{proposition}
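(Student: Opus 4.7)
By Proposition \ref{prop_comparison via BB} we already know the inequality $\mathrm{HN}(b,x)\leq \mathrm{HN}(b,\mathrm{BB}(x))$ for every geometric point, so the proposition reduces to proving equality whenever $x$ is a classical point with values in a finite extension $K|\breve E$. I would establish this equality by showing that the Harder–Narasimhan reduction on the lattice side (Corollary \ref{prop_can reduction}) and the Harder–Narasimhan reduction on the filtration side (the DOR analogue) are governed by the same slope vectors once evaluated at a classical point.

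The main tool is the commutativity of the Bialynicki–Birula map with Levi projections for classical points, recorded in Remark \ref{remark_classical}(2). By Remark \ref{remark_classical definition} we may write $x = x_0\mu(\xi)^{-1}G(\BpdR)$ with $x_0\in G(K)\subset G(\BpdR)$, via the Hensel embedding $K\hookrightarrow \BpdR$. Extending this embedding to a Cohen section $C\hookrightarrow \BpdR$ places $x$ in $G(C)\mu(\xi)^{-1}G(\BpdR)/G(\BpdR)$, so $x$ meets the hypothesis of Remark \ref{remark_classical}(2). Hence for every standard parabolic $P\subset G$ with Levi $M$ and every $g\in G(\breve F)$ for which $g^{-1}x$ lies in $\mathrm{Gr}_{P}\subset\mathrm{Gr}_G$ (via the Iwasawa decomposition), one has
\[
\mathrm{BB}\bigl(\mathrm{pr}_M(g^{-1}x)\bigr)=\mathrm{pr}_{\mathcal F}\bigl(g^{-1}\mathrm{BB}(x)\bigr).
\]

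Next I would compare the two HN–vectors via the max-of-slope-vectors characterization. On the lattice side, Corollary \ref{prop_can reduction} expresses $v_{b,x}$ as the maximum over all reductions $(b_Q,g)$ of $b$ to standard parabolics $Q$ of the slope vector $v_{(b_Q,g)}$; the latter is determined by the degrees of the one–dimensional bundles $\chi_\ast\mathcal E_{b_M,\mathrm{pr}_M(g^{-1}x)}$ for $\chi\in X^\ast(M)^\Gamma$. The exactly analogous characterization holds on the filtration side, obtained by transporting the argument of Corollary \ref{prop_can reduction} to $\mathbf{FilIsoc}_{\breve F|F}^K$ (the Faltings–Totaro theorem provides the needed tensor compatibility). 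For each reduction $(b_Q,g)$ and each character $\chi$, the two slope vectors evaluate the degree of a one–dimensional isocrystal equipped either with a lattice or with a filtration, and in rank one these two degrees coincide by the direct computation of Remark \ref{rem:lattice to filtration}; the commutativity displayed above guarantees that the rank–one objects on both sides indeed arise from matching data $\mathrm{pr}_M(g^{-1}x)$ and $\mathrm{pr}_{\mathcal F}(g^{-1}\mathrm{BB}(x))$. Taking the maximum over all standard reductions $(Q,b_Q,g)$ then gives $v_{b,x}=\tilde v_{b,\mathrm{BB}(x)}$, and the uniqueness statement in Corollary \ref{coro_HN vector} together with the analogous statement on the flag variety upgrades this equality of HN–vectors to $\mathrm{HN}(b,x)=\mathrm{HN}(b,\mathrm{BB}(x))$ in $B(G)$, as desired.

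The principal obstacle I expect is the careful setup of the filtration–side analogue of Corollary \ref{prop_can reduction}: one must check that the HN–reduction $(Q,b_Q,g)$ used on the lattice side is also the correct parabolic for the filtration side, so that the max–of–slope–vectors argument is legitimately applied against the \emph{same} family of reductions on both sides. The Tannakian / Iwasawa arguments are parallel to those in Sections \ref{subsection_HN-vector}–\ref{sec:compactibility-with-flag}, but pinning down that the commutative square of Remark \ref{remark_classical}(2) survives the whole HN machinery (rather than being used only pointwise) is the step that demands the most care.
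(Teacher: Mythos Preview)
Your proposal is correct and rests on the same key observation as the paper: for a classical point $x$, the Bialynicki--Birula map commutes with passage to Levi quotients (equivalently, with strict subobjects), so that the degree data on the lattice side and on the filtration side agree. The implementation, however, differs. Rather than invoking the max-of-slope-vectors characterization of the HN-vector on both sides---which, as you rightly flag, forces you to re-run the whole argument of Corollary \ref{prop_can reduction} inside $\mathbf{FilIsoc}_{\breve F|F}^K$ and then match the two families of reductions---the paper first reduces to $G=\mathrm{GL}_n$ via the Tannakian formalism and then proves the single equality $\mathcal F_{\Xi_1}=\mathcal F_{\Xi,1}$ for every subisocrystal $D_1\subset D$. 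This immediately makes the degree functions on strict subobjects coincide, so the two Harder--Narasimhan filtrations (hence the HN-vectors) are equal without any further group theory. Your route trades the Tannakian reduction for a comparison of two parallel parabolic-reduction formalisms; it works, but is longer, and the ``principal obstacle'' you identify is exactly the extra bookkeeping that the paper's $\mathrm{GL}_n$ reduction sidesteps.
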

\begin{proof}It suffices to show $\mathrm{HN}(b, x)=\mathrm{HN}(b, \mathrm{BB}(x))$ for any $x\in\mathrm{Gr}_{G, \mu}(K)$. When $b$ is basic, this is done in \cite[Theorem 5.1]{NV}. And their proof could be adapted to this setup. For the convenience of the readers, we still give the proof here.

Using Tannakian formalism, we may first reduce to the $G=\mathrm{GL}_n$ case. Recall that in general, the Bialynicki-Birula map \[\begin{split}\mathrm{BB}: \mathrm{BunIsoc}_{\breve{F}|F}^{\BdR} &\longrightarrow \mathrm{FilIsoc}_{\breve{F}|F}^C\\
(D, \Xi)&\longmapsto (D, \mathcal{F}_{\Xi}),\end{split}\] 
is NOT compatible with subobjects. Here, we want to show that $\mathrm{BB}$ is compatible with subojects for the particular object $(D_b, \Xi_x)\in \mathrm{BunIsoc}_{\breve{F}|F}^{\BdR}$, where $D_b$ is the isocrystal corresponding to $[b]$ and $\Xi_x$ is the lattice corresponding to $x$. For simplicity, we will write $D=D_b$ and $\Xi=\Xi_x$.
Let $D_1$ be a sub-isocrystal of $D$. Let $(D_1, \Xi_1)$ (resp. $(D_1, \mathcal{F}_{\Xi, 1})$) be the corresponding sub-object of $(D, \Xi)$ (resp. $(D, \mathcal{F}_{\Xi})$) in $\mathrm{BunIsoc}_{\breve{F}|F}^{\BdR}$ (resp. $\mathrm{FilIsoc}_{\breve{F}|F}^C$). We want to show $\deg(D_1, \Xi)=\deg(D_1, \mathcal{F}_{\Xi, 1})$. 
It suffices to show $\mathcal{F}_{\Xi_1}= \mathcal{F}_{\Xi,1}$.  By Remark \ref{remark_classical definition} and Remark \ref{remark_classical} (2), $x\in \mathrm{Gr}_{G, \mu}(K)$ satisfies (\ref{eqn_commutative}). It follows that the diagram in Remark \ref{remark_classical} (2) is commutative for the particular element $\Xi$ and hence $\mathrm{BB}$ map is compatible with subobjects for the particular object $(D, \Xi)$. In particular
\[\mathrm{BB}(D_1, \Xi_1)=(D_1, \mathcal{F}_{\Xi, 1}),\] and hence $\mathcal{F}_{\Xi_1}= \mathcal{F}_{\Xi,1}$.
\end{proof}


The following result shows that weakly admissible locus can always  be considered as an algebraic approximation of the admissible locus.
\begin{proposition}\label{Prop_a=wa} Let $x$ be a classical point of $\mathrm{Gr}_{G, \mu}$, then 
\[x\in \mathrm{Gr}_{G,\mu, b}^{a}\Longleftrightarrow x\in \mathrm{Gr}_{G,\mu, b}^{wa}\]
\end{proposition}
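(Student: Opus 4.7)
The inclusion $\mathrm{Gr}_{G,\mu,b}^a \subseteq \mathrm{Gr}_{G,\mu,b}^{wa}$ in fact holds for arbitrary (not just classical) points, and I will dispatch it first. Indeed, given $x$ with $\mathcal{E}_{b,x} \simeq \mathcal{E}_{b'}$ for some basic $[b'] \in B(G,\mu)$, Corollary \ref{coro_HN vector} yields $\mathrm{HN}(b,x) \leq [b']$ together with $\kappa(\mathrm{HN}(b,x)) = \kappa(b')$. Since a basic class is the unique minimum of every generalized Kottwitz set $B(G,\epsilon,\delta)$ (its Newton vector is central, hence minimal under dominance), this forces $\mathrm{HN}(b,x) = [b']$, so $x$ is weakly admissible.

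For the converse at a classical point $x \in \mathrm{Gr}_{G,\mu,b}^{wa}(K)$ with $K/\breve E$ finite, I will route through the flag variety. Proposition \ref{prop_classical_HN=flag} gives $\mathrm{BB}(x) \in \mathcal{F}(G,\mu,b)^{wa}(K)$, so the $G$-filtered isocrystal $(D_b,\mathcal{F}_{\Xi_x})$ is weakly admissible in Fontaine's sense. The classical Colmez-Fontaine theorem \cite{CF}, extended to an arbitrary reductive group $G$ by Dat-Orlik-Rapoport \cite{DOR}, then supplies a crystalline $G$-valued representation of $\mathrm{Gal}(\bar K/K)$ realizing this data. Via Fargues-Fontaine (\cite[\S~10.5.3]{FF}), the existence of such a crystalline $G$-representation is equivalent to the triviality of the modification of $\mathcal{E}_b$ cut out by the filtration $\mathcal{F}_{\Xi_x}$, that is, to the fact that this modification is isomorphic to $\mathcal{E}_{b'}$ where $[b'] \in B(G,\mu)$ is the basic class with $\kappa(b') = \kappa(b) - \mu^\sharp$.

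To conclude, I must identify this last modification with $\mathcal{E}_{b,x}$ itself. By Remark \ref{remark_classical definition}, $x = g\mu(\xi)^{-1}G(\BpdR)$ for some $g \in G(K)$, so the lattice $\Xi_x$ is stable under $\mathrm{Gal}(\bar K/K)$ and the Beauville-Laszlo datum defining $\mathcal{E}_{b,x}$ descends to a datum over $X_K$; by construction, reduction modulo the uniformizer of $\BpdR(C^\sharp)$ recovers $\mathcal{F}_{\Xi_x}$, and the resulting modification of $\mathcal{E}_b$ at the classical divisor cut out by $K$ is exactly the one produced by Fontaine's crystalline functor. Thus $\mathcal{E}_{b,x} \simeq \mathcal{E}_{b'}$ with $[b']$ basic, and $x \in \mathrm{Gr}_{G,\mu,b}^a$. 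The main obstacle in this plan is therefore not any novel argument, but rather the careful identification of the two modifications on the Fargues-Fontaine curve -- Beauville-Laszlo gluing of $\BpdR$-lattices on one side and Fontaine's period functor applied to the residue filtration on the other -- which for classical points is essentially tautological thanks to the explicit description of $x$ and the fact that residue filtration and lattice carry the same information over $K$; once this identification is in place, the proposition follows by chaining Proposition \ref{prop_classical_HN=flag} with the $G$-version of Colmez-Fontaine.
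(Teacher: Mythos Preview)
Your proposal is correct and follows essentially the same route as the paper's proof: both use Proposition~\ref{prop_classical_HN=flag} to pass to the flag variety and then invoke the Colmez--Fontaine theorem (equivalently, \cite[Proposition~10.5.6]{FF}) at classical points, extended from $\mathrm{GL}_n$ to general $G$. The only difference is packaging: the paper cites \cite[Proposition~10.5.6]{FF} for $\mathrm{GL}_n$ and reduces to it via a faithful representation as in Viehmann \cite[Theorem~5.2]{Vi}, whereas you route through the existence of a crystalline $G$-representation (via \cite{CF} and \cite{DOR}) and then spell out the identification of the two modifications of $\mathcal{E}_b$---the one from the $\BpdR$-lattice $\Xi_x$ and the one from the residue filtration $\mathcal{F}_{\Xi_x}$---which for classical points coincide by Lemma~\ref{lem:description-classical-pts} and Remark~\ref{remark_classical definition}. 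Your detour through crystalline representations is logically redundant (the semistability of the modification is exactly what \cite{FF} proves directly), but it is not wrong, and your explicit acknowledgement that the identification of modifications is the main technical step is apt: this is precisely what the paper hides behind the citation to Viehmann's argument.
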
 
\begin{remark}When $\mu$ is minuscule and $G=\mathrm{GL}_n$, this theorem is proved by Colmez-Fontaine \cite[Theorem A]{CF}. When $b$ is basic, this theorem is proved by Viehmann \cite[Theorem 5.2]{Vi}.
\end{remark}

\begin{proof}The proof is the same as \cite[Theorem 5.2]{Vi}. So we only outline a sketch. 

By Proposition \ref{prop_classical_HN=flag},  
\[x\in \mathrm{Gr}_{G,\mu, b}^{wa}\Longleftrightarrow \mathrm{BB}(x)\in \mathcal{F}(G,\mu, b)^{ss}.\] Therefore it suffices to show 
\[x\in \mathrm{Gr}_{G,\mu, b}^{a}\Longleftrightarrow \mathrm{BB}(x)\in \mathcal{F}(G,\mu, b)^{ss}.\]
When $G=\mathrm{GL}_n$, this is proved in \cite[Proposition 10.5.6]{FF}. The general case is reduced to the $\mathrm{GL}_n$ by choosing some suitable faithful representation of $G$ as done in the proof of \cite[Theorem 5.2]{Vi}.
\end{proof}

Now we are ready to prove the main result about classical points. 
\begin{proof}[Proof of Theorem \ref{thm_classical}] The strategy is the same as \cite[Proposition 5.5]{NV}. By Proposition \ref{prop_classical_HN=flag}, we have (2) is equivalent to (3). It remains to show that (1) is equivalent to (2). Suppose $[b']=\mathrm{HN}(b, x)$, it remains to show $\mathcal{E}_{b, x}\simeq \mathcal{E}_{b'}$. Let $M$ be the centralizer of $w_0\nu_{b'}$ and $P$ the corresponding parabolic subgroup of $G$. Let $b'_M$ be the reduction of $b'$ to $M$ such that $\nu_{b'_M}=w_0\nu_{b'}$. By Proposition \ref{prop_can reduction}, there exists a canonical reduction $(\mathcal{E}_{b,x})_P$ of $\mathcal{E}_{b,x}$ such that 
$\mathrm{pr}_M (x)$ is semi-stable and $\mathrm{HN}(b_M, \mathrm{pr}_M(x))=[b'_M]$. As $\mathrm{pr}_M(x)\in \mathrm{Gr}_M$ is again a classical point, by Proposition \ref{Prop_a=wa} it follows that
$(\mathcal{E}_{b, x})\times^P M\simeq \mathcal{E}_{b'_M}$. As the slope vector of $\mathcal{E}_{b'_M}$ is $G$-dominant, by \cite[Theorem 2.7, Corollary 2.9]{Ch}, $\mathcal{E}_{b, x}\simeq \mathcal{E}_{b'}$.
\end{proof}

\section{Basic properties of HN-strata}
In this section, we will discuss some basic properties of a HN-stratum, such as non-emptiness, dimension formula and classical points. By Section \ref{subsection_comp inner twists HN strat}, a HN-stratum associated to $(G, \mu, b)$ is always isomorphic to a HN-stratum associated to some $(H, \mu^H, b^H)$ with $H$ an inner twist of $G$. Therefore, without loss of generality, in this section we will always assume that $G$ is quasi-split. 

For a given Harder-Narasimhan stratum $\mathrm{Gr}_{G,\mu,b}^{\mathrm{HN}=[b']}$, let $M$ be the centralizer of $w_0\nu_{b'}$ and $P$ be the associated standard parabolic subgroup of $G$. Let $b'_M$ be the reduction of $b'$ to $M$ such that $\nu_{b'_M}=w_0\nu_{b'}$. By passing to the graded quotient of the canonical filtration, for any algebraically closed perfectoid field $C$ containing $\bar F$ with a bounded valuation subring $C^+$, we have the following inclusion
\[
\mathrm{Gr}_{G,\mu,b}^{\mathrm{HN}=[b']}(C)\subset \bigcup_{\textrm{reduction }(b_M,g) \atop \textrm{of }b \textrm{ to }M} \bigcup_{\lambda\in S_{M}(\mu) \textrm{ with }\atop \lambda=\kappa_M(b_M)-\kappa_M(b_M') \text{ in } \pi_1(M)_\Gamma} g\cdot \left(\mathrm{Gr}_{G,\mu}\cap \mathrm{Gr}_{P,\lambda}\right)(C),
\]
where $\mathrm{Gr}_{P, \lambda}$ and $S_M(\mu)$ are defined in \S~\ref{sec_semi-infinite orbits} and the conditions on $\lambda$ follows from Lemma \ref{lemma_reduction type} and the fact that 
\[\mathrm{Gr}_{M, b_M, \lambda}^{\mathrm{HN}=[b'_M]}\neq \emptyset\Rightarrow \kappa_M(b_M)-\lambda=\kappa_M(b'_M) \text{ in }\pi_1(M)_{\Gamma}.
\]
On the other hand, we have a projection map 
\[
\mathrm{Gr}_{P,\lambda}\longrightarrow \mathrm{Gr}_{M,\lambda}
\]
and we denote by $\mathrm{Gr}_{P,\lambda,b_M}^{ss}$ the preimage of the HN-semi-stable locus $\mathrm{Gr}_{M,\lambda,b_M}^{wa}\subset \mathrm{Gr}_{M,\lambda}$ through the above projection. 

\begin{proposition}\label{prop_fibration}  Keep the above notations. 
\begin{enumerate}
\item Let $C$ be an  algebraically closed perfectoid field containing $\bar F$ with a bounded valuation subring $C^+$. We have 
\[
\mathrm{Gr}_{G,\mu,b}^{\mathrm{HN}=[b']}(C)= \bigcup_{\textrm{reduction }(b_M,g) \atop \textrm{of }b \textrm{ to }M} \bigcup_{\lambda\in S_{M}(\mu) \textrm{ with }\atop \lambda=\kappa_M(b_M)-\kappa_M(b_M') \text{ in } \pi_1(M)_\Gamma}g\cdot \left(\mathrm{Gr}_{G,\mu}\cap \mathrm{Gr}_{P,\lambda,b_M}^{ss}\right)(C).
\]
\item Assume $\mu$ minuscule. Then the map 
\[
\mathrm{Gr}_{G,\mu}\cap \mathrm{Gr}_{P,\lambda,b_M}^{ss}\longrightarrow \mathrm{Gr}_{M,\lambda,b_M}^{wa}
\]
is (the diamond attached to) an affine fibration with fiber $N/N\cap P_{\lambda}$, where $N$ is the unipotent radical of $P$. 
\end{enumerate}
\end{proposition}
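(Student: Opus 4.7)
The plan is to prove (1) by invoking the canonical reduction of Corollary~\ref{prop_can reduction} and (2) by a fiberwise analysis using root subgroups of $N$, reduced to the classical affine Grassmannian via the Bialynicki-Birula isomorphism.

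For part~(1), consider first the reverse inclusion. Given $y\in (\mathrm{Gr}_{G,\mu}\cap \mathrm{Gr}_{P,\lambda,b_M}^{ss})(C)$ and $x=g\cdot y$, the $M$-bundle $\mathcal E_{b_M,\mathrm{pr}_M(y)}^M$ is HN-semistable, and the Kottwitz constraint $\lambda^\sharp=\kappa_M(b_M)-\kappa_M(b'_M)$ in $\pi_1(M)_\Gamma$ forces its invariants to match those of $\mathcal E_{b'_M}^M$ via the construction in Corollary~\ref{coro_HN vector}; its slope vector is therefore $-w_0\nu_{b'}$. Corollary~\ref{prop_can reduction} then gives $v_{b,x}=-w_0\nu_{b'}$, and Corollary~\ref{coro_HN vector} yields $\mathrm{HN}(b,x)=[b']$. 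For the forward inclusion, start with $x\in \mathrm{Gr}_{G,\mu,b}^{\mathrm{HN}=[b']}(C)$. Since $v_{b,x}=-w_0\nu_{b'}$ has centralizer $M$ in $G$, the parabolic attached to the HN-vector is exactly $P$, so Corollary~\ref{prop_can reduction} produces a canonical reduction $(b_M,g)$ of $b$ to $P$ with $\mathrm{pr}_M(g^{-1}x)\in\mathrm{Gr}_{M,\lambda,b_M}^{wa}(C)$ for a unique $\lambda\in S_M(\mu)$; matching the Kottwitz invariants of the resulting basic $M$-bundle yields the required identity on $\lambda^\sharp$.

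For part~(2), fix a geometric point $y=m\lambda(\xi)^{-1}M(\BpdR)\in \mathrm{Gr}_{M,\lambda}(C,C^+)$. Since $\lambda$ is central in $M$ and $m\in M(\BpdR)$, after translating, the fiber in $\mathrm{Gr}_{P,\lambda}$ is the $N(\BdR)$-orbit of $\lambda(\xi)^{-1}G(\BpdR)$, with stabilizer $N(\BdR)\cap \lambda(\xi)^{-1}G(\BpdR)\lambda(\xi)$. Decomposing $N=\prod_\alpha U_\alpha$ by root subgroups over positive roots not in $M$, this stabilizer factors as $\prod_\alpha U_\alpha(\xi^{-\langle\lambda,\alpha\rangle}\BpdR)$. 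The minuscule hypothesis combined with $\lambda\in W\mu$ forces $\langle\lambda,\alpha\rangle\in\{-1,0,1\}$, so the quotient
\[
N(\BdR)\big/\bigl(N(\BdR)\cap \lambda(\xi)^{-1}G(\BpdR)\lambda(\xi)\bigr)
\]
is an affine space whose dimension exactly matches that of $N/(N\cap P_\lambda)$. To upgrade this pointwise computation to an affine fibration of v-sheaves, use the non-canonical identification $\BpdR(C^\sharp)\simeq C^\sharp[\![\xi]\!]$ to transport the statement to the classical affine Grassmannian of $G$, where the analogous statement for semi-infinite orbits meeting minuscule Schubert varieties is standard (cf.\ \cite[3.2~Theorem]{MV}). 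Since $\mathrm{Gr}_{M,\lambda,b_M}^{wa}\subset \mathrm{Gr}_{M,\lambda}$ is open, restriction preserves the affine fibration structure.

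The main obstacle is globalizing the pointwise fiber analysis in part~(2) to a genuine affine fibration of diamonds: one must produce a natural $L^+N$-type action realizing the quotient as a relative affine space over $\mathrm{Gr}_{M,\lambda}$, or alternatively use the Bialynicki-Birula isomorphism $\mathrm{Gr}_{G,\mu}\simeq \mathcal F(G,\mu)^\diamond$ valid in the minuscule case to reduce the assertion to the classical fact that Bruhat-like cells in partial flag varieties project as affine bundles onto their $M$-orbits.
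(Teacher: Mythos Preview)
Your argument for part~(1) is correct and matches the paper's approach: both rely on the canonical reduction (Corollary~\ref{prop_can reduction} / Proposition~\ref{prop_HN vector maximum of slope vector}) to go back and forth between the HN-stratum and the semistable loci for $M$.

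Part~(2), however, has a genuine gap. You compute the fiber of the \emph{full} projection $\mathrm{Gr}_{P,\lambda}\to\mathrm{Gr}_{M,\lambda}$, which is the $N(\BdR)$-orbit of $\lambda(\xi)^{-1}G(\BpdR)$; but this orbit is infinite-dimensional (for each root $\alpha$ the quotient $\BdR/\xi^{-\langle\lambda,\alpha\rangle}\BpdR$ is infinite regardless of whether $\langle\lambda,\alpha\rangle\in\{-1,0,1\}$). What you need is the fiber of the \emph{restricted} map $\mathrm{Gr}_{G,\mu}\cap\mathrm{Gr}_{P,\lambda}\to\mathrm{Gr}_{M,\lambda}$, and the crucial missing step is to show that intersecting with the Schubert cell cuts the $N(\BdR)$-orbit down to exactly the $N(\BpdR)$-orbit (equivalently, the $P(\BpdR)$-orbit of $w[\mu^{-1}]$). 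This is nontrivial: it is the content of \cite[Proposition~13.1]{HKM}, which the paper invokes explicitly. Also, the side remark ``$\lambda$ is central in $M$'' is false in general ($\lambda$ is only $M$-dominant); the reduction to the base point uses only that $M$ normalizes $N$.

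The paper's route is precisely the Bialynicki--Birula alternative you mention at the end: it identifies $\mathrm{Gr}_{G,\mu}\cap\mathrm{Gr}_{P,\lambda}$ with the diamond of the $P$-orbit $\mathcal F(G,\mu)^w=P/(P\cap P_\lambda)$ inside the flag variety (via BB plus \cite{HKM} on geometric points, plus a Zariski-closedness argument to globalize), after which the fibration over $\mathcal F(M,\lambda)=M/(M\cap P_\lambda)$ is the classical fact for partial flag varieties. So your closing paragraph points in the right direction, but the root-group computation as written does not stand on its own.
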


\begin{proof} (1) follows from Proposition \ref{prop_HN vector maximum of slope vector}. For (2), we need to study the map 
\[
\mathrm{Gr}_{G,\mu}\cap \mathrm{Gr}_{P,\lambda}\longrightarrow \mathrm{Gr}_{M,\lambda}
\]
for any $\lambda\in S_M(\mu)$, provided that $\mu$ is minuscule. Recall that, since $\mu$ is minuscule the Bialynicki-Birula map yields an isomorphism of v-sheaves
\begin{equation}\label{eq:BB}
\mathrm{BB}: \mathrm{Gr}_{G,\mu}\stackrel{\sim}{\longrightarrow}\mathcal{F}(G,\mu)^{\diamond}.
\end{equation}
In the level of geometric points, the above BB map  is given by 
\[
\mathrm{Gr}_{G,\mu}(C,C^ +)\ni g[\mu^{-1}] \longmapsto [\bar g]\in \mathcal F(G,\mu)^{\diamond}(C^{\flat},C^{\flat, +})=G(C)/P_{\mu}(C),
\] 
with $[\mu^ {-1}]:=\mu(\xi)^{-1}G(\BpdR)/G(\BpdR)\in \mathrm{Gr}_{G}(C,C^+)$.
We will relate $\mathrm{Gr}_{G,\mu}\cap \mathrm{Gr}_{P,\lambda}$ to some subspace of the flag variety $\mathcal F(G,\mu)$. Assume $\lambda=w\mu$ for some $w\in G(\bar F)$ normalizing $T$ and write $\mathcal F(G,\mu)^{w}$ the $P$-orbit of the element 
\[
[w]\in \mathcal F(G,\mu)=G/P_{\mu}.
\]
In particular, we have 
\[
\mathcal F(G,\mu)^{w}=P/P\cap P_{w\mu}\hookrightarrow \mathcal F(G,\mu)=G/P_{\mu}.
\]
We claim that that under the BB map \eqref{eq:BB} above, $\mathrm{Gr}_{G,\mu}\cap \mathrm{Gr}_{P,\lambda}\hookrightarrow \mathrm{Gr}_{G,\mu}$ corresponds to the sub-diamond 
\[
\mathcal F(G,\mu)^{w,\diamond}\hookrightarrow \mathcal F(G,\mu)^{\diamond}.
\]
Indeed, in the level of geometric points  
\[
\begin{array}{cl}
& \mathrm{Gr}_{G,\mu}(C,C^+)\cap \mathrm{Gr}_{P,\lambda}(C,C^+) \\ =&  G(\BpdR)\cdot [\mu^{-1}] \cap (N(\BdR)M(\BpdR)\cdot [\lambda^{-1}]) \\ =& P(\BpdR)w\cdot [\mu^{-1}] 
\end{array}
\] where the last equality holds by \cite[Proposition 13.1]{HKM}. 
Therefore under the identification \eqref{eq:BB}, $\mathrm{Gr}_{G,\mu}\cap \mathrm{Gr}_{P,w\mu}$ and $\mathcal F(G,\mu)^{w,\diamond}$ have the same geometric points. On the other hand, for a general affinoid perfectoid space $S$ over $\mathrm{Spd}(\bar F)$, a morphism $S\ra \mathrm{Gr}_{G,\mu}$ factors through $\mathrm{Gr}_{G,\mu}\cap \mathrm{Gr}_{P,\lambda}$ if and only if this is the case after restricting to every geometric points of $S$. Therefore, to conclude the proof of our claim,  it suffices to check that a morphism 
\[
\alpha:S^{\sharp}\longrightarrow \mathcal F(G,\mu)
\]
factors through the subspace 
\[
\mathcal F(G,\mu)^{w}\subset \mathcal F(G,\mu)
\]
if $\alpha(\bar x)\in \mathcal F(G,\mu)^w$ for every geometric points $\bar x$ of $S$. Let $U\subset \mathcal F(G,\mu)$ be an open subspace, which contains $\mathcal F(G,\mu)^w$ as a Zariski closed subspace. Because for every geometric point $\bar x$ of $S$, $\alpha(\bar x)\in \mathcal F(G,\mu)^w$, it follows that $\mathrm{Im}(\alpha)\subset U$. By \cite[Lemma II.2.2]{Sch1}, the pullback of $\mathcal F(G,\mu)^w\hookrightarrow U$ through the morphism $\alpha:S^{\sharp}\ra U$ defines a Zariski closed immersion $T\hookrightarrow S^{\sharp}$ of affinoid  perfectoid spaces, together with a commutative diagram 
\[
\xymatrix{T\ar[r]\ar[d]& \mathcal F(G,\mu)^w \ar[d]\\ S^ {\sharp}\ar[r]^<<<<<<{\alpha}& \mathcal F(G,\mu).} 
\]
Moreover, for a morphism $f:T'\ra S^{\sharp}$ of perfectoid spaces, the composed map $\alpha \circ f$ factors through $\mathcal F(G,\mu)^w$ if and only if $f$ factors through $T$ (cf. \cite[Remark II.2.3]{Sch1}).
Our assumption on $\alpha$ implies that the induced map $T(C,C^+)\ra S^{\sharp}(C,C^+)$ is bijective for all algebraically closed perfectoid field $C$ together with an open bounded valuation subring $C^+\subset C$. Hence by \cite[Lemma 5.4]{Sch2}, $T\stackrel{\sim}{\ra}S^{\sharp}$ and the morphism $\alpha$ factors through $\mathcal F(G,\mu)^{w}$, as wanted. This finishes the proof of our claim.      

On the other hand, the map $\mathrm{Gr}_{G,\mu}\cap \mathrm{Gr}_{P,\lambda,b_M}^{ss}\ra \mathrm{Gr}_{M,\lambda,b_M}^{wa}$ is the pullback of the open subdiamond $\mathrm{Gr}_{M,\lambda,b_M}^{wa}\subset \mathrm{Gr}_{M,\lambda}$ through the morphism $\mathrm{Gr}_{G,\mu}\cap \mathrm{Gr}_{P,\lambda}\ra \mathrm{Gr}_{M,\lambda}$, which by the claim above can be identified with the natural map
\[
\mathcal F(G,\mu)^{w,\diamond}=(P/P\cap P_{\lambda})^{\diamond}\longrightarrow \mathcal F(M,\lambda)^{\diamond}.
\]
So our proposition follows from the fact that the map $P/P\cap P_{\lambda}\ra M/M\cap P_{\lambda}$ is an affine fibration with fiber isomorphic to $N/N\cap P_{\lambda}$.  
\end{proof}

\subsection{Non-emptiness}

\begin{theorem}\label{thm_nonempty}Suppose $G$ is quasi-split. A HN-stratum $\mathrm{Gr}_{G, \mu,b}^{\mathrm{HN}=[b']}$ is non-empty if and only if there exists a $M$-dominant cocharacter $\lambda\in S_M(\mu)$ such that the generalized Kottwitz set (cf. ~\ref{section_Kottwitz} for the definition)
\[B(M, \lambda+\kappa_M(b'_M), \lambda^{\diamond}\nu_{b'_M})\]
contains a reduction $[b_M]$ of $b$ to $M$.


\end{theorem}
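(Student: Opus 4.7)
\medskip

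The plan is to combine the canonical-reduction description of $\mathrm{Gr}_{G,\mu,b}^{\mathrm{HN}=[b']}$ given in Proposition~\ref{prop_fibration}(1) with a non-emptiness criterion for the weakly admissible locus on the smaller Levi $M$, and then to match the latter to the generalized Kottwitz set via Fargues' classification of $M$-bundles on the Fargues-Fontaine curve.

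\medskip

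First, I apply Proposition~\ref{prop_fibration}(1): for any algebraically closed perfectoid field $C\supset \bar F$,
\[
\mathrm{Gr}_{G,\mu,b}^{\mathrm{HN}=[b']}(C)=\bigcup_{(b_M,g),\ \lambda}\ g\cdot(\mathrm{Gr}_{G,\mu}\cap \mathrm{Gr}_{P,\lambda,b_M}^{ss})(C),
\]
where $(b_M,g)$ runs through reductions of $b$ to $M$ and $\lambda\in S_M(\mu)$ satisfies $\lambda=\kappa_M(b_M)-\kappa_M(b_M')$ in $\pi_1(M)_\Gamma$. Thus non-emptiness of the HN-stratum is equivalent to the existence of such a pair $(b_M,\lambda)$ for which $\mathrm{Gr}_{G,\mu}\cap \mathrm{Gr}_{P,\lambda,b_M}^{ss}\neq\emptyset$.

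\medskip

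Second, I would verify that, for any $\lambda\in S_M(\mu)$, the projection
\[
\mathrm{pr}_M:\ \mathrm{Gr}_{G,\mu}\cap \mathrm{Gr}_{P,\lambda}\longrightarrow \mathrm{Gr}_{M,\lambda}
\]
is surjective on geometric points. The idea is to lift any $y=m\lambda(\xi)^{-1}M(\BpdR)\in \mathrm{Gr}_{M,\lambda}(C)$ to an element of the form $nm\lambda(\xi)^{-1}G(\BpdR)\in \mathrm{Gr}_{P,\lambda}(C)$ with $n\in N(\BdR)$ chosen (using the definition of $S_M(\mu)$ via the classical affine Grassmannian and MV-type arguments as in Lemma~\ref{lemma_reduction type}) to land in the Cartan double-coset for $\mu$. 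Granted this surjectivity, non-emptiness of $\mathrm{Gr}_{G,\mu}\cap \mathrm{Gr}_{P,\lambda,b_M}^{ss}$ is equivalent to non-emptiness of $\mathrm{Gr}_{M,\lambda,b_M}^{wa}$, and the theorem reduces to a criterion for the latter.

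\medskip

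Third, I would characterize non-emptiness of $\mathrm{Gr}_{M,\lambda,b_M}^{wa}$. A point $x\in \mathrm{Gr}_{M,\lambda}$ is weakly admissible precisely when the modification $\mathcal{E}_{b_M,x}$ is semi-stable as an $M$-bundle on the Fargues-Fontaine curve, i.e., isomorphic to $\mathcal{E}_{b''_M}$ with $[b''_M]\in B(M)$ basic. Since a basic class is uniquely determined by its Kottwitz invariant, the Kottwitz compatibility $\lambda=\kappa_M(b_M)-\kappa_M(b_M')$ forces $[b''_M]=[b'_M]$. Thus non-emptiness of $\mathrm{Gr}_{M,\lambda,b_M}^{wa}$ is equivalent to non-emptiness of the basic Newton stratum $\mathrm{Gr}_{M,\lambda,b_M}^{\mathrm{New}=[b'_M]}$. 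When $b_M$ is basic, this non-emptiness is governed by the classical Kottwitz inequality proved by Fontaine-Rapoport (and reproved via the Fargues-Fontaine curve by Chen-Fargues-Shen \cite{CFS}). For general $b_M$, I would apply the inner-twist formalism of \S\ref{subsection_comp inner twists}: pass to the inner form $J_{b'_M}$ of $M$ (well-defined since $b'_M$ is basic in $M$) to trivialize the target basic bundle; then Proposition~\ref{prop_comp inner twists} tracks the transformation of Newton and Kottwitz invariants, and the non-emptiness criterion on $J_{b'_M}$ (where now the target is $[1]$) is the classical one. Transporting this back yields precisely the condition $[b_M]\in B(M,\lambda+\kappa_M(b'_M),\lambda^{\diamond}\nu_{b'_M})$.

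\medskip

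The main obstacle will be Step~3 in its general form: namely the careful bookkeeping under the inner-twist reduction to match the Kottwitz and Newton invariants of $(M,b_M,b'_M,\lambda)$ with the data of $(J_{b'_M},1,1,\lambda)$, and then invoking the Chen-Fargues-Shen non-emptiness theorem on the resulting $\BpdR$-affine Grassmannian. The surjectivity statement of Step~2 is comparatively elementary, being an Iwasawa/Cartan decomposition argument.
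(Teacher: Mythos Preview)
Your Steps 1 and 2 align with the paper's approach: the paper also reduces (via Proposition~\ref{prop_can reduction}/Proposition~\ref{prop_fibration}(1)) the problem to non-emptiness of $\mathrm{Gr}_{M,\lambda,b_M}^{wa}$ for suitable $(b_M,\lambda)$, and the surjectivity of $\mathrm{pr}_M:\mathrm{Gr}_{G,\mu}\cap\mathrm{Gr}_{P,\lambda}\to\mathrm{Gr}_{M,\lambda}$ that you spell out in Step~2 is exactly what the paper uses (implicitly) in its sufficiency direction.

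Step~3 contains a genuine error. Your claim that a point $x$ is weakly admissible \emph{precisely when} $\mathcal{E}_{b_M,x}$ is semi-stable as an $M$-bundle is false: weak admissibility is HN-semistability, and by Corollary~\ref{coro_HN vector} one only has $\mathrm{HN}(b_M,x)\leq\mathrm{New}(b_M,x)$, so $\mathrm{Gr}_{M,\lambda,b_M}^{a}\subseteq\mathrm{Gr}_{M,\lambda,b_M}^{wa}$, and the inclusion is strict in general (cf.\ the examples referenced in Remark~\ref{remark_classical}). This is harmless for the \emph{sufficiency} direction of the theorem: your inner-twist reduction followed by the non-emptiness criterion for the basic Newton stratum (as in \cite{CFS}) shows $\mathrm{Gr}_{M,\lambda,b_M}^{a}\neq\emptyset$, hence $\mathrm{Gr}_{M,\lambda,b_M}^{wa}\neq\emptyset$ by inclusion; this is a legitimate alternative to the paper's route via Fontaine--Rapoport \cite{FR} on the flag variety combined with Proposition~\ref{prop_comparison via BB}.

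For \emph{necessity}, however, you cannot conclude that the basic Newton stratum is non-empty from $\mathrm{Gr}_{M,\lambda,b_M}^{wa}\neq\emptyset$, so your proposed path breaks down. The paper handles this direction with Lemma~\ref{lemma_ss_nonempty}, whose necessity half is a direct Mazur-type slope computation entirely on the HN side: one takes an anti-dominant basic reduction $b_{M_1}$ of $b_M$ to a Levi $M_1$, and the weak-admissibility inequality $\deg\chi_*((\mathcal E_{b_M,x})_{P_1})\leq 0$ for all $\chi\in X^*(P_1/Z_M)^+$ forces $\mathrm{Av}_{M_1}(\lambda)\leq\nu_{b_{M_1}}$, which after a Weyl reflection gives $\nu_{b_M}^{ad}\leq_M\lambda^{ad,\diamond}$. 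Together with the Kottwitz-invariant condition $\kappa_M(b_M)=\lambda+\kappa_M(b'_M)$ already imposed by the stratum, this is precisely the membership $[b_M]\in B(M,\lambda+\kappa_M(b'_M),\lambda^{\diamond}\nu_{b'_M})$.
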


\begin{remark}(1) In the previous proposition, we may replace $S_M(\mu)$ by $\Sigma(\mu)_{M-max}$. Indeed, for any $\lambda\in S_M(\mu)$ we can always find $\lambda_{max}\in \Sigma(\mu)_{M-max}$ such that $\lambda\leq_M \lambda_{max}$. The result follows from the fact that
\[B(M, \lambda+\kappa_M(b'_M), \lambda^{\diamond}\nu_{b'_M})\subseteq B(M, \lambda_{max}+\kappa_M(b'_M), \lambda_{max}^{\diamond}\nu_{b'_M}).\]

(2) When $G=\mathrm{GL}_n$, an analogous result for the non-emptiness of Harder-Narasimhan strata of the flag varieties is obtained by Orlik \cite[Theorem 1]{Orl}. See Proposition \ref{prop_non_empty_HN_flag} for the precise version. 

(3) When $b$ is basic, the previous conditions are equivalent to the following: there exists a reduction $b_M$ of $b$ to $M$, and there exists a $M$-dominant cocharacter $\lambda\in S_M(\mu)$ such that
\[\kappa_M(b_M)-\lambda=\kappa_M(b'_M) \text{ in } \pi_1(M)_{\Gamma}.\] In particular, $[b']\in B(G, \kappa_G(b)-\mu, \nu_b\mu^{-1})$. This is compatible with the description of the non-emptiness of HN-strata for $b=1$ in \cite[Prop. 3.13]{NV}.
\end{remark}

\begin{lemma}\label{lemma_ss_nonempty}
$\mathrm{Gr}_{G, \mu, b}^{wa}$ is non-empty if and only if $\mu^{ad,\diamond}\geq \nu_b^{ad}$ in $G^{ad}$. In particular, when $[b]\in B(G, \mu)$,  $\mathrm{Gr}_{G, \mu, b}^{wa}$ is non-empty if and only if $\mu^{\diamond}\geq \nu_b$.
\end{lemma}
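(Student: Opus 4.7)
The plan is to reduce the non-emptiness of $\mathrm{Gr}_{G,\mu,b}^{wa}$ to an application of Theorem~\ref{thm_nonempty}. First I observe that, by definition, the weakly admissible locus is $\mathrm{Gr}_{G,\mu,b}^{\mathrm{HN}=[b']}$ for some basic class $[b']\in B(G)$; by Corollary~\ref{coro_HN vector} any such $[b']$ must satisfy $\kappa_G(b')=\kappa_G(b)-\mu^\sharp$, and since basic classes in $B(G)$ are determined by their Kottwitz invariant, this pins down a unique candidate $[b']$. So the question becomes precisely whether this distinguished HN-stratum is non-empty.

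For this $[b']$, basicness forces $\nu_{b'}$ to be central, hence $w_0\nu_{b'}=\nu_{b'}$ and its centralizer $M$ equals $G$; thus $P=G$, $b'_M=b'$, and the only reduction of $b$ to $M=G$ is $b$ itself. Moreover $S_G(\mu)=\{\mu\}$, because for $P=G$ the subfunctor $\mathrm{Gr}_{P,\lambda}$ is just the Schubert cell $\mathrm{Gr}_{G,\lambda}$, and distinct dominant Schubert cells in $\mathrm{Gr}_G$ are disjoint. Plugging these into Theorem~\ref{thm_nonempty}, the stratum is non-empty if and only if $[b]\in B(G,\mu^\sharp+\kappa_G(b'),\mu^\diamond+\nu_{b'})$. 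The Kottwitz part $\kappa_G(b)=\mu^\sharp+\kappa_G(b')$ holds automatically by our choice of $[b']$, and we are reduced to verifying the Newton inequality
\[
\nu_b\leq \mu^\diamond+\nu_{b'}\quad \text{in}\quad \mathcal N(G).
\]

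The only step requiring care is to convert this to the claimed adjoint form. The element $\mu^\diamond+\nu_{b'}-\nu_b\in X_*(A)_\Q$ has zero image in $\pi_1(G)_\Q$ (via Kottwitz compatibility), so it lies in the $\Q$-span of coroots. Since the simple coroots of $G$ project bijectively onto the simple coroots of $G^{ad}$, the dominance inequality on such an element in $X_*(A)_\Q$ is equivalent to the dominance inequality on its image in $X_*(A^{ad})_\Q$. Using $\nu_{b'}^{ad}=0$ for basic $b'$, this is exactly $\mu^{ad,\diamond}\geq\nu_b^{ad}$. For the ``in particular'' assertion: when $[b]\in B(G,\mu)$ we have $\kappa_G(b)=\mu^\sharp$, so $\mu^\diamond-\nu_b$ is already in the coroot $\Q$-span and the same projection argument yields $\mu^\diamond\geq\nu_b\Leftrightarrow \mu^{ad,\diamond}\geq\nu_b^{ad}$. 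I do not anticipate any real obstacle here; the proof is essentially bookkeeping, once one notices that the weakly admissible locus reduces to a single HN-stratum whose contributing basic class is forced by the Kottwitz invariant.
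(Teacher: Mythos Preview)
Your reduction to Theorem~\ref{thm_nonempty} is circular: in the paper, Lemma~\ref{lemma_ss_nonempty} is precisely the input used to prove Theorem~\ref{thm_nonempty}. Both directions of that theorem invoke the lemma (necessity uses it to bound $\nu_{b_M}^{ad}$ by $\lambda^{ad,\diamond}$, and sufficiency uses it to produce a point in $\mathrm{Gr}_{M,\lambda,b_M}^{wa}$). In the special case $M=G$ that you are invoking, the proof of Theorem~\ref{thm_nonempty} literally collapses to a citation of Lemma~\ref{lemma_ss_nonempty}. So your argument, while the bookkeeping with $S_G(\mu)=\{\mu\}$ and the coroot-lattice reduction to $G^{ad}$ is correct, proves nothing new.

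The paper instead gives a self-contained proof. After passing to $G^{ad}$ (via \S\ref{subsection_adjoint}), necessity is shown directly: for $x\in\mathrm{Gr}_{G,\mu,b}^{wa}(C)$ one takes the reduction $b_{M_1}$ of $b$ to the Levi $M_1$ with $b_{M_1}$ basic and $\nu_{b_{M_1}}$ anti-dominant, and reads off from the weak admissibility inequality $\deg\chi_*((\mathcal E_{b,x})_P)\le 0$ that $\mathrm{Av}_{M_1}(\lambda)\le\nu_{b_{M_1}}$ for some $\lambda\in S_{M_1}(\mu)$, which after a Weyl-group manipulation yields $\nu_b\le\lambda_{G\text{-}dom}\le\mu$. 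Sufficiency is obtained by quoting the Fontaine--Rapoport criterion \cite[Theorem~3]{FR} for non-emptiness of $\mathcal F(G,\mu,b)^{ss}$ and pulling back along the Bialynicki--Birula map via Proposition~\ref{prop_comparison via BB}. You will need to replace your appeal to Theorem~\ref{thm_nonempty} with an argument of this kind.
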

\begin{proof} By  \S\ref{subsection_adjoint}, via the morphism $\pi: \mathrm{Gr}_G\rightarrow \mathrm{Gr}_{G^{ad}}$ induced from the quotient $G\ra G^{ad}$, we have 
\[
\pi^{-1}(\mathrm{Gr}_{G^{ad}, \mu^{ad}, b^{ad}}^{wa})=\mathrm{Gr}_{G, \mu, b}^{wa}.\] It follows that $\mathrm{Gr}_{G, \mu, b}^{wa}$ is non-empty if and only if so is $\mathrm{Gr}_{G^{ad}, \mu^{ad}, b^{ad}}^{wa}$. So  we may assume that $G$ is adjoint.

Necessity. Suppose $\mathrm{Gr}_{G, \mu, b}^{wa}\neq \emptyset$. Choose $x\in \mathrm{Gr}_{G, \mu, b}^{wa}(C)$ for some algebraically closed perfectoid field $C$ containing $\bar F$. Let $b_{M_1}$ be a reduction of $b$ to a standard Levi subgroup $M_1$ such that $b_{M_1}$ is basic in $M$ and $\nu_{b_{M_1}}$ is $G$-anti-dominant. Let $P_1$ be the standard parabolic subgroup associated to $M_1$. By weak admissibility, for any  $\chi\in X^*(P_1/Z_G)^+$,
\begin{eqnarray}\label{eqn_wa}
\deg \chi_*((\E_{b, x})_P)=\deg \chi_*(\E_{b_{M_1}, \mathrm{pr}_{M_1}(x)})=\langle \chi, \lambda-\nu_{b_{M_1}}\rangle\leq 0,
\end{eqnarray}
where $\mathrm{pr}_{M_1}(x)\in \mathrm{Gr}_{M_1, \lambda}(C)$ for some $\lambda\in S_{M_1}(\mu)$ by Lemma \ref{lemma_reduction type}.

\emph{Claim: $\lambda_{G-dom}\geq \nu_b$}. 

The Claim implies that  $\mu\geq \lambda_{G-dom}\geq \nu_b$. Now it remains to prove the Claim. The inequality (\ref{eqn_wa}) implies that 
\[\mathrm{Av}_{M_1}(\lambda)-\nu_{b_{M_1}}=\mathrm{Av}_{M_1}(\lambda-\nu_{b_{M_1}})\leq 0, \] where $\mathrm{Av}_{M_1}: X_*(T)_{\Q}\rightarrow X_*(T)_{\Q}$ denotes the $W_{M_1}$-average, i.e., 
\[
\mathrm{Av}_{M_1}(\lambda)=\frac{1}{|W_{M_1}|}\sum_{w\in W_{M_1}} w\lambda.
\]
It follows that 
\[\nu_b=w_0\nu_{b_{M_1}}\leq w_0\mathrm{Av}_{M_1}(\lambda)=\mathrm{Av}_{w_0M_1}((w_0\lambda)_{w_0M_1-dom})\leq (w_0\lambda)_{w_0M_1-dom}\leq \lambda_{G-dom},
\]
as claimed.

Sufficiency. \cite[Theorem 3]{FR} shows that $\mathcal{F}(G, \mu, b)^{wa}=\mathcal{F}(G, \mu, b)^{ss}$ is non-empty if and only if $\nu_b\leq\mu^{\diamond}$. Then the result follows from Proposition \ref{prop_comparison via BB}.

\end{proof}

\begin{proof}[Proof of Theorem \ref{thm_nonempty}]Necessity. Suppose $x\in \mathrm{Gr}_{G, \mu, b}^{\mathrm{HN}=[b']}(C)$. By Proposition \ref{prop_can reduction}, there exists a reduction $(b_M, h)$ of $b$ to $M$ such that $\mathrm{pr}_M(h^{-1}x)\in \mathrm{Gr}_{M, \lambda, b}^{wa}(C)=\mathrm{Gr}_{M, \lambda, b}^{\mathrm{HN}=[b'_M]}(C)$ for some $\lambda\in S_M(\mu)$. Then $\kappa_M(b'_M)=\kappa_M(b_M)-\lambda^\sharp$ and $\nu_{b_M}^{ad}\leq_M \lambda^{ad,\diamond}$ by Lemma \ref{lemma_ss_nonempty}. 
It follows that $\nu_{b_{M}}\leq_M \lambda^{\diamond}\nu_{b'_M}$ and the result follows.

Sufficiency. Suppose $(b_M, h)$ and $\lambda$ are as in the conditions. Then again by Lemma \ref{lemma_ss_nonempty}, $\mathrm{Gr}_{M, \lambda, b}^{wa}$ is non-empty. Note that
$h\cdot \mathrm{pr}_M^{-1}(\mathrm{Gr}_{M, \lambda, b}^{wa})\subseteq \mathrm{Gr}_{G, \mu, b}^{\mathrm{HN}=[b']}$, it follows that the HN stratum is non-empty.
\end{proof}


We have analogous result for the non-emptiness of Harder-Narasimhan strata in the flag varieties. Let 
\[
S_M(\mu)_{cl}:=\{\lambda\in W\mu \mid \lambda \text{ is } M \text{-dominant} \}.
\] Obviously, $S_M(\mu)_{cl}\subset \Sigma(\mu)_{M-max}$.

\begin{proposition}\label{prop_non_empty_HN_flag}Suppose $G$ is quasi-split. A HN-stratum $\mathcal{F}(G, \mu,b)^{\mathrm{HN}=[b']}$ is non-empty if and only if there exists a $M$-dominant cocharacter $\lambda\in S_M(\mu)_{cl}$ such that the generalized Kottwitz set 
\[
B(M, \lambda+\kappa_M(b'_M), \lambda\nu_{b'_M})\]
contains a reduction $[b_M]$ of $b$ to $M$. Moreover, each non-empty HN-stratum has classical points.
\end{proposition}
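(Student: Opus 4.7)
The plan is to adapt the proof of Theorem \ref{thm_nonempty} to the flag variety setting, replacing the Iwasawa/Cartan decomposition on the $\BpdR$-affine Grassmannian with the Bruhat decomposition on the flag variety $\mathcal{F}(G,\mu) = \bigsqcup_{w\in W_M\backslash W/W_\mu} P\, w\, P_\mu/P_\mu$. The analog of Proposition \ref{prop_can reduction} in this setting is furnished by the work of Dat--Orlik--Rapoport \cite[\S IX.6]{DOR}: for any $x\in \mathcal{F}(G,\mu,b)^{\mathrm{HN}=[b']}(C)$, there is a reduction $(b_M,g)$ of $b$ to $M$ and a $w\in W_M\backslash W/W_\mu$ with $\lambda := w\mu$ such that $g^{-1}\cdot x\in P\,w\, P_\mu/P_\mu$ and $\mathrm{pr}_M(g^{-1}\cdot x)\in \mathcal{F}(M,\lambda,b_M)^{wa}$. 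The key structural difference with Theorem \ref{thm_nonempty} is that Bruhat cells are parametrized by $W_M\backslash W/W_\mu$, which is in bijection with $S_M(\mu)_{cl}=W\mu\cap X_*(T)^{M\text{-dom}}$ rather than the larger set $S_M(\mu)$ that indexes semi-infinite orbits. This accounts for the replacement of $S_M(\mu)$ by $S_M(\mu)_{cl}$ in the statement.

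For necessity, after producing $(b_M,g)$ and $\lambda\in S_M(\mu)_{cl}$ as above, I would apply the Fontaine--Rapoport criterion \cite[Theorem 3]{FR} (already invoked in the proof of Lemma \ref{lemma_ss_nonempty}) to the Levi $M$: non-emptiness of $\mathcal{F}(M,\lambda,b_M)^{wa}$ forces $\nu_{b_M}^{ad}\leq_M \lambda^{ad,\diamond}$ in $M^{ad}$, equivalently $\nu_{b_M}\leq_M \lambda^\diamond\, \nu_{b'_M}$ (using that $\nu_{b'_M}$ is central in $M$). Combined with the Kottwitz invariant identity $\kappa_M(b_M)-\lambda = \kappa_M(b'_M)$ in $\pi_1(M)_\Gamma$, coming from the condition $\mathrm{HN}(b_M,\mathrm{pr}_M(g^{-1}x))=[b'_M]$, this yields $[b_M]\in B(M,\lambda+\kappa_M(b'_M),\lambda\, \nu_{b'_M})$.

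For sufficiency together with the existence of classical points, I would argue in one stroke. Given $(b_M,\lambda)$ satisfying the stated conditions, Lemma \ref{lemma_ss_nonempty} (via Fontaine--Rapoport) gives $\mathcal{F}(M,\lambda,b_M)^{wa}\neq\emptyset$. Since $\mathcal{F}(M,\lambda)$ is a smooth projective variety over a finite extension of $\breve F$ and the weakly admissible locus is the complement of a profinite union of closed Rapoport--Zink subvarieties \cite[IX.6]{DOR}, this open locus is a non-empty rigid analytic space of algebraic origin and therefore contains a classical point $y\in \mathcal{F}(M,\lambda,b_M)^{wa}(K)$ for some finite extension $K/\breve E$. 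The projection $P\, w\, P_\mu/P_\mu \to \mathcal{F}(M,\lambda)$ is an algebraic affine fibration with fiber $N/(N\cap w P_\mu w^{-1})$, hence admits classical sections over $K$; choosing a lift $\tilde y$ and translating by $g$, the point $g\cdot \tilde y$ is a classical point of $\mathcal{F}(G,\mu)$. Using the Dat--Orlik--Rapoport analog of Proposition \ref{prop_HN vector maximum of slope vector} for slope vectors on flag varieties, one checks that $g\cdot \tilde y\in \mathcal{F}(G,\mu,b)^{\mathrm{HN}=[b']}(K)$, which proves simultaneously non-emptiness and the existence of classical points.

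The principal technical obstacle is setting up, in the flag variety setting, the canonical reduction and the semi-stable affine fibration rigorously enough to apply the Fontaine--Rapoport criterion to each Levi stratum; essentially all of this is available in \cite{DOR}, so the work consists of translating between that formalism and the one developed here. The passage from non-emptiness to classical points, in contrast, is comparatively routine because the flag variety is algebraic and admits abundant rational points over finite extensions of the reflex field, a luxury unavailable in the $\BpdR$-affine Grassmannian case.
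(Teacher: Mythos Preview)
Your proposal is correct and follows essentially the same approach as the paper's own proof: both reduce to the Levi $M$ via the Bruhat-cell/affine-fibration structure (the analog of Proposition~\ref{prop_fibration} for flag varieties, as in \cite[Proposition~7]{Orl} and \cite[IX.6]{DOR}) and then invoke the Fontaine--Rapoport criterion \cite[Theorem~3]{FR} for non-emptiness of the semi-stable locus $\mathcal{F}(M,\lambda,b_M)^{wa}$. The only minor difference is that for the existence of classical points the paper notes that Fontaine--Rapoport's theorem already \emph{produces} classical weakly admissible points directly, whereas you argue via density of classical points in a non-empty open of a rigid space over $\breve E$; both work, but citing \cite{FR} is more direct.
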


\begin{proof}When $G=\mathrm{GL}_n$, this is proved by Orlik \cite{Orl}. For general $G$, the strategy is the same which is parallel to the proof of Theorem \ref{thm_nonempty}. In order to avoid the repetition, we only outline the strategy here.  Fontaine and Rapoport determines in \cite[Theorem 3]{FR} the condition when a semi-stable locus is non-empty (and contains classical points) . The result then follows from the fact that the HN-stratum is unions of some affine fibrations over the semi-stable locus of the HN-stratification of flag variety for some Levi subgroup of $G$ (cf. Proposition \ref{prop_fibration} and \cite[Proposition 7]{Orl}).  
\end{proof}

As an application of this result, we can determine which HN-strata contain classical points.

\begin{proposition}\label{prop_classical}Suppose $G$ is quasi-split. A HN-stratum $\mathrm{Gr}_{\mu,b}^{\mathrm{HN}=[b']}$ contains classical points if and only if there exists a $M$-dominant cocharacter $\lambda\in S_M(\mu)_{cl}$ such that the set 
\[
B(M, \lambda+\kappa_M(b'_M), \lambda^{\diamond}\nu_{b'_M})
\]
contains a reduction $[b_M]$ of $b$ to $M$.
\end{proposition}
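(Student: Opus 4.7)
The plan is to reduce the question to the corresponding statement on the flag variety, which is already handled by Proposition \ref{prop_non_empty_HN_flag}. The bridge between the two sides is Proposition \ref{prop_classical_HN=flag}, which asserts that for every finite extension $K/\breve{E}$ the Bialynicki-Birula map induces a bijection
\[
\mathrm{BB}: \mathrm{Gr}_{G,\mu,b}^{\mathrm{HN}=[b']}(K) \xrightarrow{\sim} \mathcal{F}(G,\mu,b)^{\mathrm{HN}=[b']}(K).
\]
Consequently $\mathrm{Gr}_{G,\mu,b}^{\mathrm{HN}=[b']}$ contains a classical point if and only if $\mathcal{F}(G,\mu,b)^{\mathrm{HN}=[b']}$ does. (Alternatively this can be extracted from Lemma \ref{lem:description-classical-pts}(2) combined with Theorem \ref{thm_classical}.)

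Now I would invoke Proposition \ref{prop_non_empty_HN_flag}: on the flag variety the HN-stratum $\mathcal{F}(G,\mu,b)^{\mathrm{HN}=[b']}$ is non-empty precisely when there exists a $M$-dominant $\lambda \in S_M(\mu)_{cl}$ together with a reduction $[b_M]$ of $b$ to $M$ lying in the generalized Kottwitz set $B(M, \lambda+\kappa_M(b'_M), \lambda^{\diamond}\nu_{b'_M})$; moreover, the second assertion of that proposition says every non-empty HN-stratum on $\mathcal{F}(G,\mu)$ automatically contains classical points. Concatenating the two equivalences
\[
\exists\ \text{classical pt. in }\mathrm{Gr}_{G,\mu,b}^{\mathrm{HN}=[b']} \iff \mathcal{F}(G,\mu,b)^{\mathrm{HN}=[b']}\neq\emptyset \iff \text{combinatorial condition}
\]
yields the desired characterization.

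There is no serious obstacle left to resolve, since all the work has already been carried out in Sections 4 and 5. The only conceptual point worth flagging is that the index set $S_M(\mu)_{cl}\subset \Sigma(\mu)_{M\text{-}max}\subset S_M(\mu)$ appearing here is strictly smaller than the one in Theorem \ref{thm_nonempty}. This shrinking is forced by the classical point condition: by Remark \ref{remark_classical definition}, a classical point of $\mathrm{Gr}_{G,\mu}$ has the form $x\mu(\xi)^{-1}G(\BpdR)$ with $x\in G(K)$, and so the reduction type with respect to the parabolic $P$ automatically lies in the Weyl orbit $W\mu$ rather than in the a priori larger set $\Sigma(\mu)_{M\text{-}max}$.
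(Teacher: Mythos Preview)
Your proof is correct and follows essentially the same route as the paper, which simply cites Proposition \ref{prop_non_empty_HN_flag} together with Theorem \ref{thm_classical}. Your use of Proposition \ref{prop_classical_HN=flag} in place of Theorem \ref{thm_classical} is an immaterial difference (the former is precisely the $(2)\Leftrightarrow(3)$ step of the latter), and your final paragraph explaining why $S_M(\mu)_{cl}$ replaces $S_M(\mu)$ is a nice bit of added intuition that the paper leaves implicit.
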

\begin{proof}This follows from Proposition \ref{prop_non_empty_HN_flag} combined with Theorem \ref{thm_classical}. 
\end{proof}

\begin{remark}\begin{enumerate} \item 
When $b$ is basic, this proposition is proved by Viehmann \cite[Theorem 5.5]{Vi}. Indeed, Viehmann determines which Newton-strata contain classical points. It's the same as which HN-strata contain classical points by Theorem \ref{thm_classical}.

\item When $\mu$ is minuscule, compared with Theorem \ref{thm_nonempty}, we see that every non-empty HN-stratum contains classical points.
\end{enumerate}
\end{remark}

\begin{example}Let $G=\mathrm{GL}_3$, $\nu_b=(\frac{5}{2}^{(2)}, 0)$ and $\mu=(3, 1, 1)$. There are 4 non-empty HN-strata with HN-vector $\nu_{b'}$ of the following form
\[
\left(\frac{1}{2}, \frac{1}{2}, -1\right),\  (1, 1, -2),\  \left(\frac{3}{2}, \frac{3}{2}, -3\right),\ (0, 0, 0).\] Except the HN-stratum corresponding to $(1, 1, -2)$, the other 3 non-empty HN-strata all contain classical points.
\end{example}

\subsection{Dimension formula}
Now we consider the dimension of a non-empty HN-stratum $\mathrm{Gr}_{\mu,b}^{\mathrm{HN}=[b']}$. Let $S_{\mu, b, b'}$ be the set of $\lambda\in S_M(\mu)$ which satisfies
\[
[b_M]\in B(M, \lambda^{\sharp}+\kappa_M(b'_M), \lambda^{\diamond}\nu_{b'_M}),
\] for some reduction $b_M$ of $b$ to $M$. The set $S_{\mu, b, b'}$ is non-empty by Theorem \ref{thm_nonempty}. We give the dimension formula of this HN-stratum. Here for $X$ a locally spatial diamond or an adic space, its \emph{dimension}, written 
\[
\dim X,
\]
is defined to be the dimension $\dim(|X|)$ of the associated locally spectral space $|X|$, that is, the supremum of all integers $n$ for which there exist a chain $x_0,x_1,\ldots, x_n\in |X|$ of distinct points in $|X|$ such  that $x_i$ is a a specialization of $x_{i+1}$ for $i=0,\ldots, n-1$ (cf. the paragraph after Remark 21.8 of \cite{Sch2} for the case of locally spatial diamonds and \cite[\S~1.8]{Hub} for the case of adic spaces). We remark that, for $T$ a locally spectral space, if $T=\cup_i T_i$ is a union by closed subsets of $T$, we have
\[
\dim T=\mathrm{max}_i\dim T_i.
\]
Furthermore, if $X=Z^{\diamond}$ is the diamond associated with an analytic adic space $Z$ over $\mathbb Z_p$ we have $|X|=|Z|$ (\cite[Lemma 15.6]{Sch2}) and thus $\dim X=\dim Z$.

\begin{theorem}\label{thm_dimension} Notations as above. Assume that $\mu$ is minuscule, then
\[\mathrm{dim} \mathrm{Gr}_{G, \mu,b}^{\mathrm{HN}=[b']}=\langle\mu, \rho\rangle+ \mathrm{max}_{\lambda\in S_{\mu, b, b'}} \langle \lambda, 2\rho_M-\rho\rangle.\]
\end{theorem}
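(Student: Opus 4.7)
The plan is to combine the decomposition of Proposition~\ref{prop_fibration} with the standard dimension formula for minuscule flag varieties, then verify a root-theoretic identity. Proposition~\ref{prop_fibration}(1) writes $\mathrm{Gr}_{G,\mu,b}^{\mathrm{HN}=[b']}$ at the level of geometric points as a union of translates $g\cdot(\mathrm{Gr}_{G,\mu}\cap \mathrm{Gr}_{P,\lambda,b_M}^{ss})$ indexed by reductions $(b_M,g)$ of $b$ to $M$ and cocharacters $\lambda\in S_{\mu,b,b'}$. Translation by $g\in G(\breve F)$ preserves dimension, and by the finiteness/compactness arguments already used in Proposition~\ref{prop:HN-strata-are-locally-closed} (via Lemma~\ref{lem:BM-and-BG} together with compactness of $J_i(F)/P_i(F)$), the dimension of the whole HN-stratum is the supremum, over the pairs $(\lambda, b_M)$, of $\dim(\mathrm{Gr}_{G,\mu}\cap \mathrm{Gr}_{P,\lambda,b_M}^{ss})$.

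Next, by Proposition~\ref{prop_fibration}(2), the natural map to $\mathrm{Gr}_{M,\lambda,b_M}^{wa}$ is an affine fibration with fiber $N/N\cap P_\lambda$, so
\[
\dim(\mathrm{Gr}_{G,\mu}\cap \mathrm{Gr}_{P,\lambda,b_M}^{ss}) = \dim \mathrm{Gr}_{M,\lambda,b_M}^{wa} + \dim(N/N\cap P_\lambda).
\]
Since $\mu$ is minuscule, $\lambda\in S_M(\mu)\subseteq W\mu$ is automatically $M$-minuscule. The Bialynicki--Birula map then identifies $\mathrm{Gr}_{M,\lambda}$ with $\mathcal{F}(M,\lambda)^\diamond$, and $\mathrm{Gr}_{M,\lambda,b_M}^{wa}$ with an open sub-diamond of $\mathcal{F}(M,\lambda)^\diamond$ when non-empty, yielding the standard minuscule flag-variety dimension $\langle \lambda, 2\rho_M\rangle$, where $2\rho_M$ is the sum of positive roots of $M$.

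To compute the fiber dimension I would use the root-space decomposition: $\mathrm{Lie}(N)=\bigoplus_{\alpha\in\Phi^+\setminus\Phi_M^+}\mathfrak{g}_\alpha$ and $\mathrm{Lie}(N\cap P_\lambda)=\bigoplus_{\alpha\in\Phi^+\setminus\Phi_M^+,\,\langle\lambda,\alpha\rangle\geq 0}\mathfrak{g}_\alpha$, giving
\[
\dim(N/N\cap P_\lambda)=\#\{\alpha\in\Phi^+\setminus\Phi_M^+:\langle\lambda,\alpha\rangle<0\}=\#\{\alpha\in\Phi^+:\langle\lambda,\alpha\rangle=-1\},
\]
where the last equality uses $M$-dominance of $\lambda$ (removing the restriction $\alpha\notin\Phi_M^+$ is harmless) and minuscule (pairings lie in $\{-1,0,1\}$). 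Since $\mu$ is dominant minuscule, an elementary $W$-conjugacy count gives
\[
\#\{\alpha\in\Phi^+:\langle\mu,\alpha\rangle=1\} = \#\{\alpha\in\Phi^+:\langle\lambda,\alpha\rangle=1\}+\#\{\alpha\in\Phi^+:\langle\lambda,\alpha\rangle=-1\},
\]
so $\langle\mu-\lambda,2\rho\rangle = 2\,\#\{\alpha\in\Phi^+:\langle\lambda,\alpha\rangle=-1\}$, i.e.\ $\dim(N/N\cap P_\lambda)=\langle\mu-\lambda,\rho\rangle$. Adding the two contributions produces
\[
\dim(\mathrm{Gr}_{G,\mu}\cap \mathrm{Gr}_{P,\lambda,b_M}^{ss}) = \langle\lambda,2\rho_M\rangle+\langle\mu-\lambda,\rho\rangle=\langle\mu,\rho\rangle+\langle\lambda,2\rho_M-\rho\rangle,
\]
which is independent of the choice of reduction. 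Taking the maximum over $\lambda\in S_{\mu,b,b'}$ yields the stated formula.

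The main obstacle I expect is justifying that the ``union over all reductions $(b_M,g)$'' of pieces of equal dimension does not inflate the dimension of the HN-stratum: the parameter space of reductions is not finite, so one must argue, as in the proof of Proposition~\ref{prop:HN-strata-are-locally-closed}, that after reducing modulo the action of $P(\breve F)$ it is controlled by a finite union of copies of $(J_i/P_i)(F)$ with $J_i/P_i$ projective, and then that in the diamond setting a bounded family of closed subspaces of a given dimension has union of the same dimension. A secondary but routine verification is the minuscule combinatorial identity used to match $\dim(N/N\cap P_\lambda)$ with $\langle\mu-\lambda,\rho\rangle$; this is standard but worth doing carefully as the whole formula hinges on it.
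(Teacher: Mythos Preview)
Your proposal is correct and follows the paper's overall strategy: decompose via Proposition~\ref{prop_fibration}(1), compute each piece via the affine fibration of Proposition~\ref{prop_fibration}(2), identify $\dim \mathrm{Gr}_{M,\lambda,b_M}^{wa}=\langle\lambda,2\rho_M\rangle$ as an open subset of $\mathrm{Gr}_{M,\lambda}$, and verify $\dim N/N\cap P_\lambda=\langle\mu-\lambda,\rho\rangle$ by a root computation.

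The one substantive difference is exactly the point you flag as the ``main obstacle''. You propose to control the infinite union over reductions $(b_M,g)$ by the compactness argument from Proposition~\ref{prop:HN-strata-are-locally-closed}. The paper bypasses this entirely with a closure trick: each $|\mathrm{Gr}_{G,\mu}\cap\mathrm{Gr}_{P,\lambda,b_M}^{ss}|$ is dense open in the locally closed $|\mathrm{Gr}_{G,\mu}\cap\mathrm{Gr}_{P,\lambda}|$, hence has the same dimension as its closure in $|\mathrm{Gr}_{G,\mu}|$. Replacing each piece by the intersection of that closure with $|\mathrm{Gr}_{G,\mu,b}^{\mathrm{HN}=[b']}|$ rewrites the stratum as a union of \emph{closed} subsets of itself, and for a locally spectral space written as $T=\bigcup_i T_i$ with each $T_i$ closed one has $\dim T=\sup_i\dim T_i$ without any finiteness hypothesis (any specialization chain lies in the $T_i$ containing its generic point). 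This dissolves your obstacle in one line. Your compactness route would also work, but is unnecessary here.

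Your root computation is a counting argument using minusculeness (pairings in $\{-1,0,1\}$); the paper instead writes $\dim N/N\cap P_\lambda=\sum_{\alpha\in\Phi_G^+\setminus\Phi_M^+,\ \langle\lambda,\alpha\rangle<0}(-\langle\lambda,\alpha\rangle)$ and symmetrizes directly to $\langle\mu,\rho\rangle-\langle\lambda,\rho_M\rangle-\langle\lambda,\rho_N\rangle=\langle\mu-\lambda,\rho\rangle$. Both are valid and yield the same identity.
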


\begin{remark}\begin{enumerate}
\item 
We expect that the dimension formula still holds for a non-minuscule cocharacter $\mu$ although we can't prove it for the moment.

\item Let $S_{\mu, b, b'}^{op}:=\{-w_{M, 0}\lambda\mid \lambda\in S_{\mu, b, b'}\}$. Replacing $S_{\mu, b, b'}$ by $S_{\mu, b, b'}^{op}$, we can write the dimension formula in Theorem \ref{thm_dimension} in a simpler form:

\[\mathrm{dim} \mathrm{Gr}_{G, \mu,b}^{\mathrm{HN}=[b']}= \mathrm{max}_{\lambda\in S_{\mu, b, b'}^{op}} \langle\mu+ \lambda, \rho\rangle.\]

\item When $b$ is basic, the set $S_{\mu, b, b'}$ consists of the elements $\lambda\in S_M(\mu)$ such that
\[\lambda^{\sharp}=\kappa_M(b_M)-\kappa_M(b'_M) \text{ in } \pi_1(M)_{\Gamma}.\] In particular, $\mathrm{Av}_M(\lambda)=\nu_b-w_0\nu_{b'}$. Hence, the dimension formula could also be written in the following form:
\[\mathrm dim \mathrm{Gr}_{G, \mu,b}^{\mathrm{HN}=[b']}=\langle \mu-\nu_{b'}, \rho\rangle+ \mathrm{max}_{\lambda\in S_{\mu, b, b'}} \langle \lambda, \rho_M\rangle.
\]
Recall that Nguyen-Viehmann defined in \cite[3.17(iii)]{NV} the set $\Theta(\mu, b')$ of Harder-Narasimhan types consisting of $G(F)$-conjugacy classes of Harder-Narasimhan pairs of the form $(Q, \{\lambda\})$ where $Q$ is a parabolic subgroup which is conjugate to the centralizer of $\nu_{b'}$ and $\{\lambda\}$ is a $Q(\bar F)$-conjugacy class of cocharacters of $Q_{\bar F}$ such that $[b']$ has a reduction $[b'_{M_Q}]$ to some Levi factor $M_Q$ of $Q$ that is basic in $M_Q$, and such that 
\[\quad
-\lambda^{\sharp_{M_Q}}=\kappa_{M_Q}(b')\in \pi_1(M_Q)_\Gamma\quad \textrm{and}\quad (-\lambda)_{G-dom}\leq \mu_{G-dom}.
\] 
We may omit $Q$ in the pairing $(Q, \{\lambda\})$ for $\Theta(\mu, b')$. When $\mu$ is minuscule, it's easy to check that there is an identification 
$S_{\mu, 1, b'}^{op}=\Theta(\mu, b'^*)$,
where $[b'^*]$ denotes the element in $B(G)$ such that $\nu_{b'^*}=-w_0\nu_{b'}$ and $\kappa(b'^*)=-\kappa(b')$ (cf. \cite[Corollary 2.9]{CT}). Therefore when $b=1$, the dimension formula could also be written in the way that 
\[\mathrm{dim} \mathrm{Gr}_{G,\mu,1}^{\mathrm{HN}=[b']}=\mathrm{max}_{\lambda\in \Theta(\mu, b')} \langle\mu+ \lambda, \rho\rangle\]from which we could see  that the upper bound of the dimension of a  HN-stratum given by Nguyen-Viehmann (\cite[Proposition 7.1]{NV}) is actually reached. This also answers the question in \cite[Remark 7.2(2)]{NV}.

\item When $G$ is split and $b$ is basic, the set $S_{\mu,b, b'}$ consists of a unique element. It follows that in this situation, we don't need to take maximum in the dimension formula.

\item When $b$ is basic, the dimension of a HN-stratum is already studied by many people. For $G=\mathrm{GL}_n$, the dimension formula is proved by Fargues in \cite[Proposition 23]{Far}.  In \cite[Theorem 3.9]{Sh}, Shen shows that any non-basic HN-strata are parabolic induction by introducing a finer decomposition on a given HN-stratum. As a byproduct, he also gives a description of the dimension as the maximum of the dimension of each stratum \cite[Remark 3.10]{Sh}. However, we want to clarify that our dimension formula is not the same as Shen's. We have mentioned that when $G$ is split, the maximum disappears in our dimension formula while it always exists in Shen's even for $G=\mathrm{GL}_n$.  In \cite[Proposition 7.1, 7.3]{NV}, Nguyen-Viehmann gives some upper bound for the dimension formula.   
\end{enumerate}
\end{remark}

\begin{proof}[Proof of Theorem \ref{thm_dimension}]
By Proposition \ref{prop_fibration} combined with Lemma \ref{lemma_ss_nonempty}, we have that
\[
\left|\mathrm{Gr}_{G,\mu,b}^{\mathrm{HN}=[b']}\right|= \bigcup_{\textrm{reduction }(b_M,g) \atop \textrm{of }b \textrm{ to }M} \bigcup_{\lambda\in S_{\mu, b, b'} }g\cdot \left|\left(\mathrm{Gr}_{G,\mu}\cap \mathrm{Gr}_{P,\lambda,b_M}^{ss}\right)\right|.
\] 
Note that each $|\mathrm{Gr}_{G,\mu}\cap \mathrm{Gr}_{P,\lambda,b_M}^{ss}|$ in the union above is a dense open subset of $|\mathrm{Gr}_{G,\mu}\cap \mathrm{Gr}_{P,\lambda}|$, and the latter is itself a locally closed subset of $|\mathrm{Gr}_{G,\mu}|\simeq |\mathcal F(G,\mu)|$, it follows that the dimension of $|\mathrm{Gr}_{G,\mu}\cap \mathrm{Gr}_{P,\lambda,b_M}^{ss}|$ is the same as the dimension of its closure $\overline{|\mathrm{Gr}_{G,\mu}\cap \mathrm{Gr}_{P,\lambda,b_M}^{ss}|}$ in $|\mathrm{Gr}_{G,\mu}|$. Consequently, 
\[
\left|\mathrm{Gr}_{G,\mu,b}^{\mathrm{HN}=[b']}\right|= \bigcup_{\textrm{reduction }(b_M,g) \atop \textrm{of }b \textrm{ to }M} \bigcup_{\lambda\in S_{\mu, b, b'} }g\cdot \left(\overline{ \left|\left(\mathrm{Gr}_{G,\mu}\cap \mathrm{Gr}_{P,\lambda,b_M}^{ss}\right)\right|}\cap \left|\mathrm{Gr}_{G,\mu,b}^{\mathrm{HN}=[b']}\right|\right).
\] 
and hence we have 
\[\begin{split}\dim\mathrm{Gr}_{G,\mu, b}^{\mathrm{HN}=[b']}
&=\mathrm{max}_{(b_M, g)\atop \lambda\in S_{\mu, b, b'}} \dim \left(\overline{ \left|\left(\mathrm{Gr}_{G,\mu}\cap \mathrm{Gr}_{P,\lambda,b_M}^{ss}\right)\right|}\cap \left|\mathrm{Gr}_{G,\mu,b}^{\mathrm{HN}=[b']}\right|\right)\\  &=\mathrm{max}_{(b_M, g)\atop \lambda\in S_{\mu, b, b'}} \dim \mathrm{Gr}_{G,\mu}\cap \mathrm{Gr}_{P,\lambda,b_M}^{ss} \\
&=\mathrm{max}_{(b_M, g)\atop \lambda\in S_{\mu, b, b'}} \dim N/N\cap P_{\lambda} + \dim (\mathrm{Gr}_{M, \lambda, b_M}^{wa})\\
&=\mathrm{max}_{\lambda\in S_{\mu, b, b'}} \dim N/N\cap P_{\lambda} + \langle \lambda, 2\rho_M\rangle,
\end{split}
\]
where the third equality follows from Proposition \ref{prop_fibration} (2) and the last equality follows from the fact that $\mathrm{Gr}_{M, \lambda, b_M}^{wa}$ is the semi-stable HN-stratum and hence open in $\mathrm{Gr}_{M, \lambda}$. So
\[
\dim\mathrm{Gr}_{M, \lambda, b_M }^{wa}=\dim \mathrm{Gr}_{M, \lambda}=\langle \lambda, 2\rho_M\rangle.
\] is independent of the choice of $(b_M, g)$.
Therefore it remains to show that 
\[
\dim N/ N\cap P_{\lambda}=\langle \mu-\lambda, \rho\rangle.
\]
Now we compute this dimension:  
\[\begin{split}\dim N/N\cap P_{\lambda}
=&\sum_{\alpha\in\Phi_G^+-\Phi_M^+\atop s.t. \langle \lambda, \alpha\rangle<0} -\langle\lambda, \alpha\rangle\\
=&\sum_{\alpha\in\Phi_G^+-\Phi_M^+\atop s.t. \langle \lambda, \alpha\rangle<0} -\frac{1}{2}\langle\lambda, \alpha\rangle + \sum_{\alpha\in\Phi_G^+-\Phi_M^+\atop s.t. \langle \lambda, \alpha\rangle\geq 0} \frac{1}{2}\langle\lambda, \alpha\rangle-\langle \lambda, \rho_N\rangle \\
=&\langle \mu, \rho\rangle -\langle \lambda, \rho_M\rangle
-\langle \lambda, \rho_N\rangle\\
=&\langle \mu-\lambda, \rho\rangle.
\end{split}\]

\end{proof}
\bibliographystyle{amsalpha}

\end{document}